\definecolor{bred}{rgb}{0.8,0,0}
\numberwithin{equation}{section}
\newcommand{\multiline}[1]{%
  \begin{tabularx}{\dimexpr\linewidth-\ALG@thistlm}[t]{@{}X@{}}
    #1
  \end{tabularx}
}
\newtheorem{theorem}{Theorem}[section]
\newtheorem{proposition}[theorem]{Proposition}
\newtheorem{lemma}[theorem]{Lemma}
\newtheorem{corollary}[theorem]{Corollary}
\newtheorem{remark}[theorem]{Remark}
\newtheorem{definition}[theorem]{Definition}
\newtheorem{assumption}[theorem]{Assumption}
\newcommand{\R}{\mathbb{{R}}}
\newcommand{\N}{\mathbb{{N}}}
\newcommand{\cX}{\mathcal{{X}}}
\newcommand\bE{\mathbb{E}}
\newcommand\bF{\mathbb{F}}
\newcommand\bN{\mathbb{N}}
\newcommand\bR{\mathbb{R}}
\newcommand\bP{\mathbb{P}}
\newcommand\bZ{\mathbb{Z}}
\newcommand\bV{\mathbb{V}}
\newcommand\bX{\mathbb{X}}
\newcommand\bVar{\mathbb{V}ar}
\newcommand\fv{\mathbf{v}}
\newcommand\bS{\mathbb{S}}
\newcommand\cB{\mathcal{B}}
\newcommand\cD{\mathcal{D}}
\newcommand\cF{\mathcal{F}}
\newcommand\cK{\mathcal{K}}
\newcommand\cL{\mathcal{L}}
\newcommand\cM{\mathcal{M}}
\newcommand\cR{\mathcal{R}}
\newcommand\cU{\mathcal{U}}
\newcommand\cV{\mathcal{V}}
\newcommand{\cal}{\curvearrowleft}
 \newcommand{\sumstar}
 {\operatornamewithlimits{\sum@\kern-.2em\raise1ex\hbox{*}}}
\renewcommand{\>}{\rangle}
\newcolumntype{R}[1]{>{\raggedleft\let\newline\\\arraybackslash\hspace{0pt}}m{#1}}
\begin{document}

\title[]{Multilevel Picard algorithm for general semilinear parabolic PDEs with gradient-dependent nonlinearities}
\author[A.Neufeld]{Ariel Neufeld}

\author[S. Wu]{Sizhou Wu}

\date{}
\thanks{
	\textit{Acknowledgment.} Financial support by the Nanyang Assistant Professorship Grant (NAP Grant) \textit{Machine Learning based Algorithms in Finance and Insurance} is gratefully acknowledged.}
	\thanks{\textit{Email addresses.} ariel.neufeld@ntu.edu.sg \ and \ wusizhou@sufe.edu.cn}
\keywords{Multilevel Picard approximation, Nonlinear PDE, Gradient-dependent nonlinearity, Complexity analysis, Monte Carlo methods, Feynman-Kac representation,
Bismut-Elworthy-Li formula}

\subjclass[2010]{}

\begin{abstract}
In this paper we introduce a multilevel Picard approximation algorithm for general semilinear parabolic
PDEs with gradient-dependent nonlinearities whose coefficient functions do not need to be constant. We also provide a full convergence and complexity analysis of our algorithm.
\\
To obtain our main results, we consider a particular stochastic fixed-point equation (SFPE) motivated by the Feynman-Kac representation and the Bismut-Elworthy-Li formula.  
We show that the PDE under consideration has a unique viscosity solution which coincides with the first component  of the unique solution of the stochastic fixed-point equation. 
Moreover, the gradient of the unique viscosity solution of the PDE exists and coincides with the second component of the unique solution of the stochastic fixed-point equation.
Furthermore, we also provide a numerical example in up to $300$ dimensions to
demonstrate the practical applicability of our multilevel Picard algorithm.
\end{abstract}
\maketitle

\section{\textbf{Introduction}}

Nonlinear partial differential equations (PDEs) 
can be used to model numerous important phenomena in many fields,
e.g., finance, physics, biology, economics, and engineering.
In recent years, 
neural network based 
\cite{al2022extensions,beck2020deep,beck2021deep,beck2019machine,
beck2020overview,berner2020numerically,castro2022deep,cioica2022deep,
ew2017deep,ew2018deep,weinan2021algorithms,frey2022convergence,frey2022deep,GPW2022,
gnoatto2022deep,gonon2023random,gonon2021deep,gonon2023deep,grohs2023proof,
han2018solving,han2020convergence, 
han2019solving,hure2020deep,hutzenthaler2020proof,ito2021neural,
jacquier2023deep,jacquier2023random,jentzen2018proof,
lu2021deepxde,nguwi2022deep,nguwi2022numerical,nguwi2023deep,
raissi2019physics,reisinger2020rectified,sirignano2018dgm,zhang2020learning} 
or multilevel Monte-Carlo based 
\cite{BGJ2020,beck2020overcoming,becker2020numerical,
hutzenthaler2019multilevel,hutzenthaler2021multilevel,
giles2019generalised,HJK2022,HJKNW2020,
HK2020,HJKN2020,
hutzenthaler2020overcoming,hutzenthaler2022multilevel,
hutzenthaler2022multilevel1,NW2022} 
numerical methods to solve high-dimensional 
nonlinear PDEs have been widely developed. 
While efficient in practice, neural networks based algorithms lack a rigorous convergence analysis caused by the non-convexity of the corresponding optimization problems when training neural networks. On the other hand
one can provide a thorough convergence and complexity analysis for multilevel Monte-Carlo based methodologies. In particular, it has been proven in the literature that under some moderate assumptions (typically Lipschitz continuity)
on the coefficient functions, the source term function describing the nonlinearity, and the initial (or terminal) condition function of the PDE under consideration, 
the multilevel Picard approximation algorithms can overcome the curse of
dimensionality in the sense that the computational complexity of
the algorithms grows at most polynomially in both the PDE dimension~$d$ and the
reciprocal of the prescribed approximation accuracy~$\varepsilon$, see
\cite{BGJ2020,beck2020overcoming,hutzenthaler2019multilevel,
hutzenthaler2021multilevel,giles2019generalised,HJK2022,HJKNW2020,
HK2020,HJKN2020,
hutzenthaler2020overcoming,hutzenthaler2022multilevel,
hutzenthaler2022multilevel1,NW2022}.
 
We highlight that for semilinear PDEs with nonlinearities also in the gradient, the development of numerical schemes that can approximately solve such high-dimensional equations for which there is theoretical convergence guarantees and complexity analysis is at its infancy.
To the best of our knowledge, only in \cite{HJK2022,HK2020} multilevel Picard approximation algorithms together with its convergence and complexity analysis has been developed so far for semilinear PDEs with gradient-dependent nonlinearities. In particular, as the above mentioned papers \cite{HJK2022,HK2020} 
consider semilinear heat equations with
gradient-dependent nonlinearities, there is no literature on numerical schemes suitable for \textit{general} high-dimensional semilinear PDEs  with
gradient-dependent nonlinearities whose coefficient functions
are not constant for which there exists a thorough convergence and complexity analysis.

The goal of this paper hence is to develop a full-history recursive 
multilevel Picard (MLP) approximation algorithm, together with its convergence and complexity analysis, which can solve general semilinear PDEs
with gradient-dependent nonlinearities. In particular, we do not require the coefficient functions of the PDE to be constant.
We also provide a numerical example in up to $300$ dimensions to
demonstrate the practical applicability of our MLP algorithm.

The idea of our MLP approximation algorithm is the following. Consider the semilinear PDE with gradient-dependent nonlinearity defined on $[0,T]\times \R^d$  by 
\begin{align}
	\frac{\partial}{\partial t}u^d(t,x)+\langle\nabla_x u^d(t,x),\mu^d(x)\rangle
	&
	+\frac{1}{2}\operatorname{Trace}\big(\sigma^d(x)[\sigma^d(x)]^T
	\operatorname{Hess}_xu^d(t,x)\big)
	\nonumber\\
	&
	+f^d(t,x,u^d(t,x),\nabla_x u^d(t,x))=0
	\quad \text{on} \quad [0,T)\times\bR^d
	\label{PDE0}
\end{align}
with terminal condition $u^d(T,x)=g^d(x)$. 
Under some suitable conditions on the coefficients of the PDE,
the unique 
solution of the PDE \eqref{PDE0} satisfies the following \textit{Feynman-Kac representation}
\begin{equation}                                                   \label{FK}
	u^d(t,x)=\bE\big[g^d(X^{d,t,x}_T)\big]+\int_t^T\bE\big
	[f^d\big(s,X^{d,t,x}_s,u^d(s,X^{d,t,x}_s),
	(\nabla_x u^d)(s,X^{d,t,x}_s)\big)\big]\,ds, 
\end{equation}
where
\begin{equation}
\label{SDE 0}
X^{d,t,x}_s
=
x
+ 
\int_t^s\mu^d(X^{d,t,x}_s)\,ds
+
\int_t^s\sigma^d(X^{d,t,x}_{s})\,dW^d_s, \quad s\in[t,T].
\end{equation}
Note that the right-hand side of the Feynman-Kac representation \eqref{FK}
depends on both the solution of the PDE $u^d$ and its gradient $(\nabla_x u^d)$. To obtain a stochastic representation (called \textit{Bismut-Elworthy-Li formula}) also for the gradient $(\nabla_x u^d)$ of the PDE \eqref{PDE0}, observe that for each $k\in\{1,\dots d\}$
\begin{equation}                                                  \label{BEL1}
	\frac{\partial}{\partial x_k}\bE\big[g^d(X^{d,t,x}_T)\big]
	=\bE\left[g^d(X^{d,t,x}_T)\frac{1}{T-t}\int_t^T\left(
	\big[\sigma^d(X^{d,t,x}_{s})\big]^{-1}
	D_{x_k}X^{d,t,x}_{s}
	\right)^T dW^d_s\right]
\end{equation} 
and 
\begin{align}
	&
	\int_t^T\frac{\partial}{\partial x_k}
	\bE\left[f^d\big(s,X^{d,t,x}_s,u^d(s,X^{d,t,x}_s),
	(\nabla_x u^d)(s,X^{d,t,x}_s)\big)\right]ds
	\nonumber\\
	&
	=\int_t^T\bE\bigg[
	f^d\big(s,X^{d,t,x}_s,u^d(s,X^{d,t,x}_s),
	(\nabla_x u^d)(s,X^{d,t,x}_s)\big)
	\frac{1}{s-t}\int_t^s\left(
	\big[\sigma^d(X^{d,t,x}_{r})\big]^{-1}
	D_{x_k}X^{d,t,x}_{r}
	\right)^T\,dW^d_r\bigg]\,ds,
	\label{BEL2}
\end{align}
where $D_{x_k} X^{d,t,x}_s$ denotes the
$L_2(\bP)$-derivative of $X^{d,t,x}_s$ with respect to $x_k$, i.e., 
$D_{x_k}X^{d,t,x}_s$ is the $L_2(\bP)$-limit 
of the random variable
$(X^{d,t,x+\delta e_k}_s-X^{d,t,x}_s)/\delta$  as  $\delta\to 0$. 
Therefore, by \eqref{BEL1} and \eqref{BEL2} we obtain that
\begin{align}
	&
	\nabla_x u^d(t,x)
	\nonumber\\
	&
	=\bE\left[\frac{g^d(X^{d,t,x}_T)}{T-t}
	\int_t^T\left(\big[\sigma^d(X^{d,t,x}_{r})\big]^{-1}DX^{d,t,x}_{r}\right)^T\,dW^d_r\right]
	\nonumber\\
	& \quad                                        
	+\int_t^T \bE\bigg[
	f^d\big(s,X_s^{d,t,x},u^d(s,X_s^{d,t,x}),
	(\nabla_x u^d)(s,X^{d,t,x}_s)\big)
	\frac{1}{s-t}
	\int_t^s\left(\big[\sigma^d(X^{d,t,x}_{r})\big]^{-1}DX^{d,t,x}_{r}\right)^T
	\,dW^d_r\bigg]\,ds, 
	\label{BEL3}
\end{align}
where
\begin{equation}
\label{def D intro}
	DX^{d,t,x}_s:=\left(D_{x_1}X^{d,t,x}_s,
	D_{x_2}X^{d,t,x}_s,
	\dots,D_{x_d}X^{d,t,x}_s\right).
\end{equation}

%
Motivated by the Feynman-Kac representation \eqref{FK} and the Bismut-Elworthy-Li formula \eqref{BEL3}, we define the following map
\begin{align}
    &
	(\mathbf{\Phi}^d\circ\mathbf{v}^d)(t,x)
	\nonumber\\
	&
	=\bE\bigg[g^d(X^{d,t,x}_T)\left(1,\frac{1}{T-t}
	\int_t^T\left(\big[\sigma^d(X^{d,t,x}_{r})\big]^{-1}
	DX^{d,t,x}_{r}\right)^T\,dW^d_r\right)\bigg]  
	\nonumber\\
	& \quad    
	+\int_t^T\bE\bigg[
	f^d\big(s,X^{d,t,x}_s,\mathbf{v}^d(s,X^{d,t,x}_s)\big)
	\left(1,\frac{1}{s-t}
	\int_t^s\left(\big[\sigma^d(X^{d,t,x}_{r})\big]^{-1}
	DX^{d,t,x}_{r}\right)^T dW^d_r\right)
	\bigg]\,ds.
	\label{FP}
\end{align}
We show that $\mathbf{\Phi}^d$ defines a contraction mapping on a suitable Banach space, which then by the Banach fixed-point theorem ensures the existence of a unique fixed-point $\mathbf{v}^d=(v_1^d,v_2^d)$ for $\mathbf{\Phi}^d$. We show that the PDE $\eqref{PDE0}$ has a unique viscosity solution  $u^d$ which coincides with the first component $v_1^d$ of the fixed-point $\mathbf{v}^d$. Moreover, 
its gradient $(\nabla_x u^d)$ exists and coincides with the second component $v_2^d$ of the fixed-point $\mathbf{v}^d$. 

This then allows us to consider the sequence of Picard iteration defined recursively
by
\begin{equation*}
\mathbf{v}^d_k(t,x)=(\mathbf{\Phi}^d\circ\mathbf{v}^d_{k-1})(t,x)
\end{equation*}
with $\mathbf{v}^d_0\equiv 0$, for which the Banach fixed-point theorem ensures its convergence to the unique fixed-point. 
Note that by \eqref{FP}, we see that
\begin{align}
	\mathbf{v}^d_k(t,x)
	&
	=\mathbf{v}^d_1(t,x)
	+\sum_{l=1}^{k-1}[\mathbf{v}^d_{l+1}(t,x)-\mathbf{v}^d_{l}(t,x)]
	\nonumber\\
	&
	=(\mathbf{\Phi}^d\circ\mathbf{v}^d_0)(t,x)
	+\sum_{l=1}^{k-1}[(\mathbf{\Phi}^d\circ\mathbf{v}^d_l)(t,x)
	-(\mathbf{\Phi}^d\circ\mathbf{v}^d_{l-1})(t,x)]
	\nonumber\\
	&
	=\bE\left[g^d(X^{d,t,x}_T)\left(1,\frac{1}{T-t}
	\int_t^T\left(\big[\sigma^d(X^{d,t,x}_{r})\big]^{-1}DX^{d,t,x}_{r}\right)^T
	\,dW^d_r\right)\right]
	\nonumber\\
	&  \quad                                    
	+\sum_{l=0}^{k-1}\int_t^T\bE\Bigg[
	\left(
	f^d\big(s,X^{d,t,x}_s,\mathbf{v}^d_l(s,X^{d,t,x}_s)\big)
	-
	\mathbf{1}_{\{l\geq 1\}}f^d\big(s,X^{d,t,x}_s,\mathbf{v}^d_{l-1}(s,X^{d,t,x}_s)\big)\right)
	\nonumber\\
	& \quad
	\cdot
	\left(1,\frac{1}{s-t}
	\int_t^s\left(\big[\sigma^d(X^{d,t,x}_{r})\big]^{-1}DX^{d,t,x}_{r}\right)^T
	\,dW^d_r\right)\Bigg]\,ds.
	\label{iteration1}
\end{align}
By replacing in \eqref{iteration1} all expectations and integrals 
by corresponding Monte-Carlo approximations, 
as well as by replacing all paths of SDEs 
by corresponding Euler-Maruyama approximations 
in case the SDEs cannot be simulated directly, 
we derive our multilevel Picard approximation scheme.

Our approach hence can be seen as the extension of 
\cite{BGJ2020,beck2020overcoming,
hutzenthaler2019multilevel,hutzenthaler2021multilevel,
giles2019generalised,HJKNW2020,
HJKN2020,
hutzenthaler2020overcoming,
hutzenthaler2022multilevel1,NW2022}, 
where in these papers, the unique viscosity solution of a nonlinear PDE similar to
\eqref{PDE0} \textit{but without nonlinearity in the gradient} 
has been identified as the unique fixed-point of a stochastic fixed-point equation 
using only the Feynman-Kac representation. 

Moreover, \cite{HJK2022,HK2020} consider semilinear heat equations 
with gradient-dependent nonlinearities, 
which is a special case of our PDE \eqref{PDE0} under consideration 
where there the drift and diffusion coefficients are constant. 
This allows them to directly apply (a simple version of)
 the classical Bismut-Elworthy-Li formula 
(see, e.g. \cite[Proposition~3.2]{FLLLT1999}), 
without identifying the unique viscosity solution and its gradient 
as the unique solution of a particular stochastic fixed-point equation, 
see \cite[Lemma 4.2, (ii)]{HJK2022}, \cite[Lemma 4.2]{HK2020}. 

Furthermore, recently, \cite{gobet2024numerical} provides 
nonlinear Feynman-Kac formulas 
for a class of ergodic backward stochastic differential equations (BSDEs). 
These formulas provide a probabilistic representation for elliptic PDEs 
of ergodic type with gradient-dependent nonlinearities. 
This allows \cite{gobet2024numerical} 
to derive a fully implementable numerical scheme for ergodic BSDEs 
based on a Picard iteration methodology.

The structure of this paper is the following. 
The precise setting and assumptions are introduced in Section \ref{section setting}.
In Sections \ref{section euler}--\ref{section main thm},
we present our MLP approximation algorithm 
\eqref{def MLP no euler} and \eqref{def MLP euler} 
and formulate the main results of this paper 
(see Theorem \ref{thm MLP conv} and Theorem \ref{MLP complexity}).
The pseudocode of our MLP algorithm is presented in Section~\ref{section pseudocode},
whereas a numerical example is provided in Section \ref{section example}.
In Section~\ref{section FP}, we study a family of stochastic fixed-point equations,
which will be used to construct a viscosity solution of semilinear PDE~\eqref{PDE0}
(c.f.\ Proposition~\ref{Prop FP} and Corollary~\ref{corollary FP}).
In Section \ref{section PIDE}, we show the existence and uniqueness of the
viscosity solution of semilinear PDE~\eqref{PDE0} and establish 
a Feynman-Kac and Bismut-Elworthy-Li type formula 
(c.f.\ Proposition~\ref{theorem PDE existence}).
In Section~\ref{section general MLP}, we introduce a new class of full-history recursive multilevel
Picard approximation schemes (see \eqref{def MLP general}) 
in a general setting and provide an approximation error bound 
for \eqref{def MLP general}
(c.f.\ Proposition \ref{corollary MLP error}),
which we then employ to prove the convergence of our MLP approximation algorithm
\eqref{def MLP no euler} and \eqref{def MLP euler}.
At last, the proofs of the main theorems, 
Theorem~\ref{thm MLP conv} and Theorem~\ref{MLP complexity},
are presented in Section~\ref{section proof main}.
In addition, some technical lemmas and proofs are presented in 
the appendices.

\subsection*{Notations}
In conclusion, we introduce some notations used throughout this paper.
Denote by $\bN$ and $\bN_0$ the set of all positive integers 
and the set of all natural numbers, respectively,
and denote by $\bZ$ the set of all integers.

Let $d\in\bN$ and $T\in[0,\infty)$.
We use $\mathbf{I}_d$ to denote the $d\times d$ identity  matrix,
and use $\bS^d$ to denote the space of $d\times d$
symmetric matrices. 
For matrices $A,B\in\bS^d$ the notation $A\geq B$
means $A-B$ is positive semi-definite. 
For each $d\in\bN$ and any vectors $a,b\in\bR^d$, 
we denote by $\<a,b\>$ the Euclidean scalar
product of $a$ and $b$, and denote by $\|a\|$ the Euclidean norm of $a$.
For each $d\in\bN$ and every matrix $A\in\bR^{d\times d}$, we
denote by $\|A\|_F$ the Frobenius norm of $A$, 
and we use $A^{ij}$ to denote the element on the $i$-th row and $j$-th column
of $A$ for $i,j=1,...,d$.
Moreover, denote by $\{e_1,\dots,e_d\}$ the canonical basis of $\bR^d$.

For every matrix $A\in\bR^{d\times d}$, 
we denote by $A^T$ the transpose of $A$. 
For any metric spaces $(E,d_E)$ and $(F,d_F)$,
we denote by $C(E,F)$ the
set of continuous functions from $E$ to $F$. For every topological space $E$,
denote by $\cB(E)$ the Borel $\sigma$-algebra of $E$.  
For all measurable spaces $(X_1,\Sigma_1)$ and $(X_2,\Sigma_2)$, we denote by
$\cM(\Sigma_1,\Sigma_2)$ the set of $\Sigma_1/\Sigma_2$-measurable functions
from $X_1$ to $X_2$.
For all $a,b\in\bR$, we
use the notations $a\wedge b:=\min\{a,b\}$ and $a\vee b:=\max\{a,b\}$.
For any set $B$, 
we use $\mathbf{1}_B$ to denote the indicator function of $B$.

Let $\mathbb{T}$ denote any interval in the forms of $(0,T)$, $[0,T)$, and $[0,T]$,
and denote by $LSC(\mathbb{T}\times\bR^d)$ 
($USC(\mathbb{T}\times\bR^d)$)
the space of lower (upper) semicontinuous functions 
$u:\mathbb{T}\times\bR^d\to\bR$.
We denote by
$LSC_{lin}(\mathbb{T}\times\bR^d)$ 
($USC_{lin}(\mathbb{T}\times\bR^d)$)
the space of functions $u\in LSC(\mathbb{T}\times\bR^d)$ 
$(USC(\mathbb{T}\times\bR^d))$
satisfying the linear growth condition  
\begin{equation}                                               \label{p growth}
\sup_{(t,x)\in\mathbb{T}\times\bR^d}\frac{|u(t,x)|}{1+\|x\|}<\infty,
\end{equation}
and we denote by $C_{lin}(\mathbb{T}\times\bR^d)$ the space of continuous functions
$u:\mathbb{T}\times\bR^d\to\bR$ satisfying \eqref{p growth}.
Moreover, 
$SC(\mathbb{T}\times\bR^d):=LSC(\mathbb{T}\times\bR^d)
\cup USC(\mathbb{T}\times\bR^d)$,
and $SC_{lin}(\mathbb{T}\times\bR^d)
:=LSC_{lin}(\mathbb{T}\times\bR^d)\cup USC_{lin}(\mathbb{T}\times\bR^d)$.
We use the notation $C^{1,2}(\mathbb{T}\times\bR^d)$ to denote the space of once
in time $t\in\mathbb{T}$ and twice in space $x\in\bR^d$ continuously differentiable
functions $u:\mathbb{T}\times\bR^d\to\bR$.
The notation $C^\infty_c(\bR^d)$ means the space of 
infinitely differentiable real-valued
functions on $\bR^d$ with compact support.
Moreover, for every $d\in\bN$ and $q\in[1,\infty)$ we use the notation 
$L_q=L_q((\Omega,\cF,\bP),\bR^d)$ to denote the
space of random variables $X:\Omega\to\bR^d$ on a probability space 
$(\Omega,\cF,\bP)$ such that
$
\|X\|_{L_q}:=(\bE[\|X\|^q])^{1/q}<\infty.
$

In addition, for differentiable functions $h:\bR^d\to\bR$ and $f:\bR^d\to\bR^d$,
and twice differentiable functions $F:\bR^d\to\bR$,
we use $\nabla h$ to denote the gradient of $h$,
use $\nabla f$ to denote the Jacobian matrix of $f$,
and use $\operatorname{Hess}(F)$ to denote the Hessian matrix of $F$.
For differentiable functions $G=(G^1,G^2,\dots,G^d):\bR^d\to\bR^{d\times d}$,
we use the notation
$$
\nabla G(x):=\big(
\nabla G^1(x),\nabla G^2(x),\dots, \nabla G^d(x)
\big)\in\bR^{d^3},
\quad x\in\bR^d.
$$

\section{\textbf{Setting and assumptions}}           
\label{section setting}
Let $T>0$ be a fixed constant, and let $(\Omega,\cF,P)$ be a complete probability space
equipped with a filtration $\bF:=(\cF_t)_{t\in[0,T]}$ 
satisfying the usual conditions.
For each $d\in \bN$ we are given an $\bR^d$-valued standard $\bF$-Brownian motion
denoted by $(W^d_t)_{t\in[0,T]}$.
Moreover, for each $d\in\bN$ let
$\mu^d=(\mu^{d,1},...,\mu^{d,d})
\in C^3(\bR^d,\bR^d)$
and
$
\sigma^{d}=(\sigma^{d,ij})_{i,j\in\{1,2,...,d\}}=(\sigma^{d,1},...,\sigma^{d,d})
\in C^3(\bR^d,\bR^{d\times d})
$.
Then for each $d\in\bN$ and $(t,x)\in[0,T]\times\bR^d$ 
consider the following stochastic differential equation (SDE) on $[t,T]$
\begin{equation}                                               \label{SDE}                              
dX^{d,t,x}_{s}
=\mu^d(X^{d,t,x}_{s})\,ds
+\sigma^{d}(X^{d,t,x}_{s})\,dW^{d}_s
\end{equation}
with initial condition $X^{d,t,x}_{t}=x$.
For each $d\in\bN$, let $g^d\in C(\bR^d,\bR)$ 
and $f^d\in C([0,T]\times \bR^d \times\bR \times\bR^d,\bR)$.
Then we make the following assumptions for the coefficient functions.

\subsubsection*{Regularity and growth conditions}

\begin{assumption}[Lipschitz and linear growth conditions]
\label{assumption Lip and growth}
There exists constants $L,p\in(0,\infty)$ satisfying 
for all $d\in\bN$, $x,y\in\bR^d$, $v_1,v_2\in\bR$, $w_1,w_2\in\bR^d$
and $t\in[0,T]$ that
\begin{align} 
&                                   
|f^d(t,x,v_1,w_1)-f^d(t,y,v_2,w_2)|^2
\leq L\big(|v_1-v_2|^2+\|w_1-w_2\|^2+\|x-y\|^2\big),
\label{assumption Lip f}
\\
&
|g^d(x)-g^d(y)|^2
+                             
\|\mu^d(x)-\mu^d(y)\|^2
+\|\sigma^{d}(x)-\sigma^{d}(y)\|_F^2
\leq L\|x-y\|^2,
\label{assumption Lip mu sigma}
\\      
&                                  
\|\mu^d(x)\|^2+\|\sigma^{d}(x)\|_F^2
\leq Ld^p(1+\|x\|^2),
\label{assumption growth}
\end{align}
and
\begin{equation}                                  \label{assumption growth f g}
|f^d(t,x,0,\mathbf{0})|^2+|g^d(x)|^2\leq L(d^p+\|x\|^2).
\end{equation}
\end{assumption}

\begin{remark} 
\label{Remark bbd deri}
Note that \eqref{assumption Lip mu sigma} and the mean-value theorem
ensure for all $d\in\bN$, $x\in\bR^d$, 
and $k\in\{1,2,\dots,d\}$ that
\begin{equation}                                            \label{bbd partials}                             
\Big\|\frac{\partial}{\partial x_k}\mu^d(x)\Big\|^2\leq L, \quad
\Big\|\frac{\partial}{\partial x_k}\sigma^d(x)\Big\|_F^2\leq L.
\end{equation}
\end{remark}

\begin{assumption}
\label{assumption bbd partials global}
There exists a constant $K>0$ satisfying for all $d\in\bN$ and $x\in\bR^d$ that
\begin{equation}                                         \label{bbd partials global}                             
\big\|\nabla\mu^d(x)\big\|_F^2\leq K, \quad
\sum_{j=1}^d\big\|\nabla\sigma^{d,j}(x)\big\|_F^2\leq K.
\end{equation}
\end{assumption}

\begin{assumption}[Strong ellipticity]                       
\label{assumption ellip}
For every $d\in\bN$, assume that $\sigma^d(x)$
is invertible for all $x\in\bR^d$, 
and that there exists a constant $\varepsilon_d\in(0,1]$ such that
\begin{equation}                                    \label{sigma ellip}
y^T\sigma^d(x)\big[\sigma^d(x)\big]^Ty\geq \varepsilon_d\|y\|^2
\end{equation}
for all $x,y\in\bR^d$. 
\end{assumption}

\begin{remark}
For each $d\in\bN$ and $x\in\bR^d$, we denote by 
$\lambda_{d,i}(x)$, $i=1,2,\dots,d$,
the eigenvalues of $\sigma^d[\sigma^d]^T(x)$.
Then Assumption \ref{assumption ellip} ensures
for all $x\in\bR^d$ and $d\in\bN$ that
\begin{equation*}                                        
\min_{i\in\{1,2,\dots,d\}}\lambda_{d,i}(x)\geq \varepsilon_d,
\end{equation*}
where $\varepsilon_d$ is the positive constant defined in \eqref{sigma ellip}.
This implies for all $x\in\bR^d$ and $d\in\bN$ that
\begin{equation}                                          \label{max eigen}
\max_{i\in\{1,2,\dots,d\}}\lambda_{d,i}^{-1}(x)\leq \varepsilon_d^{-1}.
\end{equation}
Taking into account the eigendecomposition of $\sigma^d[\sigma^d]^T$ and
$[(\sigma^d)^{-1}]^T(\sigma^d)^{-1}$, we notice for all $x\in\bR^d$
that $\lambda^{-1}_{d,i}(x)$, $i=1,2,\dots,d$, are the eigenvectors of 
$[(\sigma^d)^{-1}]^T(\sigma^d)^{-1}(x)$. 
Hence, it holds for all $x\in\bR^d$ that
\begin{equation}
\label{bbd inverse sigma}
\big\|\big(\sigma^d(x)\big)^{-1}\big\|_F^2
=
\operatorname{Trace}\Big\{
\big[\big(\sigma^d(x)\big)^{-1}\big]^T\big(\sigma^d(x)\big)^{-1}
\Big\}
\leq d\varepsilon_d^{-1}.
\end{equation}
Moreover, \eqref{max eigen} and the eigendecomposition of 
$[(\sigma^d)^{-1}]^T(\sigma^d)^{-1}$ ensure for all $x,y\in\bR^d$ 
and $d\in\bN$ that
\begin{equation}                                           \label{sigma inverse est}
y^T\big[\big(\sigma^d(x)\big)^{-1}\big]^T\big(\sigma^d(x)\big)^{-1}y
\leq\varepsilon_d^{-1}\|y\|^2.
\end{equation}
\end{remark}
 
\begin{assumption}[Regularity of derivatives]
\label{assumption gradient}
For every $d\in\bR^d$ we assume that the derivatives
of $\mu^d$ and $\sigma^d$ up to order $3$ exist, and are continuous.
Moreover, there exists a constant $L_0\in(0,\infty)$ 
satisfying for all $d\in\bN$, $k\in\{1,2,\dots,d\}$, 
$x,y\in\bR^d$ that
\begin{equation}                                     \label{gradient mu sigma}                               
\Big\|\frac{\partial}{\partial x_k} \mu^d(x)
-\frac{\partial}{\partial y_k} \mu^d(y)\Big\|^2
+\Big\|\frac{\partial}{\partial x_k}\sigma^d(x)
-\frac{\partial}{\partial y_k} \sigma^d(y)\Big\|_F^2
\leq L_0\|x-y\|^2.
\end{equation}
\end{assumption}

\begin{remark}
Analogous to Remark \ref{Remark bbd deri}, Assumption \ref{assumption gradient}
and the mean-value theorem
ensure for all $d\in\bN$, $x\in\bR^d$,
and $k,l\in\{1,2,\dots,d\}$ that
\begin{equation}                                            \label{bbd 2nd partials}                             
\Big\|\frac{\partial^2}{\partial x_k \partial x_l}\mu^d(x)\Big\|^2\leq L_0, \quad
\Big\|\frac{\partial^2}{\partial x_k \partial x_l}\sigma^d(x)\Big\|_F^2\leq L_0.
\end{equation}
\end{remark}

Let $\Theta=\cup_{n\in \bN}\bZ^n$ be an index set which we will use for 
the families of independent random variables needed for the Monte Carlo
approximations. For each $d\in\bN$, let 
$W^{d,\theta}=(W^{d,\theta,1},...,W^{d,\theta,d})
:[0,T]\times \Omega\to \bR^d, \theta\in\Theta$, be independent 
$\bR^d$-valued standard $\bF$-Brownian motions. 
For each $d\in\bN$ and $(t,x)\in[0,T]\times\bR^d$, 
let $\big(X^{d,\theta,t,x}_s\big)_{s\in[t,T]}: 
[t,T]\times\Omega\to \bR^d$, $\theta\in\Theta$, be 
$\cB([0,T])\otimes\cF/\cB(\bR^d)$-measurable 
functions satisfying for all $\theta\in\Theta$ and $s\in[t,T]$ 
almost surely that $X^{d,\theta,t,x}_{t}=x$ and
\begin{equation}                                               \label{SDE theta}                              
X^{d,\theta,t,x}_{s}
=x+\int_t^s\mu^d(X^{d,\theta,t,x}_r)\,dr
+\int_t^s\sigma^{d}(X^{d,\theta,t,x}_r)\,dW^{d,\theta}_r.
\end{equation}
It is well-known that Assumption \ref{assumption Lip and growth} 
guarantees that for each $d\in\bN$, the SDE in \eqref{SDE theta} has an unique solution satisfying 
for all $q\in[2,\infty)$ that
\begin{equation} 
\label{SDE moment est}                                         
\bE\Big[\sup_{s\in[t,T]}\big\|X^{d,0,t,x}_s\big\|^q\Big]<C_{(d,q)}(1+\|x\|^q),
\end{equation}
where $C_{(d,q)}$ is a positive constant only 
depending on $L$, $d$, $p$, $T$, and $q$
(see, e.g., Theorem 2.9 in \cite{KS1991}, and Theorems 3.1 and 4.1 in \cite{Mao2007}).

\begin{remark}
If Assumptions \ref{assumption Lip and growth} and \ref{assumption gradient},
then it is well-known (see, e.g., Theorem 3.4 in \cite{Kunita}) that for all 
$d\in\bN$, $\theta\in\Theta$, $(t,x)\in[0,T]\times\bR^d$, $s\in[t,T]$, 
and $k\in\{1,2,\dots,d\}$,
$\frac{\partial}{\partial x_k}X^{d,\theta,t,x}_s$ exists,  and satisfies
$$
\frac{\partial}{\partial x_k} X^{d,\theta,t,x}_s
=
e_k+\int_t^s
(\nabla\mu^d)\big(X^{d,\theta,t,x}_{r}\big)
\frac{\partial}{\partial x_k}X^{d,\theta,t,x}_{r}\,dr
+
\sum_{j=1}^d
\int_t^s(\nabla\sigma^{d,j})\big(X^{d,\theta,t,x}_{r}\big)
\frac{\partial}{\partial x_k}X^{d,\theta,t,x}_{r}\,dW^{d,j}_r,
$$
where $e_k$ denotes the $k$-th unit vector on $\bR^d$.
In the sequel, we will use the notation 
\begin{equation}
\label{D proc def}
DX^{d,\theta,t,x}_s
:=\Big(\frac{\partial}{\partial x_1}X^{d,\theta,t,x}_s,
\frac{\partial}{\partial x_2}X^{d,\theta,t,x}_s,
\dots,\frac{\partial}{\partial x_d}X^{d,\theta,t,x}_s\Big),
\end{equation}
and
\begin{equation}
\label{D proc def 0}
DX^{d,t,x}_s
:=\Big(\frac{\partial}{\partial x_1}X^{d,t,x}_s,
\frac{\partial}{\partial x_2}X^{d,t,x}_s,
\dots,\frac{\partial}{\partial x_d}X^{d,t,x}_s\Big)
\end{equation}
for all 
$d\in\bN$, $\theta\in\Theta$, $(t,x)\in[0,T]\times\bR^d$, and $s\in[t,T]$.
Notice that in \eqref{def D intro} we use the same notation 
$DX^{d,t,x}_s$
to denote the $L_2(\bP)$-gradient of $X^{d,t,x}_s$ as well. 
Actually, we prove in Lemma \ref{lemma L2 derivative} that the
$L_2(\bP)$-derivative
$D_{x_k}X^{d,t,x}_s$ 
coincides with the classical derivative
$\frac{\partial}{\partial x_k}X^{d,t,x}_s$
for all $d\in\bN$, $k\in\{1,2,\dots,d\}$,  
$(t,x)\in[0,T]\times\bR^d$, and $s\in[t,T]$.
\end{remark}

\section{Multilevel Picard approximation scheme: the main results}
\subsection{Euler approximations}
\label{section euler}
For each $d\in\bN$, $N\in\bN$, and $(t,x)\in[0,T]\times\bR^d$, 
let $\big(\cX^{d,\theta,t,x,N}_s\big)_{s\in[t,T]}: 
[t,T]\times\Omega\to \bR^d$, $\theta\in\Theta$, be measurable 
functions satisfying for all 
$n\in \bN_0$, $s\in \Big[t+\frac{n(T-t)}{N},t+\frac{(n+1)(T-t)}{N}\Big]
\cap [t,T]$ that
$\cX^{d,\theta,t,x,N}_{t}=x$ and
\begin{align}
\cX^{d,\theta,t,x,N}_{s}= & \cX^{d,\theta,t,x,N}_{t+\frac{n(T-t)}{N}}
+\mu^d\Big(\cX^{d,\theta,t,x,N}_{t+\frac{n(T-t)}{N}}\Big)
\left[s-\Big(t+\frac{n(T-t)}{N}\Big)\right]
+\sigma^{d}\Big(\cX^{d,\theta,t,x,N}_{t+\frac{n(T-t)}{N}}\Big)
\left(W^{d,\theta}_s-W^{d,\theta}_{t+\frac{n(T-t)}{N}}\right).                  \label{discrete Euler}
\end{align}
To ease notations we define 
$$
\kappa_N(s):=t+\frac{\lfloor N(s-t)/(T-t)\rfloor \cdot(T-t)}{N},\quad s\in[t,T],
$$
where $\lfloor y\rfloor:=\max\{n\in\bN_0:n\leq y\}$ for $y\in[0,\infty)$.
Then for all $d,N\in\bN$, $\theta\in\Theta$, and $(t,x)\in[0,T]\times\bR^d$, 
\eqref{discrete Euler} can be written as
\begin{align}
d\cX^{d,\theta,t,x,N}_{s}=
&
\mu^d\Big(\cX^{d,\theta,t,x,N}_{\kappa_N(s)}\Big)\,ds
+\sigma^{d}\Big(\cX^{d,\theta,t,x,N}_{\kappa_N(s)}\Big)
\,dW^{d,\theta}_s.
\label{Euler 1}
\end{align}
For each $d\in\bN$ and $(t,x)\in[0,T)\times\bR^d$, 
let $\big(V^{d,\theta,t,x}_s\big)_{s\in[t,T)}: 
(t,T]\times\Omega\to \bR^d$, $\theta\in\Theta$, be continuous adapted processes
given by
\begin{equation}
V^{d,\theta,t,x}_s
:=
\frac{1}{s-t}
\int_t^s\left(\big[\sigma^d(X^{d,\theta,t,x}_{r})\big]^{-1}
DX^{d,\theta,t,x}_{r}\right)^T
\,dW^{d,\theta}_r, \quad s\in(t,T].
\label{def proc V}
\end{equation}
Moreover, For each $d\in\bN$, $N\in\bN$, $k\in\{1,2,\dots,d\}$, 
and $(t,x)\in[0,T]\times\bR^d$, 
let $\big(\cD_{x_k}\cX^{d,\theta,t,x,N}_s\big)_{s\in[t,T]}: 
[t,T]\times\Omega\to \bR^d$, $\theta\in\Theta$, be continuous adapted processes
given by
\begin{align}
\cD_{x_k}\cX^{d,\theta,t,x,N}_s
=
&
e_k
+
\int_t^s(\nabla \mu^d)\big(\cX^{d,\theta,t,x,N}_{\kappa_N(r)}\big)
\cD_{x_k}\cX^{d,\theta,t,x,N}_{\kappa_N(r)}\,dr
\nonumber\\
&
+
\sum_{j=1}^d(\nabla \sigma^{d,j})\big(\cX^{d,\theta,t,x,N}_{\kappa_N(r)}\big)
\cD_{x_k}\cX^{d,\theta,t,x,N}_{\kappa_N(r)}\,dW^j_r,
\quad s\in[t,T].
\label{def proc cD}
\end{align}
Then, for each $d\in\bN$, $N\in\bN$, and $(t,x)\in[0,T)\times\bR^d$, 
let $\big(\cV^{d,\theta,t,x,N}_s\big)_{s\in[t,T)}: 
(t,T]\times\Omega\to \bR^d$, $\theta\in\Theta$, be continuous adapted processes
given by
\begin{equation}
\cV^{d,\theta,t,x,N}_s
:=
\frac{1}{s-t}
\int_t^s\left(\big[\sigma^d(\cX^{d,\theta,t,x,N}_{\kappa_N(r)})\big]^{-1}
\cD\cX^{d,\theta,t,x,N}_{\kappa_N(r)}\right)^T
\,dW^{d,\theta}_r, \quad s\in(t,T],
\label{def euler proc V}
\end{equation}
where
$$
\cD\cX^{d,\theta,t,x,N}_s
:=(\cD_{x_1}\cX^{d,\theta,t,x,N}_s,\cD_{x_2}\cX^{d,\theta,t,x,N}_s,
\dots,\cD_{x_d}\cX^{d,\theta,t,x,N}_s), \quad s\in[t,T].
$$
For each $d\in\bN$, $N\in\bN$, $(t,x)\in[0,T)\times\bR^d$, $s\in(t,T]$,
and $\theta\in\Theta$
we also use the notation 
$$
\cV^{d,\theta,t,x,N}_s
=(\cV^{d,\theta,t,x,N,1}_s,\cV^{d,\theta,t,x,N,2}_s,\dots,\cV^{d,\theta,t,x,N,d}_s).
$$

\subsection{Multilevel Picard (MLP) approximation scheme}
\label{section MLP}
Let $\alpha\in[1/2,1)$, 
and define the function $\varrho:(0,1)\to(0,\infty)$
by 
\begin{equation}
\label{def pdf rho 0}
\varrho(z):=\frac{z^{-\alpha}(1-z)^{-\alpha}}{\cB(1-\alpha,1-\alpha)}, 
\quad z\in(0,1),
\end{equation}
where $\cB(\beta,\gamma):=\frac{\Gamma(\beta)\Gamma(\gamma)}{\Gamma(\beta+\gamma)}$
denotes the Beta function with parameters $\beta,\gamma\in(0,\infty)$, 
and $\Gamma$ denotes the Gamma function.
Let $\xi^\theta:\Omega\to[0,1]$, $\theta\in\Theta$, 
be i.i.d.\ random variables
such that $\bP(\xi^0\leq y)=\int_0^y\varrho(z)\,dz$ for all $y\in[0,1]$,
and assume that $(\xi^{\theta})_{\theta\in\Theta}$ and 
$(W^{d,\theta})_{(d,\theta)\in\bN\times\Theta}$ are independent.
For each $\theta\in\Theta$ and $t\in[0,T)$, define
$\cR^\theta_t:=t+(T-t)\xi^\theta$,
and let
$\Big(\cR^{(\theta,l,i)}_t\Big)_{(l,i)
\in \bN\times\bN_0}$
are independent copies of $\cR^\theta_t$.
Moreover, for each $d,N\in \bN$, $(t,x)\in[0,T]\times\bR^d$, $s\in[t,T]$, 
and $\theta\in\Theta$, let 
$\big(X^{(d,\theta,t,x,l,i)}_{s},V^{(d,\theta,t,x,l,i)}_{s}\big)_{(l,i)
\in\bN\times\bZ}$
be independent copies of 
$\big(X^{d,\theta,t,x}_{s},V^{d,\theta,t,x}_{s}\big)$,
and let 
$\big(\cX^{(d,\theta,t,x,N,l,i)}_{s},\cV^{(d,\theta,t,x,N,l,i)}_{s}\big)_{(l,i)
\in\bN\times\bZ}$
be independent copies of 
$\big(\cX^{d,\theta,t,x,N}_{s},\cV^{d,\theta,t,x,N}_{s}\big)$.
Furthermore, for each $d\in\bN$, let 
$$
F^d: C([0,T)\times\bR^d,\bR^{d+1})\to C([0,T)\times\bR^d,\bR)
$$
be the operator such that
$$
[0,T)\times\bR^d\ni(t,x)\mapsto (F^d(\mathbf{v}))(t,x)
:=f^d(t,x,\mathbf{v}(t,x))\in\bR, \quad \mathbf{v}\in C([0,T)\times\bR^d,\bR^{d+1}).
$$
Then our MLP approximation is defined as follows.

For each $d\in\bN$, $n\in\bN_0\cup\{-1\}$, $M\in\bN$, and $\theta\in\Theta$, 
let $U^{d,\theta}_{n,M}:[0,T)\times \bR^d \times \Omega \to \bR^{d+1}$ 
satisfy for all $(t,x)\in[0,T)\times\bR^d$ and $\omega\in\Omega$ that
$U^{d,\theta}_{0,M}(t,x)=U^{d,\theta}_{-1,M}(t,x)=\mathbf{0}$ and
\begin{align}
U^{d,\theta}_{n,M}(t,x)
=
&
(g^d(x),0)
+
\frac{1}{M^n}\sum_{i=1}^{M^n}
\Big[g^d\Big(X^{(d,\theta,t,x,0,-i)}_{T}\Big)-g^d(x)\Big]
\Big(1,V^{(d,\theta,t,x,0,-i)}_{T}\Big)
\nonumber\\
&
+\sum_{l=0}^{n-1}\frac{T-t}{M^{n-l}}
\bigg[\sum^{M^{n-l}}_{i=1}
\varrho^{-1}\Big(\frac{\cR^{(\theta,l,i)}_t-t}{T-t}\Big)
\Big[F\big(U^{(d,\theta,l,i)}_{l,M}\big)
-\mathbf{1}_{\{l\geq 1\}}F\big(U^{(d,\theta,-l,i)}_{l-1,M}\big)\Big]
\nonumber\\
&
\Big(\cR^{(\theta,l,i)}_t,X^{(d,\theta,t,x,l,i)}_{\cR^{(\theta,l,i)}_t}\Big)
\Big(1,V^{(d,\theta,t,x,l,i)}_{\cR^{(\theta,l,i)}_t}\Big)
\bigg],
\label{def MLP no euler}
\end{align}
where $\Big(U^{(d,\theta,l,i)}_{n,M}(t,x)\Big)_{(l,i)
\in \bZ\times\bN_0}$
are independent copies of $U^{d,\theta}_{n,M}(t,x)$ 
for each $(t,x)\in[0,T)\times\bR^d$.

In case the SDE in \eqref{SDE} or the process in \eqref{def proc V} 
cannot be simulated directly,
we define our MLP approximation as follows.

For each $d\in\bN$, $n\in\bN_0\cup\{-1\}$, $M,N\in\bN$, and $\theta\in\Theta$, 
let $\cU^{d,\theta}_{n,M,N}:[0,T)\times \bR^d \times \Omega \to \bR^{d+1}$ 
satisfy for all $(t,x)\in[0,T)\times\bR^d$ and $\omega\in\Omega$ that
$\cU^{d,\theta}_{0,M,N}(t,x)=\cU^{d,\theta}_{-1,M,N}(t,x)=\mathbf{0}$ and
\begin{align}
\cU^{d,\theta}_{n,M,N}(t,x)
=
&
(g^d(x),0)
+
\frac{1}{M^n}\sum_{i=1}^{M^n}
\Big[g^d\Big(\cX^{(d,\theta,t,x,N,0,-i)}_{T}\Big)-g^d(x)\Big]
\Big(1,\cV^{(d,\theta,t,x,N,0,-i)}_{T}\Big)
\nonumber\\
&
+\sum_{l=0}^{n-1}\frac{T-t}{M^{n-l}}
\bigg[\sum^{M^{n-l}}_{i=1}
\varrho^{-1}\Big(\frac{\cR^{(\theta,l,i)}_t-t}{T-t}\Big)
\Big[F\big(\cU^{(d,\theta,l,i)}_{l,M,N}\big)
-\mathbf{1}_{\{l\geq 1\}}F\big(\cU^{(d,\theta,-l,i)}_{l-1,M,N}\big)\Big]
\nonumber\\
&
\Big(\cR^{(\theta,l,i)}_t,\cX^{(d,\theta,t,x,N,l,i)}_{\cR^{(\theta,l,i)}_t}\Big)
\Big(1,\cV^{(d,\theta,t,x,N,l,i)}_{\cR^{(\theta,l,i)}_t}\Big)
\bigg],
\label{def MLP euler}
\end{align}
where $\Big(\cU^{(d,\theta,l,i)}_{n,M,N}(t,x)\Big)_{(l,i)
\in \bZ\times\bN_0}$
are independent copies of $\cU^{d,\theta}_{n,M,N}(t,x)$ 
for each $(t,x)\in[0,T)\times\bR^d$.

\subsection{Viscosity solutions of PDEs}
For every $d\in\bN$, let
$
G^d:(0,T)\times \bR^d\times\bR\times\bR^d\times\bS^d \to \bR
$
be a function defined for all
$(t,x,r,y,A)
\in(0,T)\times\bR^d\times\bR\times\bR^d\times\bS^d$ by 
\begin{align}
G^d(t,x,r,y,A):=
&
-\langle y,\mu^d(x)\rangle
-\frac{1}{2}\operatorname{Trace}
(\sigma^d(x)[\sigma^d(x)]^TA)
-f^d(t,x,r,y).
\label{def operator G}
\end{align}
Then for every $d\in\bN$
 we consider the semilinear PDE of parabolic type
\begin{align}
&
-\frac{\partial}{\partial t}u^d(t,x)
+G^d(t,x,u^d(t,x),\nabla_x u^d(t,x)
,\operatorname{Hess}_xu^d(t,x))=0
\quad \text{on $(0,T)\times \bR^d$},
\label{APIDE}
\\
&
u^d(T,x)=g^d(x) \quad \text{on $\bR^d$}.
\label{APIDE initial}
\end{align}
We use the following definition of viscosity solutions
of PDE \eqref{APIDE} (cf.\,Definition 2.1 in \cite{JK}).

\begin{definition}
Let $d\in\bN$.
A function $u^d\in USC_{lin}((0,T)\times\bR^d)$ 
($u^d\in LSC_{lin}((0,T)\times\bR^d)$)
is called a viscosity subsolution (supersolution) of PDE \eqref{APIDE}
if for every $(t,x)\in(0,T)\times\bR^d$ 
and $\varphi\in C^{1,2}((0,T)\times\bR^d)$ such that 
$\varphi(t,x)=u^d(t,x)$, 
and $u^d\leq \varphi$ ($u^d\geq \varphi$), we have that
$$
-\frac{\partial}{\partial t}\varphi(t,x)
+G^d(t,x,\varphi(t,x),\nabla_x \varphi(t,x),
\operatorname{Hess}_x\varphi(t,x))
\leq 0\; (\geq 0).
$$
A function $u^d:(0,T)\times\bR^d\to\bR$ 
is said to be a viscosity solution of PDE \eqref{APIDE} if $u$ is both a 
viscosity subsolution and a viscosity supersolution of \eqref{APIDE}.
\end{definition}

\begin{remark}                                       
\label{remark v solution}
Let $d\in\bN$.
If $u^d\in C_{lin}([0,T]\times\bR^d)$ is a viscosity solution of
\eqref{APIDE} with $u^d(T,x)=g^d(x)$ for $x\in\bR^d$, 
then the function $v^d(t,x):=u^d(T-t,x)$, $(t,x)\in[0,T]\times\bR^d$,
satisfies the following:
\begin{enumerate}[(i)]
\item
$v^d(0,x)=g^d(x)$ for $x\in\bR^d$;
\item
for every $(t,x)\in(0,T)\times\bR^d$ and
$\varphi\in C^{1,2}((0,T)\times\bR^d)$ such that $\varphi(t,x)=v^d(t,x)$ 
and $v^d\leq \varphi$ ($v^d\geq \varphi$), we have
$$
\frac{\partial}{\partial t}\varphi(t,x)
+G^d(t,x,\varphi(t,x),\nabla_x \varphi(t,x),\operatorname{Hess}_x\varphi(t,x))
\leq 0\; (\geq 0).
$$
\end{enumerate}
The converse holds as well.
\end{remark}


\subsection{The main results}
Our first main result, Theorem \ref{thm MLP conv},  proves the existence
and uniqueness of the viscosity solutions of PDE \eqref{APIDE}, and provides
a Feynman-Kac and Bismut-Elworthy-Li type formula for the stochastic representation
of the unique viscosity solution. Moreover, Theorem \ref{thm MLP conv} also
proves the convergence together with the corresponding convergence rate 
for our MLP approximations \eqref{def MLP no euler}
and \eqref{def MLP euler}.
\label{section main thm}
\begin{theorem}                                                 \label{thm MLP conv}
Let Assumptions \ref{assumption Lip and growth}, \ref{assumption ellip}, 
and \ref{assumption gradient} hold. Then the following holds.
\begin{enumerate}[(i)]
\item
There exists a unique pair of Borel functions $(u^d,w^d)$ with
$u^d\in C([0,T)\times\bR^d,\bR)$ and $w^d\in C([0,T)\times\bR^d,\bR^d)$ 
satisfying for all $(t,x)\in[0,T)\times\bR^d$ that
\begin{align}
&
\big\|g^d(X^{d,0,t,x}_T)(1,V^{d,0,t,x}_T)\big\|_{L_1}
+\int_t^T\big\|
f^d\big(s,X^{d,0,t,x}_s,u^d(s,X^{d,0,t,x}_s),w^d(s,X^{d,0,t,x}_s)\big)
(1,V^{d,0,t,x}_s)
\big\|_{L_1}\,ds
\nonumber\\
&
+\sup_{(s,y)\in[0,T)\times\bR^d}
\left(\frac{|u^d(s,y)|+(T-s)^{1/2}\|w^d(s,y)\|}{(d^p+\|y\|^2)^{1/2}}
\right)<\infty,
\label{finite main}
\end{align}
and
\begin{align}
(u^d(t,x),w^d(t,x))
&
=\bE\left[g(X^{d,0,t,x}_T)(1,V^{d,0,t,x}_T)\right]  
\nonumber\\
& \quad    
+\int_t^T\bE\left[
f^d\big(s,X^{d,0,t,x}_s,u^d(s,X^{d,0,t,x}_s),w^d(s,X^{d,0,t,x}_s)\big)
(1,V^{d,0,t,x}_s)
\right]ds.
\label{BEL main}
\end{align}
\item{
For each $d\in\bN$ there exists a unique viscosity solution
$\tilde{u}^d\in C_{lin}([0,T)\times\bR^d)$ of PDE \eqref{APIDE} with
$\tilde{u}^d(T,x)=g^d(x)$ for all $x\in\bR^d$.
}

\item{
It holds for all $d\in\bN$ and $(t,x)\in[0,T)\times\bR^d$ that
$\tilde{u}^d(t,x)=u^d(t,x)$.
}

\item
For all $d\in\bN$ and $(t,x)\in[0,T)\times\bR^d$ the gradient of $u^d$ exists
and satisfies $\nabla_x u^d(t,x)=w^d(t,x)$. 

\item{
There exists positive constants 
$\mathfrak{c}_{d,1}=\mathfrak{c}_{d,1}(d,\varepsilon_d,L,L_0,T)$ 
and $\mathfrak{c}_{d,2}=\mathfrak{c}_{d,2}(d,\varepsilon_d,\alpha,L,L_0,T)$
satisfying
for all $d\in\bN$, $(t,x)\in[0,T)\times\bR^d$, $n\in\bN_0$, and $M,N\in\bN$ that
\begin{align}
\big\|\cU^{d,0}_{n,M,N}(t,x)-(u^d,\nabla_x u^d)(t,x)\big\|_{L_2}
\leq
\left[
\mathfrak{c}_{d,1}N^{-1/2}
+
\mathfrak{c}_{d,2}^{n-1}\exp\big\{M^3/6\big\}M^{-n/2}
\right]
\frac{d^p+\|x\|^2}{(T-t)^{1/2}}.
\end{align}
}

\item{
In addition, assume that Assumption \ref{assumption bbd partials global} holds.
Then there exists a positive constant 
$\mathfrak{c}_3=\mathfrak{c}_3(\alpha,L,L_0,K,T)$ satisfying
for all $d\in\bN$, $(t,x)\in[0,T)\times\bR^d$, $n\in\bN_0$, and $M\in\bN$ that
\begin{align}
\big\|U^{d,0}_{n,M}(t,x)-(u^d,\nabla_x u^d)(t,x)\big\|_{L_2}
\leq
\mathfrak{c}_3^{n-1}(d\varepsilon_d^{-1})^n\exp\big\{M^3/6\big\}M^{-n/2}
\frac{(d^p+\|x\|^2)^{1/2}}{(T-t)^{1/2}}.
\label{MLP error no Euler}
\end{align}
}
\end{enumerate}
\end{theorem}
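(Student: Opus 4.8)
The plan is to establish the six assertions of Theorem~\ref{thm MLP conv} in a logical order, building on the machinery announced in Sections~\ref{section FP}--\ref{section general MLP}. First I would treat items (i)--(iv), the well-posedness of the stochastic fixed-point equation and its identification with the viscosity solution of the PDE. The key is to realize \eqref{BEL main} as a fixed-point equation $\mathbf{v}^d = \mathbf{\Phi}^d \circ \mathbf{v}^d$ for the operator in \eqref{FP}, and to find a Banach space on which $\mathbf{\Phi}^d$ is a contraction. The natural choice is a weighted sup-norm space: functions $\mathbf{v}^d = (v_1^d, v_2^d)$ on $[0,T)\times\bR^d$ with $\sup_{(s,y)}\big(|v_1^d(s,y)| + (T-s)^{1/2}\|v_2^d(s,y)\|\big)/(d^p+\|y\|^2)^{1/2}$ finite, possibly tilted by an additional factor $e^{-\lambda(T-s)}$ or $(T-s)^{\gamma}$ to absorb the time-singularity in $V^{d,0,t,x}_s$. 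The crucial estimates feeding the contraction are the bounds on $\bE[\|V^{d,0,t,x}_s\|^2]$ of order $(s-t)^{-1}$ (which come from the Bismut-Elworthy-Li weight together with \eqref{bbd inverse sigma} and the moment bounds on $D^{d,t,x}_s$ from Section~\ref{section bounds SDE}), the Lipschitz bound \eqref{assumption Lip f} on $f^d$, and the growth control on $X^{d,0,t,x}_s$ from \eqref{SDE moment est}. Once the contraction constant is made $<1$ (via choice of the tilting parameter), Banach's fixed-point theorem yields existence and uniqueness; the finiteness claim \eqref{finite main} is then read off from the norm of the fixed point plus the growth of $f^d(s,\cdot,0,\mathbf{0})$ and $g^d$ via \eqref{assumption growth f g}. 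Items (ii)--(iv) are then delegated to Theorem~\ref{theorem PDE existence}: the first component $v_1^d = u^d$ is shown to be a viscosity solution by a standard argument (approximate, use the Feynman-Kac structure and Itô's formula against test functions), uniqueness follows from a comparison principle for \eqref{APIDE} in the linear-growth class, and the identification $w^d = \nabla_x u^d$ when $u^d \in C^{1,2}$ follows from \eqref{BEL1}--\eqref{BEL3}.

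Next I would handle the quantitative items (v) and (vi), which are the heart of the statement and where the real work lies. The strategy is to invoke the general MLP error bound Proposition~\ref{corollary MLP error} from Section~\ref{section general MLP}, instantiated with the operator $\mathbf{\Phi}^d$ and the nonlinearity $F^d$. The MLP scheme \eqref{def MLP euler} is, by construction, a full-history recursive Monte-Carlo approximation of the Picard iteration \eqref{iteration1}; its error splits as a telescoping sum over levels $l=0,\dots,n-1$, each contributing a Monte-Carlo variance term (scaled by $M^{-(n-l)}$) and, in the Euler case, a discretization bias term (of order $N^{-1/2}$). The recursion on $\bigl(\bE[\|\cU^{d,0}_{n,M,N}(t,x) - (u^d,w^d)(t,x)\|^2]\bigr)^{1/2}$ has the shape
\begin{align*}
\epsilon_n \leq \mathfrak{c}_{d,1}N^{-1/2} + \sum_{l=0}^{n-1}\frac{\text{(const)}}{M^{(n-l)/2}}\bigl(1 + \epsilon_l + \epsilon_{l-1}\bigr),
\end{align*}
where the time-singularity factor $(T-t)^{-1/2}$ and the spatial growth $(d^p+\|x\|^2)^{1/2}$ are carried through using the importance-sampling weight $\varrho^{-1}$ in \eqref{def MLP euler} (whose role is precisely to integrate the $(s-t)^{-\alpha}$ singularities, via the Beta-density \eqref{def pdf rho 0}) and the moment bounds on $\cX$, $\cV$, $\cD$ from Sections~\ref{section bounds SDE}--\ref{section bounds euler}. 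Solving this recursion — a Gronwall-type induction in $n$ — produces the $\mathfrak{c}_{d,2}^{n-1}\exp\{M^3/6\}M^{-n/2}$ factor; the $\exp\{M^3/6\}$ arises from crudely bounding the product of per-level constants $\prod_{l}(1 + \text{something})$ by an exponential, which is the standard (if lossy) MLP bookkeeping. For item (vi), the extra Assumption~\ref{assumption bbd partials global} lets one replace the $d$-dependent moment bounds on the derivative process $D^{d,t,x}_s$ (which otherwise blow up with $d$ through \eqref{bbd inverse sigma}, giving the $(d\varepsilon_d^{-1})^n$ factor) — actually the global bounds on $\nabla\mu^d$, $\nabla\sigma^{d,j}$ give $d$-uniform control on $D^{d,t,x}_s$, but the inverse-diffusion factor $\|(\sigma^d)^{-1}\|_F^2 \leq d\varepsilon_d^{-1}$ still enters, which is why $(d\varepsilon_d^{-1})^n$ persists in \eqref{MLP error no Euler} while the Euler-free setting removes only the $N^{-1/2}$ bias.

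The main obstacle, as I see it, is the joint control of the two-component process $(u^d, w^d)$ in the presence of the singular Bismut-Elworthy-Li weight $V^{d,0,t,x}_s = (s-t)^{-1}\int_t^s([\sigma^d]^{-1}D)^T dW$. Unlike the classical gradient-independent MLP where one works with a single bounded kernel, here every Monte-Carlo average of the form $F^d(\mathbf{v})(s,\cdot)\cdot(1, V_s)$ has a second-moment that diverges like $(s-t)^{-1}$ as $s\downarrow t$, so the iteration does not close in an unweighted norm. The resolution — and the technically delicate part — is the interplay between (a) the time-weight $(T-s)^{1/2}\|w^d\|$ built into the norm in \eqref{finite main}, (b) the importance-sampling density $\varrho$ with exponent $\alpha\in[1/2,1)$, chosen so that $\int_0^1 z^{-\alpha}\varrho^{-1}(z)\,\varrho(z)\,dz = \int_0^1 z^{-\alpha}\cdot z^{\alpha}(1-z)^{\alpha}\cB(1-\alpha,1-\alpha)\,\varrho(z)\,dz$ remains finite while still killing the $(s-t)^{-1}$ singularity after one integration in $s$, and (c) the fact that the singularity is integrable: $\int_t^T (s-t)^{-1/2}\,ds < \infty$ but $\int_t^T (s-t)^{-1}\,ds = \infty$, forcing $\alpha \geq 1/2$. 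Getting the constants $\mathfrak{c}_{d,1}, \mathfrak{c}_{d,2}, \mathfrak{c}_3$ to depend on the stated parameters only — and in particular verifying that, under Assumption~\ref{assumption bbd partials global}, all $d$-dependence is confined to the explicit $(d\varepsilon_d^{-1})^n$ prefactor — requires careful tracking through the derivative-flow estimates of Sections~\ref{section bounds SDE}--\ref{section bounds euler}, and this dimension-bookkeeping is where I would expect the proof to be most intricate.
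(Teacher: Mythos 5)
Your high-level architecture for items (i)--(iv) is sound and follows the paper's logic (Banach fixed point in a weighted sup-norm space, then delegation to Proposition~\ref{proposition uniqueness PDE}, Theorem~\ref{theorem PDE existence}, and the Bismut--Elworthy--Li formula). Your intuition about the importance-sampling weight $\varrho$ taming the $(s-t)^{-1}$ singularity of $V^{d,0,t,x}_s$, the $(T-s)^{1/2}$ weight in the norm, and the provenance of the $\exp\{M^3/6\}$ factor is also essentially right.

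The genuine gap is in the treatment of item (v). You try to set up a single recursion
\[
\epsilon_n \leq \mathfrak{c}_{d,1}N^{-1/2} + \sum_{l=0}^{n-1}\frac{\text{const}}{M^{(n-l)/2}}\bigl(1 + \epsilon_l + \epsilon_{l-1}\bigr)
\]
for $\epsilon_n := (\bE[\|\cU^{d,0}_{n,M,N}(t,x)-(u^d,w^d)(t,x)\|^2])^{1/2}$, carrying the Euler bias $N^{-1/2}$ inside the per-level recursion. This does not close. The MLP scheme $\cU^{d,0}_{n,M,N}$ is, by construction, an unbiased Monte-Carlo realisation of the Picard iteration for the \emph{Euler} fixed-point equation \eqref{FP gradient Euler}, so its conditional expectation after $n$ levels is a Picard iterate converging to $(u^d_N,w^d_N)$, not $(u^d,w^d)$. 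The abstract error bound (Proposition~\ref{corollary MLP error}) requires assumption \eqref{MLP FP}, namely that the reference $(u,w)$ be the fixed point of the equation driven by the \emph{same} processes $\bX^{\theta,t,x}_s$, $\bV^{\theta,t,x}_s$ as the scheme. When the scheme uses Euler processes $\cX^{d,\theta,t,x,N}$, $\cV^{d,\theta,t,x,N}$, the only admissible reference is $(u^d_N,w^d_N)$. The paper therefore first introduces this auxiliary fixed point (Corollary~\ref{corollary FP}, items (iii)--(iv)), splits the error via the triangle inequality into $\cU^{d,0}_{n,M,N}-(u^d_N,w^d_N)$ plus $(u^d_N,w^d_N)-(u^d,w^d)$, bounds the former by Proposition~\ref{corollary MLP error} (yielding $\mathfrak{c}_{d,2}^{n-1}\exp\{M^3/6\}M^{-n/2}$), and bounds the latter by the perturbation lemma for stochastic fixed-point equations, Lemma~\ref{lemma perturbation}, which compares the solutions of two fixed-point equations driven by nearby process pairs $(\bX^{t,x,1},\bV^{t,x,1})$, $(\bX^{t,x,2},\bV^{t,x,2})$ and produces the deterministic $\mathfrak{c}_{d,1}N^{-1/2}(T-t)^{-1/2}(d^p+\|x\|^2)$ term. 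Your proposal contains no analogue of this perturbation step, which is where the Euler discretization error actually lives; trying to carry $N^{-1/2}$ as a source term inside the Monte-Carlo recursion would instead require controlling a bias in $\bE[\cU^{d,0}_{l,M,N}]$ relative to $(u^d,w^d)$ at every level, and that object is not what the variance estimates of Lemma~\ref{lemma recursive error} / Proposition~\ref{corollary MLP error} bound. Item (vi) is then the simpler case: no Euler processes, so no intermediate fixed point, and Proposition~\ref{corollary MLP error} is applied directly against $(u^d,w^d)$, which you describe roughly correctly including the persistence of $(d\varepsilon_d^{-1})^n$ from $\|(\sigma^d)^{-1}\|_F^2 \le d\varepsilon_d^{-1}$.
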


The next theorem provides convergence results for the MLP approximation
algorithms \eqref{def MLP no euler} and \eqref{def MLP euler},
and it shows that the MLP approximation algorithm 
defined by \eqref{def MLP no euler} 
can overcome the curse of dimensionality
in the sense that the computational complexity of
the algorithm grows at most polynomially in both the PDE dimension~$d$ and the
reciprocal of the prescribed approximation accuracy~$\varepsilon$.
To describe the computational complexity of each of the two  algorithms 
\eqref{def MLP no euler} and \eqref{def MLP euler}, 
for each $d\in\bN$,
$n\in\bN_0$, 
and $M\in\bN$ 
we introduce a natural number $\mathfrak{C}^{(d)}_{n,M}$ to denote the sum of: 
the number of function evaluations of $g^d$,
the number of function evaluations of $\mu^d$,
the number of function evaluations of $\sigma^{d}$,
and the number of realizations of scalar random variables used to obtain
one realization of the corresponding MLP approximation algorithms in \eqref{def MLP no euler} and \eqref{def MLP euler}. 
Moreover, for each $d\in\bN$ we use
$\mathfrak{g}^{(d)}$ to denote the number of function evaluations of $g^d$, 
$\mathfrak{f}^{(d)}$ to denote the number of function evaluations of $f^d$, 
$\mathfrak{e}^{(d)}$ to denote the sum of:
the number of realizations of scalar random variables generated,
the number of function evaluations of $\mu^d$,
and the number of function evaluations of $\sigma^{d}$. 
Note that by the construction of the MLP approximation algorithms 
\eqref{def MLP no euler} and \eqref{def MLP euler} it holds for all $d,n,M\in\bN$ that
\begin{equation}                                            \label{cc 1}
\mathfrak{C}^{(d)}_{n,M}\leq M^n(M^M\mathfrak{e}^{(d)}
+\mathfrak{g}^{(d)})
+\sum_{l=0}^{n-1}[M^{n-l}(M^M\mathfrak{e}^{(d)}+\mathfrak{f}^{(d)}
+\mathfrak{C}^{(d)}_{l,M}
+\mathfrak{C}^{(d)}_{l-1,M})].
\end{equation}

\begin{theorem}                                            \label{MLP complexity}
Let Assumptions \ref{assumption Lip and growth},  
\ref{assumption ellip}, and \ref{assumption gradient} hold. 
Then the following holds.
\begin{enumerate}[(i)]
\item
For each $d\in\bN$, $\varepsilon\in(0,1]$, and $x\in\bR^d$
there exists a positive integer $\mathfrak{n}^d(x,\varepsilon)\geq 2$ 
such that for all $t\in[0,T]$
\begin{equation}                                           \label{error epsilon b U}
\sup_{n\in[\mathfrak{n}^d(x,\varepsilon),\infty)\cap \bN}
\big\|\cU^{d,0}_{n^3,n,n}(t,x)-(u^d,\nabla_x u^d)(t,x)\big\|_{L_2}<\varepsilon.
\end{equation}
\item
For the computational complexity of \eqref{def MLP euler} it holds for all $d\in\bN$ and $n\in\bN$ that
\begin{equation}                                               \label{cc 2}
\sum_{k=1}^{n+1}\mathfrak{C}^{(d)}_{k^3,k}
\leq 
12\big[3\mathfrak{e}^{(d)}+\mathfrak{g}^{(d)}+\mathfrak{f}^{(d)}\big]
(12)^{5n^3}\cdot n^{8n^3}.
\end{equation}
\item
In addition, assume that Assumption \ref{assumption bbd partials global} holds. 
Then for each $d\in\bN$, $\varepsilon\in(0,1]$, and $x\in\bR^d$
there exists a positive integer $\mathbf{n}^d(x,\varepsilon)\geq 2$ 
such that for all 
$\gamma\in(0,1]$ and $t\in[0,T]$
\begin{equation}                                            \label{error epsilon}
\sup_{n\in[\mathbf{n}^d(x,\varepsilon),\infty)\cap \bN}
\big\|U^{d,0}_{n^3,n}(t,x)-(u^d,\nabla_x u^d)(t,x)\big\|_{L_2}<\varepsilon,
\end{equation}
as well as for the computational complexity of \eqref{def MLP no euler} that
\begin{align}                                             
\left(\sum_{n=1}^{\mathbf{n}^d(x,\varepsilon)}\mathfrak{C}^{(d)}_{n^3,n}\right)
\varepsilon^{\gamma+16}  
\leq
&
12\big[3\mathfrak{e}^{(d)}+\mathfrak{g}^{(d)}+2\mathfrak{f}^{(d)}\big]
\big[(T-t)^{-1}(d^p+\|x\|^2)\big]^{\frac{\gamma+16}{2}}
\nonumber\\
& 
\cdot
\sup_{n\in\bN}\Big\{
12^{5n^3}\cdot n^{-\gamma n^3/2} 
\big[\mathfrak{c}_3^{n-1}(d\varepsilon_d^{-1})^n
\exp\big\{n^3/6\big\}\big]^{\gamma+16}\Big\}<\infty,
\label{cc 3}
\end{align}
where $\mathfrak{c}_3$ is the positive constant introduced in 
\eqref{MLP error no Euler}.
\end{enumerate}
\end{theorem}

\begin{remark}
We highlight that in the accompanying paper \cite{NNW2023}, 
we prove under more restrictive assumptions 
on the function describing the nonlinearity of the PDE 
that our developed MLP algorithm \eqref{def MLP euler} which involves
the Euler approximations of the corresponding SDEs in case 
the SDE in \eqref{SDE} or the process in \eqref{def proc V} 
cannot be directly simulated 
also overcomes the curse of dimensionality. 
\end{remark}

\clearpage

\subsection{Pseudocode}
\label{section pseudocode}
In this subsection we provide a pseudocode to show how the multilevel Picard
approximations \eqref{def MLP no euler} and \eqref{def MLP euler} can be implemented.

\begin{algorithm}
\footnotesize{
\caption{Multilevel Picard Approximation}
\begin{algorithmic}[1]
\Function{MLP}{$t,x,n,M,N$}\\
\% $(t,x)$: 
the evaluating time-space point of PDE \eqref{PDE0} 
/ the starting point $x$ of SDE \eqref{SDE 0} at initial time $t$\\
\% $n$: the number of different levels in the MLP algorithms
\eqref{def MLP no euler} and \eqref{def MLP euler}\\ 
\% $M$: the parameter determining the number of Monte-Carlo simulation samples,
see \eqref{def MLP no euler} and \eqref{def MLP euler}\\
\% $N$: the number of steps in Euler approximations 
\eqref{Euler 1} and \eqref{def euler proc V}

\State $t_1(j)\leftarrow (T-t)/N$ for all $j\in\{0,...,N-1\}$;
\Comment the length of time intervals in the time discretization

\If{the processes defined in \eqref{SDE} and \eqref{def proc V}
can be directly simulated} 
    \State \multiline{ Generate $M^n$ realizations 
    $(X_1(j),V_1(j))\in\bR^d\times\bR^d$, 
    $j\in\{1,2,\dots,M^n\}$,
    of the pair of the processes (at time $T$) 
    defined in \eqref{SDE} and \eqref{def proc V};}
 \Else
 \For{$i\leftarrow 1$ to $M^n$}

    \State $X_1(i)\leftarrow x$, $DX_1(i)\leftarrow \mathbf{I}_d$, $V_1(i)\leftarrow 0$;

    \State \multiline{Generate $N$ realizations $W(j)\in\bR^d$,
    $j\in\{0,...,N-1\}$, of i.i.d.\,standard normal random vectors;}
    \For{$k\leftarrow 0$ to $N-1$}
    \State \multiline{
    $
    V_1(i)\leftarrow  V_1(i)+(T-t)^{-1}\big([\sigma^d(X_1(i))]^{-1}DX_1(i)\big)^T
    \sqrt{t_1(k)}\cdot W(k);
    $
    }
    \State \multiline{
    $
    DX_1(i)\leftarrow  DX_1(i)+(\nabla\mu^d)(X_1(i))DX_1(i)t_1(k)
    +(\nabla\sigma^d)(X_1(i))DX_1(i)\sqrt{t_1(k)}\cdot W(k);
    $
    }
    \State \multiline{
    $
    X_1(i)\leftarrow  X_1(i)+\mu^d(X_1(i))t_1(k)
    +\sigma^d(X_1(i))\sqrt{t_1(k)}\cdot W(k);
    $
    }
    \EndFor
    
 \EndFor
\EndIf

\If{$n>0$}
       \State $\bar{u}= (g(x),0,\dots,0)+\frac{1}{M^n}\sum_{i=1}^{M^n}
       \big[g(X_1(i))-g(x)\big]\big(1,V_1(i)\big);$
       \Else 
       \State $\bar{u}=0;$
    \EndIf

\For{$l \leftarrow 0$ to $n-1$}

    \State \multiline{
    Generate $M^{n-l}$ realizations $\xi(i)\in[0,1]$, $i\in{1,...,M^{n-l}}$,
    of i.i.d.\,random variables with density function $\varrho$
    (cf. \eqref{def pdf rho 0});
    }
    \State \multiline{
    $
    \cR(i)\leftarrow t+(T-t)\xi(i);
    $
    }
  \If{the processes defined in \eqref{SDE} and \eqref{def proc V}
    can be directly simulated} 
    \For{$i \leftarrow 1$ to $M^{n-l}$}
     \State \multiline{Generate a realization of the pair
     $(X_2(i),V_2(i))\in\bR^d\times\bR^d$ 
     of the precesses (at time $\cR(i)$) 
     defined in \eqref{SDE} and \eqref{def proc V},;}
    \EndFor 
    \Else
    
    \For{$i \leftarrow 1$ to $M^{n-l}$}
      \State $X_2(i)\leftarrow x$, $DX_2(i)\leftarrow \mathbf{I}_d$, 
      $V_2(i)\leftarrow 0;$
      \State $S(i)\leftarrow \lfloor (\cR(i)-t)N/(T-t)\rfloor+1$;
      \Comment total time points of the time discretization of SDE
      \State $t_2(j)\leftarrow (T-t)/N$ for all $j\in\{0,...,S(i)-2\}$;
      \Comment the length of the first $S(i)-1$ time intervals
      \State $t_2(S(i)-1)\leftarrow \cR(i)-[t+N^{-1}(T-t)(S_2(i)-1)]$;
      \Comment the length of the last time interval
      \State \multiline{Generate $S(i)$ realizations $W(j)\in\bR^d$,
      $j\in\{0,...,S(i)-1\}$, of independent standard normal random vectors;}
        \For{$k\leftarrow 0$ to $S(i)-1$}
          \State \multiline{
          $
          V_2(i)\leftarrow  V_2(i)
          +(\cR(i)-t)^{-1}\big([\sigma^d(X_2(i))]^{-1}DX_2(i)\big)^T
          \sqrt{t_2(k)}\cdot W(k);
          $
          }
          \State \multiline{
          $
          DX_2(i)\leftarrow  DX_2(i)+(\nabla\mu^d)(X_2(i))DX_2(i)t_2(k)
          +(\nabla\sigma^d)(X_2(i))DX_2(i)\sqrt{t_2(k)}\cdot W(k);
          $
          }
          \State \multiline{
          $
          X_2(i)\leftarrow  X_2(i)+\mu^d(X_2(i))t_2(k)
          +\sigma^d(X_2(i))\sqrt{t_2(k)}\cdot W(k);
          $
          }
        \EndFor
    \EndFor
   \EndIf
 \State $\bar{u}\leftarrow \bar{u}+\frac{T-t}{M^{n-l}}\sum_{i=1}^{M^{n-l}}
    \varrho^{-1}\big(\frac{\cR(i)-t}{T-t}\big)
    f\big(\cR(i),X_2(i),\operatorname{MLP}(\cR(i),X_2(i),l,M,N)\big)
    \big(1,V_2(i)\big)$;
    \If{$l>0$}
    \State $\bar{u}\leftarrow \bar{u}-\frac{T-t}{M^{n-l}}\sum_{i=1}^{M^{n-l}}
    \varrho^{-1}\big(\frac{\cR(i)-t}{T-t}\big)
    f\big(\cR(i),X_2(i),\operatorname{MLP}(\cR(i),X_2(i),l-1,M,N)\big)
    \big(1,V_2(i)\big)$;
    \EndIf
\EndFor\\
\quad\:   \Return{$\bar{u}$;}
\EndFunction
\end{algorithmic}
}
\end{algorithm}

\clearpage

\subsection{\textbf{A Numerical Example}}
\label{section example}
In this subsection, we present a numerical example\footnote{All numerical experiments have been implemented in \texttt{Python} on an average laptop (AMD Ryzen 7 5800H with Radeon Graphics, 3.20 GHz, 8 Cores, 16 Logical Processors). The code for the MLP algorithm can be found here: 
\url{https://github.com/SizhouWu/MLP_Gradient_Nonlinearity}.}
to illustrate the applicability of our  
MLP approximation algorithm \eqref{def MLP euler} in approximately solving 
high-dimensional semilinear PDEs 
of the form \eqref{APIDE}--\eqref{APIDE initial}. 
To this end, we consider the following semilinear PDE on $(0,T)\times\bR^d$
\begin{align}
&
\frac{\partial}{\partial t}u^d(t,x)
+\tilde{\mu}\<x,\nabla_x u^d(t,x)\>
+\frac{1}{2}\tilde{\sigma}^2\sum_{i=1}^d\frac{\partial^2}{\partial x_i^2}u^d(t,x)
+\tilde{f}^d\big(\nabla_xu^d(t,x)\big)
=0
\label{simple_PDE}
\end{align}
with terminal condition $u^d(T,x)=\tilde{g}^d(x)$
for all $x\in \bR^d$,  
where $\tilde{\mu}\in[0,\infty)$ and $\tilde{\sigma}\in(0,\infty)$
are constants,
the terminal condition $\tilde{g}^d:\bR^d\to\bR$ is given by
\begin{equation}
\label{def g fin}
\tilde{g}^d(y)
:=\max\Big\{0,\max_{i\in[1,d]\cap \bN}y_i-K_1\Big\}
-2\max\Big\{0,\max_{i\in[1,d]\cap \bN}y_i-K_2\Big\},
\quad
y=(y_1,y_2,\dots,y_d)\in\bR^d
\end{equation} 
with $K_1\in[0,\infty)$ and $K_2\in(K_1,\infty)$,
and where the function 
$\tilde{f}^d:\bR^d\to\bR$ 
describing the nonlinearity is defined by
\begin{equation}
\label{def f fin}
\tilde{f}^d(v):= 
Ld^{-1}\max\Big\{0,\max_{i\in\bN\cap[1,d]}|v_i|-K_0\Big\},
\quad
v=(v_1,v_2,\dots,v_d)\in\bR^d,
\end{equation}
with $K_0\in[0,\infty)$.
Then one can easily verify that \eqref{def g fin}
and \eqref{def f fin} satisfy
Assumptions \ref{assumption Lip and growth}, \ref{assumption bbd partials global}, 
\ref{assumption ellip}, and \ref{assumption gradient} with $\mu^d(x):=\tilde{\mu}x$,
$\sigma^d(t,x):=\tilde{\sigma}\mathbf{I}_d$, $g^d:=\tilde{g}^d$,
and $f^d:=\tilde{f}^d$ in the notation of Section \ref{section setting}.

For the numerical example, we choose 
$T=0.25$, $\tilde{\mu}=0.06$, $\tilde{\sigma}=0.2$, $K_0=25$,
$K_1=95$, $K_2=120$ for the parameters in PDE \eqref{simple_PDE}, 
and aim to approximate the solution $u^d(0,x)$ of PDE \eqref{simple_PDE}
for $x=(100,\dots,100)\in\bR^d$ and $d\in\{10,100,200,300\}$.
We take $N=12$ for each $d\in\{10,100,200,300\}$ 
and level $n=M\in\{1,2,\dots,5\}$, and run our MLP algorithm \eqref{def MLP euler}
10 times to approximate $u^d(0,100,\dots,100)$. 
The numerical results are collected 
in Table \ref{simple_eg} and Figure \ref{figure std devi}.

\begin{table}[h!]
		\begin{tabular}{rl|R{2cm}R{2cm}R{2cm}R{2cm}R{2cm}|}
			& & \multicolumn{5}{c|}{Level} \\
			$d$ & & $M = n = 1$ & $M = n = 2$ & $M = n = 3$ & 
			$M = n = 4$ & $M = n = 5$ \\
			\hline
			10 & Avg. Sol. & $6.6832$ & $6.7109$ & $6.6781$ & $6.6681$ & $6.6645$ \\ 
 & \textit{Std. Dev.} & \textit{0.0682} & \textit{0.0715} & \textit{0.0369} & \textit{0.0031} & \textit{0.0017} \\ 
 & Avg. Eval. & $4.37 \cdot 10^{2}$ & $5.54 \cdot 10^{3}$ & $1.20 \cdot 10^{5}$ & $3.46 \cdot 10^{6}$ & $1.28 \cdot 10^{8}$ \\ 
 & \textit{Avg. Time} & \textit{0.0016} & \textit{0.0157} & \textit{0.3743} & \textit{11.4826} & \textit{421.8852} \\ 
\hline 
100 & Avg. Sol. & $6.7766$ & $6.8193$ & $6.7645$ & $6.7632$ & $6.7628$ \\ 
 & \textit{Std. Dev.} & \textit{0.0327} & \textit{0.0595} & \textit{0.0549} & \textit{0.0044} & \textit{0.0010} \\ 
 & Avg. Eval. & $1.82 \cdot 10^{3}$ & $3.01 \cdot 10^{4}$ & $6.28 \cdot 10^{5}$ & $1.81 \cdot 10^{7}$ & $6.67 \cdot 10^{8}$ \\ 
 & \textit{Avg. Time} & \textit{0.0051} & \textit{0.0561} & \textit{1.2532} & \textit{36.8444} & \textit{1388.9666} \\ 
\hline 
200 & Avg. Sol. & $6.7897$ & $6.8063$ & $6.7966$ & $6.7933$ & $6.7871$ \\ 
 & \textit{Std. Dev.} & \textit{0.0201} & \textit{0.0341} & \textit{0.0122} & \textit{0.0145} & \textit{0.0012} \\ 
 & Avg. Eval. & $3.25 \cdot 10^{3}$ & $5.46 \cdot 10^{4}$ & $1.21 \cdot 10^{6}$ & $3.44 \cdot 10^{7}$ & $1.27 \cdot 10^{9}$ \\ 
 & \textit{Avg. Time} & \textit{0.0151} & \textit{0.1928} & \textit{4.1178} & \textit{116.2634} & \textit{4358.3818} \\ 
\hline 
300 & Avg. Sol. & $6.7958$ & $6.8322$ & $6.7933$ & $6.8009$ & $6.7998$ \\ 
 & \textit{Std. Dev.} & \textit{0.0341} & \textit{0.0317} & \textit{0.0179} & \textit{0.0020} & \textit{0.0010} \\ 
 & Avg. Eval. & $5.53 \cdot 10^{3}$ & $8.20 \cdot 10^{4}$ & $1.77 \cdot 10^{6}$ & $5.06 \cdot 10^{7}$ & $1.86 \cdot 10^{9}$ \\ 
 & \textit{Avg. Time} & \textit{0.0781} & \textit{0.7497} & \textit{12.8255} & \textit{324.7100} & \textit{11556.8760} \\ 
\hline 
		\end{tabular}
		\caption{MLP solutions of PDE \eqref{simple_PDE}, 
		for different $d \in \mathbb{N}$ and $M = n \in \lbrace 1,...,5 \rbrace$. 
		The average solution (Avg.~Sol.), 
		the sample standard deviation (Std.~Dev.), 
		the average time (Avg.~Time [in seconds]), 
		and the average number of function evaluations 
		over $10$ independent runs
		$\mathfrak{C}^{(d)}_{n,M}$ (Avg.~Eval., where each evaluation 
		of $\mu^d$, $\sigma^d$, $f^d$, and $g^d$, 
		and the generation of a one-dimensional random variable 
		is counted as one unit) are computed over the 10 runs of the algorithm.}
		\label{simple_eg}
	\end{table}
	
\clearpage
	
\begin{figure}[h] 
    \centering
    \includegraphics[width=0.75\textwidth, height=7.5cm]{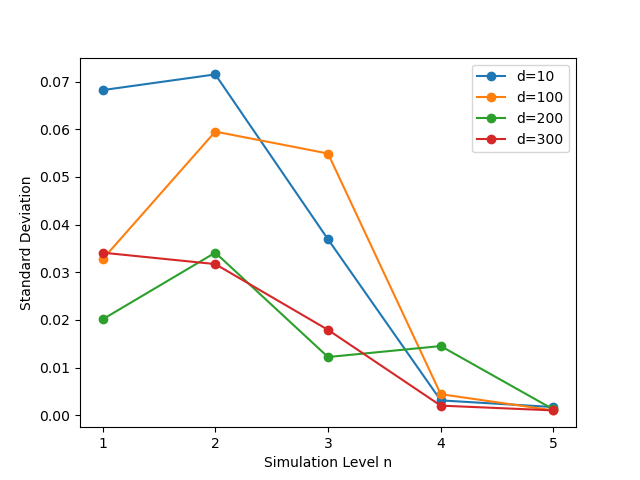}     
    \caption{Standard deviations of MLP solutions of PDE \eqref{simple_PDE}
    with different simulation levels}
    \label{fig:example}
    \label{figure std devi}
\end{figure}

Note that for each $d\in\{10,100,200,300\}$ 
the average solution $\bar{U}^d(0,x)$ is calculated by
$
\bar{U}^d(0,x):=\frac{\sum_{i=1}^{10} U^{d,i}(0,x)}{10},
$
where $U^{d,i}(0,x)$, $i\in\{1,2,\dots,10\}$, are $10$ independent realizations of 
the MLP approximation \eqref{def MLP euler}.
Moreover, for each $d\in\{10,100,200,300\}$ the standard deviation 
of the realizations is calculated by
$
\sqrt{\frac{\sum_{i=1}^{10}|U^{d,i}(0,x)-\bar{U}^d(0,x)|^2}{10}}
$.

\section{\textbf{Stochastic fixed-point equations}}
\label{section FP}
In this section, we show in Proposition \ref{Prop FP} below 
the existence and uniqueness of the solution 
of some stochastic fixed-point equation (see \eqref{FP gradient} below).
This fixed-point will be used to construct a viscosity solution
of PDE \eqref{APIDE} in Section \ref{section PIDE}.
Furthermore, we also present in Lemma \ref{lemma perturbation} below 
a perturbation result for the stochastic fixed-point equation mentioned above.
We first present a simple lemma which will be used in the calculation later on.
\begin{lemma}
\label{lemma time integral est}
It holds for all $t\in[0,T)$ and $\beta\in(0,\infty)$ that
\begin{equation}
\label{time integral est}
\int_t^T
(s-t)^{\frac{-(2+\beta)}{2(1+\beta)}}(T-s)^{\frac{-(2+\beta)}{2(1+\beta)}}\,ds
\leq (2(1+\beta)/\beta)2^{\frac{2+\beta}{1+\beta}}(T-t)^{\frac{-1}{1+\beta}}
\end{equation}
and
\begin{equation}
\label{time integral est 1/2}
\int_t^T
(s-t)^{-1/2}(T-s)^{-1/2}\,ds
\leq 4.
\end{equation}
\end{lemma}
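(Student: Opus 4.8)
The plan is to reduce both integrals to Beta-type integrals over $[0,1]$ via the affine substitution $s = t+(T-t)z$, and then estimate the resulting integral by the elementary device of splitting the domain at $z=1/2$ and exploiting monotonicity of $z\mapsto z^{-a}$ together with the symmetry $z\leftrightarrow 1-z$.

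For \eqref{time integral est} I would first record the abbreviation $a:=\frac{2+\beta}{2(1+\beta)}$ and the one-line algebraic identities $a\in(1/2,1)$, $1-2a=-\frac{1}{1+\beta}$, and $1-a=\frac{\beta}{2(1+\beta)}$. The substitution $s=t+(T-t)z$, $ds=(T-t)\,dz$, turns the left-hand side into $(T-t)^{1-2a}\int_0^1 z^{-a}(1-z)^{-a}\,dz=(T-t)^{-1/(1+\beta)}\,\cB(1-a,1-a)$, where the exponent of $(T-t)$ is correct by the identity $1-2a=-\frac1{1+\beta}$. To bound $\int_0^1 z^{-a}(1-z)^{-a}\,dz$, split it at $z=1/2$; by the symmetry $z\leftrightarrow 1-z$ the two halves coincide, and on $[0,1/2]$ one has $(1-z)^{-a}\le 2^{a}$, so the integral is at most $2\cdot 2^{a}\int_0^{1/2}z^{-a}\,dz=2^{1+a}\cdot\frac{2^{a-1}}{1-a}=\frac{2^{2a}}{1-a}$. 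Substituting $\frac1{1-a}=\frac{2(1+\beta)}{\beta}$ and $2^{2a}=2^{(2+\beta)/(1+\beta)}$ then yields exactly the claimed right-hand side.

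For \eqref{time integral est 1/2} I would run the same substitution, reducing the left-hand side to $\int_0^1 z^{-1/2}(1-z)^{-1/2}\,dz$ (now independent of $t$), and apply the same split-at-$1/2$ estimate: on $[0,1/2]$ one has $(1-z)^{-1/2}\le\sqrt2$, so the integral is at most $2\sqrt2\int_0^{1/2}z^{-1/2}\,dz=2\sqrt2\cdot\sqrt2=4$. (Equivalently the integral equals $\cB(1/2,1/2)=\pi\le 4$, but the elementary bound avoids invoking the value of $\Gamma(1/2)$.)

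There is no serious obstacle: the statement is elementary, and the only point requiring care is the exponent bookkeeping in \eqref{time integral est}, i.e.\ verifying that $1-2a$, $2a$, and $1/(1-a)$ collapse to $-1/(1+\beta)$, $(2+\beta)/(1+\beta)$, and $2(1+\beta)/\beta$ respectively. Once these identities are in place, the split-domain bound on the Beta integral finishes both parts immediately.
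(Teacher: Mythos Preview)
Your proposal is correct and follows essentially the same approach as the paper: both arguments split the integral at the midpoint and bound the ``tame'' factor on each half by its value at the midpoint, then integrate the remaining power explicitly. The only cosmetic difference is that you first substitute $s=t+(T-t)z$ to work on $[0,1]$ and invoke the $z\leftrightarrow 1-z$ symmetry explicitly, whereas the paper splits directly at $(T+t)/2$; the resulting computations and constants are identical.
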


\begin{proof}
We observe for all $t\in[0,T)$ and $\beta\in(0,\infty)$ that
\begin{align*}
&
\int_t^T
(s-t)^{\frac{-(2+\beta)}{2(1+\beta)}}(T-s)^{\frac{-(2+\beta)}{2(1+\beta)}}\,ds
\\
&
=
\int_t^{\frac{T+t}{2}}
(s-t)^{\frac{-(2+\beta)}{2(1+\beta)}}(T-s)^{\frac{-(2+\beta)}{2(1+\beta)}}\,ds
+\int_{\frac{T+t}{2}}^T
(s-t)^{\frac{-(2+\beta)}{2(1+\beta)}}(T-s)^{\frac{-(2+\beta)}{2(1+\beta)}}\,ds
\\
&
\leq
\Big(\frac{T-t}{2}\Big)^{\frac{-(2+\beta)}{2(1+\beta)}}
\int_t^{\frac{T+t}{2}}(s-t)^{\frac{-(2+\beta)}{2(1+\beta)}}\,ds
+
\Big(\frac{T-t}{2}\Big)^{\frac{-(2+\beta)}{2(1+\beta)}}
\int_{\frac{T+t}{2}}^T(T-s)^{\frac{-(2+\beta)}{2(1+\beta)}}\,ds
\\
&
=
(2(1+\beta)/\beta)2^{\frac{2+\beta}{1+\beta}}(T-t)^{\frac{-1}{1+\beta}}.
\end{align*}
Similarly, we obtain \eqref{time integral est 1/2},
which completes the proof of this lemma.
\end{proof}

\begin{proposition}[Existence and uniqueness of stochastic fixed-point]
\label{Prop FP}
Let $d\in \bN$, $a,b,b_1,c,T,p\in(0,\infty)$, 
and for each $t\in[0,T]$, $s\in[t,T]$, and $x\in\bR^d$ 
let $\bX^{t,x}_s:\Omega\to \bR^d$ be a random variable.  
For every nonnegative Borel function
$\varphi:[0,T]\times\bR^d\to [0,\infty)$, we assume that the mapping
$
\{(t,s)\in[0,T]^2:t\leq s\}\times \bR^d \ni (t,s,x)
\mapsto \bE[\varphi(s,\bX^{t,x}_s)]\in[0,\infty] 
$
is measurable. Let $F:[0,T]\times \bR^d \times \bR \times \bR^d \to \bR$
and $G:\bR^d\to\bR$ be Borel functions, and assume for all
$t\in[0,T]$, $s\in[t,T]$, $x,x'\in\bR^d$, $y,y'\in\bR$, and $v,v'\in\bR^d$ that
\begin{equation}                                             \label{est f g Z}
|F(t,x,0,\mathbf{0})|\leq a(d^p+\|x\|^2)^{1/2}, 
\quad |G(x)|\leq a(d^p+\|x\|^2)^{1/2},
\quad \big\|\big(d^p+\|\bX^{t,x}_s\|^2\big)\big\|_{L_1}
\leq b^2(d^p+\|x\|^2),
\end{equation}
\begin{equation}
\label{est Z q}
\big\|\big(d^p+\|\bX^{t,x}_s\|^2\big)\big\|_{L_q}
\leq
b^2_0(d^p+\|x\|^2),
\end{equation}
and
\begin{equation}                                          \label{Lip f FP}
|G(x)-G(x')|\leq c\|x-x'\|, \quad
|F(t,x,y,v)-F(t,x',y',v')|\leq c(\|x-x'\|+|y-y'|+\|v-v'\|).
\end{equation}
Moreover, for every $(t,x)\in[0,T)\times\bR^d$, 
let $\bV^{t,x}:(t,T]\times\Omega\to\bR^d$ be a stochastic process such that
\begin{equation}                                          
\label{est V t x s}
\big\|\bV^{t,x}_s\big\|_{L_2}^2\leq C_{d,T}(s-t)^{-1},
\quad s\in(t,T]
\end{equation}
with a positive constant $C_{d,T}$ only depending on $d$ and $T$.
Also assume that there exists a positive constant $\alpha_{d,T}$  
only depending on $d$ and $T$ such that it holds for all
$t\in[0,T)$, $t'\in[t,T)$ $s\in(t',T]$, and $x,x'\in\bR^d$ that
\begin{equation}
\label{cont est V}
\big\|\bV^{t,x}_s-\bV^{t',x'}_s\big\|_{L_2}^2
\leq 
\frac{\alpha_{d,T}(t'-t)}{(s-t)(s-t')}
+\alpha_{d,T}(s-t')^{-1}\big[(t'-t)(d^p+\|x\|^2)+\|x-x'\|^2\big],
\end{equation}
and
\begin{equation}
\label{cont est Z}
\big\|\bX^{t,x}_s-\bX^{t',x'}_s\big\|_{L_2}^2
\leq 
\alpha_{d,T}(d^p+\|x\|^2)\big[(t'-t)+\|x-x'\|^2\big].
\end{equation}
Then the following holds.
\begin{enumerate}[(i)]
\item
There exists a unique pair of Borel functions $(u_1,u_2)$ with
$u_1\in C([0,T)\times\bR^d,\bR)$ and $u_2\in C([0,T)\times\bR^d,\bR^d)$ 
satisfying for all $(t,x)\in[0,T)\times\bR^d$ that
\begin{align*}
&
\big\|G(\bX^{t,x}_T)(1,\bV^{t,x}_T)\big\|_{L_1}
+
\int_t^T\big\|F\big(s,\bX^{t,x}_s,u_1(s,\bX^{t,x}_s),u_2(s,\bX^{t,x}_s)\big)
(1,\bV^{t,x}_s)\big\|_{L_1}\,ds
\\
&
+\sup_{(s,y)\in[0,T)\times\bR^d}
\left(\frac{|u_1(s,y)|+(T-s)^{1/2}\|u_2(s,y)\|}{(d^p+\|y\|^2)^{1/2}}
\right)<\infty,
\end{align*}
and
\begin{align}
&
\big(u_1(t,x),u_2(t,x)\big)
\nonumber\\
&
=\bE\left[G(\bX^{t,x}_T)\left(1,\bV^{t,x}_T\right)\right]  
+\int_t^T\bE\left[
F\big(s,\bX^{t,x}_s,u_1(s,\bX^{t,x}_s),u_2(s,\bX^{t,x}_s)\big)
\left(1,\bV^{t,x}_s\right)
\right]ds.
\label{FP gradient}
\end{align}
\item
It holds for all $t\in[0,T)$ that
\begin{align}
&
\sup_{r\in[t,T)}\sup_{x\in\bR^d}
\left[
\frac{|u_1(r,x)|+(T-r)^{1/2}\|u_2(r,x)\|}{(d^p+\|x\|^2)^{1/2}}
\right]
\nonumber\\
&
\leq
ab\Big[1+C_{d,T}^{1/2}+T+2C_{d,T}^{1/2}T\Big]
+
\exp\left\{
4bc(4+T)\big(1+C_{d,T}^{1/2}T^{1/2}\big)
\right\}
<\infty.
\label{LG FP u}
\end{align}
\end{enumerate}
\end{proposition}

\begin{proof}
Throughout this proof we denote $V$ by the space
\begin{align}
V:= \Big\{
&
\mathbf{v}=(v^1,v^2)\in C([0,T)\times\bR^d,\bR)\times C([0,T)\times\bR^d,\bR^d):
\nonumber\\
&
\sup_{(t,x)\in[0,T)\times\bR^d}
\frac{|v^1(t,x)|+(T-t)^{1/2}\|v^2(t,x)\|}{(d^p+\|x\|^2)^{1/2}}
<\infty
\Big\}.
\label{def V space}
\end{align}
For every $\lambda\in\bR$, let $\|\cdot\|_\lambda$ be a norm on $V$ such that
\begin{equation}                                          \label{def norm}
\|\mathbf{v}\|_\lambda:=\sup_{(t,x)\in[0,T)\times\bR^d}
\frac{e^{\lambda t}\big(|v^1(t,x)|+(T-t)^{1/2}\|v^2(t,x)\|\big)}
{(d^p+\|x\|^2)^{1/2}},
\quad \mathbf{v}\in V.
\end{equation}
It is easy to check that $(V,\|\cdot\|_\lambda)$ 
is a normed real vector space for each $\lambda\in\bR$. 
Then we aim to show that $(V,\|\cdot\|_0)$ is a real Banach space.
Let $\{\mathbf{v}_n\}_{n=1}^\infty=\{(v^1_n,v^2_n)\}_{n=1}^\infty\subset V$ 
be a Cauchy sequence in
$(V,\|\cdot\|_0)$. Then we have
\begin{equation}                                       \label{cauchy v}
\lim_{N\to\infty}\sup_{n,m\geq N}\|\mathbf{v}_n-\mathbf{v}_m\|_0=0.
\end{equation}
This together with \eqref{def norm} imply that there exists functions
$
\varphi:[0,T)\times\bR^d\to\bR
$
and
$
\psi:[0,T)\times\bR^d\to\bR^d
$
satisfying for all $(t,x)\in[0,T)\times\bR^d$ that
\begin{equation}                                     \label{conv cauchy v}
\lim_{n\to\infty}\big[
|v^1_n(t,x)-\varphi(t,x)|
+\|v^2_n(t,x)-\psi(t,x)\|
\big]=0.
\end{equation}
Furthermore, by \eqref{conv cauchy v} it holds for all $N\in\bN$ that
\begin{align}
&
\sup_{(t,x)\in[0,T)\times\bR^d}
\frac{|\varphi(t,x)|+(T-t)^{1/2}\|\psi(t,x)\|}{(d^p+\|x\|^2)^{1/2}}
\nonumber\\
&
=\sup_{(t,x)\in[0,T)\times\bR^d}
\frac{|\lim_{n\to\infty}v^1_n(t,x)|+(T-t)^{1/2}\|\lim_{n\to\infty} v^2_n(t,x)\|}
{(d^p+\|x\|^2)^{1/2}}
\nonumber\\
&
\leq \sup_{(t,x)\in[0,T)\times\bR^d}
\frac{\sup_{n\in\bN}\left[|v^1_n(t,x)|+(T-t)^{1/2}\|v^2_n(t,x)\|\right]}
{(d^p+\|x\|^2)^{1/2}}
\nonumber\\
&
=\sup_{n\in\bN}\|\mathbf{v}_n\|_0
\nonumber\\
&
\leq \sup_{n,m\geq N}\|\mathbf{v}_n-\mathbf{v}_m\|_0+\sup_{n\leq N}\|\mathbf{v}_n\|_0.
\label{norm varphi}
\end{align}
By \eqref{cauchy v} taking limit to \eqref{norm varphi} as $N\to\infty$ yields that
\begin{equation}
\label{finite norm phi psi}
\sup_{(t,x)\in[0,T)\times\bR^d}
\frac{|\varphi(t,x)|+(T-t)^{1/2}\|\psi(t,x)\|}{(d^p+\|x\|^2)^{1/2}}
\leq \sup_{n\in\bN}\|\mathbf{v}_n\|_0<\infty.
\end{equation}
Moreover, by \eqref{cauchy v} we also notice that
\begin{align}
&
\limsup_{n\to\infty}\|(\varphi,\psi)-\mathbf{v}_n\|_0
\nonumber\\
&
=\limsup_{n\to\infty}\sup_{(t,x)\in[0,T)\times\bR^d}
\frac{\lim_{m\to\infty}\big[|v^1_m(t,x)-v^1_n(t,x)|
+(T-t)^{1/2}\|v^2_m(t,x)-v^2_n(t,x)\|\big]}{(d^p+\|x\|^2)^{1/2}}
\nonumber\\
&
\leq \limsup_{n\to\infty}\sup_{(t,x)\in[0,T)\times\bR^d}
\frac{\sup_{m\geq n}\big[|v^1_m(t,x)-v^1_n(t,x)|
+(T-t)^{1/2}\|v^2_m(t,x)-v^2_n(t,x)\|\big]}
{(d^p+\|x\|^2)^{1/2}}
\nonumber\\
&
= \limsup_{n\to\infty}\sup_{m\geq n}\|\mathbf{v}_m-\mathbf{v}_n\|_0
=0.
\label{conv v n}
\end{align}
This implies for all compact set $\cK\subseteq [0,T)\times\bR^d$ that
$$
\lim_{n\to\infty}\sup_{(t,x)\in\cK}
\left[|\varphi(t,x)-v^1_n|+\|\psi(t,x)-v^2_n(t,x)\|\right]
=0.
$$
Thus, the fact that $v^1_n$ and $v^2_n$ are continuous for all $n\in\bN$
ensures that 
$
\varphi:[0,T)\times\bR^d\to\bR
$
and
$
\psi:[0,T)\times\bR^d\to\bR^d
$ 
are also continuous.
This together with \eqref{finite norm phi psi} and \eqref{conv v n} ensure that 
$(V,\|\cdot\|_0)$ is a real Banach space.
We also notice that it holds
for all $\lambda_1\in\bR$, $\lambda_2\in[\lambda_1,\infty)$, and $\mathbf{v}\in V$ that
$\|\mathbf{v}\|_{\lambda_1}\leq \|\mathbf{v}\|_{\lambda_2}
\leq e^{(\lambda_2-\lambda_1)T}\|\mathbf{v}\|_{\lambda_1}$.
Considering this and the fact that $(V,\|\cdot\|_0)$ is a real Banach space, we have
that $(V,\|\cdot\|_\lambda)$ is a real Banach space for every $\lambda\in\bR$.

Next, by H\"older's inequality, \eqref{est f g Z}, and \eqref{est V t x s}
we observe for all $(t,x)\in[0,T)\times\bR^d$ that
\begin{align}
\big\|G\big(\bX^{t,x}_T\big)\big\|_{L_1}
&
\leq \bE\left[a\big(d^p+\big\|\bX^{t,x}_T\big\|^2\big)^{1/2}\right]
\leq ab(d^p+\|x\|^2)^{1/2}
<\infty,
\label{milk tea 1}
\end{align}
and
\begin{align}
(T-t)^{1/2}\big\|G\big(\bX^{t,x}_T\big)\bV^{t,x}_T\big\|_{L_1}
&
\leq
(T-t)^{1/2}
\big\|G\big(\bX^{t,x}_T\big)\big\|_{L_2}
\big\|\bV^{t,x}_T\big\|_{L_2}
\leq aC^{1/2}_{d,T}
\left(\bE\left[d^p+\big\|\bX^{t,x}_T\big\|^2\right]\right)^{1/2}
\nonumber\\
&
\leq abC^{1/2}_{d,T}(d^p+\|x\|^2)^{1/2}
<\infty.
\label{milk tea 2}
\end{align}
Furthermore, by H\"older's inequality, 
\eqref{est f g Z}, \eqref{Lip f FP}, and \eqref{est V t x s} we have for all 
$(t,x)\in[0,T)\times\bR^d$, and $\mathbf{v}=(v^1,v^2)\in V$ that
\begin{align}
&
(T-t)^{1/2}
\int_t^T\bE\left[
\big\|
F\big(
s,\bX^{t,x}_s,v^1(s,\bX^{t,x}_s),v^2(s,\bX^{t,x}_s)
\big)
\bV^{t,x}_s
\big\|
\right]ds
\nonumber\\
&
\leq
(T-t)^{1/2}
\int_t^T
\big\|
F\big(
s,\bX^{t,x}_s,v^1(s,\bX^{t,x}_s),v^2(s,\bX^{t,x}_s)
\big)
\big\|_{L_2}
\big\|\bV^{t,x}_s\big\|_{L_2}\,
ds
\nonumber\\
&
\leq 
(T-t)^{1/2}
C^{1/2}_{d,T}
\int_t^T(s-t)^{-1/2}\left(
\bE\left[
\big(
\big|F(s,\bX^{t,x}_s,0,\mathbf{0})\big|
+c\big|v^1(s,\bX^{t,x}_s)\big|
+c\big\|v^2(s,\bX^{t,x}_s)\big\|
\big)^2
\right]
\right)^{1/2}ds
\nonumber\\
&
\leq 
(T-t)^{1/2}C^{1/2}_{d,T}\int_t^T(s-t)^{1/2}
\Bigg(
\bE\Bigg[
\frac
{\big|F(s,\bX^{t,x}_s,0,\mathbf{0})\big|^2}
{d^p+\|\bX^{t,x}_s\|^2}
\big(d^p+\|\bX^{t,x}_s\|^2\big)
\Bigg]
\Bigg)^{1/2}ds
+(T-t)^{1/2}C^{1/2}_{d,T}c(1+T^{1/2})
\nonumber\\
& \quad
\cdot
\int_t^T(s-t)^{-1/2}(T-s)^{-1/2}
\left(
\bE\left[
\frac
{\big(\big|v^1(s,\bX^{t,x}_s)\big|+(T-s)\big\|v^2(s,\bX^{t,x}_s)\big\|\big)^2}
{d^p+\|\bX^{t,x}_s\|^2}
\big(d^p+\|\bX^{t,x}_s\|^2\big)
\right]
\right)^{1/2}ds
\nonumber\\
&
\leq
(T-t)^{1/2}C^{1/2}_{d,T}
\Bigg[
\sup_{t\in[t,T]}\sup_{y\in\bR^d}
\frac{\big|F(r,y,0,\mathbf{0})\big|}{(d^p+\|y\|^2)^{1/2}}
\Bigg]
\int_t^T(s-t)^{-1/2}\Big(
\bE\Big[
d^p+\|\bX^{t,x}_s\|^2
\Big]
\Big)^{1/2}\,ds
\nonumber\\
& \quad
+(T-t)^{1/2}C^{1/2}_{d,T}c(1+T^{1/2})
\int_t^T(s-t)^{-1/2}(T-s)^{-1/2}
\nonumber\\
& \quad
\cdot
\Bigg[
\sup_{r\in[s,T)}\sup_{y\in\bR^d}
\frac{|v^1(r,y)|+(T-r)^{1/2}\|v^2(r,y)\|}{(d^p+\|y\|^2)^{1/2}}
\Bigg]
\Big(
\bE\Big[d^p+\big\|\bX^{t,x}_s\big\|^2\Big]
\Big)^{1/2}
\,ds
\nonumber\\
&
\leq 
2abC_{d,T}^{1/2}(T-t)(d^p+\|x\|^2)^{1/2}
+(T-t)^{1/2}C^{1/2}_{d,T}bc(1+T^{1/2})(d^p+\|x\|^2)^{1/2}
\nonumber\\
& \quad
\cdot
\int_t^T(s-t)^{-1/2}(T-s)^{-1/2}
\Bigg[
\sup_{r\in[s,T)}\sup_{y\in\bR^d}
\frac{|v^1(r,y)|+(T-r)^{1/2}\|v^2(r,y)\|}{(d^p+\|y\|^2)^{1/2}}
\Bigg]
\,ds
<\infty.
\label{milk tea 3}
\end{align}
Analogously, by H\"older's inequality, 
\eqref{est f g Z}, \eqref{Lip f FP}, and \eqref{est V t x s}
it holds for all 
$(t,x)\in[0,T)\times\bR^d$, and $\mathbf{v}=(v^1,v^2)\in V$ that
\begin{align}
&
\int_t^T
\big\|
F\big(s,\bX^{t,x}_s,v^1(s,\bX^{t,x}_s), v^2(s,\bX^{t,x}_s)\big)
\big\|_{L_1}
ds
\nonumber\\
&
\leq
ab(T-t)(d^p+\|x\|^2)^{1/2}
+bc(1+T^{1/2})(d^p+\|x\|^2)^{1/2}
\nonumber\\
& \quad
\cdot
\int_t^T
(T-s)^{-1/2}
\left[
\sup_{r\in[s,T)}\sup_{y\in\bR^d}
\frac{|v^1(r,y)|+(T-r)^{1/2}\|v^2(r,y)\|}
{(d^p+\|y\|^2)^{1/2}}
\right]
ds
<\infty,
\label{milk tea 4}
\end{align}
Then \eqref{milk tea 1}--\eqref{milk tea 4} ensure that 
for every $\mathbf{v}=(v^1,v^2)\in V$ we are allowed to define the mapping 
$\psi_{\mathbf{v}}:[0,T)\times\bR^d\to\bR^{d+1}$ by
\begin{align}
\psi_{\mathbf{v}}(t,x)
:=\bE\left[G\big(\bX^{t,x}_T\big)\Big(1,\bV^{t,x}_T\Big)\right]
+\int_t^T\bE\left[
F\big(s,\bX^{t,x}_s,v^1(s,\bX^{t,x}_s),v^2(s,\bX^{t,x}_s)\big)
\left(1,\bV^{t,x}_s\right)
\right]ds.
\label{def map psi}
\end{align}
By \eqref{est f g Z}, \eqref{Lip f FP}, \eqref{est V t x s}, \eqref{cont est V},
\eqref{cont est Z}, and Cauchy-Schwarz inequality, we obtain for all
$t\in(0,T]$, $t'\in[t,T)$, and $x,x'\in\bR^d$ that
\begin{align}
\big\|G\big(\bX^{t,x}_T\big)-G\big(\bX^{t',x'}_T\big)\big\|_{L_1}
\leq c\big\|\bX^{t,x}_T-\bX^{t',x'}_T\big\|_{L_2}
\leq
c\alpha^{1/2}(d^p+\|x\|^2)^{1/2}\big[(t'-t)+\|x-x'\|^2\big]^{1/2},
\label{black tea 1}
\end{align}
and
\begin{align}
&
\big\|G\big(\bX^{t,x}_T\big)\bV^{t,x}_T
-G\big(\bX^{t',x'}_T\big)\bV^{t',x'}_T\big\|_{L_1}
\nonumber\\
&
\leq
\big\|G\big(\bX^{t,x}_T\big)\big\|_{L_2}
\big\|\bV^{t,x}_T-\bV^{t',x'}_T\big\|_{L_2}
+
c\big\|\bX^{t,x}_T-\bX^{t',x'}_T\big\|_{L_2}
\big\|\bV^{t',x'}_T\big\|_{L_2}
\nonumber\\
& 
\leq
a\left(\bE\left[d^p+\|\bX^{t,x}_s\|^2\right]\right)^{1/2}
\big\|\bV^{t,x}_T-\bV^{t',x'}_T\big\|_{L_2}
+
cC_{d,T}^{1/2}(T-t')^{-1/2}
\big\|\bX^{t,x}_T-\bX^{t',x'}_T\big\|_{L_2}
\nonumber\\
&
\leq
ab(d^p+\|x\|^2)^{1/2}
\left(\frac{\alpha(t'-t)}{(T-t)(T-t')}+\alpha(T-t')^{-1}
\big[(t'-t)(d^p+\|x\|^2)+\|x-x'\|^2\big]\right)^{1/2}
\nonumber\\
& \quad
+
c(\alpha C_{d,T})^{1/2}(d^p+\|x\|^2)^{1/2}\big[(t'-t)+\|x-x'\|^2\big]^{1/2}.
\label{black tea 2}
\end{align}
Furthermore, by the triangle inequality and Minkowski's integral inequality
we have for all $t\in[0,T)$, $t'\in[t,T)$, $x,x'\in\bR^d$,
and $\fv=(v^1,v^2)\in V$ that
\begin{align}
&
\bigg\|
\int_t^T
\bE\left[
F\big(s,\bX^{t,x}_s,\fv(s,\bX^{t,x}_s)\big)\bV^{t,x}_s
\right]
ds
-
\int_{t'}^T
\bE\left[
F\big(s,\bX^{t',x'}_s,\fv(s,\bX^{t',x'}_s)\big)\bV^{t',x'}_s
\right]
ds
\bigg\|
\leq
\sum_{i=1}^3A_{\fv,i}(t,t',x,x'),
\label{ineq A v}
\end{align}
where 
\begin{align*}
&
A_{\fv,1}(t,t',x,x')
:=\int_t^{t'}
\big\|F\big(s,\bX^{t,x}_s,\fv(s,\bX^{t,x}_s)\big)\bV^{t,x}_s\big\|_{L_1}ds
\\
&
A_{\fv,2}(t,t',x,x')
:=\int_{t'}^T
\big\|
\big(\bV^{t,x}_s-\bV^{t',x'}_s\big)F\big(s,\bX^{t',x'}_s,\fv(s,\bX^{t',x'}_s)\big)
\big\|_{L_1}ds,
\end{align*}
and
$$
A_{\fv,3}(t,t',x,x'):=\int_{t'}^T
\big\|
\big[F\big(s,\bX^{t,x}_s,\fv(s,\bX^{t,x}_s)-
F\big(s,\bX^{t',x'}_s,\fv(s,\bX^{t',x'}_s)\big)\big]
\bV^{t,x}_s
\big\|_{L_1}ds.
$$
By \eqref{time integral est 1/2}, \eqref{est f g Z}, \eqref{Lip f FP},
\eqref{est V t x s}, and Cauchy-Schwarz inequality we have for all
$t\in[0,T)$, $t'\in[t,T)$, $x,x'\in\bR^d$, and $\fv=(v^1,v^2)\in V$ that
\begin{align}
&
A_{\fv,1}(t,t',x,x')
\nonumber\\
&
\leq
\int_t^{t'}
\big\|F\big(s,\bX^{t,x}_s,\fv(s,\bX^{t,x}_s)\big)\big\|_{L_2}
\|\bV^{t,x}_s\|_{L_2}
ds
\nonumber\\
&
\leq
C_{d,T}^{1/2}\int_t^{t'}(s-t)^{-1/2}
\bigg[
c\left(\bE\left[
\big(\big|v^1(s,\bX^{t,x}_s)\big|+\big\|v^2(s,\bX^{t,x}_s)\big\|\big)^2
\right]\right)^{1/2}
+
a\left(\bE\left[d^p+\|\bX^{t,x}_s\|^2\right]\right)^{1/2}
\bigg]ds
\nonumber\\
&
\leq
C_{d,T}^{1/2}\int_t^{t'}(s-t)^{-1/2}\Bigg[
\frac{c(1+T^{1/2})}{(T-s)^{1/2}}
\bigg(
\bE\bigg[
\frac
{\big(\big|v^1(s,\bX^{t,x}_s)\big|+(T-s)^{1/2}\big\|v^2(s,\bX^{t,x}_s)\big\|\big)^2}
{d^p+\|\bX^{t,x}_s\|^2}
\nonumber\\
& \quad
\cdot
\big(d^p+\|\bX^{t,x}_s\|^2\big)
\bigg]
\bigg)^{1/2}
+
\frac{T^{1/2}ab(d^p+\|x\|^2)^{1/2}}{(T-s)^{1/2}}
\Bigg]\,ds
\nonumber\\
&
\leq
C_{d,T}^{1/2}b[c+(a+c)T^{1/2}](d^p+\|x\|^2)^{1/2}\|\fv\|_0(T-t')^{-1/2}
\int_t^{t'}(s-t)^{-1/2}\,ds
\nonumber\\
&
=
2C_{d,T}^{1/2}b[c+(a+c)T^{1/2}](d^p+\|x\|^2)^{1/2}\|\fv\|_0(T-t')^{-1/2}(t'-t)^{1/2}.
\label{A v 1}
\end{align}
Furthermore, by \eqref{time integral est 1/2}, \eqref{est f g Z}, \eqref{Lip f FP},
\eqref{est V t x s}, \eqref{cont est V}, and Cauchy-Schwarz inequality
it holds for all $t\in[0,T)$, $t'\in[t,T)$, $x,x'\in\bR^d$, and
$\fv=(v^1,v^2)\in V$ that
\begin{align}
A_{\fv,2}(t,t',x,x')
&
\leq
\int_{t'}^T\big\|\bV^{t,x}_s-\bV^{t',x'}_s\big\|_{L_2}
\big\|F\big(s,\bX^{t',x'}_s,\fv(s,\bX^{t',x'}_s)\big)\big\|_{L_2}
\,ds
\nonumber\\
&
\leq
\int_{t'}^T
\left(
\bigg[\frac{\alpha(t'-t)}{(s-t)(s-t')}\bigg]^{1/2}
+\frac{\alpha^{1/2}[(t'-t)(d^p+\|x\|^2)+\|x-x'\|^2]^{1/2}}{(s-t')^{1/2}}
\right)
\nonumber\\
& \quad
\cdot
\left[
c\left(\bE\left[\big(\big
|v^1(s,\bX^{t',x'}_s)\big|+\|v^2(s,\bX^{t',x'}_s)\big\|
\big)^2\right]\right)^{1/2}
+
a\left(
\bE\left[d^p+\|\bX^{t',x'}_s\|^2\right]
\right)^{1/2}
\right]ds
\nonumber\\
&
\leq
\int_{t'}^T
\left(
\bigg[\frac{\alpha(t'-t)}{(s-t)(s-t')}\bigg]^{1/2}
+\frac{\alpha^{1/2}\big[(t'-t)(d^p+\|x\|^2)+\|x-x'\|^2\big]^{1/2}}{(s-t')^{1/2}}
\right)
\nonumber\\
& \quad
\cdot
\Bigg[
\frac{c(1+T^{1/2})}{(T-s)^{1/2}}
\left(
\bE\left[
\frac
{\big(\big|v^1(s,\bX^{t',x'}_s)\big|+(T-s)^{1/2}\big\|v^2(s,\bX^{t',x'}_s)\big\|\big)^2}
{d^p+\|\bX^{t',x'}_s\|^2}
\big(d^p+\|\bX^{t',x'}_s\|^2\big)
\right]
\right)^{1/2}
\nonumber\\
& \quad
+
\frac{T^{1/2}ab(d^p+\|x\|^2)^{1/2}}{(T-s)^{1/2}}
\Bigg]\,ds
\nonumber\\
&
\leq
b[c+(a+c)T^{1/2}](d^p+\|x\|^2)^{1/2}\|\fv\|_0
\Bigg(
\int_t^T\left[
\frac{\alpha(t'-t)\mathbf{1}_{(t',T)}(s)}{(s-t)(s-t')(T-s)}
\right]^{1/2}ds
\nonumber\\
& \quad
+
\alpha^{1/2}\big[(t'-t)(d^p+\|x\|^2)+\|x-x'\|^2\big]^{1/2}
\int_{t'}^T(s-t')^{-1/2}(T-s)^{-1/2}\,ds
\Bigg)
\nonumber\\
&
\leq
b[c+(a+c)T^{1/2}](d^p+\|x\|^2)^{1/2}\|\fv\|_0
\Bigg(
\int_t^T\left[
\frac{\alpha(t'-t)\mathbf{1}_{(t',T)}(s)}{(s-t)(s-t')(T-s)}
\right]^{1/2}ds
\nonumber\\
& \quad
+
4\big[(t'-t)(d^p+\|x\|^2)+\|x-x'\|^2\big]^{1/2}
\Bigg).
\label{ineq A v 2}
\end{align}
Here and later on, we use the convention $0/0:=0$ whenever such terms appear.
For each $t\in[0,T)$, $s\in(t,T)$, and $\{t_k\}_{k=1}^\infty\subseteq[t,T)$
with $|t_1-t|\leq (T-t)/2$ and $t_k\downarrow t$ as $k\to\infty$, we define
$$
R_{t,t_k}(s):=
\left(\frac{\alpha(t_k-t)\mathbf{1}_{(t_k,T]}(s)}{(s-t)(s-t_k)(T-s)}\right)^{1/2}.
$$
Then it holds for all $\beta\in(0,1)$, $t\in[0,T)$, $s\in(t,T)$, and $k\in\bN$ that
\begin{align*}
\int_t^TR_{t,t_k}^{1+\beta}(s)\,ds
&
\leq
\int_{t_k}^T\alpha^{\frac{1+\beta}{2}}
(s-t_k)^{-\frac{1+\beta}{2}}(T-s)^{-\frac{1+\beta}{2}}\,ds
\\
&
=
\alpha^{\frac{1+\beta}{2}}
\left(
\int_{t_k}^{\frac{T+t_k}{2}}
(s-t_k)^{-\frac{1+\beta}{2}}(T-s)^{-\frac{1+\beta}{2}}\,ds
+
\int_{\frac{T+t_k}{2}}^T
(s-t_k)^{-\frac{1+\beta}{2}}(T-s)^{-\frac{1+\beta}{2}}\,ds
\right)
\\
&
\leq
\alpha^{\frac{1+\beta}{2}}\Big(\frac{T-t_k}{2}\Big)^{-\frac{1+\beta}{2}}
\left(
\int_{t_k}^{\frac{T+t_k}{2}}(s-t_k)^{-\frac{1+\beta}{2}}\,ds
+
\int_{\frac{T+t_k}{2}}^T(T-s)^{-\frac{1+\beta}{2}}\,ds
\right)
\\
&
=
\frac{4\alpha^{\frac{1+\beta}{2}}}{1-\beta}\Big(\frac{T-t_k}{2}\Big)^{-\beta}
\leq 
\frac{4\alpha^{\frac{1+\beta}{2}}}{1-\beta}\Big(\frac{T-t}{4}\Big)^{-\beta}.
\end{align*}
Hence, for every $t\in[0,T)$, 
the family of $\cB\big((t,T)\big)/\cB(\bR)$-measurable functions
$\big(R_{t,t_k}(s)\big)_{s\in(t,T)}$, $k\in\bN$, are uniformly integrable
on $\big((t,T),\cB\big((t,T)\big),\cL(ds)\big)$,
where $\cL(ds)$ denotes the Lebesgue measure 
on $\big((t,T),\cB\big((t,T)\big)\big)$. 
Moreover, notice for every $t\in[0,T)$ and $s\in(t,T)$ that
$\lim_{k\to\infty}R_{t,t_k}(s)=0$.
Thus, for all $t\in[0,T)$ and $s\in(t,T]$ we have that
$$
\lim_{k\to\infty}\int_t^TR_{t,t_k}(s)\,ds
=
\int_t^T\lim_{k\to\infty}R_{t,t_k}(s)\,ds=0.
$$
This together with \eqref{ineq A v 2} imply for all $\fv\in V$, 
$(t,x)\in[0,T)\times\bR^d$, and $(t_k,x_k)_{k=1}^\infty\subseteq [t,T)\times\bR^d$
with $|t_1-t|\leq (T-t)/2$ and $t_k\downarrow t$, $x_k\to x$ as $k\to\infty$ that
\begin{equation}
\label{A v 2}
\lim_{k\to\infty}A_{\fv,2}(t,t_k,x,x_k)=0.
\end{equation}
In addition, by \eqref{Lip f FP}, \eqref{est V t x s}, and Cauchy-Schwarz inequality
we obtain for all $t\in[0,T)$, $t'\in[t,T)$, $x,x'\in\bR^d$, 
and $\fv=(v^1,v^2)\in V$ that
\begin{align}
A_{\fv,3}(t,t',x,x')
&
=
\int_t^T
\big\|\mathbf{1}_{(t',T)}(s)
\big[F\big(s,\bX^{t,x}_s,\fv(s,\bX^{t,x}_s)\big)-
F\big(s,\bX^{t',x'}_s,\fv(s,\bX^{t',x'}_s)\big)\big]
\bV^{t,x}_s\big\|_{L_1}ds
\nonumber\\
&
\leq
\int_t^T\big\|
\mathbf{1}_{(t',T)}(s)\big|F\big(s,\bX^{t,x}_s,\fv(s,\bX^{t,x}_s)\big)-
F\big(s,\bX^{t',x'}_s,\fv(s,\bX^{t',x'}_s)\big\|_{L_2}
\big\|\bV^{t,x}_s\big\|_{L_2}ds
\nonumber\\
&
\leq
C_{d,T}^{1/2}c^{1/2}\int_t^T\frac{\mathbf{1}_{(t',T)}(s)}{(s-t)^{1/2}}
\Big(
\bE\Big[
\big(
\big\|\bX^{t,x}_s-\bX^{t',x'}_s\big\|
+\big|v^1(s,\bX^{t,x}_s)-v^1(s,\bX^{t',x'}_s)\big|
\nonumber\\
& \quad
+\big\|v^1(s,\bX^{t,x}_s)-v^1(s,\bX^{t',x'}_s)\big\|
\big)^2
\Big]
\Big)^{1/2}\,ds
\nonumber\\
& 
\leq
C_{d,T}^{1/2}c^{1/2}(1+T^{1/2})\int_t^T
\frac{\mathbf{1}_{(t',T)}(s)}{(s-t)^{1/2}(T-s)^{1/2}}
\big\|h_{\fv}(t,t',x,x',s)\big\|_{L_2}\,ds,
\label{ineq A v 3}
\end{align}
where
\begin{align*}
&
h_{\fv}(t,t',x,x',s)
\\
&
:=
\big\|\bX^{t,x}_s-\bX^{t',x'}_s\big\|
+\big|v^1(s,\bX^{t,x}_s)-v^1(s,\bX^{t',x'}_s)\big|
+(T-s)^{1/2}\big\|v^1(s,\bX^{t,x}_s)-v^1(s,\bX^{t',x'}_s)\big\|.
\end{align*}
By \eqref{est Z q} it holds for all $k\in\bN$, $\beta\in(0,1)$, 
$\fv=(v^1,v^2)\in V$, $(t,x)\in[0,T)\times\bR^d$, $s\in(t,T)$,
and $(t_k,x_k)_{k=1}^\infty\subseteq[t,T)\times\bR^d$ 
with $\|x_k-x\|\leq 1$ for all $k\in\bN$ that
\begin{align}
&
\big\|h_{\fv}(t,t_k,x,x_k,s)\big\|_{L_{2+\beta}}^{2+\beta}
\nonumber\\
&
\leq
\bE\Bigg[
\bigg(
\big\|\bX^{t,x}_s-\bX^{t_k,x_k}_s\big\|
+
\frac{\big|v^1(s,\bX^{t,x}_s)\big|+(T-s)^{1/2}\big\|v^2(s,\bX^{t,x}_s)\big\|}
{\big(d^p+\|\bX^{t,x}_s\|^2\big)^{1/2}}
\big(d^p+\|\bX^{t,x}_s\|^2\big)^{1/2}
\nonumber\\
& \quad
+
\frac{\big|v^1(s,\bX^{t_k,x_k}_s)\big|+(T-s)^{1/2}\big\|v^2(s,\bX^{t_k,x_k}_s)\big\|}
{\big(d^p+\|\bX^{t_k,x_k}_s\|^2\big)^{1/2}}
\big(d^p+\|\bX^{t_k,x_k}_s\|^2\big)^{1/2}
\bigg)^{2+\beta}
\Bigg]
\nonumber\\
&
\leq
2^{1+\beta}(1+\|\fv\|_0)^{2+\beta}
\bE\left[
\big(d^p+\|\bX^{t,x}_s\|^2\big)^{1+\beta/2}
+\big(d^p+\|\bX^{t_k,x_k}_s\|^2\big)^{1+\beta/2}
\right]
\nonumber\\
&
\leq
2^{1+\beta}(1+\|\fv\|_0)^{2+\beta}b_0^{2+\beta}
\left[(d^p+\|x\|^2)^{1+\beta/2}+(d^p+(1+\|x\|)^2)^{1+\beta/2}\right]
<\infty,
\label{h 1+beta est}
\end{align}
which demonstrates that $h_{\fv}(t,t_k,x,x_k,s)$, $k\in\bN$, are uniformly
integrable random variables.
Furthermore, by \eqref{cont est Z} we have for all $(t,x)\in[0,T)\times\bR^d$,
$s\in(t,T)$, and $(t_k,x_k)_{k=1}^\infty\subseteq[t,T)\times\bR^d$ with
$t_k\downarrow t$ and $x_k\to x$ as $k\to\infty$ that
$$
\lim_{k\to\infty}\big\|\bX^{t,x}_s-\bX^{t_k,x_k}_s\big\|_{L_2}=0.
$$
This together withy the continuity of $\fv=(v^1,v^2)\in V$ imply for all 
$\fv\in V$, $(t,x)\in[0,T)\times\bR^d$,
$s\in(t,T)$, and $(t_k,x_k)_{k=1}^\infty\subseteq[t,T)\times\bR^d$ with
$t_k\downarrow t$ and $x_k\to x$ as $k\to\infty$ that
$$
h_{\fv}(t,t_k,x,x_k,s)\to 0 \quad \text{as $k\to\infty$ in probability}.
$$
Combining this and the uniform integrability of
$\{h_{\fv}(t,t_k,x,x_k,s)\}_{k=1}^\infty$ obtained before yields for all
$(t,x)\in[0,T)\times\bR^d$, $s\in(t,T)$,
and $(t_k,x_k)_{k=1}^\infty\subseteq[t,T)\times\bR^d$ with $t_k\downarrow t$,
$x_k\to x$ as $k\to\infty$ for all $k\in\bN$ that
\begin{equation}
\label{h v conv 0}
\lim_{k\to\infty}\big\|h_{\fv}(t,t_k,x,x_k,s)\big\|_{L_2}=0.
\end{equation}
Moreover, \eqref{time integral est 1/2}, \eqref{ineq A v 3}, 
\eqref{h 1+beta est}, \eqref{h v conv 0}, and the dominated convergence theorem
ensure for all $\fv\in V$, $(t,x)\in[0,T)\times\bR^d$,
$s\in(t,T)$, and $(t_k,x_k)_{k=1}^\infty\subseteq[t,T)\times\bR^d$ with
$t_k\downarrow t$ and $x_k\to x$ as $k\to\infty$ that
\begin{align}
&
\lim_{k\to\infty}A_{\fv,3}(t,t_k,x,x_k)
\nonumber\\
&
\leq
C^{1/2}_{d,T}c^{1/2}(1+T^{1/2})\lim_{k\to\infty}
\int_t^T\frac{\mathbf{1}_{(t_k,T)}(s)}{(s-t)^{1/2}(T-s)^{1/2}}
\big\|h_{\fv}(t,t_k,x,x_k,s)\big\|_{L_2}\,ds
\nonumber\\
&
\leq
C_{d,T}^{1/2}c^{1/2}(1+T^{1/2})
\int_t^T(s-t)^{-1/2}(T-s)^{-1/2}
\lim_{k\to\infty}\big\|h_{\fv}(t,t_k,x,x_k,s)\big\|_{L_2}\,ds
=0.
\label{A v 3}
\end{align}
Combining \eqref{ineq A v}, \eqref{A v 1}, \eqref{A v 2}, and \eqref{A v 3} yields
for all $\fv=(v^1,v^2)\in V$, $(t,x)\in[0,T)\times\bR^d$,  
and $(t_k,x_k)_{k=1}^\infty\subseteq[t,T)\times\bR^d$ with
$t_k\downarrow t$ and $x_k\to x$ as $k\to\infty$ that
\begin{equation}
\label{RC int F}
\lim_{k\to\infty}
\bigg\|
\int_t^T
\bE\left[
F\big(s,\bX^{t,x}_s,\fv(s,\bX^{t,x}_s)\big)\bV^{t,x}_s
\right]
ds
-
\int_{t'}^T
\bE\left[
F\big(s,\bX^{t_k,x_k}_s,\fv(s,\bX^{t_k,x_k}_s)\big)\bV^{t_k,x_k}_s
\right]
ds
\bigg\|=0.
\end{equation}
Similarly, we have for all $\fv=(v^1,v^2)\in V$, $(t,x)\in[0,T)\times\bR^d$,  
and $(t_k,x_k)_{k=1}^\infty\subseteq[t,T)\times\bR^d$ with
$t_k\uparrow t$ and $x_k\to x$ as $k\to\infty$ that
\begin{equation*}
\lim_{k\to\infty}
\bigg\|
\int_t^T
\bE\left[
F\big(s,\bX^{t,x}_s,\fv(s,\bX^{t,x}_s)\big)\bV^{t,x}_s
\right]
ds
-
\int_{t'}^T
\bE\left[
F\big(s,\bX^{t_k,x_k}_s,\fv(s,\bX^{t_k,x_k}_s)\big)\bV^{t_k,x_k}_s
\right]
ds
\bigg\|=0.
\end{equation*}
This together with \eqref{RC int F} ensure for all $\fv\in V$ that the mapping
\begin{equation}
\label{cont int F}
[0,T)\times\bR^d\ni(t,x)
\longmapsto
\int_t^T
\bE\left[
F\big(s,\bX^{t,x}_s,\fv(s,\bX^{t,x}_s)\big)\bV^{t,x}_s
\right]
ds
\in \bR^d
\end{equation}
is continuous. 
Analogously, it holds for all $\fv\in V$ that the mapping
\begin{equation}
\label{cont int F 1}
[0,T)\times\bR^d\ni(t,x)
\longmapsto
\int_t^T
\bE\left[
F\big(s,\bX^{t,x}_s,\fv(s,\bX^{t,x}_s)\big)
\right]
ds
\in \bR
\end{equation}
is continuous.
Then combining \eqref{def map psi}, \eqref{black tea 1}, \eqref{black tea 2},
\eqref{cont int F}, and \eqref{cont int F 1} yields for all $\fv\in V$ 
that the mapping $\psi_{\fv}:[0,T)\times\bR^d\to\bR^{d+1}$ is continuous.
Therefore, there exists $\Phi: V\to V$ such that for all 
$\mathbf{v}=(v^1,v^2)\in V$ and $(t,x)\in[0,T)\times\bR^d$ that
\begin{align}
\big(\Phi(\mathbf{v})\big)(t,x)
=\bE\left[G(\bX^{t,x}_T)\left(1,\bV^{t,x}_T\right)\right]  
+\int_t^T\bE\left[
F\big(s,\bX^{t,x}_s,v^1(s,\bX^{t,x}_s),v^2(s,\bX^{t,x}_s)\big)
\left(1,\bV^{t,x}_s\right)
\right]ds.
\label{def mapping Phi}
\end{align}
Next, by Jensen's inequality, H\"older's inequality, 
\eqref{est f g Z}, and \eqref{Lip f FP}
we notice for all $(t,x)\in[0,T)\times\bR^d$, 
$\lambda\in(0,\infty)$, $\beta\in(0,1)$,
$\mathbf{v}:=(v^1,v^2)\in V$, and $\mathbf{w}:=(w^1,w^2)\in V$ that
\begin{align}
&
\int_t^T
\left\|
F\big(s,\bX^{t,x}_s,v^1(s,\bX^{t,x}_s),v^2(s,\bX^{t,x}_s)\big)
-
F\big(s,\bX^{t,x}_s,w^1(s,\bX^{t,x}_s),w^2(s,\bX^{t,x}_s)\big)
\right\|_{L_1}
ds
\nonumber\\
&
\leq c\int_t^T
\left(\big\|v^1(s,\bX^{t,x}_s)-w^1(s,\bX^{t,x}_s)\big\|_{L_1}
+
\big\|
v^2(s,\bX^{t,x}_s)-w^2(s,\bX^{t,x}_s)
\big\|_{L_1}
\right)ds
\nonumber\\
&
\leq
c(1+T^{1/2})\int_t^T\bE\Bigg[
\frac
{e^{\lambda s}\big(\big|v^1(s,\bX^{t,x}_s)-w^1(s,\bX^{t,x}_s)\big|
+
(T-s)^{1/2}\big\|
v^2(s,\bX^{t,x}_s)-w^2(s,\bX^{t,x}_s)
\big\|\big)}
{\big(d^p+\big\|\bX^{t,x}_s\big\|^2\big)^{1/2}}
\nonumber\\
& \quad \cdot
\big(d^p+\big\|\bX^{t,x}_s\big\|^2\big)^{1/2}
\Bigg]e^{-\lambda s}(T-s)^{-1/2}\,ds
\nonumber\\
&
\leq c(1+T^{1/2})\int_t^T
\|\mathbf{v}-\mathbf{w}\|_\lambda\cdot
\bE
\left[\big(d^p+\big\|\bX^{t,x}_s\big\|^2\big)^{1/2}\right]
e^{-\lambda s}(T-s)^{-1/2}\,ds
\nonumber\\
&
\leq c(1+T^{1/2})\|\mathbf{v}-\mathbf{w}\|_\lambda\cdot b(d^p+\|x\|^2)^{1/2}
\left(\int_t^Te^{-(2+\beta)\lambda s}\,ds\right)^{\frac{1}{2+\beta}}
\left(
\int_t^T(T-s)^{\frac{-(2+\beta)}{2(1+\beta)}}\,ds
\right)^{\frac{1+\beta}{2+\beta}}
\nonumber\\
&
\leq 
\frac{bc(1+T^{1/2})(2(1+\beta)/\beta)^{\frac{1+\beta}{2+\beta}}(T-t)^{\frac{\beta}{2(2+\beta)}}}
{\lambda^{\frac{1}{2+\beta}}}
\|\mathbf{v}-\mathbf{w}\|_\lambda\cdot
(d^p+\|x\|^2)^{1/2}e^{-\lambda t}.
\label{FP est 1}
\end{align}
Analogously, by Minkowski's integral inequality, H\"older inequality,
\eqref{time integral est}, \eqref{est f g Z}, \eqref{Lip f FP}, 
and \eqref{est V t x s} it holds for all
$(t,x)\in[0,T)\times\bR^d$, $\lambda\in(0,\infty)$, $\beta\in(0,1)$,
$\mathbf{v}:=(v^1,v^2)\in V$, and $\mathbf{w}:=(w^1,w^2)\in V$ that
\begin{align}
&
(T-t)^{1/2}
\left\|
\int_t^T\bE\left[
\left(
F\big(s,\bX^{t,x}_s,v^1(s,\bX^{t,x}_s),v^2(s,\bX^{t,x}_s)\big)
-
F\big(s,\bX^{t,x}_s,w^1(s,\bX^{t,x}_s),w^2(s,\bX^{t,x}_s)\big)
\right)\bV^{t,x}_s
\right]ds
\right\|
\nonumber\\
&
\leq 
(T-t)^{1/2}
\int_t^T
\big\|
F\big(s,\bX^{t,x}_s,v^1(s,\bX^{t,x}_s),v^2(s,\bX^{t,x}_s)\big)
-
F\big(s,\bX^{t,x}_s,w^1(s,\bX^{t,x}_s),w^2(s,\bX^{t,x}_s)\big)
\big\|_{L_2}
\big\|\bV^{t,x}_s\big\|_{L_2}\,
ds
\nonumber\\
& 
\leq
(T-t)^{1/2}c(1+T^{1/2})C^{1/2}_{d,T}
\int_t^T
(s-t)^{-1/2}(T-s)^{-1/2}
\nonumber\\
& \quad \cdot
\left(
\bE\left[
\left(
\left|
v^1(s,\bX^{t,x}_s)-w^1(s,\bX^{t,x}_s)
\right|
+
(T-s)^{1/2}
\left\|
v^2(s,\bX^{t,x}_s)-w^2(s,\bX^{t,x}_s)
\right\|
\right)^2
\right]
\right)^{1/2}\,ds
\nonumber\\
&
\leq
(T-t)^{1/2}c(1+T^{1/2})C_{d,T}^{1/2}
\left(
\int_t^T(s-t)^{-\frac{2+\beta}{2(1+\beta)}}(T-s)^{-\frac{2+\beta}{2(1+\beta)}}\,ds
\right)^{\frac{1+\beta}{2+\beta}}
\nonumber\\
& \quad
\cdot
\left(
\int_t^T\left(
\bE\left[
\left(
\left|
v^1(s,\bX^{t,x}_s)-w^1(s,\bX^{t,x}_s)
\right|
+
(T-s)^{1/2}
\left\|
v^2(s,\bX^{t,x}_s)-w^2(s,\bX^{t,x}_s)
\right\|
\right)^2
\right]
\right)^{\frac{2+\beta}{2}}ds
\right)^{\frac{1}{2+\beta}}
\nonumber\\
&
\leq
(T-t)^{1/2}
cC_{d,T}^{1/2}(T-t)^{\frac{\beta}{2(2+\beta)}}
\Bigg(\int_t^T e^{-(2+\beta)\lambda s}
\nonumber\\
& \quad \cdot
\Bigg(
\bE\Bigg[
\frac
{
e^{2\lambda s}
\big(
\big|
v^1(s,\bX^{t,x}_s)-w^1(s,\bX^{t,x}_s)
\big|
+(T-s)^{1/2}
\big\|
v^2(s,\bX^{t,x}_s)-w^2(s,\bX^{t,x}_s)
\big\|
\big)^2
}
{d^p+\big\|\bX^{t,x}_s\big\|^2}
\big(d^p+\big\|\bX^{t,x}_s\big\|^2\big)
\Bigg]
\Bigg)^{\frac{2+\beta}{2}}
ds\Bigg)^{\frac{1}{2+\beta}}
\nonumber\\
&
\leq 
c(1+T^{1/2})C_{d,T}^{1/2}
2(2(1+\beta)/\beta)^{\frac{1+\beta}{2+\beta}}(T-t)^{\frac{\beta}{2(2+\beta)}}
\left(
\int_t^T
e^{-(2+\beta)\lambda s}
\|\mathbf{v}-\mathbf{w}\|_\lambda^{2+\beta}
\left(
\bE\left[
d^p+\big\|\bX^{t,x}_s\big\|^2
\right]
\right)^{\frac{2+\beta}{2}}
ds
\right)^{\frac{1}{2+\beta}}
\nonumber\\
&
\leq
\frac
{bc(1+T^{1/2})C_{d,T}^{1/2}
2(2(1+\beta)/\beta)^{\frac{1+\beta}{2+\beta}}(T-t)^{\frac{\beta}{2(2+\beta)}}}
{\lambda^{\frac{1}{2+\beta}}}
\|\mathbf{v}-\mathbf{w}\|_\lambda
\cdot
(d^p+\|x\|^2)^{1/2}e^{-\lambda t}.
\label{FP est 2}
\end{align}
Then combining \eqref{def mapping Phi}, \eqref{FP est 1}, \eqref{FP est 2},
and Minkowski's integral inequality
yields for all $\lambda\in(0,\infty)$ and $\mathbf{v},\mathbf{w}\in V$ that
\begin{equation}
\label{contraction mapping}
\|\Phi(\mathbf{v})-\Phi(\mathbf{w})\|_{\lambda}
\leq
\frac
{bc\big(1+T^{1/2}\big)\big(1+2C_{d,T}^{1/2}\big)
(2(1+\beta)/\beta)^{\frac{1+\beta}{2+\beta}}T^{\frac{\beta}{2(2+\beta)}}}
{\lambda^{\frac{1}{2+\beta}}}
\|\mathbf{v}-\mathbf{w}\|_{\lambda}.
\end{equation}
If we fix a $\beta\in(0,1)$, \eqref{contraction mapping} implies for all
$\lambda\geq \big[bc\big(1+T^{1/2}\big)\big(1+2C_{d,T}^{1/2}\big)\big]^{2+\beta}
(2(1+\beta)/\beta)^{1+\beta}T^{\beta/2}$ that
$\Phi$ forms a contraction mapping on $(V,\|\cdot\|_{\lambda})$.
Hence, by Banach's fixed-point theorem there exists a unique 
$\mathbf{u}=(u_1,u_2)\in V$ such that $\Phi(\mathbf{u})=\mathbf{u}$. 
This together with \eqref{def V space}, 
\eqref{milk tea 1}--\eqref{milk tea 4},
and \eqref{def mapping Phi} establish (i).
Furthermore, by \eqref{def mapping Phi} and Minkowski's integral inequality
we have for all $t\in[0,T)$ that
\begin{align*}
&
\sup_{r\in[t,T)}\sup_{x\in\bR^d}
\frac{|u_1(r,x)|+(T-r)^{1/2}\|u_2(r,x)\|}{(d^p+\|x\|^2)^{1/2}}
\\
&
\leq
\sup_{r\in[t,T)}\sup_{x\in\bR^d}
\frac
{
\bE\left[\big|G\big(\bX^{r,x}_T\big)\big|\right]
+(T-r)^{1/2}\int_r^T\bE\left[
\big|
F\big(
s,\bX^{r,x}_s,u_1(s,\bX^{r,x}_s),u_2(s,\bX^{r,x}_s)
\big)
\big|
\right]ds
}
{(d^p+\|x\|^2)^{1/2}}
\\ 
& \quad
+\sup_{r\in[t,T)}\sup_{x\in\bR^d}
\frac
{
\bE\left[\big\|G\big(\bX^{r,x}_T\big)\bV^{r,x}_T\big\|\right]
+(T-r)^{1/2}\int_r^T\bE\left[
\big\|
F\big(
s,\bX^{r,x}_s,u_1(s,\bX^{r,x}_s),u_2(s,\bX^{r,x}_s)
\big)
\bV^{r,x}_s
\big\|
\right]ds
}
{(d^p+\|x\|^2)^{1/2}}.
\end{align*}
This together with \eqref{milk tea 1}--\eqref{milk tea 4} imply for all
$t\in[0,T)$ that
\begin{align*}
&
\sup_{r\in[t,T)}\sup_{x\in\bR^d}
\left[
\frac{|u_1(r,x)|+(T-r)^{1/2}\|u_2(r,x)\|}{(d^p+\|x\|^2)^{1/2}}
\right]
\\
&
\leq 
ab\Big[1+C_{d,T}^{1/2}+T+2C_{d,T}^{1/2}T\Big]
+
bc(1+T^{1/2})\big(1+C_{d,T}^{1/2}T^{1/2}\big)
\\
& \quad
\cdot 
\int_t^T
\left[  
(T-s)^{-1/2}+(s-t)^{-1/2}(T-s)^{-1/2}
\right]
\left[
\sup_{r\in[s,T)}\sup_{x\in\bR^d}
\frac{|u_1(r,x)|+(T-r)^{1/2}\|u_2(r,x)\|}{(d^p+\|x\|^2)^{1/2}}
\right]ds.
\end{align*}
Therefore, by \eqref{time integral est 1/2}, the fact that $\fv\in V$, 
and Gr\"onwall's lemma we obtain 
for all $t\in[0,T)$ that
\begin{align}
&
\sup_{r\in[t,T)}\sup_{x\in\bR^d}
\left[
\frac{|u_1(r,x)|+(T-r)^{1/2}\|u_2(r,x)\|}{(d^p+\|x\|^2)^{1/2}}
\right]
\leq 
ab\Big[1+C_{d,T}^{1/2}+T+2C_{d,T}^{1/2}T\Big]
\nonumber\\
& \quad
+
\exp\left\{
bc(1+T^{1/2})\big(1+C_{d,T}^{1/2}T^{1/2}\big)
\int_t^T
\left[  
(T-s)^{-1/2}+(s-t)^{-1/2}(T-s)^{-1/2}
\right]
ds
\right\}
\nonumber\\
&
\leq
ab\Big[1+C_{d,T}^{1/2}+T+2C_{d,T}^{1/2}T\Big]
+
\exp\left\{
4bc(4+T)\big(1+C_{d,T}^{1/2}T^{1/2}\big)
\right\}
<\infty.
\label{est gronwall FP}
\end{align}
This proves (ii). Hence the proof of this proposition is completed.
\end{proof}

\begin{remark}
Note that for each $T>0$ every bounded monotone continuous function 
$f:(0,T)\to\bR$ is uniformly continuous.
Hence there is a unique continuous extension $\bar{f}:[0,T]\to\bR$ of $f$ such that
$\bar{f}(s)=f(s)$ for all $s\in(0,T)$.
Then there is no obstacle for us to apply Gr\"onwall's lemma 
to obtain \eqref{est gronwall FP}.
\end{remark}

Then we apply Proposition \ref{Prop FP} to SDE \eqref{SDE} to obtain
the following corollary.

\begin{corollary}                                         
\label{corollary FP}
Let Assumptions \ref{assumption Lip and growth}, \ref{assumption ellip},
and \ref{assumption gradient} hold,
and let $d,N\in\bN$. 
For each $(t,x)\in[0,T]\times\bR^d$ 
let $\big(X^{d,0,t,x}_s\big)_{s\in[t,T]}$ and $\big(\cX^{d,0,t,x,N}_s\big)_{s\in[t,T]}$
be the stochastic processes defined in \eqref{SDE} and \eqref{Euler 1}, 
respectively, with $\theta=0$. 
Moreover, for each $(t,x)\in[0,T)\times\bR^d$ 
let $\big(V^{d,0,t,x}_s\big)_{s\in(t,T]}$ 
and $\big(\cV^{d,0,t,x,N}_s\big)_{s\in(t,T]}$ 
be the stochastic processes defined in
\eqref{def proc V} and \eqref{def euler proc V}, respectively, with $\theta=0$.
Then the following holds.
\begin{enumerate}[(i)]
\item
There exists a unique pair of Borel functions $(u^d,w^d)$ with
$u^d\in C([0,T)\times\bR^d,\bR)$ and $w^d\in C([0,T)\times\bR^d,\bR^d)$ 
satisfying for all $(t,x)\in[0,T)\times\bR^d$ that
\begin{align*}
&
\bE\left[\big\|g^d(X^{d,0,t,x}_T)(1,V^{d,0,t,x}_T)\big\|\right]
+\int_t^T\bE
\Big[\big\|
f^d\big(s,X^{d,0,t,x}_s,u^d(s,X^{d,0,t,x}_s),w^d(s,X^{d,0,t,x}_s)\big)
(1,V^{d,0,t,x}_s)
\big\|\Big]\,ds
\\
&
+\sup_{(s,y)\in[0,T)\times\bR^d}
\left(\frac{|u^d(s,y)|+(T-s)^{1/2}\|w^d(s,y)\|}{(d^p+\|y\|^2)^{1/2}}
\right)<\infty,
\end{align*}
and
\begin{align}
(u^d(t,x),w^d(t,x))
&
=\bE\left[g^d(X^{d,0,t,x}_T)(1,V^{d,0,t,x}_T)\right]  
\nonumber\\
& \quad    
+\int_t^T\bE\left[
f^d\big(s,X^{d,0,t,x}_s,u^d(s,X^{d,0,t,x}_s),w^d(s,X^{d,0,t,x}_s)\big)
(1,V^{d,0,t,x}_s)
\right]ds.
\label{FP gradient SDE}
\end{align}
\item
There exists a constant $C_{d,1}$ 
only depending on $d$, $\varepsilon_d$, $L$, and $T$
satisfying for all $t\in[0,T)$ that
\begin{align}                                              
\sup_{s\in[t,T)}\sup_{x\in\bR^d}
\frac{|u^d(s,x)|+(T-s)^{1/2}\|w^d(s,x)\|}{(d^p+\|x\|^2)^{1/2}}
\leq 
C_{d,1}.
\label{LG FP SDE}
\end{align}
\item
There exists a unique pair of Borel functions $(u^d_N,w^d_N)$ with
$u^d_N\in C([0,T)\times\bR^d,\bR)$ and $w^d_N\in C([0,T)\times\bR^d,\bR^d)$ 
satisfying for all $(t,x)\in[0,T)\times\bR^d$ that
\begin{align*}
&
\bE\left[\big\|g^d(\cX^{d,0,t,x,N}_T)(1,\cV^{d,0,t,x,N}_T)\big\|\right]
\\
&
+\int_t^T\bE
\Big[\big\|
f^d\big(s,\cX^{d,0,t,x,N}_s,u^d_N(s,\cX^{d,0,t,x,N}_s),w^d_N(s,\cX^{d,0,t,x,N}_s)\big)
(1,\cV^{d,0,t,x,N}_s)
\big\|\Big]\,ds
\\
&
+\sup_{(s,y)\in[0,T)\times\bR^d}
\left(\frac{|u^d(s,y)|+(T-s)^{1/2}\|w^d(s,y)\|}{(d^p+\|y\|^2)^{1/2}}
\right)<\infty,
\end{align*}
and
\begin{align}
(u^d_N(t,x),w^d_N(t,x))
&
=\bE\left[g^d(\cX^{d,0,t,x,N}_T)(1,V^{d,0,t,x,N}_T)\right]  
\nonumber\\
& \quad    
+\int_t^T\bE\left[
f^d\big(s,\cX^{d,0,t,x,N}_s,u^d_N(s,\cX^{d,0,t,x,N}_s),w^d_N(s,\cX^{d,0,t,x,N}_s)\big)
(1,\cV^{d,0,t,x,N}_s)
\right]ds.
\label{FP gradient Euler}
\end{align}
\item
There exists a constant $C_{d,2}$ 
only depending on $d$, $\varepsilon_d$, $L$, and $T$ 
satisfying for all $t\in[0,T)$ that
\begin{align}                                              
\sup_{s\in[t,T)}\sup_{x\in\bR^d}
\frac{|u^d_N(s,x)|+(T-s)^{1/2}\|w^d_N(s,x)\|}{(d^p+\|x\|^2)^{1/2}}
\leq 
C_{d,2}.
\label{LG FP Euler}
\end{align}
\end{enumerate}
\end{corollary}

\begin{proof}
We first notice that
by Corollary \ref{corollary prob 1}, for $d\in \bN$ the mapping
$\Lambda\times\bR^d\ni(t,s,x)
\mapsto\big(s,X^{d,0,t,x}_{s}\big)\in\cL_0(\Omega,[0,T]\times\bR^d)$
is continuous, where $\cL_0(\Omega,[0,T]\times\bR^d)$ denotes the metric space
of all measurable functions from $\Omega$ to $[0,T]\times\bR^d$ equipped with
the metric deduced by convergence in probability.
Moreover, notice that for all $d\in\bN$ and nonnegative Borel functions 
$\varphi:[0,T]\times\bR^d\to[0,\infty)$, the mapping
$
\cL_0(\Omega,[0,T]\times\bR^d)\ni Z \mapsto \bE\big[\varphi(Z)\big]\in [0,\infty]
$
is measurable.
Hence for all $d\in\bN$ and all nonnegative Borel functions
$\varphi:[0,T]\times\bR^d\to[0,\infty)$ it holds that the mapping
\begin{equation}                                  \label{measurability 1}
\Lambda\times\bR^d\ni(t,s,x)\mapsto
\bE\Big[\varphi\big(s,X^{d,0,t,x}_{s}\big)\Big]
\in[0,\infty]
\end{equation}
is measurable. Analogously, Corollary \ref{corollary prob 1}
ensures for all $d\in\bN$, $N\in\bN$ and all nonnegative Borel functions
$\varphi:[0,T]\times\bR^d\to[0,\infty)$ that the mapping
\begin{equation}                                  \label{measurability 2}
\Lambda\times\bR^d\ni(t,s,x)\mapsto
\bE\Big[\varphi\big(s,\cX^{d,0,t,x,N}_{s}\big)\Big]
\in[0,\infty]
\end{equation}
is measurable.
Then combining \eqref{assumption Lip f}, \eqref{assumption growth f g},
\eqref{measurability 1}, 
\eqref{q moment est SDE}, \eqref{L q continuity SDE}, 
\eqref{L2 est proc V}, and \eqref{L2 cont est proc V},
and applying Proposition \ref{Prop FP}
(with
$F\cal f^d$, $G\cal g^d$, $\bX^{t,x}_s\cal X^{d,0,t,x}_s$,
and $\bV^{t,x}_s\cal V^{d,0,t,x}_s$
in the notation of Proposition \ref{Prop FP}),
we obtain (i) and (ii).
Similarly, by \eqref{assumption Lip f}, \eqref{assumption growth f g},
\eqref{measurability 2},
\eqref{L q est Euler SDE}, \eqref{cV L q est 0}, \eqref{L2 cont est proc cV},
Proposition \ref{Prop FP}, and e.g., Lemma 3.10 in \cite{NW2022},
we get (iii) and (iv).
The proof of this lemma is completed.
\end{proof}

Next, we present a perturbation lemma for stochastic fixed-point equations
associated with different random variables, 
which will be applied to prove the main results, Theorems \ref{thm MLP conv}
and \ref{MLP complexity}, in Section \ref{section proof main}.

\begin{lemma}[Perturbation of stochastic fixed-point equations]                                           
\label{lemma perturbation}
Let $d\in\bN$, $a,a_1,b,c,L\in[0,\infty)$, 
$\alpha,\alpha_1,\beta,\delta,\gamma,\gamma_1,\kappa,\rho\in[0,\infty)$,
$T\in(0,\infty)$, $q\in(2,\infty)$, 
let $(\bX^{t,x,k}_{s})_{s\in[t,T]}:[t,T]\times\Omega\to\bR^d$,
$t\in[0,T]$, $x\in\bR^d$, $k\in\{1,2\}$, be 
$\cB([0,T])\otimes\cF/\cB(\bR^d)$-measurable functions,
let $F:[0,T]\times\bR^d\times\bR\times\bR^d\to\bR$, 
$G:\bR^d\to\bR$,
and 
$u_k:[0,T)\times\bR^d\to\bR$, 
$w_k:[0,T)\times\bR^d\to\bR^d$,
$k\in\{1,2\}$, be Borel functions.
For all $t\in[0,T]$, $s\in[t,T]$, and all Borel functions 
$h:\bR^d\times\bR^d\to[0,\infty)$ assume that
$
\bR^d\times\bR^d\ni(y_1,y_2)\mapsto\bE[h(\bX^{t,y_1,1}_{s},\bX^{t,y_2,1}_{s})]
\in[0,\infty]
$ 
is measurable. 
For every $(t,x)\in[0,T)\times\bR^d$ and $k\in\{1,2\}$, 
let $\bV^{t,x,k}:[t,T)\to\bR^d$ be a stochastic process such that
\begin{equation}                                          \label{est V t x k s}
\big\|\bV^{t,x,k}_s\big\|_{L_2}^2\leq C_{d,T}(s-t)^{-1}
\quad
\text{for all $s\in[t,T)$},
\end{equation}
where $C_{d,T}$ is a positive constant only depending on $d$ and $T$.
Moreover, for all $x,x',y,y'\in\bR^d$, $v,v'\in\bR$, $t\in[0,T)$, $s\in[t,T)$,
$r\in[s,T)$, $k\in\{1,2\}$, and all Borel functions 
$h:\bR^d\times\bR^d\to[0,\infty)$ assume that 
\begin{align}
&
\bX^{t,x,k}_{t}=x, 
\quad
\big\|\big(d^p+\big\|\bX^{t,x,k}_{s}\big\|^2\big)\big\|_{L_1}
\leq a(d^p+\|x\|^2),
\label{a b est}
\\
&
\big\|\big(d^p+\big\|\bX^{t,x,k}_{s}\big\|^2\big)\big\|_{L_{q/2}}
\leq a_1(d^p+\|x\|^2),
\quad
\big\|\big(d^p+\big\|\bX^{t,x,k}_{s}\big\|^2\big)\big\|_{L_2}
\leq a_2(d^p+\|x\|^2), 
\label{a b est 2 q}
\\
&
|G(x)|^2\leq b(d^p+\|x\|^2),
\quad
|F(t,x,v,y)|^2\leq b(d^p+\|x\|^2+|v|^2+\|y\|^2),
\label{growth F G}
\\
&
\big\|\bX^{t,x,1}_s-\bX^{t,x',1}_s\big\|_{L_2}^2
\leq
\alpha\|x-x'\|^2,
\quad
\big\|\bX^{t,x,1}_s-\bX^{t,x',1}_s\big\|_{L_{2q/(q-2)}}^{(q-2)/q}
\leq
\alpha_1\|x-x'\|^2,
\label{L2 cont Z}
\\
&
\big\|\bX^{t,x,1}_s-\bX^{t,x,2}_s\big\|_{L_2}^2
\leq
\gamma\delta^2(d^p+\|x\|^2),
\quad
\big\|\bX^{t,x,1}_s-\bX^{t,x,2}_s\big\|_{L_{2q/(q-2)}}^{(q-2)/q}
\leq
\gamma_1\delta^2(d^p+\|x\|^2),
\label{error Z 1 2}
\\
&
\big\|\bV^{t,x,1}_s-\bV^{t,x',1}_s\big\|_{L_2}^2
\leq
\beta(s-t)^{-1}\|x-x'\|^2,
\quad
\big\|\bV^{t,x,1}_s-\bV^{t,x,2}_s\big\|_{L_2}^2
\leq
\kappa\delta^2(s-t)^{-1}(d^p+\|x\|^2),
\label{L2 cont error V}
\\
&
\bE\left[\bE\Big[h(\bX^{s,x',1}_{r},\bX^{s,y',1}_{r})\Big]
\Big|_{(x',y')=(\bX^{t,x,1}_{s},\bX^{t,y,1}_{s})}\right]
=\bE\Big[h(\bX^{t,x,1}_{r},\bX^{t,y,1}_{r})\Big],
\label{flow property Z}
\\
&
|F(t,x,v,y)-F(t,x',v',y')|^2
\leq L(\|x-x'\|^2+|v-v'|^2+\|y-y'\|^2),
\quad
|G(x)-G(x')|^2\leq L\|x-x'\|^2,
\label{F G Lip}
\\
&
\big\|G(\bX^{t,x,k}_{T})\big(1,\bV^{t,x,k}_T\big)\big\|_{L_1}
+
\int_t^T
\big\|F\big(s,\bX^{t,x,k}_{s},u_k(s,\bX^{t,x,k}_{s}),w_k(s,\bX^{t,x,k}_{s})\big)
\big(1,\bV^{t,x,k}_s\big)\big\|_{L_1}
\,ds<\infty,
\nonumber\\
&
(u_k(t,x),w_k(t,x))
=\bE\bigg[G(\bX^{t,x,k}_{T})\big(1,\bV^{t,x,k}_T\big)
+
\int_t^T
F\big(s,\bX^{t,x,k}_{s},u_k(s,\bX^{t,x,k}_{s}),w_k(s,\bX^{t,x,k}_{s})\big)
\big(1,\bV^{t,x,k}_s\big)\,ds\bigg],
\label{BEL u w k}
\\
&                                     
|u_k(t,x)|^2+(T-t)\|w_k(t,x)\|^2
\leq c(d^p+\|x\|^2).
\label{cond growth u w k}
\end{align}
Then the following holds.
\begin{enumerate}[(i)]
\item
For all $t\in[0,T)$ and $x,y\in\bR^d$ we have that
\begin{equation}
\label{local Lip u 1}
|u_1(t,x)-u_1(t,y)|^2+(T-t)\|w_1(t,x)-w_1(t,y)\|^2
\leq 
c_{d,1}e^{c_{d,2}T}(d^p+\|x\|^2)\|x-y\|^2,
\end{equation}
where
\begin{equation}
\label{def c 1}
c_{d,1}:=4\Big[
\big(2(2q/(q-1))^{\frac{2(q-1)}{q}}[2(q-1)/q]+1\big)
a_1b\alpha_1\beta
+
\alpha L\big(1+3C_{d,T}\big)
\Big]
(1+T)^2(1+c),
\end{equation}
and
\begin{equation}
\label{def c 2}
c_{d,2}:=64L\big(C_{d,T}+1\big)(1+T)T.
\end{equation}
\item
For all $(t,x)\in[0,T)\times\bR^d$ it holds that
\begin{equation}                                     \label{u w difference}
|u_1(t,x)-u_2(t,x)|^2+(T-t)\|w_1(t,x)-w_2(t,x)\|^2
\leq 
c_{d,3}e^{c_{d,4}T}\delta^2(d^p+\|x\|^2)^2,
\end{equation}
where
\begin{align}
c_{d,3}
:=
&
2\Big[
L\gamma+2(\gamma LC_{d,T}+\kappa ab)
+
4LT(1+T)\big(\gamma+8a_1\gamma_1c_{d,1}e^{c_{d,2}T}\big)
\nonumber\\
&
+
4T(1+T)
\left(
C_{d,T}L\big(2\gamma+c_{d,1}e^{c_{d,2}T}a_1\gamma_1\big)
+4ab\kappa(1+4c)
\right)
\Big],
\label{def c 3}
\end{align}
and
\begin{equation*}
c_{d,4}:=8L(1+T)T\big(C_{d,T}a_2^2T+4a^2\big).
\end{equation*}
\end{enumerate}
\end{lemma}

\begin{proof}
By \eqref{BEL u w k} and the triangle inequality, 
we first notice for all $t\in[0,T)$, $s\in[t,T)$, and $x,y\in\bR^d$ that
\begin{align}
E^{t,x,y}_1(s)
&
:=
\|u_1(s,\bX^{t,x,1}_s)-u_1(s,\bX^{t,y,1}_s)\|_{L_2}^2
+
(T-s)\|w_1(s,\bX^{t,x,1}_s)-w_1(s,\bX^{t,y,1}_s)\|_{L_2}^2
\nonumber\\
&
\leq 
\sum_{i=1}^3A_i^{t,x,y}(s),
\label{ineq u 1 t x y}
\end{align}
where
\begin{align*}
&
A_1^{t,x,y}(s)
:=
\bE\Bigg[
\Big(\bE\Big[
\big(G(\bX^{s,x',1}_{T})-G(\bX^{s,y',1}_{T})\big)
+\int_s^T
\big[
F\big(r,\bX^{s,x',1}_{r},u_1(r,\bX^{s,x',1}_{r}),w_1(r,\bX^{s,x',1}_{r})\big)
\\
& \qquad \qquad \qquad
-
F\big(r,\bX^{s,y',1}_{s},u_1(r,\bX^{s,y',1}_{r}),w_1(r,\bX^{t,y',1}_{r})\big)
\big]
\,dr
\Big]\Big)^2\bigg|_{(x',y')=(\bX^{t,x,1}_s,\bX^{t,y,1}_s)}
\Bigg],
\\
&
A_2^{t,x,y}(s)
:=
2(T-s)
\bE\left[
\big\|
G(\bX^{s,x',1}_{T})\bV^{s,x',1}_T
-
G(\bX^{s,y',1}_{T})\bV^{s,y',1}_T
\big\|_{L_1}^2\Big|_{(x',y')=(\bX^{t,x,1}_s,\bX^{t,y,1}_s)}
\right],
\end{align*}
and
\begin{align*}
A_3^{t,x,y}(s)
:=
2(T-s)
\bE
&
\Bigg[
\bigg(
\int_s^T
\big\|F\big(r,\bX^{s,x',1}_{r},u_1(r,\bX^{s,x',1}_{r}),w_1(r,\bX^{s,x',1}_{r})\big)
\bV^{s,x',1}_r
\nonumber\\
&
-
F\big(r,\bX^{s,y',1}_{r},u_1(r,\bX^{s,y',1}_{r}),w_1(r,\bX^{s,y',1}_{r})\big)
\bV^{s,y',1}_r\big\|_{L_1}
\,dr
\bigg)^2
\bigg|_{(x',y')=(\bX^{t,x,1}_s,\bX^{t,y,1}_s)}
\Bigg].
\end{align*}
By \eqref{L2 cont Z}, \eqref{flow property Z}, \eqref{F G Lip},
the triangle inequality, Fubini's theorem, Cauchy-Schwarz inequality, 
and Jensen's inequality
we have for all $t\in[0,T)$, $s\in[t,T)$, and $x,y\in\bR^d$ that
\begin{align}
&
A_1^{t,x,y}(s)
\nonumber\\
&
\leq 
2\big\|G(\bX^{t,x,1}_T)-G(\bX^{t,y,1}_T)\big\|_{L_2}^2
+
2\bE\Bigg[
\bigg(\int_s^T\bE\Big[
F\big(r,\bX^{s,x',1}_r,u_1(r,\bX^{s,x',1}_r),w_1(r,\bX^{s,x',1}_r)\big)
\nonumber\\
& \quad
-
F\big(r,\bX^{s,y',1}_r,u_1(r,\bX^{s,y',1}_r),w_1(r,\bX^{s,y',1}_r)\big)\Big]\,dr\bigg)^2
\bigg|_{(x',y')=(\bX^{t,x,1}_s,\bX^{t,y,1}_s)}
\Bigg]
\nonumber\\
&
\leq 2L\big\|\bX^{t,x,1}_T-\bX^{t,y,1}_T\big\|_{L_2}^2
+2L
\bE\left[
\left(
\int_s^T
\big\|\bX^{s,x',1}_r-\bX^{s,y',1}_r\big\|_{L_2}
dr
\right)^2
\bigg|_{(x',y')=(\bX^{t,x,1}_s,\bX^{t,y,1}_s)}
\right]
\nonumber\\
& \quad
+2L\bE\Bigg[
\bigg(
\int_s^T
\Big(
\big\|u_1(r,\bX^{s,x',1}_r)-u_1(r,\bX^{s,y',1}_r)\big\|_{L_2}^2
\nonumber\\
& \quad
+
\big\|w_1(r,\bX^{s,x',1}_r)-w_1(r,\bX^{s,y',1}_r)\big\|_{L_2}^2
\Big)^{1/2}
dr
\bigg)^2
\bigg|_{(x',y')=(\bX^{t,x,1}_s,\bX^{t,y,1}_s)}
\Bigg]
\nonumber\\
&
\leq 
2\alpha L(1+T)\|x-y\|^2
+
2L(1+T)\bE\Bigg[
\bigg(
\int_s^T
\Big(
\big\|u_1(r,\bX^{s,x',1}_r)-u_1(r,\bX^{s,y',1}_r)\big\|_{L_2}^2
\nonumber\\
& \quad
+
(T-r)\big\|w_1(r,\bX^{s,x',1}_r)-w_1(r,\bX^{s,y',1}_r)\big\|_{L_2}^2
\Big)^{1/2}
(T-r)^{-1/2}
dr
\bigg)^2
\bigg|_{(x',y')=(\bX^{t,x,1}_s,\bX^{t,y,1}_s)}
\Bigg]
\nonumber\\
&
\leq 
2\alpha L(1+T)\|x-y\|^2
+4L(1+T)T^{1/2}
\int_s^T
E^{t,x,y}_1(r)
(T-r)^{-1/2}
\,dr.
\label{A 1 t x y est}
\end{align}
Furthermore, by \eqref{est V t x k s}, \eqref{a b est 2 q}, \eqref{growth F G}, 
\eqref{L2 cont Z}, \eqref{L2 cont error V}, \eqref{flow property Z}, 
and H\"older's inequality
it holds for all $t\in[0,T)$, $s\in[t,T)$, and $x,y\in\bR^d$ that
\begin{align}
A_2^{t,x,y}(s)
&
\leq
4(T-s)\bE\Bigg[
\big\|
G(\bX^{s,x',1}_{T})
(
\bV^{s,x',1}_T
-
\bV^{s,y',1}_T
)
\big\|_{L_2}^2
\bigg|_{(x',y')=(\bX^{t,x,1}_s,\bX^{t,y,1}_s)}
\Bigg]
\nonumber\\
& \quad
+
4(T-s)\bE\Bigg[
\big\|
\big[
G(\bX^{s,x',1}_{T})
-
G(\bX^{s,y',1}_{T})
\big]
\bV^{s,y',1}_T
\big\|_{L_2}^2
\bigg|_{(x',y')=(\bX^{t,x,1}_s,\bX^{t,y,1}_s)}
\Bigg]
\nonumber\\
&
\leq
4(T-s)
\bE\left[\big\|G(\bX^{s,x',1}_T)\big\|_{L_2}^2
\cdot
\big\|
\bV^{s,x',1}_T-\bV^{s,y',1}_T
\big\|_{L_2}^2
\Big|_{(x',y')=(\bX^{t,x,1}_s,\bX^{t,y,1}_s)}
\right]
\nonumber\\
& \quad
+
4(T-s)
\bE\left[
\big\|G(\bX^{s,x',1}_T)-G(\bX^{s,y',1}_T)\big\|_{L_2}^2
\cdot
\big\|
\bV^{s,y',1}_T
\big\|_{L_2}^2
\Big|_{(x',y')=(\bX^{t,x,1}_s,\bX^{t,y,1}_s)}
\right]
\nonumber\\
&
\leq
4(T-s)
\big\|G(\bX^{t,x,1}_T)\big\|_{L_q}^2
\left(
\bE\left[
\big\|
\bV^{s,x',1}_T-\bV^{s,y',1}_T
\big\|_{L_2}^{2q/(q-2)}
\Big|_{(x',y')=(\bX^{t,x,1}_s,\bX^{t,y,1}_s)}
\right]
\right)^{\frac{q-2}{q}}
\nonumber\\
& \quad
+
4C_{d,T}\big\|G(\bX^{t,x,1}_T)-G(\bX^{t,y,1}_T)\big\|_{L_2}^2
\nonumber\\
&
\leq
4b\beta
\big\|(d^p+\big\|\bX^{t,x,1}_T\big\|^2\big)\big\|_{L_{q/2}}
\cdot
\big\|
\bX^{t,x,1}_s-\bX^{t,x,1}_s
\big\|_{L_{2q/(q-2)}}^2
+4C_{d,T}L
\big\|
\bX^{t,x,1}_s-\bX^{t,x,1}_s
\big\|_{L_2}^2
\nonumber\\
&
\leq
4\big(a_1b\alpha_1\beta+\alpha LC_{d,T}\big)
(d^p+\|x\|^2)\|x-y\|^2.
\label{A 2 t x y est}
\end{align}
In addition, by the triangle inequality 
we notice for all $t\in[0,T)$, $s\in[t,T)$, and $x,y\in\bR^d$ that
\begin{equation}
\label{ineq A t x y 3}
A^{t,x,y}_3(s)\leq A^{t,x,y}_{3,1}(s)+A^{t,x,y}_{3,2}(s),
\end{equation}
where 
\begin{align*}
A^{t,x,y}_{3,1}(s)
:=
2(T-s)
\bE
\Bigg[
&
\bigg(
\int_s^T
\big\|
F\big(r,\bX^{s,x',1}_{r},u_1(r,\bX^{s,x',1}_{r}),w_1(r,\bX^{s,x',1}_{r})\big)
\nonumber\\
&
\big(\bV^{s,x',1}_r-\bV^{s,y',1}_r\big)
\big\|_{L_1}
\,dr
\bigg)^2
\bigg|_{(x',y')=(\bX^{t,x,1}_s,\bX^{t,y,1}_s)}
\Bigg],
\end{align*}
and
\begin{align*}
A^{t,x,y}_{3,2}(s)
:=
2(T-s)
\bE
\Bigg[
&
\bigg(
\int_s^T
\big\|
\big(
F\big(r,\bX^{s,x',1}_{r},u_1(r,\bX^{s,x',1}_{r}),w_1(r,\bX^{s,x',1}_{r})\big)
\nonumber\\
&
-
F\big(r,\bX^{s,y',1}_{r},u_1(r,\bX^{s,y',1}_{r}),w_1(r,\bX^{s,y',1}_{r})\big)
\big)
\bV^{s,y',1}_r
\big\|_{L_1}
\,dr
\bigg)^2
\bigg|_{(x',y')=(\bX^{t,x,1}_s,\bX^{t,y,1}_s)}
\Bigg].
\end{align*}
By \eqref{time integral est}, \eqref{a b est 2 q}, \eqref{growth F G}, 
\eqref{flow property Z}, \eqref{cond growth u w k},
H\"older's inequality, and Fubini theorem it holds for all $t\in[0,T)$,
$s\in[t,T)$, and $x,y\in\bR^d$ that
\begin{align}
&
A^{t,x,y}_{3,1}(s)
\nonumber\\
&
\leq 
2(T-s)\bE\Bigg[
\bigg(
\int_s^T
\big\|
F\big(
r,\bX^{s,x',1}_r,u_1(r,\bX^{s,x',1}_r),w_1(r,\bX^{s,x',1}_r)
\big)
\big\|_{L_2}
\nonumber\\
& \quad
\cdot
\big\|\bV^{s,x',1}_r-\bV^{s,y',1}_r\big\|_{L_2}\,
dr
\bigg)^2\bigg|_{(x',y')=(\bX^{t,x,1}_s,\bX^{t,y,1}_s)}
\Bigg]
\nonumber\\
&
\leq
2b\beta(1+T)(T-s)
\bE\Bigg[
\bigg(
\int_s^T
\frac
{\left(\bE\left[d^p+\big\|\bX^{s,x',1}_r\big\|^2+\big|u_1(r,\bX^{s,x',1}_r)\big|^2
+(T-r)\big\|w_1(r,\bX^{s,x',1}_r)\big\|^2
\right]\right)^{1/2}}
{(T-r)^{1/2}}
\nonumber\\
& \quad
\cdot
\frac{\|x'-y'\|}{(r-s)^{1/2}}
\,dr
\bigg)^2\bigg|_{(x',y')=(\bX^{t,x,1}_s,\bX^{t,y,1}_s)}
\Bigg]
\nonumber\\
&
\leq
2b\beta(1+T)(1+c)(T-s)
\bE\Bigg[
\bigg(
\int_s^T
\frac
{\big\|\big(d^p+\big\|\bX^{s,x',1}_r\big\|^2\big)\big\|_{L_1}^{1/2}\cdot\|x'-y'\|}
{(T-r)^{1/2}(r-s)^{1/2}}
\,dr
\bigg)^2\bigg|_{(x',y')=(\bX^{t,x,1}_s,\bX^{t,y,1}_s)}
\Bigg]
\nonumber\\
&
\leq 
2b\beta(1+T)(1+c)(T-s)
\bE\Bigg[
\left(
\int_s^T
\big\|\big(d^p+\big\|\bX^{s,x',1}_r\big\|^2\big)\big\|_{L_1}^{q/2}\,dr
\right)^{2/q}
\nonumber\\
& \quad
\cdot
\left(
\int_s^T(T-r)^{\frac{-q}{2(q-1)}}(r-s)^{\frac{-q}{2(q-1)}}\,dr
\right)^{\frac{2(q-1)}{q}}
\|x'-y'\|^2
\bigg|_{(x',y')=(\bX^{t,x,1}_s,\bX^{t,y,1}_s)}
\Bigg]
\nonumber\\
&
\leq
8\Big(\frac{2q}{q-1}\Big)^{\frac{2(q-1)}{q}}
b\beta(1+T)(1+C)(T-s)^{\frac{q-2}{q}}
\nonumber\\
& \quad
\cdot
\bE\Bigg[
\left(
\int_s^T
\big\|
\big(
d^p+\big\|\bX^{s,x',1}_r\big\|^2
\big)
\big\|_{L_{q/2}}^{q/2}\,dr
\right)^{2/q}
\|x'-y'\|^2
\bigg|_{(x',y')=(\bX^{t,x,1}_s,\bX^{t,y,1}_s)}
\Bigg]
\nonumber\\
&
\leq
8(2q/(q-1))^{\frac{2(q-1)}{q}}
b\beta(1+T)(1+C)(T-s)^{\frac{q-2}{q}}
\nonumber\\
& \quad
\cdot
\left(
\int_s^T
\big\|
\big(
d^p+\big\|\bX^{t,x,1}_s\big\|^2
\big)
\big\|_{L_{q/2}}^{q/2}\,dr
\right)^{2/q}
\big\|
\bX^{t,x,1}_s-\bX^{t,y,1}_s
\big\|_{L_{2q/(q-2)}}^2
\nonumber\\
&
\leq
8(2q/(q-1))^{\frac{2(q-1)}{q}}[2(q-1)/q]
a_1b\alpha_1\beta(1+T)T(1+c)(d^p+\|x\|^2)\|x-y\|^2.
\label{A t x y 3 1 est}
\end{align}
Moreover, by \eqref{est V t x k s}, \eqref{L2 cont Z}, \eqref{F G Lip} 
Cauchy-Schwarz inequality, and Jensen's iequality we have
for all $t\in[0,T)$, $s\in[t,T)$, and $x,y\in\bR^d$ that
\begin{align}
&
A^{t,x,y}_{3,2}(s)
\nonumber\\
&
\leq 
2(T-s)
\bE\Bigg[
\bigg(
\int_s^T
\big\|
F\big(r,\bX^{s,x',1}_r,u_1(r,\bX^{s,x',1}_r),w_1(r,\bX^{s,x',1}_r)\big)
\nonumber\\
& \quad
-
F\big(r,\bX^{s,y',1}_r,u_1(r,\bX^{s,y',1}_r),w_1(r,\bX^{s,y',1}_r)\big)
\big\|_{L_2}^2
\cdot
\big\|\bV^{s,y',1}_r\big\|_{L_2}^2
\,dr
\bigg)^2\bigg|_{(x',y')=(\bX^{t,x,1}_s,\bX^{t,y,1}_s)}
\Bigg]
\nonumber\\
& 
\leq
2LC_{d,T}(T-s)\bE\Bigg[
\bigg(\int_s^T
\Big(
\big\|\bX^{s,x',1}_r-\bX^{s,y',1}_r\big\|_{L_2}^2
+\big\|u_1(r,\bX^{s,x',1}_r)-u_1(r,\bX^{s,y',1}_r)\big\|_{L_2}^2
\nonumber\\
& \quad
+\big\|w_1(r,\bX^{s,x',1}_r)-w_1(r,\bX^{s,y',1}_r)\big\|^2
\Big)^{1/2}
(r-s)^{-1/2}
\,dr
\bigg)^2\bigg|_{(x',y')=(\bX^{t,x,1}_s,\bX^{t,y,1}_s)}
\Bigg]
\nonumber\\
&
\leq
4LC_{d,T}(T-s)
\big\|\bX^{t,x,1}_s-\bX^{t,y,1}_s\big\|_{L_2}^2
\left(
\int_s^T(r-s)^{-1/2}\,dr
\right)^2
+4LC_{d,T}(1+T)(T-s)
\nonumber\\
& \quad
\cdot
\bE\Bigg[
\bigg(
\int_s^T
\left(
\big\|u_1(r,\bX^{s,x',1}_r)-u_1(r,\bX^{s,y',1}_r)\big\|_{L_2}^2
+(T-r)\big\|w_1(r,\bX^{s,x',1}_r)-w_1(r,\bX^{s,y',1}_r)\big\|_{L_2}^2
\right)^{1/2}
\nonumber\\
& \quad
\cdot
(T-r)^{-1/2}(r-s)^{-1/2}
\,dr
\bigg)^2
\bigg|_{(x',y')=(\bX^{t,x,1}_s,\bX^{t,y,1}_s)}
\Bigg]
\nonumber\\
&
\leq
8\alpha LC_{d,T}(T-s)^2\|x-y\|^2
+
16LC_{d,T}(1+T)(T-s)
\int_s^T
(T-r)^{-1/2}(r-s)^{-1/2}
E^{t,x,y}_1(r)
\,dr.
\nonumber\\
\end{align}
This together with \eqref{ineq A t x y 3} and \eqref{A t x y 3 1 est}
imply for all $t\in[0,T)$, $s\in[t,T)$, and $x,y\in\bR^d$ that
\begin{align}
A^{t,x,y}_3(s)
\leq
&
8\Big[
(2q/(q-1))^{\frac{2(q-1)}{q}}[2(q-1)/q]
a_1b\alpha_1\beta
+
\alpha LC_{d,T}
\Big]
(1+T)T(1+c)(d^p+\|x\|^2)\|x-y\|^2
\nonumber\\
&
+
16LC_{d,T}(1+T)T
\int_s^T
(T-r)^{-1/2}(r-s)^{-1/2}
E^{t,x,y}_1(r)
\,dr.
\label{A 3 t x y est}
\end{align}
Then combining \eqref{ineq u 1 t x y}, 
\eqref{A 1 t x y est}, \eqref{A 2 t x y est}, and \eqref{A 3 t x y est} yields 
for all $t\in[0,T)$, $s\in[t,T)$, and $x,y\in\bR^d$ that  
\begin{align}
E^{t,x,y}_1(s)
\leq
c_{d,1}(d^p+\|x\|^2)\|x-y\|^2
+\frac{c_{d,2}}{4}\int_s^T
(T-r)^{-1/2}(r-s)^{-1/2}
E^{t,x,y}_1(r)
\,dr,
\label{E t x y 1 est}
\end{align}
where $c_{d,1}$ and $c_{d,2}$ are the positive constants 
defined by \eqref{def c 1} and \eqref{def c 2}, respectively.
By \eqref{time integral est 1/2}, \eqref{a b est}, \eqref{cond growth u w k}, 
\eqref{E t x y 1 est}, and Gr\"onwall's lemma, 
it holds for all $t\in[0,T)$, $s\in[t,T)$, and $x,y\in\bR^d$ that
\begin{align*}
E^{t,x,y}_1(s)
&
\leq 
c_{d,1}(d^p+\|x\|^2)\|x-y\|^2
\exp\bigg\{
\frac{c_{d,2}}{4}\int_s^T
(T-r)^{-1/2}(r-s)^{-1/2}
\,dr
\bigg\}
\\
&
\leq
c_{d,1}e^{c_{d,2}T}(d^p+\|x\|^2)\|x-y\|^2.
\end{align*}
This ensures that \eqref{local Lip u 1} holds.
Next, by \eqref{BEL u w k}
we notice for all $(t,x)\in[0,T)\times\bR^d$ that
\begin{equation}
\label{ineq u w 1 2}
|u_1(t,x)-u_2(t,x)|^2+(T-t)\|w_1(t,x)-w_2(t,x)\|^2
\leq
2\sum_{i=1}^4B^{t,x}_i,
\end{equation}
where
\begin{align*}
&
B^{t,x}_1
:=
\big\|G(\bX^{t,x,1}_{T})-G(\bX^{t,x,2}_{T})\big\|_{L_1}^2
, \quad
B^{t,x}_2
:=
(T-t)
\big\|G(\bX^{t,x,1}_{T})\bV^{t,x,1}_T
-G(\bX^{t,x,2}_{T})\bV^{t,x,2}_T\big\|_{L_1}^2.
\\
&
B^{t,x}_3
:=
\Bigg(
\bE\bigg[
\int_t^T
\Big[
F\big(s,\bX^{t,x,1}_{s},u_1(s,\bX^{t,x,1}_{s}),w_1(s,\bX^{t,x,1}_{s})\big)
-
F\big(s,\bX^{t,x,2}_{s},u_2(s,\bX^{t,x,2}_{s}),w_2(s,\bX^{t,x,2}_{s})\big)
\Big]
\,ds
\bigg]
\Bigg)^2,
\end{align*}
and
\begin{align*}
B^{t,x}_4
:=
(T-t)
\bigg(
&
\bE\bigg[
\int_t^T
\big\|
F\big(s,\bX^{t,x,1}_{s},u_1(s,\bX^{t,x,1}_{s}),w_1(s,\bX^{t,x,1}_{s})\big)\bV^{t,x,1}_s
\\
&
-
F\big(s,\bX^{t,x,2}_{s},u_2(s,\bX^{t,x,2}_{s}),w_2(s,\bX^{t,x,2}_{s})\big)\bV^{t,x,2}_s
\big\|
\,ds
\bigg]
\bigg)^2.
\end{align*}
By \eqref{est V t x k s}, \eqref{a b est}, \eqref{growth F G},
\eqref{error Z 1 2}, \eqref{L2 cont error V}, \eqref{F G Lip}, 
and Cauchy-Schwarz inequality we have for all $(t,x)\in[0,T)\times\bR^d$ that
\begin{equation}
\label{B t x 1 est}
B^{t,x}_1
\leq
L\big\|\bX^{t,x,1}_T-\bX^{t,x,2}_T\big\|_{L_2}^2
\leq
L\gamma\delta^2(d^p+\|x\|^2),
\end{equation}
and
\begin{align}
B^{t,x}_2
&
\leq
2(T-t)\left(
\Big(
\bE\Big[
\big|G(\bX^{t,x,1}_T)-G(\bX^{t,x,2}_T)\big|
\cdot
\big\|\bV^{t,x,1}_T\big\|
\Big]
\Big)^2
+
\Big(
\bE\Big[
\big|G(\bX^{t,x,2}_T)\big|
\cdot
\big\|\bV^{t,x,1}_T-\bV^{t,x,2}_T\big\|
\Big]
\Big)^2
\right)
\nonumber\\
&
\leq
2L(T-t)\big\|\bX^{t,x,1}_T-\bX^{t,x,2}_T\big\|_{L_2}^2
\cdot
\big\|\bV^{t,x,1}_T\big\|_{L_2}^2
+
2b(T-t)\big\|\big(d^p+\big\|\bX^{t,x,2}_T\big\|^2\big)\big\|_{L_1}
\cdot
\big\|\bV^{t,x,1}_T-\bV^{t,x,2}_T\big\|_{L_2}^2
\nonumber\\
&
\leq
2(\gamma LC_{d,T}+\kappa ab)\delta^2(d^p+\|x\|^2).
\label{B t x 2 est}
\end{align}
Furthermore, we notice for all $(t,x)\in[0,T)\times\bR^d$ that
\begin{equation}
\label{ineq B t x 3}
B^{t,x}_3\leq 2B^{t,x}_{3,1}+2B^{t,x}_{3,2},
\end{equation}
where
$$
B^{t,x}_{3,1}
:=
\Bigg(
\bE
\bigg[
\int_t^T
\Big[
F\big(s,\bX^{t,x,1}_{s},u_1(s,\bX^{t,x,1}_{s}),w_1(s,\bX^{t,x,1}_{s})\big)
-
F\big(s,\bX^{t,x,2}_{s},u_1(s,\bX^{t,x,2}_{s}),w_1(s,\bX^{t,x,2}_{s})\big)
\Big]
\,ds
\bigg]
\Bigg)^2,
$$
and
$$
B^{t,x}_{3,2}
:=
\Bigg(
\bE
\bigg[
\int_t^T
\Big[
F\big(s,\bX^{t,x,2}_{s},u_1(s,\bX^{t,x,2}_{s}),w_1(s,\bX^{t,x,2}_{s})\big)
-
F\big(s,\bX^{t,x,2}_{s},u_2(s,\bX^{t,x,2}_{s}),w_2(s,\bX^{t,x,2}_{s})\big)
\Big]
\,ds
\bigg]
\Bigg)^2.
$$
By \eqref{F G Lip}, \eqref{error Z 1 2}, \eqref{local Lip u 1},
Jensen's inequality, and H\"older's inequality, we have
for all $(t,x)\in[0,T)\times\bR^d$ that
\begin{align}
&
B^{t,x}_{3,1}
\nonumber\\
&
\leq
L\bE\Bigg[
\bigg(
\int_t^T
\Big(
\|\bX^{t,x,1}_s-\bX^{t,x,2}_s\|
+\big|u_1(s,\bX^{t,x,1}_{s})-u_1(s,\bX^{t,x,2}_{s})\big|
+\big\|w_1(s,\bX^{t,x,1}_{s})-w_1(s,\bX^{t,x,2}_{s})\big\|
\Big)
\,ds
\bigg)^2
\Bigg]
\nonumber\\
&
\leq
2L(T-t)\int_t^T\big\|\bX^{t,x,1}_s-\bX^{t,x,2}_s\big\|_{L_2}^2
+4L(1+T)\bE\Bigg[
\bigg(
\int_t^T
(T-s)^{1/2}
\nonumber\\
& \quad
\cdot
\Big(
\big|u_1(s,\bX^{t,x,1}_{s})-u_1(s,\bX^{t,x,2}_{s})\big|^2
+(T-s)\big\|w_1(s,\bX^{t,x,1}_{s})-w_1(s,\bX^{t,x,2}_{s})\big\|^2
\Big)^{1/2}
\,ds
\bigg)^2
\Bigg]
\nonumber\\
&
\leq
2LT^2\gamma\delta^2(d^p+\|x\|^2)
+4L(1+T)c_{d,1}e^{4c_{d,2}T}
\bE
\left[
\bigg(
\int_t^T
\frac
{\big(d^p+\big\|\bX^{t,x,1}_s\big\|^2\big)^{1/2}\big\|\bX^{t,x,1}_s-\bX^{t,x,2}_s\big\|}
{(T-s)^{1/2}}
\,ds
\bigg)^2
\right]
\nonumber\\
&
\leq
2LT^2\gamma\delta^2(d^p+\|x\|^2)+8L(1+T)(T-t)^{1/2}c_{d,1}e^{4c_{d,2}T}
\int_t^T
\frac
{
\bE
\left[
\big(d^p+\big\|\bX^{t,x,1}_s\big\|^2\big)\big\|\bX^{t,x,1}_s-\bX^{t,x,2}_s\big\|^2
\right]
}
{(T-s)^{1/2}}
\,ds
\nonumber\\
&
\leq
2LT^2\gamma\delta^2(d^p+\|x\|^2)
+16L(1+T)(T-t)c_{d,1}e^{4c_{d,2}T}
\sup_{s\in[t,T]}
\big\|\big(d^p+\big\|\bX^{t,x,1}_s\big\|^2\big)\big\|_{L_{q/2}}
\nonumber\\
& \quad
\cdot
\sup_{s\in[t,T]}
\big\|\bX^{t,x,1}_s-\bX^{t,x,2}_s\big\|_{L_{2q/(q-2)}}^2
\nonumber\\
&
\leq
2LT^2\gamma\delta^2(d^p+\|x\|^2)
+16LT(1+T)c_{d,1}e^{4c_{d,2}T}a_1\gamma_1\delta^2(d^p+\|x\|^2)^2.
\label{B t x 3 1 est}
\end{align}
For every $s\in[0,T)$, we define
\begin{equation}
\label{def E s}
E(s):=\sup_{y\in\bR^d}
\frac{|u_1(s,y)-u_2(s,y)|^2+(T-s)\|w_1(s,y)-w_2(s,y)\|^2}{(d^p+\|y\|^2)^2}.
\end{equation}
Then by \eqref{a b est}, \eqref{F G Lip}, and Jensen's inequality,
we obtain for all $(t,x)\in[0,T)\times\bR^d$ that
\begin{align*}
B^{t,x}_{3,2}
&
\leq
L\left(
\bE\left[
\int_t^T
\left(
\big|u_1(s,\bX^{t,x,2}_s)-u_2(s,\bX^{t,x,2}_s)\big|
+
\big\|w_1(s,\bX^{t,x,2}_s)-w_2(s,\bX^{t,x,2}_s)\big\|
\right)
ds
\right]
\right)^2 
\\
&
\leq 2L(1+T)
\Bigg(
\bE\bigg[
\int_t^T
(T-s)^{-1/2}\left(d^p+\big\|\bX^{t,x,2}_s\big\|^2\right)
\\
& \quad
\cdot
\frac
{
\Big(
\big|u_1(s,\bX^{t,x,2}_s)-u_2(s,\bX^{t,x,2}_s)\big|^2
+
(T-s)\big\|w_1(s,\bX^{t,x,2}_s)-w_2(s,\bX^{t,x,2}_s)\big\|^2
\Big)^{1/2}
}
{\left(d^p+\big\|\bX^{t,x,2}_s\big\|^2\right)}
\,ds
\bigg]
\Bigg)^2
\\
&
\leq
2L(1+T)
\left(
\int_t^T
\big(T-s)^{-1/2}\bE\Big[d^p+\big\|\bX^{t,x,2}_s\big\|^2\Big][E(s)\big]^{1/2}
\,ds
\right)^2
\\
&
\leq
4a^2L(1+T)(d^p+\|x\|^2)^2(T-t)^{1/2}\int_t^T(T-s)^{-1/2}E(s)\,ds.
\end{align*}
This together with \eqref{ineq B t x 3} and \eqref{B t x 3 1 est} imply 
for all $(t,x)\in[0,T)\times\bR^d$ that
\begin{align}
B^{t,x}_3
\leq
&
4LT(1+T)\left(\gamma+8a_1\gamma_1c_{d,1}e^{c_{d,2}T}\right)\delta^2(d^p+\|x\|^2)^2
\nonumber\\
&
+
8a^2L(1+T)(d^p+\|x\|^2)^2(T-t)^{1/2}\int_t^T(T-s)^{-1/2}E(s)\,ds.
\label{B t x 3 est}
\end{align}
Next, we notice for all $(t,x)\in[0,T)\times\bR^d$ that
\begin{equation}
\label{ineq B t x 4}
B^{t,x}_4\leq \sum_{i=1}^3B^{t,x}_{4,i},
\end{equation}
where
\begin{align*}
&
B^{t,x}_{4,1}
:=
2(T-t)
\bigg(
\int_t^T
\big\|
\big[
F\big(s,\bX^{t,x,1}_{s},u_1(s,\bX^{t,x,1}_{s}),w_1(s,\bX^{t,x,1}_{s})\big)
\\
& \qquad\qquad\qquad\qquad\quad
-
F\big(s,\bX^{t,x,2}_{s},u_1(s,\bX^{t,x,2}_{s}),w_1(s,\bX^{t,x,2}_{s})\big)\big]
\bV^{t,x,1}_s
\big\|_{L_1}
\,ds
\bigg)^2,
\\
&
B^{t,x}_{4,2}
:=
4(T-t)
\bigg(
\int_t^T
\big\|
F\big(s,\bX^{t,x,2}_{s},u_1(s,\bX^{t,x,2}_{s}),w_1(s,\bX^{t,x,2}_{s})\big)
\big(\bV^{t,x,1}_s-\bV^{t,x,2}_s\big)
\big\|_{L_1}
\,ds
\bigg)^2,
\end{align*}
and
\begin{align*}
B^{t,x}_{4,3}
:=
2(T-t)
\bigg(
&
\int_t^T
\big\|
\big[
F\big(s,\bX^{t,x,2}_{s},u_1(s,\bX^{t,x,2}_{s}),w_1(s,\bX^{t,x,2}_{s})\big)
\\
&
-
F\big(s,\bX^{t,x,2}_{s},u_2(s,\bX^{t,x,2}_{s}),w_2(s,\bX^{t,x,2}_{s})\big)
\big]
\bV^{t,x,2}_s
\big\|_{L_1}
\,ds
\bigg)^2.
\end{align*}
By \eqref{time integral est 1/2}, \eqref{est V t x k s}, \eqref{a b est 2 q}, 
\eqref{error Z 1 2}, \eqref{F G Lip}, \eqref{local Lip u 1}, 
and H\"older's inequality we have for all $(t,x)\in[0,T)\times\bR^d$ that
\begin{align}
B^{t,x}_{4,1}
&
\leq
2L(T-t)\bigg(
\int_t^T
\bE
\Big[
\big(
\big\|\bX^{t,x,1}_s-\bX^{t,x,2}_s\big\|+\big|u_1(s,\bX^{t,x,1}_s)-u_1(s,\bX^{t,x,2}_s)\big|
\nonumber\\
& \quad
+\big\|w_1(s,\bX^{t,x,1}_s)-w_1(s,\bX^{t,x,2}_s\big\|
\big)
\big\|\bV^{t,x,1}_s\big\|
\Big]
ds
\bigg)^2
\nonumber\\
&
\leq 
4LT\left(\int_t^T
\big\|\bX^{t,x,1}_s-\bX^{t,x,2}_s\big\|_{L_2}
\cdot
\big\|\bV^{t,x,1}_s\big\|_{L_2}
\,ds
\right)^2
+8L(1+T)T
\bigg(
\int_t^T(T-s)^{-1/2}
\nonumber\\
& \quad
\cdot
\bE\left[
\left(
\big|u_1(s,\bX^{t,x,1}_s)-u_1(s,\bX^{t,x,2}_s)\big|^2
+(T-s)\big\|w_1(s,\bX^{t,x,1}_s)-w_1(s,\bX^{t,x,2}_s)\big\|^2
\right)^{1/2}
\big\|\bV^{t,x,1}_s\big\|
\right]
ds
\bigg)^2
\nonumber\\
&
\leq
4C_{d,T}LT\gamma\delta^2(d^p+\|x\|^2)
\left(\int_t^T(s-t)^{-1/2}\,ds\right)^2
+8L(1+T)Tc_{d,1}e^{c_{d,2}T}
\nonumber\\
& \quad
\cdot
\left(
\int_t^T
(T-s)^{-1/2}
\bE\left[
\left(d^p+\big\|\bX^{t,x,1}_s\big\|^2\right)^{1/2}
\big\|\bX^{t,x,1}_s-\bX^{t,x,2}_s\big\|
\cdot
\big\|\bV^{t,x,1}_s\big\|
\right]
ds
\right)^2
\nonumber\\
&
\leq
16C_{d,T}LT^2\gamma\delta^2(d^p+\|x\|^2)
+8L(1+T)Tc_{d,1}e^{c_{d,2}T}
\bigg(
\int_t^T(T-s)^{-1/2}
\nonumber\\
& \quad
\cdot
\left(
\bE\left[
\left(d^p+\big\|\bX^{t,x,1}_s\big\|^2\right)
\big\|\bX^{t,x,1}_s-\bX^{t,x,2}_s\big\|^2
\right]
\right)^{1/2}
\big\|\bV^{t,x,1}_s\big\|_{L_2}
\,ds
\bigg)^2
\nonumber\\
&
\leq
16C_{d,T}LT^2\gamma\delta^2(d^p+\|x\|^2)
+8C_{d,T}L(1+T)Tc_{d,1}e^{c_{d,2}T}
\bigg(
\int_t^T
(T-s)^{-1/2}(s-t)^{-1/2}
\nonumber\\
& \quad
\cdot
\big\|\big(
d^p+\big\|\bX^{t,x,1}_s\big\|^2
\big)\big\|_{L_{q/2}}^{1/2}
\cdot
\big\|
\bX^{t,x,1}_s-\bX^{t,x,2}_s
\big\|_{L_{2q/(q-2)}}
\,ds
\bigg)^2
\nonumber\\
&
\leq
16C_{d,T}LT^2\gamma\delta^2(d^p+\|x\|^2)
+128C_{d,T}L(1+T)Tc_{d,1}e^{c_{d,2}T}a_1\gamma_1\delta^2(d^p+\|x\|^2)^2.
\label{B t x 4 1 est}
\end{align}
Moreover, by \eqref{time integral est 1/2}, \eqref{a b est}, \eqref{growth F G},
\eqref{L2 cont error V}, \eqref{cond growth u w k}, and Cauchy-Schwarz inequality
it holds for all $(t,x)\in[0,T)\times\bR^d$ that
\begin{align}
B^{t,x}_{4,2}
&
\leq
4bT\left(
\int_t^T
\bE\left[
\left(
d^p+\big\|\bX^{t,x,2}_s\big\|^2+\big|u_1(s,\bX^{t,x,2}_s)\big|^2
+\big\|w_1(s,\bX^{t,x,2}_s)\big\|^2
\right)^{1/2}
\big\|\bV^{t,x,1}_s-\bV^{t,x,2}_s\big\|
\right]
ds
\right)^2
\nonumber\\
& 
\leq
8bT\left(
\int_t^T
\bE\left[
\left(d^p+\big\|\bX^{t,x,2}_s\big\|^2\right)^{1/2}
\big\|\bV^{t,x,1}_s-\bV^{t,x,2}_s\big\|
\right]
ds
\right)^2
\nonumber\\
& \quad
+8bT(1+T)
\Bigg(
\int_t^T
\bE\bigg[
\frac
{
\big(
\big|u_1(s,\bX^{t,x,2}_s)\big|^2
+(T-s)\big\|w_1(s,\bX^{t,x,2}_s)\big\|^2
\big)^{1/2}
}
{(T-s)^{1/2}}
\big\|\bV^{t,x,1}_s-\bV^{t,x,2}_s\big\|
\bigg]
\,ds
\Bigg)^2
\nonumber\\
&
\leq
8bT\left(
\int_t^T
\big\|\big(d^p+\big\|\bX^{t,x,2}_s\big\|^2\big)\big\|_{L_1}^{1/2}
\cdot
\big\|\bV^{t,x,1}_s-\bV^{t,x,2}_s\big\|_{L_2}
\,ds
\right)^2
\nonumber\\
& \quad
+8bcT(1+T)
\left(
\int_t^T
(T-s)^{-1/2}
\big\|\big(d^p+\big\|\bX^{t,x,2}_s\big\|^2\big)\big\|_{L_1}^{1/2}
\cdot
\big\|\bV^{t,x,1}_s-\bV^{t,x,2}_s\big\|_{L_2}
\,ds
\right)^2
\nonumber\\
&
\leq
8abT\kappa\delta^2(d^p+\|x\|^2)^2
\left(\int_t^T(s-t)^{-1/2}\,ds\right)^2
\nonumber\\
& \quad
+
8abcT(1+T)\kappa\delta^2(d^p+\|x\|^2)^2
\left(\int_t^T(T-s)^{-1/2}(s-t)^{-1/2}\,ds\right)^2
\nonumber\\
&
\leq
32abT^2\kappa\delta^2(d^p+\|x\|^2)^2
+
128abcT(1+T)\kappa\delta^2(d^p+\|x\|^2)^2.
\label{B t x 4 2 est}
\end{align}
In addition, by \eqref{est V t x k s}, \eqref{a b est}, \eqref{a b est 2 q},
\eqref{F G Lip}, H\"older's inequality, and Jensen's inequality
we obtain for all $(t,x)\in[0,T)\times\bR^d$ that
\begin{align}
B^{t,x}_{4,3}
&
\leq
2L(T-t)
\bigg(
\int_t^T
\bE\Big[
\Big(
\big|u_1(s,\bX^{t,x,2}_s)-u_2(s,\bX^{t,x,2}_s)\big|^2
\nonumber\\
& \quad
+
\big\|w_1(s,\bX^{t,x,2}_s)-w_2(s,\bX^{t,x,2}_s)\big\|^2
\Big)^{1/2}
\big\|\bV^{t,x,2}_s\big\|
\Big]
\,ds
\bigg)^2
\nonumber\\
&
\leq 
2L(1+T)(T-t)
\Bigg(
\int_t^T
\bE\bigg[
\big\|\bV^{t,x,2}_s\big\|
\big(d^p+\big\|\bX^{t,x,2}_s\big\|^2\big)^{1/2}
\nonumber\\
& \quad
\cdot
\frac
{
\Big(
\big|u_1(s,\bX^{t,x,2}_s)-u_2(s,\bX^{t,x,2}_s)\big|^2
+
(T-s)\big\|w_1(s,\bX^{t,x,2}_s)-w_2(s,\bX^{t,x,2}_s)\big\|^2
\Big)^{1/2}
}
{(T-s)^{1/2}\big(d^p+\big\|\bX^{t,x,2}_s\big\|^2\big)^{1/2}}
\bigg]
\,ds
\Bigg)^2
\nonumber\\
&
\leq
2L(1+T)(T-t)
\bigg(
\int_t^T
(T-s)^{-1/2}\big[E(s)\big]^{1/2}
\big\|\big(d^p+\big\|\bX^{t,x,2}_s\big\|^2\big)\big\|_{L_1}^{1/2}
\cdot
\big\|\bV^{t,x,2}_s\big\|_{L_2}
\,ds
\bigg)^2
\nonumber\\
& 
\leq
2C_{d,T}La_2^2(1+T)(T-t)(d^p+\|x\|^2)^2
\left(
\int_t^T(T-s)^{-1/2}\big[E(s)\big]^{1/2}\,ds
\right)^2
\nonumber\\
&
\leq
4C_{d,T}La_2^2(1+T)(T-t)^{3/2}(d^p+\|x\|^2)^2
\int_t^T(T-s)^{-1/2}E(s)\,ds.
\label{B t x 4 3 est}
\end{align}
Combining \eqref{ineq B t x 4}, \eqref{B t x 4 1 est}, \eqref{B t x 4 2 est},
and \eqref{B t x 4 3 est} implies for all $(t,x)\in[0,T)\times\bR^d$ that
\begin{align}
B^{t,x}_4
\leq
&
8T(1+T)
\left[
C_{d,T}L\big(2\gamma+c_{d,1}e^{c_{d,2}T}a_1\gamma_1\big)
+4ab\kappa(1+4c)
\right]
\delta^2(d^p+\|x\|^2)^2
\nonumber\\
&
+4C_{d,T}La_2^2(1+T)(T-t)^{3/2}(d^p+\|x\|^2)^2
\int_t^T(T-s)^{-1/2}E(s)\,ds.
\label{B t x 4 est}
\end{align}
Then by \eqref{ineq u w 1 2}, \eqref{B t x 1 est}, \eqref{B t x 2 est},
\eqref{B t x 3 est}, and \eqref{B t x 4 est}, 
we obtain for all $(t,x)\in[0,T)\times\bR^d$ that
\begin{align}
&
|u_1(t,x)-u_2(t,x)|^2+(T-t)\|w_1(t,x)-w_2(t,x)\|^2
\nonumber\\
&
\leq
c_{d,3}\delta^2(d^p+\|x\|^2)^2
+
4L(1+T)(C_{d,T}a_2^2T+4a^2)(d^p+\|x\|^2)^2(T-t)^{1/2}
\int_t^T(T-s)^{-1/2}E(s)\,ds,
\label{est u w 1 2 t x}
\end{align}
where $c_{d,3}$ is defined by \eqref{def c 3}.
By \eqref{cond growth u w k} and \eqref{est u w 1 2 t x},
the application of Gr\"onwall's lemma ensures \eqref{u w difference}.
The proof of this proposition is therefore completed.
\end{proof}

\section{\textbf{Semilinear Parabolic Partial Differential Equations}}
\label{section PIDE}
In this section, we assume the settings in Section \ref{section setting},
and show the following proposition and theorem 
which demonstrate the uniqueness and existence of viscosity solutions to
PDE \eqref{APIDE}, and establish a probabilistic representation 
for the unique viscosity solution and its gradient. The details for the
construction of viscosity solutions of PDE \eqref{APIDE} can be found
in Appendix \ref{appendix existence PDE}.
\begin{proposition}[Uniqueness]                   \label{proposition uniqueness PDE}
Let Assumption \ref{assumption Lip and growth} hold, 
and let $d\in\bN$.
Assume that $u_1,u_2\in C_{lin}([0,T)\times\bR^d)$ 
are two viscosity solutions of PDE \eqref{APIDE} such that 
$u_1(T,x)=u_2(T,x)=g(x)$ for all $x\in\bR^d$.
Then we have for all
$(t,x)\in[0,T]\times\bR^d$ that 
$u_1(t,x)=u_2(t,x)$.
\end{proposition}

\begin{proof}
We define the function $f^*:[0,T]\times\bR^d\times\bR\times\bR^d\to\bR$ by
$$
f^*(t,x,v,w):=f(t,x,v,w\sigma^{-1}(x)), 
\quad (t,x,v,w)\in[0,T]\times\bR^d\times\bR\times\bR^d.
$$
Then by \eqref{assumption Lip f}, \eqref{bbd inverse sigma}, 
and Cauchy-Schwarz inequality
it holds for all $(t,x)\in[0,T]\times\bR^d$, 
$v_1,v_2\in\bR$, and $w_1,w_2\in\bR^d$ that
\begin{align}
|f^*(t,x,v_1,w_1)-f^*(t,x,v_2,w_2)|^2
&
\leq
L\big(|v_1-v_2|^2+\|(w_1-w_2)\sigma^{-1}(x)\|^2\big)
\nonumber\\
&
\leq
L\big(1+d\varepsilon_d^{-1}\big)\big(|v_1-v_2|^2+\|(w_1-w_2)\|^2\big).
\label{Lip f *}
\end{align}
By \eqref{assumption Lip mu sigma}, \eqref{assumption growth f g}, 
and \eqref{Lip f *}, the application of Theorem 3.5 in \cite{BBP1997}
(with $b\cal\mu$, $\sigma\cal\sigma$, $\beta\cal 0$, $g_i\cal g$, $f_i\cal f^*$,
and $k=0$ in the notation of Theorem 3.5 in \cite{BBP1997})
proves Proposition \ref{proposition uniqueness PDE}.
\end{proof}

\begin{proposition}                            \label{theorem PDE existence}
Let Assumptions \ref{assumption Lip and growth}, \ref{assumption ellip},
and \ref{assumption gradient} hold,
and let $d\in\bN$.
Then the following holds:
\begin{enumerate}[(i)]
\item
There exists a unique pair of Borel functions $(u,w)$ such that
$u\in C_{lin}([0,T)\times\bR^d,\bR)$, $w\in C([0,T)\times\bR^d,\bR^d)$, 
and
\begin{equation}
\label{growth FP PIDE}
\sup_{(s,y)\in[0,T)\times\bR^d}
\left(\frac{|u(s,y)|+(T-s)^{1/2}\|w(s,y)\|}{(d^p+\|y\|^2)^{1/2}}
\right)<\infty,
\end{equation}
and it holds for all $(t,x)\in[0,T)\times\bR^d$ that
\begin{align}
&
(u(t,x),w(t,x))
\nonumber\\
&
=\bE\left[g(X^{d,t,x}_T)\left(1,\frac{1}{T-t}
\int_t^T\left[\sigma^{-1}(X^{d,t,x}_{r})
DX^{d,t,x}_{r}\right]^T\,dW^d_r\right)\right]  
\nonumber\\
& \quad    
+\int_t^T\bE\left[
f\big(s,X^{d,t,x}_s,u(s,X^{d,t,x}_s),w(s,X^{d,t,x}_s)\big)
\left(1,\frac{1}{s-t}
\int_t^s\left[\sigma^{-1}(X^{t,x}_{r})
DX^{d,t,x}_{r}\right]^T\,dW^d_r\right)
\right]ds.
\label{BEL PDE}
\end{align}
\item
The function $u:[0,T)\times\bR^d\to\bR$ defined in \eqref{BEL PDE} 
with $u(T,\cdot)=g(\cdot)$
is a viscosity solution of PDE~\eqref{APIDE}.
\item
For all $(t,x)\in[0,T)\times\bR^d$ the gradient of $u$ exists and satisfies
$\nabla_x u(t,x)=w(t,x)$.
\end{enumerate}
\end{proposition}

\begin{proof}
This proposition is proved in Appendix \ref{section proof existence PDE}.
\end{proof}

\section{\textbf{Multilevel Picard approximations}}
\label{section general MLP}

In this section, we introduce and investigate a new class of full-history recursive
multilevel Picard approximation algorithms applicable to semilinear PDEs with
gradient-dependent nonlinearity (c.f. \eqref{APIDE}).
In the main result of this section (see Proposition \ref{corollary MLP error}),
we show an error analysis for these multilevel Picard approximation algorithms,
which will be applied to prove the main results of this paper, 
namely Theorems \ref{thm MLP conv} and \ref{MLP complexity}, 
in Section \ref{section proof main}.

\subsection{Setting}                                 \label{MLP setting}

Let $d\in\bN$, $T\in(0,\infty)$, and $\Theta=\cup_{n=1}^\infty\bZ^n$,
and define
$$
\Delta:=\{(t,s)\in[0,T)\times[0,T]:t\leq s\}.
$$ 
Let $\bX^\theta=(\bX^{\theta,t,x}_{s})_{(t,s,x)\in\Delta\times\bR^d}$:
$\Delta\times\bR^d\times\Omega\to \bR^d$, $\theta\in\Theta$,
be $\cB(\Delta)\otimes\cB(\bR^d)\otimes\cF/\cB(\bR^d)$-measurable functions.
For each $d\in\bN$,
let $\bV^\theta=(\bV^{\theta,t,x}_{s})_{(t,s,x)\in\Delta\times\bR^d}$:
$\Delta\times\bR^d\times \Omega\to \bR^d$, $\theta\in\Theta$, 
be $\cB(\Delta)\otimes\cB(\bR^d)\otimes\cF/\cB(\bR^d)$-measurable functions such that
$\bE\big[\bV^{\theta,t,x}_s\big]=\mathbf{0}$
for all $\theta\in\Theta$ and $(t,s,x)\in\Delta\times\bR^d$.
Assume for all $(t,x)\in[0,T)\times\bR^d$ and $s\in[t,T]$ that 
$\big(\bX^{\theta,t,x}_{s},\bV^{\theta,t,x}_{s}\big)$,
$\theta\in\Theta$, are independent and identically distributed.
Let $\alpha\in[1/2,1)$, and define the function $\varrho:(0,1)\to(0,\infty)$ by 
\begin{equation}
\label{def pdf rho}
\varrho(z):=\frac{z^{-\alpha}(1-z)^{-\alpha}}{\cB(1-\alpha,1-\alpha)}, 
\quad z\in(0,1),
\end{equation}
where $\cB(\beta,\gamma):=\frac{\Gamma(\beta)\Gamma(\gamma)}{\Gamma(\beta+\gamma)}$
denotes the Beta function for all $\beta,\gamma\in(0,\infty)$, 
and $\Gamma$ denotes the Gamma function.
Let $\xi^\theta:\Omega\to[0,1]$, $\theta\in\Theta$, 
be i.i.d. random variables
such that $\bP(\xi^0\leq y)=\int_0^y\varrho(z)\,dz$ for all $y\in[0,1]$.
For each $\theta\in\Theta$ and $t\in[0,T)$, define
$\cR^\theta_t:=t+(T-t)\xi^\theta$.
Moreover, we assume
that 
$$
\big(
\bX^{\theta,t,x}_{s},\bV^{\theta,t,x}_{s}
\big)
_{(\theta,t,s,x)\in\Theta\times\Delta\times\bR^d}
\quad
\text{and} 
\quad
\big(\xi^\theta\big)_{\theta\in\Theta} 
$$
are independent.
For each $d\in \bN$, $(t,x)\in[0,T]\times\bR^d$, $s\in[t,T]$, 
and $\theta\in\Theta$, let 
$\big(\bX^{(\theta,t,x,l,i)}_{s},\bV^{(\theta,t,x,l,i)}_{s}\big)_{(l,i)\in\bN\times\bZ}$
be independent copies of 
$\big(\bX^{\theta,t,x}_{s},\bV^{\theta,t,x}_{s}\big)$.
Let $g\in C([0,T]\times\bR^d\to\bR)$ 
and $f\in C([0,T]\times\bR^d\times\bR\times\bR^d)$.
Moreover, let 
$$
F: C([0,T]\times\bR^d,\bR^{d+1})\to C([0,T]\times\bR^d,\bR)
$$
be the operator such that
\begin{equation}
\label{def F MLP}
[0,T]\times\bR^d\ni(t,x)\mapsto (F(\mathbf{v}))(t,x)
:=f(t,x,\mathbf{v}(t,x))\in\bR,\quad \mathbf{v}\in C([0,T]\times\bR^d,\bR^{d+1}).
\end{equation}
Then for each $n\in\bN_0$, $M\in\bN$, and $\theta\in\Theta$, 
let $U^{\theta}_{n,M}:[0,T)\times \bR^d \times \Omega \to \bR^{d+1}$ 
satisfy for all $(t,x)\in[0,T)\times\bR^d$ and $\omega\in\Omega$ that
$U^{\theta}_{0,M}(t,x)=\mathbf{0}$ and
$$
U^{\theta}_{n,M}(t,x)
=
(g(x),0)
+
\frac{1}{M^n}\sum_{i=1}^{M^n}
\Big[g\Big(\bX^{(\theta,t,x,0,-i)}_{T}\Big)-g(x)\Big]
\Big(1,\bV^{(\theta,t,x,0,-i)}_{T}\Big)
$$
\begin{equation}
\label{def MLP general}
+\sum_{l=0}^{n-1}\frac{T-t}{M^{n-l}}
\left[\sum^{M^{n-l}}_{i=1}
\varrho^{-1}\Big(\frac{\cR^{(\theta,l,i)}_t-t}{T-t}\Big)
\Big[F\big(U^{(\theta,l,i)}_{l,M}\big)
-\mathbf{1}_{\{l\geq 1\}}F\big(U^{(\theta,-l,i)}_{l-1,M}\big)\Big]
\Big(\cR^{(\theta,l,i)}_t,\bX^{(\theta,t,x,l,i)}_{\cR^{(\theta,l,i)}_t}\Big)
\Big(1,\bV^{(\theta,t,x,l,i)}_{\cR^{(\theta,l,i)}_t}\Big)
\right],
\end{equation}
where $\Big(U^{(\theta,l,i)}_{n,M}(t,x)\Big)_{(l,i)
\in \bZ\times\bN_0}$
are independent copies of $U^{\theta}_{n,M}(t,x)$ 
for each $(t,x)\in[0,T)\times\bR^d$,
and $\Big(\cR^{(\theta,l,i)}_t\Big)_{(l,i)
\in \bN\times\bN_0}$
are independent copies of $\cR^\theta_t$ for each $t\in[0,T)$.

Furthermore, let $a,a_1,a_2,a_3,b,b_1,c,L,p,\rho\in(0,\infty)$, 
and let $u:[0,T)\times\bR^d\to\bR$, $w:[0,T)\times\bR^d\to\bR^d$ be
measurable functions. We assume for all $(t,x)\in[0,T)\times\bR^d$, 
$s\in(t,T]$, and $v_1,v_2\in\bR^{d+1}$ that
\begin{align}
&
\big\|g(\bX^{0,t,x}_T)\big\|_{L_1}
+
\big\|g(\bX^{0,t,x}_T)\bV^{0,t,x}_T\big\|_{L_1}
+
\int_t^T
\big\|f\big(s,\bX^{0,t,x}_s,u(s,\bX^{0,t,x}_s),w(s,\bX^{0,t,x}_s)\big)\big\|_{L_1}
ds
\nonumber\\
&
+
\int_t^T
\big\|f\big(s,\bX^{0,t,x}_s,u(s,\bX^{0,t,x}_s),w(s,\bX^{0,t,x}_s)\big)\bV^{0,t,x}_s\big\|_{L_1}
ds
<\infty,
\label{MLP integrability}
\\
&
\big\|\big(d^p+\|\bX^{0,t,x}_s\|^2\big)\big\|_{L_1}
\leq ae^{\rho(s-t)}(d^p+\|x\|^2),
\quad
\big\|\big(d^p+\|\bX^{0,t,x}_s\|^2\big)\big\|_{L_2}
\leq a_1e^{\rho(s-t)}(d^p+\|x\|^2),
\label{MLP Z est}
\\
&
\big\|\bX^{0,t,x}_s-x\big\|_{L_2}^2
\leq
a_2(s-t)e^{\rho(s-t)}(d^p+\|x\|^2),
\quad
\big\|\bX^{0,t,x}_s-x\big\|_{L_4}^2
\leq
a_3(s-t)e^{\rho(s-t)}(d^p+\|x\|^2),
\label{MLP Z x est}
\\
&
\|\bV^{0,t,x}_s\|_{L_2}^2\leq b(s-t)^{-1},
\quad
\|\bV^{0,t,x}_s\|_{L_4}^2
\leq b_1(s-t)^{-1},
\label{MLP V est}
\\
&
\big(u(t,x),w(t,x)\big)
=
\bE\big[g\big(\bX^{0,t,x}_T\big)\big(1,\bV^{0,t,x}_T\big)\big]
+
\int_t^T\bE\big[
f\big(s,\bX^{0,t,x}_s,u(s,\bX^{0,t,x}_s),w(s,\bX^{0,t,x}_s)\big)
\big(1,\bV^{0,t,x}_s\big)
\big]\,
ds,
\label{MLP FP}
\\
&
|f(t,x,v_1)-f(t,x,v_2)|^2\leq L\|w_1-w_2\|^2,
\label{MLP Lip f}
\\
&
|f(t,x,0,\mathbf{0})|^2+|g(x)|^2\leq c(d^p+\|x\|^2),
\label{MLP LG}
\\
&
(|u(t,x)|+(T-t)^{1/2}\|w(t,x)\|)^2\leq c'(d^p+\|x\|^2).
\label{MLP growth u v}
\end{align}

\subsection{Error Analysis of Multilevel Picard Approximations}
In the subsection, we provide the error analysis for the MLP approximations
introduced in \eqref{def MLP general}. Some details of the proofs are collected 
in Appendix \ref{appendix MLP}.

\begin{lemma}
\label{lemma MLP property}
Assume Setting \ref{MLP setting}.
Then the following holds:
\begin{enumerate}[(i)]
\item
for all $n\in\bN_0$, $M\in\bN$, and $\theta\in \Theta$,
$U^\theta_{n,M}:[0,T)\times\bR^d\times\Omega\to\bR^{d+1}$
is measurable;
\item
for all $n\in\bN_0$, $M\in\bN$, and $\theta\in \Theta$,
$$
\sigma\left(\big(U^\theta_{n,M}(t,x)\big)_{(t,x)\in[0,T)\times\bR^d}\right)
\subseteq
\sigma\left(
\big(\xi^{(\theta,\upsilon)}\big)_{\upsilon\in\Theta},
\big(\bX^{\theta,\upsilon,t,x}_s,\bV^{\theta,\upsilon,t,x}_s\big)
_{(t,s,x,\upsilon)\in\Delta\times\bR^d\times\Theta}
\right);
$$ 
\item
for all $n\in\bN_0$, $M\in\bN$, and $\theta\in \Theta$,
$\big(U^\theta_{n,M}(t,x)\big)_{(t,x)\in[0,T)\times\bR^d}$,
$\big(\bX^{\theta,t,x}_s,\bV^{\theta,t,x}_s\big)_{(t,s,x)\in\Delta\times\bR^d}$,
and $\xi^\theta$ are independent;
\item
for all $n,m\in\bN_0$, $M\in\bN$, $\theta\in \Theta$,
and $i,j,k,l\in \bZ$ with $(i,j)\neq(k,l)$,
\\
$\big(U^{(\theta,i,j)}_{n,M}(t,x)\big)_{(t,x)\in[0,T)\times\bR^d}$,
$\big(U^{(\theta,k,l)}_{m,M}(t,x)\big)_{(t,x)\in[0,T)\times\bR^d}$,
$\big(\bX^{\theta,i,j,t,x}_s,\bV^{\theta,i,j,t,x}_s\big)_{(t,s,x)\in\Delta\times\bR^d}$,
and $\xi^{(\theta,i,j)}$ are independent;
\item
for all $n\in\bN_0$, $M\in\bN$, and $(t,x)\in[0,T)\times\bR^d$,
$U^{\theta}_{n,M}(t,x)$, $\theta\in\Theta$, are i.i.d..
\end{enumerate}
\end{lemma}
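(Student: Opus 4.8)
The plan is to prove all five assertions by induction on $n$, leveraging the recursive structure of \eqref{def MLP general} together with the independence assumptions on the families $(\xi^\theta)_{\theta\in\Theta}$ and $(\bX^{\theta,t,x}_s,\bV^{\theta,t,x}_s)_{(\theta,t,s,x)}$ built into Setting~\ref{MLP setting}. The base case $n=0$ is immediate: $U^\theta_{0,M}(t,x)=\mathbf{0}$ for all $(t,x)$, so (i)--(v) hold trivially, with the $\sigma$-algebra in (ii) being the trivial one. For the inductive step, I would assume (i)--(v) hold for all levels $l\in\{0,1,\dots,n-1\}$ (and all $\theta\in\Theta$) and establish them for level $n$.

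First I would address (i). Writing out \eqref{def MLP general}, the level-$n$ approximation $U^\theta_{n,M}(t,x)$ is a finite sum of terms, each of which is built from: the continuous functions $g$ and (via $F$) $f$; the random variables $\cR^{(\theta,l,i)}_t = t+(T-t)\xi^{(\theta,l,i)}$, which depend measurably on $(t,\xi^{(\theta,l,i)})$; the $\cB(\Delta)\otimes\cB(\bR^d)\otimes\cF/\cB(\bR^d)$-measurable processes $\bX^{(\theta,t,x,l,i)}$ and $\bV^{(\theta,t,x,l,i)}$; and the lower-level objects $U^{(\theta,l,i)}_{l,M}$ and $U^{(\theta,-l,i)}_{l-1,M}$, which are jointly measurable in $(t,x,\omega)$ by the induction hypothesis (i). Here one must be slightly careful that the composition $F\big(U^{(\theta,l,i)}_{l,M}\big)\big(\cR^{(\theta,l,i)}_t,\bX^{(\theta,t,x,l,i)}_{\cR^{(\theta,l,i)}_t}\big)$ is jointly measurable: this follows because $U^{(\theta,l,i)}_{l,M}(\cdot,\cdot)(\omega)$ is a continuous $\bR^{d+1}$-valued function of its first two arguments for each $\omega$ (it is a finite sum of continuous functions plus the constant $(g(x),0)$), $f$ is continuous, and evaluation of a jointly measurable family of continuous functions at a measurable random point is measurable. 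Summing and multiplying preserves measurability, giving (i).

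For (ii), I would track which randomness enters. In \eqref{def MLP general}, level $n$ uses $\xi^{(\theta,l,i)}$, $\bX^{(\theta,t,x,l,i)}$, $\bV^{(\theta,t,x,l,i)}$ for $l\in\{0,\dots,n-1\}$ and $i\in\{1,\dots,M^{n-l}\}$ together with the negative-index copies $\bX^{(\theta,t,x,0,-i)}$, $\bV^{(\theta,t,x,0,-i)}$, and the sub-approximations $U^{(\theta,l,i)}_{l,M}$, $U^{(\theta,-l,i)}_{l-1,M}$. Each of the latter, by induction hypothesis (ii), is measurable with respect to the $\sigma$-algebra generated by $(\xi^{(\theta,l,i,\upsilon)})_\upsilon$ and $(\bX^{(\theta,l,i,\upsilon),\cdot},\bV^{(\theta,l,i,\upsilon),\cdot})_\upsilon$; since $(\theta,l,i,\upsilon)$ and $(\theta,l,i)$ are themselves indices in $\Theta$ (via the tree structure $\Theta=\cup_n\bZ^n$), all of these are among the families $(\xi^{(\theta,\upsilon)})_{\upsilon\in\Theta}$ and $(\bX^{\theta,\upsilon,t,x}_s,\bV^{\theta,\upsilon,t,x}_s)_{(t,s,x,\upsilon)\in\Delta\times\bR^d\times\Theta}$, which is exactly the $\sigma$-algebra claimed in (ii). Assertions (iii) and (iv) then follow from (ii) combined with the independence assumptions of Setting~\ref{MLP setting}: (ii) shows $U^\theta_{n,M}$ is a function of the $\theta$-indexed sub-tree's randomness, which is independent of $(\bX^{\theta,t,x}_s,\bV^{\theta,t,x}_s)$ and $\xi^\theta$ at the root index $\theta$ (for (iii)), and independent across distinct index-branches $(\theta,i,j)$ versus $(\theta,k,l)$ (for (iv)), since distinct branches use disjoint collections of the i.i.d.\ base families. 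Finally (v) follows because, by construction, $U^{(\theta,l,i)}_{n,M}(t,x)$ for varying $(l,i)$ are, by definition, independent copies of $U^\theta_{n,M}(t,x)$, and an induction using the i.i.d.\ property of all the base-level inputs and (iv) shows that the law of $U^\theta_{n,M}(t,x)$ does not depend on $\theta$; hence the copies are i.i.d.

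The main obstacle I anticipate is the bookkeeping in (ii): one must carefully verify that the index relabelings $(\theta,l,i)\mapsto$ sub-tree indices stay consistent with the global families indexed by $\Theta$, and that the negative-index copies $\bX^{(\theta,t,x,0,-i)}$, $\bV^{(\theta,t,x,0,-i)}$ and the positive-index ones $\bX^{(\theta,t,x,l,i)}$, $\bV^{(\theta,t,x,l,i)}$ used in the outer sum are all distinct instances, so that the independence in (iii)--(v) is not spoiled by accidental index collisions. This is a standard but delicate point in the MLP literature (cf.\ the analogous lemmas in \cite{HJKNW2020,hutzenthaler2019multilevel}); I would handle it by fixing, once and for all, an injective labelling of the countable index set used at each node and invoking it throughout the induction. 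Everything else is routine measure theory and elementary probability.
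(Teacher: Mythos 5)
Your approach is essentially the same as the paper's, which similarly derives (i) from the definition and the joint measurability of $\bX^\theta,\bV^\theta$ assumed in the setting, establishes (ii) by tracking which base-level random variables feed into the recursion, and then obtains (iii), (iv), (v) by combining (ii) with the independence assumptions of the setting — the induction over levels is implicit in the paper but made explicit in your write-up, which is a clarity improvement. One small wrinkle in your argument for (i): you justify measurability of $\big(F(U^{(\theta,l,i)}_{l,M})\big)\big(\cR^{(\theta,l,i)}_t,\bX^{(\theta,t,x,l,i)}_{\cR^{(\theta,l,i)}_t}\big)$ by asserting that $U^{(\theta,l,i)}_{l,M}(\cdot,\cdot)(\omega)$ is continuous in $(t,x)$ for each fixed $\omega$ (and you also call $(g(x),0)$ ``constant'', which it is not in $x$). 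The setting only guarantees joint $\cB(\Delta)\otimes\cB(\bR^d)\otimes\cF$-measurability of the processes $\bX^\theta,\bV^\theta$, not sample-path continuity in $(t,s,x)$, so the inductive claim of continuity of $U^{(\theta,l,i)}_{l,M}(\cdot,\cdot)(\omega)$ is unjustified. Fortunately continuity is not needed: interpreting $F(\mathbf{v})(t,x)=f(t,x,\mathbf{v}(t,x))$ as a composition, the map $(t,x,\omega)\mapsto f\big(t,x,U^{(\theta,l,i)}_{l,M}(t,x,\omega)\big)$ is jointly measurable (composition of jointly measurable maps with the continuous $f$), and evaluating a jointly measurable random field at the jointly measurable random argument $(t,x,\omega)\mapsto\big(\cR^{(\theta,l,i)}_t(\omega),\bX^{(\theta,t,x,l,i)}_{\cR^{(\theta,l,i)}_t}(\omega)\big)$ is again measurable by a two-step composition through $(t,x,\omega)\mapsto\big(\cR^{(\theta,l,i)}_t(\omega),\bX^{(\theta,t,x,l,i)}_{\cR^{(\theta,l,i)}_t}(\omega),\omega\big)$. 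Replacing the continuity appeal with this composition argument closes the gap; the remainder of your proof is sound and matches the paper's route.
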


\begin{proof}
See Appendix \ref{sec proof C1}.
\end{proof}

\begin{lemma}
\label{Lemma MLP property}
Assume Setting \ref{MLP setting}.
Let $M\in\bN$, and let $\operatorname{dim}:\Theta\to\bN$ be the mapping
satisfying for all $n\in\bN$ and $\theta\in\bZ^n$ that
$\dim(\theta)=n$. Then the following holds:
\begin{enumerate}[(i)]
\item
for all $t\in[0,T)$, $l\in N_0$, 
and $\eta,\zeta,\upsilon\in\Theta$ 
with $\min\{\dim(\eta),\dim(\zeta)\}\geq\dim(\upsilon)$,
\begin{align}
&
\sup_{x\in\bR^d}
\left(
\frac{e^{\rho t}(T-t)^2}{d^p+\|x\|^2}
\bE\left[
\Big\|
\varrho^{-1}\Big(\frac{\cR^{\upsilon}_t-t}{T-t}\Big)
\big(
F\big(U^\eta_{l,M}\big)
-
\mathbf{1}_{\{l\geq 1\}}F\big(U^\zeta_{l-1,M}\big)
\big)
\big(\cR^{\upsilon}_t,\bX^{\upsilon,t,x}_{\cR^\upsilon_t}\big)
\big(1,\bV^{\upsilon,t,x}_{\cR^\upsilon_t}\big)
\Big\|^2
\right]
\right)^{1/2}
\nonumber\\
&
\leq
(a+a_1b_1)^{1/2}(T-t)^{1/2}
\Bigg[
\bigg(\int_t^T
\frac{[(s-t)^{-1}+1]}{(T-s)\varrho(\frac{s-t}{T-t})}
\,ds\bigg)^{1/2}
\sup_{(r,x)\in[t,T]\times\bR^d}
\bigg(
\frac{\mathbf{1}_{\{l=0\}}[e^{\rho r}(T-r)]^{1/2}}{(d^p+\|x\|^2)^{1/2}}
\big|(F(\mathbf{0}))(r,x)\big|
\bigg)
\nonumber\\
& \quad
+
\left(
\int_t^T
\frac{[(s-t)^{-1}+1]}{(T-s)\varrho(\frac{s-t}{T-t})}
\sup_{(r,x)\in[s,T)\times\bR^d}
\left(
\frac{\mathbf{1}_{\{l\geq 1\}}Le^{\rho r}(T-r)}{d^p+\|x\|^2}
\bE\left[\big\|(U^\eta_{l,M}-U^\zeta_{l-1,M})(r,x)\big\|^2\right]
\right)
ds
\right)^{1/2}
\Bigg];
\label{MLP est F l l-1}
\end{align}
\item
for all $\theta\in\Theta$,
\begin{align}
&
\sup_{(t,x)\in[0,T)\times\bR^d}
\left(
(d^p+\|x\|^2)^{-1}e^{\rho t}
\big\|
\big[g\big(\bX^{\theta,t,x}_T\big)-g(x)\big]\big(1,\bV^{\theta,t,x}_T\big)
\big\|_{L_2}^2
\right)
\leq
e^{\rho T}L(a_2T+a_3b_1c);
\label{MLP est g}
\end{align}
\item
for all $l\in N_0$ and $\eta,\zeta,\upsilon\in\Theta$ 
with $\min\{\dim(\eta),\dim(\zeta)\}\geq\dim(\upsilon)$,
\begin{align}
&
\sup_{(t,x)\in[0,T)\times\bR^d}
\left(
\frac{e^{\rho t}(T-t)^2}{d^p+\|x\|^2}
\bE\left[
\Big\|
\varrho^{-1}\Big(\frac{\cR^{\upsilon}_t-t}{T-t}\Big)
\big(
F\big(U^\eta_{l,M}\big)
-
\mathbf{1}_{\{l\geq 1\}}F\big(U^\zeta_{l-1,M}\big)
\big)
\big(\cR^{\upsilon}_t,\bX^{\upsilon,t,x}_{\cR^\upsilon_t}\big)
\big(1,\bV^{\upsilon,t,x}_{\cR^\upsilon_t}\big)
\Big\|^2
\right]
\right)
\nonumber\\
&
<\infty;
\label{MLP F l l-1 finite}
\end{align}
\item
\label{MLP U finite}
for all $\theta\in\Theta$,
\begin{equation}
\sup_{(t,x)\in[0,T)\times\bR^d}
\left(
(d^p+\|x\|^2)^{-1}e^{\rho t}
\big\|U^\theta_{n,M}(t,x)\big\|_{L_2}^2
\right)
<\infty;
\end{equation}
\item
for all $t\in[0,T)$, $n\in\bN_0$, and $\eta,\upsilon\in\Theta$
with $\dim(\eta)\geq\dim(\upsilon)$,
\begin{align}
&
\sup_{(t,x)\in[0,T)\times\bR^d}
\left(
\frac{e^{\rho t}}{d^p+\|x\|^2}
\Big\|
(T-t)\varrho^{-1}\Big(\frac{\cR^{\upsilon}_t-t}{T-t}\Big)
\big(F\big(U^\eta_{l,M}\big)\big)
\big(\cR^{\upsilon}_t,\bX^{\upsilon,t,x}_{\cR^\upsilon_t}\big)
\big(1,\bV^{\upsilon,t,x}_{\cR^\upsilon_t}\big)
\Big\|_{L_2}^2
\right)
\nonumber\\
&
=
\sup_{x\in\bR^d}
\left(
\frac{e^{\rho t}(T-t)}{d^p+\|x\|^2}
\int_t^T
\varrho^{-1}\Big(\frac{s-t}{T-t}\Big)
\cdot
\big\|
\big(F\big(U^\eta_{l,M}\big)\big)
\big(s,\bX^{\upsilon,t,x}_s\big)
\big(1,\bV^{\upsilon,t,x}_s\big)
\big\|_{L_2}^2\,
ds
\right)
<\infty.
\label{MLP est F}
\end{align}
\end{enumerate}
\end{lemma}

\begin{proof}
See Appendix \ref{sec proof C2}.
\end{proof}

\begin{lemma}
\label{lemma iid expectation}
Assume Setting \ref{MLP setting}, and let $\theta\in\Theta$.
Then the following holds:
\begin{enumerate}[(i)]
\item
for all $l\in\bN_0$ and $(t,x)\in[0,T)\times\bR^d$,
$$
\Big(
F\big(U^{(\theta,l,i)}_{l,M}\big)
-
\mathbf{1}_{\{l\geq 1\}}F\big(U^{(\theta,-l,i)}_{l-1,M}\big)
\Big)
\Big(
\cR^{(\theta,l,i)}_t,\bX^{(\theta,l,i,t,x)}_{\cR^{(\theta,l,i)}_t}
\Big)
\Big(
1,\bV^{(\theta,l,i,t,x)}_{\cR^{(\theta,l,i)}_t}
\Big),
\quad
i\in\bN,
$$
are independently and identically distributed;
\item
for all $n\in\bN$ and $(t,x)\in[0,T)\times\bR^d$,
\begin{align}
&
\bE\left[U^\theta_{n,M}(t,x)\right]
\nonumber\\
&
=
\bE\left[g(\bX^{\theta,t,x}_T)(1,\bV^{\theta,t,x}_T)\right]
+
(T-t)\bE\left[
\varrho^{-1}\Big(\frac{\cR^\theta_t-t}{T-t}\Big)
\Big(F\big(U^\theta_{n-1,M}\big)\Big)
\big(
\cR^\theta_t,\bX^{\theta,t,x}_T
\big)
\big(
1,\bV^{\theta,t,x}_{\cR^\theta_t}
\big)
\right]
\nonumber\\
&
=
\bE\left[g(\bX^{\theta,t,x}_T)(1,\bV^{\theta,t,x}_T)\right]
+
\int_t^T
\bE\left[
\Big(F\big(U^\theta_{n-1,M}\big)\Big)
\big(s,\bX^{\theta,t,x}_s\big)
\big(1,\bV^{\theta,t,x}_s\big)
\right]
ds.
\label{E U t x}
\end{align}
\end{enumerate}
\end{lemma}

\begin{proof}
See Appendix \ref{sec proof C3}.
\end{proof}

\begin{lemma}[Recursive Error]
\label{lemma recursive error}
Assume Setting \ref{MLP setting}. Let $n,M\in\bN$, $t\in[0,T)$.
Then we have for all $\beta\in(0,1/2)$ that
\begin{align}
&
\sup_{x\in\bR^d}
\left(
e^{\rho t}(d^p+\|x\|^2)^{-1}(T-t)
\big\|
U^0_{n,M}(t,x)-(u,w)(t,x)
\big\|_{L_2}^2
\right)^{1/2}
\nonumber\\
&
\leq
\frac{e^{\rho T/2}}{\sqrt{M^n}}
\left(
(a+a_1b_1)^{1/2}[cT\alpha^{-1}(4^{-\alpha}+T)]^{1/2}
\sqrt{\cB(1-\alpha,1-\alpha)}
+
[LT(a_2T+a_3b_1c)]^{1/2}
\right)
\nonumber\\
& \quad
+\big[L(a+a_1b_1)(b+T^{3/2})\big]^{1/2}(1+\sqrt{2})
5(1+T)\sqrt{\cB(1-\alpha,1-\alpha)}
\nonumber\\
& \quad
\cdot
\sum_{l=0}^{n-1}
\Bigg[
\frac
{1}
{\sqrt{M^{n-l-1}}}
\left(
\int_t^T
\sup_{(r,y)\in[s,T)\times\bR^d}
\left(
\frac{e^{\rho r}(T-r)}{d^p+\|y\|^2}
\big\|
U^0_{l,M}(r,y)-(u,w)(r,y)
\big\|_{L_2}^2
\right)^{\frac{1+\beta}{\beta}}
ds
\right)^{\frac{\beta}{2(1+\beta)}}
\Bigg]
\nonumber\\
&
<\infty.
\label{MLP error recursive}
\end{align}
\end{lemma}

\begin{proof}
Throughout the proof of this lemma, for every 
$x\in\bR^d$, $s\in[t,T]$, and $i\in\bZ$ we use the notations 
\begin{align*}
&
U^0_{n,M}(t,x)
=
\left(U^{0,(1)}_{n,M}(t,x),
U^{0,(2)}_{n,M}(t,x),
\dots,
U^{0,(d+1)}_{n,M}(t,x)\right),
\\
&
(u,w)(t,x)=
\left(
(u,w)^{(1)}(t,x),(u,w)^{(2)}(t,x),\dots,(u,w)^{(d+1)}(t,x)
\right),
\\
&
\bV^{(0,t,x,0,i)}_s=
\left(
\bV^{(0,t,x,0,i),(1)}_s,\bV^{(0,t,x,0,i),(2)}_s,\dots,\bV^{(0,t,x,0,i),(d)}_s
\right),
\end{align*}
and set
$$
\bV^{(0,t,x,0,i),(0)}_s=1.
$$
By \eqref{def MLP general}, the triangle inequality, 
(i) in Lemma \ref{lemma iid expectation}, 
and the fact that it holds for all $x\in\bR^d$ that
$\big(\bX^{t,x,\theta}_T,\bV^{t,x,\theta}_T\big)$, $\theta\in\Theta$, are i.i.d.,
we first notice for all $x\in\bR^d$ that
\begin{align}
&
\left(
\sum_{k=1}^{d+1}
\bVar
\left[U^{0,(k)}_{n,M}(t,x)
\right]
\right)^{1/2}
\nonumber\\
&
\leq
\left(
\bVar\left[
\frac{1}{M^n}\sum_{i=1}^{M^n}\sum_{k=0}^d
\left(g\big(\bX^{(0,t,x,0,-i)}_T\big)-g(x)\right)
\bV^{(0,t,x,0,i),(k)}_T
\right]
\right)^{1/2}
+\sum_{l=0}^{n-1}
\Bigg(
\bVar\bigg[
\frac{T-t}{M^{n-l}}
\nonumber\\
& \quad
\sum_{i=1}^{M^{n-l}}\sum_{k=0}^d
\varrho^{-1}\Big(\frac{\cR^{(0,l,i)}_t-t}{T-t}\Big)
\left(
F\big(U^{0,l,i}_{l,M}\big)-\mathbf{1}_{\{l\geq 1\}}F\big(U^{0,l-1,i}_{l-1,M}
\right)
\Big(\cR^{(0,l,i)}_t,\bX^{(0,t,x,l,i)}_{\cR^{(0,l,i)}_t}\Big)
\bV^{(0,t,x,l,i),(k)}_{\cR^{(0,l,i)}_t}
\bigg]
\Bigg)^{1/2}
\nonumber\\
&
\leq
\frac{1}{\sqrt{M^n}}
\big\|
\big(g(\bX^{0,t,x}_T)-g(x)\big)
\big(1,\bV^{0,t,x}_T\big)
\big\|_{L_2}
+
\sum_{l=0}^{n-1}\frac{T-t}{\sqrt{M^{n-l}}}
\Big\|
\varrho^{-1}\Big(\frac{\cR^{(0,l,1)}_t-t}{T-t}\Big)
\nonumber\\
& \quad
\cdot
\left(
F\big(U^{0,l,i}_{l,M}\big)-\mathbf{1}_{\{l\geq 1\}}F\big(U^{0,l-1,i}_{l-1,M}\big)
\right)
\Big(\cR^{(0,l,i)}_t,\bX^{(0,t,x,l,i)}_{\cR^{(0,l,i)}_t}\Big)
\Big(1,\bV^{(0,t,x,l,1)}_{\cR^{(0,l,1)}_t}\Big)
\Big\|_{L_2}.
\nonumber
\end{align}
Hence, the application of (i) and (ii) in Lemma \ref{Lemma MLP property}
(applied for every $l\in[0,n-1]\cap \bN$ with $\eta\cal(0,l,1)$, 
$\zeta\cal(0,-l,1)$, and $\upsilon\cal(0,l,1)$) in the notation of 
Lemma \ref{Lemma MLP property} implies for all $x\in\bR^d$ that
\begin{align}
&
\left(
\sum_{k=1}^{d+1}
\bVar
\left[U^{0,(k)}_{n,M}(t,x)
\right]
\right)^{1/2}
\nonumber\\
&
\leq
\frac{e^{\rho(T-t)/2}(d^p+\|x\|^2)^{1/2}[L(a_2T+a_3b_1c)]^{1/2}}{\sqrt{M^n}}
+
\frac{e^{\rho(T-t)/2}(d^p+\|x\|^2)^{1/2}(a+a_1b_1)^{1/2}(T-t)}{\sqrt{M^n}}
\nonumber\\
& \quad
\cdot
\left(
\int_t^T[(s-t)^{-1}+1]\varrho^{-1}\Big(\frac{s-t}{T-t}\Big)(T-s)^{-1}\,ds
\right)^{1/2}
\sup_{(r,z)\in[0,T]\times\bR^d}\left(\frac{|(F(0))(r,z)|}{(d^p+\|z\|^2)^{1/2}}\right)
\nonumber\\
& \quad
+\sum_{l=1}^{n-1}
\Bigg[
\frac
{e^{-\rho t/2}(d^p+\|x\|^2)^{1/2}(a+a_1b_1)^{1/2}(T-t)^{1/2}L^{1/2}}
{\sqrt{M^{n-l}}}
\nonumber\\
& \quad
\cdot
\left(
\int_t^T\frac{(s-t)^{-1}+1}{(T-s)\varrho(\frac{s-t}{T-t})}
\sup_{(r,y)\in[s,T)\times\bR^d}
\left(
\frac{e^{\rho r}(T-r)}{d^p+\|y\|^2}
\big\|
U^{(0,l,1)}_{l,M}(r,y)-U^{(0,-l,1)}_{l-1,M}(r,y)
\big\|_{L_2}^2
\right)
ds
\right)^{1/2}
\Bigg].
\label{Var est 1}
\end{align}
Furthermore, we notice that (v) in Lemma \ref{lemma MLP property} ensures for all
$l\in\bN$, $\eta,\zeta\in\Theta$, and $(s,x)\in[0,T)\times\bR^d$ that
\begin{align*}
\big\|
U^{\eta}_{l,M}(s,x)-U^{\zeta}_{l-1,M}(s,x)
\big\|_{L_2}
&
\leq
\big\|
U^{\eta}_{l,M}(s,x)-(u,w)(s,x)
\big\|_{L_2}
+
\big\|
U^{\zeta}_{l-1,M}(s,x)-(u,w)(s,x)
\big\|_{L_2}
\nonumber\\
&
=
\big\|
U^0_{l,M}(s,x)-(u,w)(s,x)
\big\|_{L_2}
+
\big\|
U^0_{l-1,M}(s,x)-(u,w)(s,x)
\big\|_{L_2}.
\end{align*}
This together with \eqref{Var est 1} and the fact that it holds for all
$\{a_i\}_{i=1}^n\subseteq[0,\infty]$ that
$\sum_{l=1}^{n-1}(a_l+a_{l-1})
\leq \sum_{l=0}^{n-1}\big(2-\mathbf{1}_{\{l=n-1\}}\big)a_l$
imply for all $x\in\bR^d$ that
\begin{align}
&
\left(
\sum_{k=1}^{d+1}
\bVar
\left[U^{0,(k)}_{n,M}(t,x)
\right]
\right)^{1/2}
\nonumber\\
&
\leq
\frac{e^{\rho(T-t)/2}(d^p+\|x\|^2)^{1/2}[L(a_2T+a_3b_1c)]^{1/2}}{\sqrt{M^n}}
+
\frac{e^{\rho(T-t)/2}(d^p+\|x\|^2)^{1/2}(a+a_1b_1)^{1/2}(T-t)}{\sqrt{M^n}}
\nonumber\\
& \quad
\cdot
\left(\int_t^T[(s-t)^{-1}+1]\varrho\Big(\frac{s-t}{T-t}\Big)(T-s)^{-1}
\,ds\right)^{1/2}
\sup_{(r,z)\in[s,T)\times\bR^d}\left(\frac{|(F(0))(r,z)|}{(d^p+\|z\|^2)^{1/2}}\right)
\nonumber\\
& \quad
+\sum_{l=0}^{n-1}
\Bigg[
\frac
{\big(2-\mathbf{1}_{\{l=n-1\}}\big)
e^{-\rho t/2}(d^p+\|x\|^2)^{1/2}(a+a_1b_1)^{1/2}(T-t)^{1/2}L^{1/2}}
{\sqrt{M^{n-l-1}}}
\nonumber\\
& \quad
\cdot
\left(
\int_t^T\frac{(s-t)^{-1}+1}{(T-s)\varrho(\frac{s-t}{T-t})}
\sup_{(r,y)\in[s,T)\times\bR^d}
\left(
\frac{e^{\rho r}(T-r)}{d^p+\|y\|^2}
\big\|
U^0_{l,M}(r,y)-(u,w)(r,y)
\big\|_{L_2}^2
\right)
ds
\right)^{1/2}
\Bigg].
\nonumber
\end{align}
Therefore, we have that
\begin{align}
&
\sup_{x\in\bR^d}
\left(
(d^p+\|x\|^2)^{-1}e^{\rho t}\sum_{k=1}^{d+1}\bVar\left[U^{0,(k)}_{n,M}(t,x)\right]
\right)^{1/2}
\nonumber\\
&
\leq
\frac{e^{\rho T/2}[L(a_2T+a_3b_1c)]^{1/2}}{\sqrt{M^n}}
\nonumber\\
& \quad
+
\frac{e^{\rho T/2}(a+a_1b_1)^{1/2}(T-t)}{\sqrt{M^n}}
\left(\int_t^T\frac{(s-t)^{-1}+1}{(T-s)\varrho(\frac{s-t}{T-t})}\,ds\right)^{1/2}
\sup_{(r,z)\in[0,T]\times\bR^d}\left(\frac{|(F(0))(r,z)|}{(d^p+\|z\|^2)^{1/2}}\right)
\nonumber\\
& \quad
+\sum_{l=0}^{n-1}
\Bigg[
\frac
{\big(2-\mathbf{1}_{\{l=n-1\}}\big)(a+a_1b_1)^{1/2}(T-t)^{1/2}L^{1/2}}
{\sqrt{M^{n-l-1}}}
\nonumber\\
& \quad
\cdot
\left(
\int_t^T\frac{(s-t)^{-1}+1}{(T-s)\varrho(\frac{s-t}{T-t})}
\sup_{(r,y)\in[s,T)\times\bR^d}
\left(
\frac{e^{\rho r}(T-r)}{d^p+\|y\|^2}
\big\|
U^0_{l,M}(r,y)-(u,w)(r,y)
\big\|_{L_2}^2
\right)
ds
\right)^{1/2}
\Bigg]
\nonumber\\
&
<\infty.
\label{Var est 2}
\end{align}
Next, by \eqref{MLP FP} and (ii) in Lemma \ref{lemma iid expectation}
we observe that it holds for all $x\in\bR^d$ that
$$
\bE\left[U^0_{n,M}(t,x)-(u,w)(t,x)\right]
=
\int_t^T
\bE\left[
\big(
(F(U^0_{n-1,M}))(s,\bX^{0,t,x}_s)
-
(F(u,w))(s,\bX^{0,t,x}_s)
\big)
\big(1,\bV^{0,t,x}_s\big)
\right]
ds.
$$
Thus, by (iii) in Lemma \ref{lemma MLP property}, \eqref{MLP Z est}, 
\eqref{MLP V est}, \eqref{MLP Lip f}, Minkowski's integral inequality,
Cauchy-Schwarz inequality, and e.g., Lemma 2.2 in \cite{HJKNW2020}
we obtain for all $x\in\bR^d$ that
\begin{align*}
&
\big\|
\bE\left[U^0_{n,M}(t,x)-(u,w)(t,x)\right]
\big\|
\\
&
\leq
\int_t^T
\big\|
\big[
(F(U^0_{n-1,M}))(s,\bX^{0,t,x}_s)
-
(F(u,w))(s,\bX^{0,t,x}_s)
\big]
\big(1,\bV^{0,t,x}_s\big)
\big\|_{L_1}
\,ds
\\
&
=
\int_t^T
\bE\left[
\big\|
[(F(U^0_{n-1,M}))(s,z)-(F(u,w))(s,z)]
(1,y)
\big\|_{L_1}
\Big|_{(z,y)=(\bX^{0,t,x}_s,\bV^{0,t,x}_s)}
\right]
ds
\\
&
\leq
\int_t^T
\left[
\sup_{(r,z)\in[s,T)\times\bR^d}
\frac
{(T-r)\big\|(F(U^0_{n-1,M}))(r,z)-(F(u,w))(r,z)\big\|_{L_1}}
{(d^p+\|z\|^2)^{1/2}}
\right]
\\
& \quad
\cdot
\frac
{\bE\left[(d^p+\|\bX^{0,t,x}_s\|^2)^{1/2}(1+\|\bV^{0,t,x}_s\|)\right]}
{(T-s)^{1/2}}
\,ds
\\
&
\leq
L^{1/2}\int_t^T
\Bigg[
\sup_{(r,z)\in[s,T)\times\bR^d}
\frac
{(T-r)\big\|U^0_{n-1,M}(r,z)-(u,w)(r,z)\big\|_{L_1}}
{(d^p+\|z\|^2)^{1/2}}
\Bigg]
(T-s)^{-1/2}
\\
& \quad
\cdot
\left(\bE\big[d^p+\|\bX^{0,t,x}_s\|^2\big]\right)^{1/2}
\left(1+\|\bV^{0,t,x}_s\|_{L_2}\right)
ds
\\
&
\leq
(aL)^{1/2}
\int_t^T
\Bigg[
\sup_{(r,z)\in[s,T)\times\bR^d}
\frac
{(T-r)\big\|U^0_{n-1,M}(r,z)-(u,w)(r,z)\big\|_{L_1}}
{(d^p+\|z\|^2)^{1/2}}
\Bigg]
\frac{e^{\rho(s-t)/2}(d^p+\|x\|^2)^{1/2}}{(T-s)^{1/2}}
\,ds
\\
& \quad
+
(abL)^{1/2}
\int_t^T
\Bigg[
\sup_{(r,z)\in[s,T)\times\bR^d}
\frac
{(T-r)\big\|U^0_{n-1,M}(r,z)-(u,w)(r,z)\big\|_{L_1}}
{(d^p+\|z\|^2)^{1/2}}
\Bigg]
\frac{e^{\rho(s-t)/2}(d^p+\|x\|^2)^{1/2}}{(s-t)^{1/2}(T-s)^{1/2}}
\,ds.
\end{align*}
Hence, by Jensen's inequality and \eqref{time integral est 1/2} we have that
\begin{align*}
&
\sup_{x\in\bR^d}
\left[
\frac{e^{\rho t}\big\|\bE\big[U^0_{n,M}(t,x)-(u,w)(t,x)\big]\big\|^2}
{d^p+\|z\|^2}
\right]^{1/2}
\\
&
\leq
\left(
\int_t^T
\left[
\sup_{(r,z)\in[s,T)\times\bR^d}
\frac
{e^{\rho r}(T-r)\big\|U^0_{n-1,M}(r,z)-(u,w)(r,z)\big\|_{L_2}^2}
{d^p+\|z\|^2}
\right]
\frac{2aL(T-t)}{(T-s)^{1/2}}
\,ds
\right)^{1/2}
\\
& \quad
+
\left(
\int_t^T
\left[
\sup_{(r,z)\in[s,T)\times\bR^d}
\frac
{e^{\rho r}(T-r)\big\|U^0_{n-1,M}(r,z)-(u,w)(r,z)\big\|_{L_2}^2}
{d^p+\|z\|^2}
\right]
\frac
{2abL}{(s-t)^{1/2}(T-s)^{1/2}}
\,ds
\right)^{1/2}.
\end{align*}
This together with \eqref{Var est 2} ensure that
\begin{align}
&
\sup_{x\in\bR^d}
\left(
e^{\rho t}(d^p+\|x\|^2)^{-1}
\big\|
U^0_{n,M}(t,x)-(u,w)(t,x)
\big\|_{L_2}^2
\right)^{1/2}
\nonumber\\
&
\leq
\sup_{x\in\bR^d}
\left[
e^{\rho t}(d^p+\|x\|^2)^{-1}
\sum_{k=1}^{d+1}
\left(
\left(\bE\left[U^{0,(k)}_{n,M}(t,x)\right]-(u,w)^{(k)}(t,x)\right)^2
+
\bVar\left[U^{0,(k)}_{n,M}(t,x)\right]
\right)
\right]^{1/2}
\nonumber\\
&
\leq
\sup_{x\in\bR^d}
\left(
e^{\rho t}(d^p+\|x\|^2)^{-1}
\big\|
\bE\left[U^0_{n,M}(t,x)\right]-(u,w)(t,x)
\big\|^2
\right)^{1/2}
\nonumber\\
& \quad
+\sup_{x\in\bR^d}
\left(
e^{\rho t}(d^p+\|x\|^2)^{-1}
\sum_{k=1}^{d+1}\bVar\left[U^{0,(k)}_{n,M}(t,x)\right]
\right)^{1/2}
\nonumber\\
&
\leq
\big[2aL(b+T^{3/2})\big]^{1/2}
\Bigg(
\int_t^T
\left[
\sup_{(r,z)\in[s,T)\times\bR^d}
\frac
{e^{\rho r}(T-r)\big\|U^0_{n-1,M}(r,z)-(u,w)(r,z)\big\|_{L_2}^2}
{d^p+\|z\|^2}
\right]
\nonumber\\
& \quad
\cdot
(s-t)^{-1/2}(T-s)^{-1/2}
\,ds
\Bigg)^{1/2}
+
\frac{e^{\rho T/2}[L(a_2T+a_3b_1c)]^{1/2}}{\sqrt{M^n}}
\nonumber\\
& \quad
+
\frac{e^{\rho T/2}(a+a_1b_1)^{1/2}(T-t)}{\sqrt{M^n}}
\left(\int_t^T\frac{(s-t)^{-1}+1}{(T-s)\varrho(\frac{s-t}{T-t})}\,ds\right)^{1/2}
\sup_{(r,z)\in[0,T]\times\bR^d}\left(\frac{|(F(0))(r,z)|}{(d^p+\|z\|^2)^{1/2}}\right)
\nonumber\\
& \quad
+\sum_{l=0}^{n-1}
\Bigg[
\frac
{\big(2-\mathbf{1}_{\{l=n-1\}}\big)(a+a_1b_1)^{1/2}(T-t)^{1/2}L^{1/2}}
{\sqrt{M^{n-l-1}}}
\nonumber\\
& \quad
\cdot
\left(
\int_t^T\frac{(s-t)^{-1}+1}{(T-s)\varrho(\frac{s-t}{T-t})}
\sup_{(r,y)\in[s,T)\times\bR^d}
\left(
\frac{e^{\rho r}(T-r)}{d^p+\|y\|^2}
\big\|
U^0_{l,M}(r,y)-(u,w)(r,y)
\big\|_{L_2}^2
\right)
ds
\right)^{1/2}
\Bigg]
\nonumber\\
&
\leq
\frac{e^{\rho T/2}(a+a_1b_1)^{1/2}(T-t)}{\sqrt{M^n}}
\left(\int_t^T\frac{(s-t)^{-1}+1}{(T-s)\varrho(\frac{s-t}{T-t})}\,ds\right)^{1/2}
\sup_{(r,z)\in[0,T]\times\bR^d}\left(\frac{|(F(0))(r,z)|}{(d^p+\|z\|^2)^{1/2}}\right)
\nonumber\\
& \quad
+
\frac{e^{\rho T/2}[L(a_2T+a_3b_1c)]^{1/2}}{\sqrt{M^n}}
+
\big[L(a+a_1b_1)(b+T^{3/2})\big]^{1/2}(1+\sqrt{2})
\nonumber\\
& \quad
\cdot
\sum_{l=0}^{n-1}
\Bigg[
\frac
{1}
{\sqrt{M^{n-l-1}}}
\Bigg(
\int_t^T
\left[
\frac{(s-t)^{-1}+1}{(T-s)\varrho(\frac{s-t}{T-t})}
+(s-t)^{-1/2}(T-s)^{-1/2}
\right]
\nonumber\\
& \quad
\cdot
\sup_{(r,y)\in[s,T)\times\bR^d}
\left(
\frac{e^{\rho r}(T-r)}{d^p+\|y\|^2}
\big\|
U^0_{l,M}(r,y)-(u,w)(r,y)
\big\|_{L_2}^2
\right)
ds
\Bigg)^{1/2}
\Bigg].
\label{error recursive 1}
\end{align}
Next, by \eqref{def pdf rho} we observe for all $\beta\in[0,1/2)$ that
\begin{align}
&
(T-t)^{1/2}
\left(
\int_t^T\left[
\frac{(s-t)^{-1}+1}{(T-s)\varrho(\frac{s-t}{T-t})}
+
(s-t)^{-1/2}(T-s)^{-1/2}
\right]^{1+\beta}
\,ds
\right)^{\frac{1}{2(1+\beta)}}
\nonumber\\
&
\leq
(T-t)^{1/2}
\Bigg[
\Bigg(
\int_t^T
\Big[
(s-t)^{-(1-\alpha)}(T-s)^{-(1-\alpha)}\cB(1-\alpha,1-\alpha)
\Big]^{1+\beta}
\,ds
\Bigg)^{\frac{1}{1+\beta}}
\nonumber\\
& \quad
+
\Bigg(
\int_t^T
\Big[
(s-t)^\alpha(T-s)^{-(1-\alpha)}\cB(1-\alpha,1-\alpha)
\Big]^{1+\beta}
\,ds
\Bigg)^{\frac{1}{1+\beta}}
\nonumber\\
& \quad
+
\Bigg(
\int_t^T
\bigg[
(s-t)^{-1/2}(T-s)^{-1/2}
\bigg]^{1+\beta}
\,ds
\Bigg)^{\frac{1}{1+\beta}}
\Bigg]^{1/2}
\label{beta integral est 2}
\end{align}
By the assumption that $\alpha\in[1/2,1)$, we have for all $\beta\in[0,1/2)$ that
\begin{align}
&
\Bigg(
\int_t^T
\Big[
(s-t)^{-(1-\alpha)}(T-s)^{-(1-\alpha)}\cB(1-\alpha,1-\alpha)
\Big]^{1+\beta}
\,ds
\Bigg)^{\frac{1}{1+\beta}}
\nonumber\\
&
\leq
\cB(1-\alpha,1-\alpha)
\Bigg[
\left(
\int_t^{\frac{T+t}{2}}
(s-t)^{(\alpha-1)(1+\beta)}\Big(\frac{T-t}{2}\Big)^{(\alpha-1)(1+\beta)}
\,ds
\right)^{\frac{1}{\beta+1}}
\nonumber\\
& \quad
+
\left(
\int_{\frac{T+t}{2}}^T
(T-s)^{(\alpha-1)(1+\beta)}\Big(\frac{T-t}{2}\Big)^{(\alpha-1)(1+\beta)}
\,ds
\right)^{\frac{1}{\beta+1}}
\Bigg]
\nonumber\\
&
=
\frac
{\cB(1-\alpha,1-\alpha)
2^{2(1-\alpha)+1+\frac{1}{1+\beta}}(T-t)^{2(\alpha-1)+\frac{1}{\beta+1}}}
{[(\alpha-1)(1+\beta)+1]^{\frac{1}{1+\beta}}}
\nonumber\\
&
\leq
16(T-t)^{2(\alpha-1)+\frac{1}{\beta+1}}\cB(1-\alpha,1-\alpha).
\label{beta integral est 3}
\end{align}
Similarly, it holds for all $\beta\in[0,1/2)$ that
\begin{align}
\Bigg(
\int_t^T
\Big[
(s-t)^\alpha(T-s)^{-(1-\alpha)}\cB(1-\alpha,1-\alpha)
\Big]^{1+\beta}
\,ds
\Bigg)^{\frac{1}{1+\beta}}
\leq
2(T-t)^{2(\alpha-1)+\frac{1}{1+\beta}+1}\cB(1-\alpha,1-\alpha),
\label{beta integral est 4}
\end{align}
and
\begin{equation}
\label{beta integral est 5}
\Bigg(
\int_t^T
\bigg[
(s-t)^{-1/2}(T-s)^{-1/2}
\bigg]^{1+\beta}
\,ds
\Bigg)^{\frac{1}{1+\beta}}
\leq 
4(T-t)^{-\frac{\beta}{1+\beta}}.
\end{equation}
Combining \eqref{beta integral est 2}--\eqref{beta integral est 5} yields for all
$\beta\in[0,1/2)$ that
\begin{align*}
&
(T-t)^{1/2}
\left(
\int_t^T\left[
\frac{(s-t)^{-1}+1}{(T-s)\varrho(\frac{s-t}{T-t})}
+
(s-t)^{-1/2}(T-s)^{-1/2}
\right]^{1+\beta}
\,ds
\right)^{\frac{1}{2(1+\beta)}}
\\
&
\leq
\left[
\big(16(1+T)+2(1+T)^2\big)\cB(1-\alpha,1-\alpha)+(1+T)
\right]^{1/2}
\leq 
5(1+T)\sqrt{\cB(1-\alpha,1-\alpha)}.
\end{align*}
This together with \eqref{def F MLP}, \eqref{MLP LG}, \eqref{error recursive 1},
\eqref{beta integral est 1},
and H\"older's inequality imply for all $\beta\in(0,1/2)$ that
\begin{align}
&
\sup_{x\in\bR^d}
\left(
e^{\rho t}(d^p+\|x\|^2)^{-1}(T-t)
\big\|
U^0_{n,M}(t,x)-(u,w)(t,x)
\big\|_{L_2}^2
\right)^{1/2}
\nonumber\\
&
\leq
\frac{e^{\rho T/2}(a+a_1b_1)^{1/2}[cT\alpha^{-1}(4^{-\alpha}+T)]^{1/2}
\sqrt{\cB(1-\alpha,1-\alpha)}}
{\sqrt{M^n}}
+
\frac{e^{\rho T/2}[LT(a_2T+a_3b_1c)]^{1/2}}{\sqrt{M^n}}
\nonumber\\
& \quad
+
\big[L(a+a_1b_1)(b+T^{3/2})\big]^{1/2}(1+\sqrt{2})
\nonumber\\
& \quad
\cdot
\sum_{l=0}^{n-1}
\Bigg[
\frac
{(T-t)^{1/2}}
{\sqrt{M^{n-l-1}}}
\left(
\int_t^T
\left[
\frac{(s-t)^{-1}+1}{(T-s)\varrho(\frac{s-t}{T-t})}
+(s-t)^{-1/2}(T-s)^{-1/2}
\right]^{1+\beta}
ds
\right)^{\frac{1}{2(1+\beta)}}
\nonumber\\
& \quad
\cdot
\left(
\int_t^T
\sup_{(r,y)\in[s,T)\times\bR^d}
\left(
\frac{e^{\rho r}(T-r)}{d^p+\|y\|^2}
\big\|
U^0_{l,M}(r,y)-(u,w)(r,y)
\big\|_{L_2}^2
\right)^{\frac{1+\beta}{\beta}}
ds
\right)^{\frac{\beta}{2(1+\beta)}}
\Bigg]
\nonumber\\
&
\leq
\frac{e^{\rho T/2}}{\sqrt{M^n}}
\left(
(a+a_1b_1)^{1/2}[cT\alpha^{-1}(4^{-\alpha}+T)]^{1/2}
\sqrt{\cB(1-\alpha,1-\alpha)}
+
[LT(a_2T+a_3b_1c)]^{1/2}
\right)
\nonumber\\
& \quad
+\big[L(a+a_1b_1)(b+T^{3/2})\big]^{1/2}(1+\sqrt{2})
5(1+T)\sqrt{\cB(1-\alpha,1-\alpha)}
\nonumber\\
& \quad
\cdot
\sum_{l=0}^{n-1}
\Bigg[
\frac
{1}
{\sqrt{M^{n-l-1}}}
\left(
\int_t^T
\sup_{(r,y)\in[s,T)\times\bR^d}
\left(
\frac{e^{\rho r}(T-r)}{d^p+\|y\|^2}
\big\|
U^0_{l,M}(r,y)-(u,w)(r,y)
\big\|_{L_2}^2
\right)^{\frac{1+\beta}{\beta}}
ds
\right)^{\frac{\beta}{2(1+\beta)}}
\Bigg].
\nonumber\\
\end{align}
This establishes \eqref{MLP error recursive}.
Hence, we have completed the proof of this lemma.
\end{proof}

The following proposition provides a global error analysis for the MLP approximation
algorithm \eqref{def MLP general}, 
which will be used to prove Theorems \ref{thm MLP conv} and \ref{MLP complexity} 
(see Section \ref{section proof main}).

\begin{proposition}[Global approximation error]
\label{corollary MLP error}
Assume Setting \ref{MLP setting}, and let $n,M\in\bN$, $t\in[0,T)$, $\beta\in(0,1/2]$.
Then it holds that
\begin{align}
&
\sup_{x\in\bR^d}
\left(
(d^p+\|x\|^2)^{-1}(T-t)
\big\|
U^0_{n,M}(t,x)-(u,w)(t,x)
\big\|_{L_2}^2
\right)^{1/2}
\nonumber\\
&
\leq
\left[
A+B(T-t)^{\frac{\beta}{2(1+\beta)}}e^{\rho T/2}(1+T^{1/2})c^{1/2}
\right]
\exp\Big\{\frac{\beta}{2(1+\beta)}M^{\frac{1+\beta}{\beta}}\Big\}
M^{-n/2}
\left[1+B(T-t)^{\frac{\beta}{2(1+\beta)}}\right]^{n-1},
\label{MLP error}
\end{align}
where
$$
A:=e^{\rho T/2}
\left(
(a+a_1b_1)^{1/2}[cT\alpha^{-1}(4^{-\alpha}+T)]^{1/2}
\sqrt{\cB(1-\alpha,1-\alpha)}
+
[LT(a_2T+a_3b_1c)]^{1/2}
\right),
$$
and
$$
B:=\big[L(a+a_1b_1)(b+T^{3/2})\big]^{1/2}(1+\sqrt{2})
5(1+T)\sqrt{\cB(1-\alpha,1-\alpha)}.
$$
\end{proposition}

\begin{proof}
Throughout the proof of this lemma, we define the Borel functions 
$\mathfrak{f}_k:[t,T)\to[0,\infty]$, $k\in\{0,1,\dots,n\}$ by
$$
\mathfrak{f}_k(s):=
\sup_{s\in[t,T)}
\sup_{x\in\bR^d}
\left(
e^{\rho s}
(d^p+\|x\|^2)^{-1}
(T-s)
\big\|
U^0_{k,M}(s,x)-(u,w)(s,x)
\big\|_{L_2}^2
\right)^{1/2},
\quad s\in [t,T).
$$
Then \eqref{MLP growth u v} and the fact that $U^0_{0,M}\equiv 0$ imply that
\begin{equation}
\label{sup f 0}
\sup_{s\in[t,T)}|\mathfrak{f}_0(s)|
\leq
e^{\rho T/2}(1+T^{1/2})\sqrt{c'}<\infty.
\end{equation}
Moreover, notice that Lemma \ref{lemma recursive error} ensures for all 
$k\in\{1,2,\dots,n\}$ and $s\in[t,T)$ that
$$
\mathfrak{f}_k(s)
\leq
\frac{A}{\sqrt{M^k}}
+
\sum_{l=0}^{k-1}
\frac{B}{\sqrt{M^{k-l-1}}}
\left(
\int_t^T
|\mathfrak{f}_l(r)|^{\frac{2(1+\beta)}{\beta}}
\,dr
\right)^{\frac{\beta}{2(1+\beta)}}.
$$
Hence, by \eqref{sup f 0} the application of Lemma 3.11 in \cite{HJKN2020}
(with $a\cal A$, $b\cal B$, $M\cal M$, $N\cal N$, $T\cal T$, $\tau\cal t$, 
$p\cal 2(1+\beta)/\beta$, and $f_n\cal\mathfrak{f}_n$ in the notations
of Lemma 3.11 in \cite{HJKN2020})
yields that
\begin{align*}
\mathfrak{f}_N(t)
\leq
\left[
A+B(T-t)^{\frac{\beta}{2(1+\beta)}}e^{\rho T/2}(1+T^{1/2})c^{1/2}
\right]
\exp\bigg\{\frac{\beta M^{\frac{1+\beta}{\beta}}}{2(1+\beta)}\bigg\}
M^{-N/2}
\left[1+B(T-t)^{\frac{\beta}{2(1+\beta)}}\right]^{N-1}.
\end{align*}
This shows that \eqref{MLP error} holds for $\beta\in(0,1/2)$.
Thus, a straightforward limiting argument as $\beta\to 1/2$ ensures
\eqref{MLP error} for $\beta=1/2$.
We have therefore completed the proof of this corollary.
\end{proof}

\section{\textbf{Proof of the main results}}
\label{section proof main}
In this section, 
we present the proof of Theorems \ref{thm MLP conv} and \ref{MLP complexity}.
\begin{proof}[Proof of Theorem \ref{thm MLP conv}]
First notice that Corollary \ref{corollary FP} ensures that (i),
and it holds for all $(t,x)\in[0,T)\times\bR^d$ and $d\in\bN$ that
\begin{align}                                              
\sup_{s\in[t,T)}\sup_{x\in\bR^d}
\frac{|u^d(s,x)|+(T-s)^{1/2}\|w^d(s,x)\|}{(d^p+\|x\|^2)^{1/2}}
\leq 
C_{d,1}.
\label{growth u v}
\end{align}
By Proposition \ref{proposition uniqueness PDE} and
Proposition~\ref{theorem PDE existence} we obtain (ii), (iii), and (iv).
Furthermore, Corollary \ref{corollary FP} also implies for each $N,d\in\bN$ and that
there exists a unique pair of Borel functions $(u^d_N,w^d_N)$ with
$u^d_N\in C([0,T)\times\bR^d,\bR)$ and $w^d_N\in C([0,T)\times\bR^d,\bR^d)$ 
satisfying for all $(t,x)\in[0,T)\times\bR^d$ that
\begin{align*}
&
\big\|g^d(\cX^{d,0,t,x,N}_T)(1,\cV^{d,0,t,x,N}_T)\big\|_{L_1}
\\
&
+\int_t^T[\big\|
f^d\big(s,\cX^{d,0,t,x,N}_s,u^d_N(s,\cX^{d,0,t,x,N}_s),w^d_N(s,\cX^{d,0,t,x,N}_s)\big)
(1,\cV^{d,0,t,x,N}_s)
\big\|_{L_1}\,ds
\\
&
+\sup_{(s,y)\in[0,T)\times\bR^d}
\left(\frac{|u^d(s,y)|+(T-s)^{1/2}\|w^d(s,y)\|}{(d^p+\|y\|^2)^{1/2}}
\right)<\infty,
\end{align*}
and
\begin{align}
(u^d_N(t,x),w^d_N(t,x))
&
=\bE\left[g^d(\cX^{d,0,t,x,N}_T)(1,V^{d,0,t,x,N}_T)\right]  
\nonumber\\
& \quad    
+\int_t^T\bE\left[
f^d\big(s,\cX^{d,0,t,x,N}_s,u^d_N(s,\cX^{d,0,t,x,N}_s),w^d_N(s,\cX^{d,0,t,x,N}_s)\big)
(1,\cV^{d,0,t,x,N}_s)
\right]ds,
\label{BEL u v N}
\end{align}
as well as for all $t\in[0,T)$ that
\begin{align}                                              
\sup_{s\in[t,T)}\sup_{x\in\bR^d}
\frac{|u^d_N(s,x)|+(T-s)^{1/2}\|w^d_N(s,x)\|}{(d^p+\|x\|^2)^{1/2}}
\leq C_{d,2}.
\label{growth u v N}
\end{align}
Next, to prove (v) and (vi) we observe for all
$d\in\bN$, $(t,x)\in[0,T)\times\bR^d$, $n\in\bN_0$, and $M,N\in\bN$ that
\begin{align}
&
\big\|\cU^{d,0}_{n,M,N}(t,x)-(u^d,\nabla_x u^d)(t,x)\big\|_{L_2}
\nonumber\\
&
\leq
\big\|\cU^{d,0}_{n,M,N}(t,x)-(u^d_N,w^d_N)(t,x)\big\|_{L_2}
+
\big\|(u^d_N,w^d_N)(t,x)-(u^d,\nabla_x u^d)(t,x)\big\|_{L_2}.
\label{ineq bU u v}
\end{align}
Moreover, by \eqref{L q continuity SDE}
we notice that for each $d\in\bN$, $s\in[0,T]$, and $r\in[s,T]$ the mapping 
$
\bR^d\times\bR^d\ni(x,y)\mapsto \big(X^{d,0,s,x}_r,X^{d,0,s,y}_r\big)
\in \cL_0(\Omega,\bR^d\times\bR^d)
$
is continuous and hence measurable, 
and we have for all nonnegative Borel functions 
$h:\bR^d\times\bR^d\to[0,\infty)$ that the mapping
$
\cL_0(\Omega,\bR^d\times\bR^d)\ni Z \mapsto \bE\big[h(Z)\big]\in [0,\infty]
$
is measurable. 
Hence, it holds for all $d\in\bN$, $s\in[0,T]$,
$r\in[s,T]$ and all nonnegative Borel functions 
$h:\bR^d\times\bR^d\to[0,\infty)$ that the mapping
\begin{equation}                                           \label{measurability 3}
\bR^d\times\bR^d\ni(x,y)\mapsto
\bE\Big[h\big(X^{d,0,s,x}_{r},X^{d,0,s,y}_{r}\big)\Big]\in[0,\infty]
\end{equation}
is measurable.
Furthermore, Lemma 2.2 in \cite{HJKNW2020} ensures that for all 
$d\in\bN$, $t\in[0,T]$, $s\in[t,T]$, $r\in[s,T]$, $x,y\in\bR^d$ and
all nonnegative Borel functions $h:\bR^d\times\bR^d\to[0,\infty)$
it holds that
\begin{equation}                                     \label{expectation equality}
\bE\left[\bE \left[h\Big(X^{d,0,s,x'}_{r},X^{d,0,s,y'}_{r}\Big)\right]
\Big|_{(x',y')=(X^{d,0,t,x}_{s},X^{d,0,t,y}_{s})}\right]
=\bE\left[h\Big(X^{d,0,t,x}_{r},X^{d,0,t,y}_{r}\Big)\right].
\end{equation}
Then by \eqref{assumption Lip f}, \eqref{assumption Lip mu sigma},
\eqref{assumption growth f g}, \eqref{BEL main},  
\eqref{BEL u v N}, \eqref{measurability 3}, \eqref{expectation equality},
\eqref{q moment est SDE}, \eqref{L q continuity SDE}, 
\eqref{L2 est proc V}, \eqref{L2 cont est proc V}, 
\eqref{L q est Euler SDE}, \eqref{L q error Euler SDE}, \eqref{cV L q est 0},  
and \eqref{V cV error 0},
the application of Lemma \ref{lemma perturbation}
(with $F\cal f^d$, $G\cal g^d$, 
$\bX^{t,x,1}\cal X^{d,0,t,x}$, $\bX^{t,x,2}\cal \cX^{d,0,t,x,N}$,
$\bV^{t,x,1}\cal V^{d,0,t,x}$, and $\bV^{t,x,2}\cal \cV^{d,0,t,x,N}$
in the notation of Lemma \ref{lemma perturbation}) yields that there
exists a positive constant 
$\mathfrak{c}_{d,1}=\mathfrak{c}_{d,1}(d,\varepsilon_d,L,L_0,T)$
satisfying for all
$d\in\bN$, $(t,x)\in[0,T)\times\bR^d$, $n\in\bN_0$, and $M,N\in\bN$ that
\begin{align}
&\big\|(u^d_N,w^d_N)(t,x)-(u^d,\nabla_x u^d)(t,x)\big\|_{L_2}
\nonumber\\
&
\leq
\left(
\frac{1+T}{T-t}
\bE\left[\big|u^d_N(t,x)-u^d(t,x)\big|^2+(T-t)\big\|w^d_N(t,x)-\nabla_x u^d(t,x)\big\|^2\right]
\right)^{1/2}
\nonumber\\
&
\leq
\mathfrak{c}_{d,1}(T-t)^{-1/2}N^{-1/2}(d^p+\|x\|^2).
\label{ineq MLP conv 1}
\end{align}
Moreover, by
\eqref{assumption Lip f}, \eqref{assumption growth f g},
\eqref{BEL u v N}, \eqref{growth u v N}, 
\eqref{L q est Euler SDE}, \eqref{L q est Euler SDE -x}, 
and \eqref{cV L q est 0},
we apply Proposition \ref{corollary MLP error}
(with $\beta=1/2$, $\varrho\cal\varrho$, $\cR^\theta\cal\cR^\theta$, 
$g\cal g^d$, $f\cal f^d$, $F\cal F^d$,
$\bX^{\theta,t,x}\cal \cX^{d,\theta,t,x,N}$,
$\bV^{\theta,t,x}\cal \cV^{d,\theta,t,x,N}$,
$(u,w)\cal (u^d_N,w^d_N)$,
and $\cU^{\theta}_{n,M,N}(t,x)\cal\cU^{d,\theta}_{n,M,N}(t,x)$
in the notation of Proposition \ref{corollary MLP error})
to show that there exists a positive constant 
$\mathfrak{c}_{d,2}=\mathfrak{c}_{d,2}(d,\varepsilon_d,\alpha,L,L_0,T)$
satisfying for all
$d\in\bN$, $(t,x)\in[0,T)\times\bR^d$, $n\in\bN_0$, and $M,N\in\bN$ that
\begin{align}
\big\|\cU^{d,0}_{n,M,N}(t,x)-(u^d_N,w^d_N)(t,x)\big\|_{L_2}
\leq
\mathfrak{c}_{d,2}^{n-1}\exp\big\{M^3/6\big\}M^{-n/2}
(T-t)^{-1/2}(d^p+\|x\|^2)^{1/2}.
\label{ineq MLP conv 2}
\end{align}
Then combining \eqref{ineq bU u v}, \eqref{ineq MLP conv 1},
and \eqref{ineq MLP conv 2} yields for all
$d\in\bN$, $(t,x)\in[0,T)\times\bR^d$, $n\in\bN_0$, and $M,N\in\bN$ that
\begin{align}
&
\big\|\cU^{d,0}_{n,M,N}(t,x)-(u^d,\nabla_x u^d)(t,x)\big\|_{L_2}
\nonumber\\
&
\leq
\left[
\mathfrak{c}_{d,1}N^{-1/2}
+
\mathfrak{c}_{d,2}^{n-1}\exp\big\{M^3/6\big\}M^{-n/2}
\right]
(T-t)^{-1/2}(d^p+\|x\|^2).
\end{align}
This proves (v).
Next, taking Assumption \ref{assumption bbd partials global} into account,
by
\eqref{assumption Lip f}, \eqref{assumption growth f g}, \eqref{finite main},
\eqref{BEL main}, \eqref{growth u v}, 
\eqref{q moment est SDE}, \eqref{q moment est SDE -x}, and \eqref{L q est proc V}
we apply Proposition \ref{corollary MLP error}
(with $\beta=1/2$, $\varrho\cal\varrho$, $\cR^\theta\cal\cR^\theta$, 
$g\cal g^d$, $f\cal f^d$, $F\cal F^d$,
$\bX^{\theta,t,x}\cal X^{d,\theta,t,x}$,
$V^{\theta,t,x}\cal V^{d,\theta,t,x}$,
$(u,w)\cal (u^d,w^d)$,
and $\cU^{\theta}_{n,M}(t,x)\cal U^{d,\theta}_{n,M}(t,x)$
in the notation of Proposition \ref{corollary MLP error})
to show that there exists a positive constant 
$\mathfrak{c}_3=\mathfrak{c}_3(\alpha,L,L_0,K,T)$
satisfying for all
$d\in\bN$, $(t,x)\in[0,T)\times\bR^d$, $n\in\bN_0$, and $M,N\in\bN$ that
\begin{align}
\big\|U^{d,0}_{n,M}(t,x)-(u^d,\nabla_x u^d)(t,x)\big\|_{L_2}
\leq
\mathfrak{c}_3^{n-1}(d\varepsilon_d^{-1})^n\exp\big\{M^3/6\big\}M^{-n/2}
(T-t)^{-1/2}(d^p+\|x\|^2)^{1/2}.
\end{align}
This ensures (vi). 
We have therefore completed the proof of Theorem \ref{thm MLP conv}.
\end{proof}

\begin{proof}[Proof of Theorem \ref{MLP complexity}]
For each $d\in\bN$, $\varepsilon\in(0,1]$, and $x\in\bR^d$ we define
$\mathfrak{n}^d(x,\varepsilon)$ by
\begin{equation}                                                \label{def n frak}
\mathfrak{n}^d(x,\varepsilon):=\inf\Big\{n\in\bN\cap [2,\infty):
\sup_{k\in[n,\infty)\cap\bN}\sup_{t\in[0,T)}
\big\|\cU^{d,0}_{(k)}(t,x)-(u^d,\nabla_x u^d)(t,x)\big\|_{L_2}<\varepsilon\Big\},
\end{equation}
where we use the shorter notation
$$
\cU^{(d)}_{(k)}(t,x):=\cU^{d,0}_{k^3,k,k}(t,x)
\quad
\text{for all $d\in\bN$, $k\in\bN$, and $(t,x)\in[0,T)\times\bR^d$,}
$$ 
and use the convention $\inf(\emptyset)=\infty$
($\emptyset$ denotes the empty set). 
Applying (v) in Theorem \ref{thm MLP conv} (with
$n \cal n^3$, $M\cal n$, and $N\cal n$ 
in the notation of Theorem \ref{thm MLP conv}),
we have for all $d\in\bN$, 
$n\in\bN$, and $(t,x)\in[0,T)\times\bR^d$ that
\begin{align}
&
\big\|\cU^{d,0}_{(n)}(t,x)-(u^d,\nabla_x u^d)(t,x)\big\|_{L_2}
\leq
\Big[\mathfrak{c}_{d,1}n^{-1/2}
+\mathfrak{c}_{d,2}^{n-1}\exp\big\{n^3/6\big\}n^{-n^3/2}\Big]
(T-t)^{-1/2}(d^p+\|x\|^2)^{1/2}              
\label{MLP n bU}
\end{align}
with $\mathfrak{c}_{d,1}=\mathfrak{c}_{d,1}(d,\varepsilon_d,L,L_0,T)$ 
and $\mathfrak{c}_{d,2}=\mathfrak{c}_{d,2}(d,\varepsilon_d,\alpha,L,L_0,T)$ 
being the positive constants
introduced in (v).
Moreover, for each $d\in\bN$ we observe for all integers 
$n\geq \max\{(1+\mathfrak{c}_{d,2})/2,e\}$ that
$$
\mathfrak{c}_{d,2}^{n-1}\exp\big\{n^3/6\big\}n^{-n^3/2}
\leq
\frac{(1+\mathfrak{c}_{d,2})^n}{n^{n^3/3}}
\cdot\frac{e^{n^3/6}}{n^{n^3/6}}
\leq 2^{-n},
$$
which implies that
$$
\lim_{n\to\infty}
\Big[
\mathfrak{c}_{d,1}n^{-1/2}+
\mathfrak{c}_{d,2}^{n-1}\exp\big\{n^3/6\big\}n^{-n^3/2}
\Big]=0.
$$
Therefore, by \eqref{MLP n bU} we have for all 
$d\in\bN$, $n\in\bN$, $\varepsilon\in(0,1]$, and $(t,x)\in[0,T)\times\bR^d$ that
$$
\mathfrak{n}^d(x,\varepsilon)<\infty \quad \text{and} \quad
\sup_{n\in[\mathfrak{n}^d(x,\varepsilon),\infty)\cap \bN}
\big\|\cU^{d,0}_{(n)}(t,x)-(u^d,\nabla_x u^d)(t,x)\big\|_{L_2}
<\varepsilon,
$$
which proves (i).
Next, note that \eqref{cc 1} and e.g., Lemma 3.14 in \cite{BGJ2020}
(applied with $M\cal M$, $n\cal n$, 
$\alpha\cal (2M^M\mathfrak{e}^{(d)}+\mathfrak{g}^{(d)}+\mathfrak{f}^{(d)})$,
$\beta\cal (M^M\mathfrak{e}^{(d)}+\mathfrak{f}^{(d)})$,
and $(C_n)_{n\in\bN_0}\cal(\mathfrak{C}^{(d)}_{n,M})_{n\in\bN_0}$
in the notation of Lemma 3.14 in \cite{BGJ2020})
ensure for all $d\in\bN$ and $n,M\in\bN$ that
$$
\mathfrak{C}^{(d)}_{n,M}
\leq
\left[\frac{3M^M\mathfrak{e}^{(d)}+\mathfrak{g}^{(d)}+\mathfrak{f}^{(d)}}{2}\right]
(3M)^n.
$$
Hence, it holds for all $d\in\bN$, $n\in\bN$, and $k\in\{1,2,\dots,n+1\}$ that
\begin{align*}
\mathfrak{C}^{(d)}_{k^3,k}
&
\leq
\frac{\big[3\mathfrak{e}^{(d)}+\mathfrak{g}^{(d)}+\mathfrak{f}^{(d)}\big]
(3(n+1)^2)^{(n+1)^3}}{2}
\leq
\frac{\big[3\mathfrak{e}^{(d)}+\mathfrak{g}^{(d)}+\mathfrak{f}^{(d)}\big]
(3(2n)^2)^{(n+1)^3}}{2}
\\
&
\leq
\frac{\big[3\mathfrak{e}^{(d)}+\mathfrak{g}^{(d)}+\mathfrak{f}^{(d)}\big]
(12n^2)^{4(n^3+1)}}{2}.
\end{align*}
This together with the fact that $n^9\leq 9^n$ imply for all $d\in\bN$ and $n\in\bN$
that
\begin{align*}
\sum_{k=1}^{n+1}\mathfrak{C}^{(d)}_{k^3,k}
&
\leq
\frac{\big[3\mathfrak{e}^{(d)}+\mathfrak{g}^{(d)}+\mathfrak{f}^{(d)}\big]
(n+1)(12n^2)^{4(n^3+1)}}{2}
\leq
\big[3\mathfrak{e}^{(d)}+\mathfrak{g}^{(d)}+\mathfrak{f}^{(d)}\big]
n(12n^2)^{4(n^3+1)}
\\
&
\leq
12\big[3\mathfrak{e}^{(d)}+\mathfrak{g}^{(d)}+\mathfrak{f}^{(d)}\big]
9^n\cdot 12^{4n^3}\cdot n^{8n^3}
\leq
12\big[3\mathfrak{e}^{(d)}+\mathfrak{g}^{(d)}+\mathfrak{f}^{(d)}\big]
(12)^{5n^3}\cdot n^{8n^3}.
\end{align*}
This establishes (ii).
Next, for each $d\in\bN$, $\varepsilon\in(0,1]$, and $x\in\bR^d$ define
$\mathbf{n}^d(x,\varepsilon)$ by
\begin{equation}                                                \label{def n}
\mathbf{n}^d(x,\varepsilon):=\inf\Big\{n\in\bN\cap [2,\infty):
\sup_{k\in[n,\infty)\cap\bN}\sup_{t\in[0,T)}
\big\|U^{d,0}_{(k)}(t,x)-(u^d,\nabla_x u^d)(t,x)\big\|_{L_2}<\varepsilon^2\Big\},
\end{equation}
where we use the shorter notation
$$
U^{(d)}_{(k)}(t,x):=U^{d,0}_{k^3,k}(t,x)
\quad
\text{for all $d\in\bN$, $k\in\bN$, and $(t,x)\in[0,T)\times\bR^d$}.
$$ 
Applying (vi) in Theorem \ref{thm MLP conv} (with
$n \cal n$, and $M\cal n$ in the notation of Theorem \ref{thm MLP conv}),
we have for all $d\in\bN$, 
$n\in\bN$, and $(t,x)\in[0,T)\times\bR^d$ that
\begin{align}
&
\big\|U^{d,0}_{(n)}(t,x)-(u^d,\nabla_x u^d)(t,x)\big\|_{L_2}
\leq
\mathfrak{c}_3^{n-1}(d\varepsilon_d^{-1})^n\exp\big\{n^3/6\big\}n^{-n^3/2}
(T-t)^{-1/2}(d^p+\|x\|^2)^{1/2},               
\label{MLP n}
\end{align}
where $\mathfrak{c}_3=\mathfrak{c}_3(\alpha,L,L_0,K,T)$
is the positive constant introduced in (vi).
Moreover, for each $d\in\bN$ we observe for all integers 
$n\geq \max\{(1+\mathfrak{c}_3)d\varepsilon_d^{-1}/2,e\}$ that
$$
\mathfrak{c}_3^{n-1}(d\varepsilon_d^{-1})^n\exp\big\{n^3/6\big\}n^{-n^3/2}
\leq
\frac{\big[(1+\mathfrak{c}_3)d\varepsilon_d^{-1}\big]^n}{n^{n^3/3}}
\cdot\frac{e^{n^3/6}}{n^{n^3/6}}
\leq 2^{-n},
$$
which implies that
$$
\lim_{n\to\infty}
\mathfrak{c}_3^{n-1}(d\varepsilon_d^{-1})^n\exp\big\{n^3/6\big\}n^{-n^3/2}=0.
$$
Therefore, by \eqref{MLP n} we have for all $d\in\bN$, $n\in\bN$, $\varepsilon\in(0,1]$, and $(t,x)\in[0,T)\times\bR^d$ that
$$
\mathbf{n}^d(x,\varepsilon)<\infty \quad \text{and} \quad
\sup_{n\in[\mathbf{n}^d(x,\varepsilon),\infty)\cap \bN}
\big\|U^{d,0}_{(n)}(t,x)-(u^d,\nabla_x u^d)(t,x)\big\|_{L_2}<\varepsilon,
$$
which proves \eqref{error epsilon}.
Furthermore, by \eqref{cc 2} and \eqref{MLP n}
we obtain for all $d,n\in\bN$, $\gamma\in(0,1]$, 
and $(t,x)\in[0,T]\times\bR^d$ that
\begin{align}
&
\left(\sum_{k=1}^{n+1}\mathfrak{C}^{(d)}_{k^3,k}\right)
\big\|U^{d,0}_{(n)}(t,x)-(u^d,\nabla_x u^d)(t,x)\big\|_{L_2}^{\gamma+16}
\nonumber\\
&
\leq 
12\big[3\mathfrak{e}^{(d)}+\mathfrak{g}^{(d)}+2\mathfrak{f}^{(d)}\big]
(12)^{5n^3}\cdot n^{8n^3} 
\big[(T-t)^{-1}(d^p+\|x\|^2)\big]^{\frac{\gamma+16}{2}}
\big[\mathfrak{c}_3^{n-1}(d\varepsilon_d^{-1})^n
\exp\big\{n^3/6\big\}n^{-n^3/2}\big]^{\gamma+16}
\nonumber\\
& 
=         
12\big[3\mathfrak{e}^{(d)}+\mathfrak{g}^{(d)}+2\mathfrak{f}^{(d)}\big]
(12)^{5n^3}\cdot n^{-\gamma n^3/2} 
\big[(T-t)^{-1}(d^p+\|x\|^2)\big]^{\frac{\gamma+16}{2}}
\big[\mathfrak{c}_3^{n-1}(d\varepsilon_d^{-1})^n
\exp\big\{n^3/6\big\}\big]^{\gamma+16} .      
\label{cc 4}
\end{align} 
Then \eqref{def n} and \eqref{cc 4} show for all $d\in\bN$,
$\varepsilon,\gamma\in(0,1]$ and $(t,x)\in[0,T]\times\bR^d$ that
\begin{align}
\left(\sum_{k=1}^{\mathbf{n}^d(x,\varepsilon)}\mathfrak{C}^{(d)}_{k^3,k}\right)
\varepsilon^{\gamma+16}
&
\leq \left(\sum_{k=1}^{\mathbf{n}^d(x,\varepsilon)}\mathfrak{C}^{(d)}_{k^3,k}\right)
\big\|U^{d,0}_{(\mathbf{n}^d(x,\varepsilon)-1)}(t,x)
-(u^d,\nabla_x u^d)(t,x)\big\|_{L_2}^{\gamma+16}
\nonumber\\
&
\leq
12\big[3\mathfrak{e}^{(d)}+\mathfrak{g}^{(d)}+2\mathfrak{f}^{(d)}\big]
\big[(T-t)^{-1}(d^p+\|x\|^2)\big]^{\frac{\gamma+16}{2}}
\nonumber\\
& \quad
\cdot
\sup_{n\in\bN}\Big\{
12^{5n^3}\cdot n^{-\gamma n^3/2} 
\big[\mathfrak{c}_3^{n-1}(d\varepsilon_d^{-1})^n
\exp\big\{n^3/6\big\}\big]^{\gamma+16}\Big\}.
\label{complexity}  
\end{align}
Moreover, it holds for all $\gamma\in(0,1]$ and $n\in\bN$ satisfying 
$$
n\geq 
\max\left\{
\big(2\cdot 12^5\big)^{\frac{6}{\gamma}},(1+\mathfrak{c}_3)d\varepsilon_d^{-1},
6(\gamma+16)\gamma^{-1},\exp\big\{\gamma+16\gamma^{-1}\big\}
\right\}
$$ that
\begin{align*}
&
12^{5n^3}\cdot n^{-\gamma n^3/2} 
\big[\mathfrak{c}_3^{n-1}(d\varepsilon_d^{-1})^n
\exp\big\{n^3/6\big\}\big]^{\gamma+16}
\\
&
\leq
\frac{12^{5n^3}}{n^{\gamma n^3/6}}
\cdot
\frac{\big[(1+\mathfrak{c}_3)d\varepsilon_d^{-1}\big]^{n(\gamma+16)}}
{n^{\gamma n^3/6}}
\cdot
\frac{\exp\big\{(\gamma+16)n^3/6\big\}}{n^{\gamma n^3/6}}
\\
&
\leq 
2^{-n^3}.
\end{align*}
which implies for all $\gamma\in(0,1]$ that
$$
\sup_{n\in\bN}\Big\{
12^{5n^3}\cdot n^{-\gamma n^3/2} 
\big[\mathfrak{c}_3^{n-1}(d\varepsilon_d^{-1})^n
\exp\big\{n^3/6\big\}\big]^{\gamma+16}\Big\}
<\infty.
$$
This together with \eqref{complexity} establish \eqref{cc 3},
which proves (iii). We have therefore completed the proof of 
Theorem \ref{MLP complexity}.
\end{proof}

\clearpage

\appendix

\section{\textbf{Preliminaries}}
\label{section pre}

In this section, we show some important lemmas and tools used in
Sections \ref{section FP}, \ref{section PIDE}, and \ref{section proof main}. 
Sections \ref{section bounds SDE}--\ref{section bounds euler} provide some results 
on moment estimates, stability, continuity, and discretization errors for SDEs.
In Section \ref{section property coe}, we collect some lemmas for the coefficient
functions $\mu^d$ and $\sigma^d$ of PDE \eqref{APIDE}. 
Throughout this section, we assume the settings in Section \ref{section setting},
fix $d\in\bN$ and $\theta\in\Theta$, 
and omit the superscripts $d$ and $\theta$ for
the notations introduced in Section \ref{section setting} 
(e.g., for each $(t,x)\in[0,T]\times\bR^d$, 
$(X^{d,\theta,t,x}_{s})_{s\in[0,T]}$ 
will be denoted by $(X^{t,x}_{s})_{s\in[0,T]}$).

\subsection{Dimension-depending bounds for SDEs}
\label{section bounds SDE}

\begin{lemma}
\label{lemma q moment est SDE}
Let Assumptions \ref{assumption Lip and growth} hold.
For every $(t,x)\in[0,T]\times\bR^d$, let $(X^{t,x}_s)_{s\in[t,T]}$ be the
stochastic process defined in \eqref{SDE}.
Then it holds for all $(t,x)\in[0,T]\times\bR^d$, $s\in[t,T]$, 
and $q\in[2,\infty)$ that
\begin{equation}
\label{q moment est SDE}
\bE\left[\sup_{r\in[t,s]}\big(d^p+\|X^{t,x}_r\|^2\big)^{q/2}\right]
\leq
\big[C_{q,1}e^{\rho_{q,1}(s-t)}(d^p+\|x\|^2)\big]^{q/2},
\end{equation}
and
\begin{equation}
\label{q moment est SDE -x}
\bE\bigg[\sup_{r\in[t,s]}\big\|X^{t,x}_r-x\big\|^q\bigg]
\leq
\left[K_{q,0}(s-t)e^{\rho_{q,1}(s-t)}(d^p+\|x\|^2)\right]^{q/2},
\end{equation}
where
\begin{equation}
\label{def C q 1}
C_{q,1}
:=
\left(
2^{q/2}6^{q-1}\big[1+2^{q-1}L^{q/2}+T^{q/2}\big((4q)^q+T^{q/2}\big)\big]
\right)^{2/q},
\end{equation}
\begin{equation}
\label{def rho q 1}
\rho_{q,1}
:=
2q^{-1}6^{q-1}L^{q/2}T^{\frac{q-2}{2}}\big((4q)^q+T^{q/2}\big),
\end{equation}
and
\begin{equation}
\label{def K q 0}
K_{q,0}:=LC_{q,1}
\big[4^{q-1}\big(T^{q/2}+(4q)^q\big)\big]^{2/q}.
\end{equation}
\end{lemma}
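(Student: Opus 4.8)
The plan is to establish the two estimates \eqref{q moment est SDE} and \eqref{q moment est SDE -x} by the standard route: bound the running supremum of the SDE via the triangle inequality, apply the Burkholder--Davis--Gundy (BDG) inequality to the stochastic integral and Jensen/H\"older to the drift integral, use the linear growth condition \eqref{assumption growth} to re-insert the process itself, and close the argument with Gr\"onwall's lemma. The slightly nonstandard feature here is that the factor $d^p$ must be carried along inside the $(d^p+\|X^{t,x}_r\|^2)^{q/2}$ quantity rather than appearing as an additive constant on the outside; this is what makes the bound dimension-explicit and is essential for the later complexity analysis, so I would keep the bookkeeping in the form $\phi(r):=\bE[\sup_{u\in[t,r]}(d^p+\|X^{t,x}_u\|^2)^{q/2}]$ throughout.

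Concretely, first I would write, for $s\in[t,T]$,
\[
\|X^{t,x}_s\|\le \|x\|+\Bigl\|\int_t^s\mu^d(X^{t,x}_r)\,dr\Bigr\|+\Bigl\|\int_t^s\sigma^d(X^{t,x}_r)\,dW^d_r\Bigr\|,
\]
square, take the supremum over $[t,s]$, add $d^p$, raise to the power $q/2$, and use $(a+b+c)^{q}\le 3^{q-1}(a^q+b^q+c^q)$-type convexity bounds (together with $a+b+c+d^{p}\le 3(\,\cdot\,)$ after absorbing $d^p$ appropriately into the three terms) to split into a $\|x\|$-term, a drift term and a diffusion term. For the drift term apply H\"older in $dr$ to pull out $(s-t)^{q/2-1}$ and then \eqref{assumption growth}; for the diffusion term apply the BDG inequality with constant $(4q)^{q}$ (or any admissible choice; this is exactly where the $(4q)^q$ in \eqref{def C q 1}--\eqref{def rho q 1} comes from) followed by H\"older in $dr$ and again \eqref{assumption growth}. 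After these substitutions every occurrence of the process appears as $Ld^p(1+\|X^{t,x}_r\|^2)\le L(d^p+\|X^{t,x}_r\|^2)$ (using $d^p\ge 1$ for $d\ge1$, $p>0$ — or just $d^p(1+\|X\|^2)\le d^p+d^p\|X\|^2$ and $d^p\ge 1$), so one obtains
\[
\phi(s)\le C_{q,1}^{q/2}(d^p+\|x\|^2)^{q/2}+\rho_{q,1}\,\tfrac q2\int_t^s\phi(r)\,dr
\]
with the constants $C_{q,1},\rho_{q,1}$ as defined, and Gr\"onwall yields \eqref{q moment est SDE}. For \eqref{q moment est SDE -x} I would repeat the same decomposition but starting from $\|X^{t,x}_s-x\|$ (dropping the $\|x\|$ term), keep only the drift and diffusion contributions, bound the integrand by the already-established \eqref{q moment est SDE}, and evaluate the elementary integral $\int_t^s e^{\rho_{q,1}(r-t)}\,dr\le (s-t)e^{\rho_{q,1}(s-t)}$; tracking the constants $4^{q-1}(T^{q/2}+(4q)^q)$ from the drift/BDG split reproduces $K_{q,0}$ as in \eqref{def K q 0}.

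I do not anticipate a genuine obstacle — this is a routine a priori moment estimate — but the one place that needs care is the constant bookkeeping: one must choose the splitting of $d^p$ among the three terms, the BDG constant, and the H\"older exponents precisely so that the accumulated constants collapse into exactly the closed forms \eqref{def C q 1}, \eqref{def rho q 1}, \eqref{def K q 0}. A secondary technical point is the standard localization/stopping-time argument needed to justify applying BDG and Gr\"onwall when one does not yet know $\phi(s)<\infty$; I would invoke a localizing sequence of stopping times $\tau_m=\inf\{s\ge t:\|X^{t,x}_s\|\ge m\}$, derive the inequality for $X^{t,x}_{s\wedge\tau_m}$ (where all quantities are finite by \eqref{SDE moment est} already giving $q=2$ finiteness, and boundedness up to $\tau_m$), apply Gr\"onwall at level $m$, and let $m\to\infty$ by monotone convergence. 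With those two caveats handled, the result follows.
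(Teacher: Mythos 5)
Your overall plan (triangle inequality, BDG, H\"older/Jensen, Gr\"onwall, localization) matches the paper's, but there is a genuine error in the way you propose to absorb the $d^p$ factor, and it is not a harmless bookkeeping slip: it destroys the dimension-independence of the constants.

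The inequality you write, $Ld^p(1+\|X^{t,x}_r\|^2)\le L(d^p+\|X^{t,x}_r\|^2)$, goes the wrong way. Indeed $Ld^p(1+\|X\|^2)=Ld^p+Ld^p\|X\|^2\ge L(d^p+\|X\|^2)$ as soon as $d^p\ge1$, which is always the case here. If you nonetheless apply the linear growth bound \eqref{assumption growth} directly to $\|\mu(X)\|^q$ (and likewise for $\sigma$), the coefficient of the $\|X\|^q$ term in your integral inequality picks up a factor $(Ld^p)^{q/2}$, so Gr\"onwall produces an estimate of the form $\exp\bigl(c\,(Ld^p)^{q/2}(s-t)\bigr)$ rather than $\exp\bigl(\rho_{q,1}(s-t)\bigr)$ with the dimension-free $\rho_{q,1}$ in \eqref{def rho q 1}. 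That is fatal for the lemma, whose entire point is that the $d$-dependence of the right-hand side of \eqref{q moment est SDE} is confined to the polynomial factor $(d^p+\|x\|^2)^{q/2}$, with $C_{q,1}$ and $\rho_{q,1}$ independent of $d$.

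The paper avoids this by never applying \eqref{assumption growth} to the $\|X\|$-dependent part: it splits $\mu(X)=\bigl(\mu(X)-\mu(0)\bigr)+\mu(0)$ (and similarly for $\sigma$), uses the \emph{Lipschitz} bound \eqref{assumption Lip mu sigma} on $\mu(X)-\mu(0)$ to obtain a coefficient $L^{q/2}$ free of $d$ in front of the $\bE[\sup_r\|X_r\|^q]$ term, and only invokes the growth bound for the constant term $\|\mu(0)\|^q\le (Ld^p)^{q/2}$, which then contributes to the $d$-dependent prefactor $(d^p+\|x\|^2)^{q/2}$ but not to the Gr\"onwall exponent. You should adopt this $\mu(0)$-splitting; with it, the rest of your argument (including the localization and the derivation of \eqref{q moment est SDE -x} from \eqref{q moment est SDE}) goes through as you describe.
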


\begin{proof}
By \eqref{SDE}, \eqref{assumption Lip mu sigma}, \eqref{assumption growth},
Jensen's inequality, and Burkholder-Davis-Gundy inequality
we have for all $(t,x)\in[0,T]\times\bR^d$ and $q\in[2,\infty)$ that
\begin{align}
&
\bE\left[\sup_{r\in[t,s]}\|X^{t,x}_r\|^q\right]
\nonumber\\
&
\leq
3^{q-1}\|x\|^q
+
3^{q-1}\bE\left[\bigg\|\int_t^s\mu(X^{t,x}_r)\,dr\bigg\|^q\right]
+
3^{q-1}\bE\left[\sup_{u\in[t,s]}\bigg\|\int_t^u\sigma(X^{t,x}_r)\,dW_r\bigg\|^q\right]
\nonumber\\
&
\leq
3^{q-1}\|x\|^q
+
3^{q-1}(s-t)^{q-1}\int_t^s\bE\left[\|\mu(X^{t,x}_r)\|^q\right]dr
+
3^{q-1}(4q)^q
\left(\bE\left[\int_t^s\|\sigma(X^{t,x}_r)\|_F^2\,dr\right]\right)^{q/2}
\nonumber\\
&
\leq
3^{q-1}\|x\|^q
+
6^{q-1}T^{q-1}\int_t^s\bE\left[\|\mu(X^{t,x}_r)-\mu(0)\|^q\right]dr
+
6^{q-1}T^{q-1}\int_t^s\|\mu(0)\|^q\,dr
\nonumber\\
& \quad
+
6^{q-1}T^{\frac{q-2}{2}}(4q)^q
\int_t^s\bE\left[\|\sigma(X^{t,x}_r)-\sigma(0)\|_F^q\right]dr
+
6^{q-1}T^{\frac{q-2}{2}}(4q)^q
\int_t^s\|\sigma(0)\|_F^q\,dr
\nonumber\\
&
\leq
3^{q-1}\|x\|^q+6^{q-1}T^{q-1}L^{q/2}\int_t^s
\bE\bigg[\sup_{u\in[t,r]}\|X^{t,x}_u\|^q\bigg]
\,dr
+
6^{q-1}T^q(Ld^p)^{q/2}
\nonumber\\
& \quad
+
6^{q-1}T^{\frac{q-2}{2}}(4q)^qL^{q/2}\int_t^s
\bE\bigg[\sup_{u\in[t,r]}\|X^{t,x}_u\|^q\bigg]
\,dr
+
6^{q-1}T^{q/2}(4q)^q(Ld^p)^{q/2}
\nonumber\\
&
\leq
3^{q-1}\big[1+2^{q-1}L^{q/2}T^{q/2}\big((4q)^q+T^{q/2}\big)\big]
(d^p+\|x\|^2)^{q/2}
\nonumber\\
& \quad
+
6^{q-1}L^{q/2}T^{\frac{q-2}{2}}\big((4q)^q+T^{q/2}\big)
\int_t^s\bE
\bigg[\sup_{u\in[t,r]}\|X^{t,x}_u\|^q\bigg]
\,dr.
\label{integral est q moment}
\end{align}
Moreover, under Assumption \ref{assumption Lip and growth} 
it is well-known (see, e.g., Theorem 4.1 in \cite{Mao2007}) that
$$
\bE\bigg[\sup_{s\in[t,T]}\|X^{t,x}_s\|^q\bigg]<\infty
$$
for all $(t,x)\in[0,T]\times\bR^d$ and $q\in[2,\infty)$.
Hence, \eqref{integral est q moment} and Gr\"onwall's lemma ensure
for all $(t,x)\in[0,T]\times\bR^d$ and $q\in[2,\infty)$ that
\begin{align*}
\bE\bigg[\sup_{r\in[t,s]}\|X^{t,x}_r\|^q\bigg]
\leq
&
3^{q-1}\big[1+2^{q-1}L^{q/2}+T^{q/2}\big((4q)^q+T^{q/2}\big)\big]
\\
&
\cdot
\exp\big\{6^{q-1}L^{q/2}T^{\frac{q-2}{2}}\big((4q)^q+T^{q/2}\big)(s-t)\big\}
(d^p+\|x\|^2)^{q/2}.
\end{align*}
This together with the fact that $(a+b)^m\leq 2^{q-1}(a^m+b^m)$ for all
$a,b\in[0,\infty)$ and $m\in[1,\infty)$ 
imply \eqref{q moment est SDE}.
Next, by \eqref{SDE}, \eqref{assumption Lip mu sigma}, \eqref{assumption growth}, 
\eqref{q moment est SDE}, Jensen's inequality, and Burkholder-Davis-Gundy inequality
we notice for all $(t,x)\in[0,T]\times\bR^d$, $s\in[t,T]$, $N\in\bN$,
and $q\in[2,\infty)$ that
\begin{align*}
&
\bE\bigg[
\sup_{r\in[t,s]}
\big\|X^{t,x}_r-x\big\|^q
\bigg]
\\
&
\leq
2^{q-1}
\bE\left[
\sup_{u\in[t,s]}
\bigg\|
\int_t^u
\mu\big(X^{t,x}_r\big)
\,dr
\bigg\|^q
\right]
+
2^{q-1}
\bE\left[
\sup_{u\in[t,s]}
\bigg\|
\int_t^u
\sigma\big(X^{t,x}_r\big)
\,dW_r
\bigg\|^q
\right]
\\
&
\leq
2^{q-1}(s-t)^{q-1}\int_t^s
\bE\left[\big\|\mu\big(X^{t,x}_r\big)\big\|^q\right]
dr
+
2^{q-1}(4q)^q
\left(
\bE\left[
\int_t^s
\big\|
\sigma\big(X^{t,x}_r\big)
\big\|_F^2
\,dr
\right]
\right)^{q/2}
\\
&
\leq
4^{q-1}(s-t)^{q-1}
\left(
\int_t^s\bE\left[
\big\|
\mu\big(X^{t,x}_r\big)-\mu(0)
\big\|^q
\right]dr
+
\int_t^s
\big\|
\mu(0)
\big\|^q
\,dr
\right)
\\
& \quad
+
4^{q-1}(s-t)^{\frac{q-2}{2}}(4q)^q
\left(
\int_t^s\bE\left[
\big\|
\sigma\big(X^{t,x}_r\big)-\sigma(0)
\big\|_F^q
\right]dr
+
\int_t^s
\big\|
\sigma(0)
\big\|_F^q
\,dr
\right)
\\
&
\leq
4^{q-1}(s-t)^qL^{q/2}
\bE\bigg[
\sup_{r\in[t,s]}\big\|X^{t,x}_r\big\|^q
\bigg]
+
4^{q-1}(s-t)^q(Ld^p)^{q/2}
\\
& \quad
+
4^{q-1}(s-t)^{q/2}(4q)^qL^{q/2}
\bE\bigg[
\sup_{r\in[t,s]}\big\|X^{t,x}_r\big\|^q
\bigg]
+
4^{q-1}(s-t)^{q/2}(4q)^q(Ld^p)^{q/2}
\\
&
\leq
4^{q-1}(s-t)^{q/2}(Ld^p)^{q/2}[T^{q/2}+(4q)^q]
+
4^{q-1}(s-t)^{q/2}L^{q/2}[T^{q/2}+(4q)^q]
\bE\bigg[
\sup_{r\in[t,s]}\big\|X^{t,x}_r\big\|^q
\bigg]
\\
&
\leq
4^{q-1}(s-t)^{q/2}L^{q/2}[T^{q/2}+(4q)^q]
\bE\bigg[
\sup_{r\in[t,s]}
\big(
d^p+\big\|X^{t,x}_r\big\|^2
\big)^{q/2}
\bigg]
\\
&
\leq
4^{q-1}(s-t)^{q/2}L^{q/2}[T^{q/2}+(4q)^q]
\left[C_{q,1}e^{\rho_{q,1}(s-t)}(d^p+\|x\|^2)\right]^{q/2}.
\end{align*}
This shows \eqref{q moment est SDE -x}.
Therefore, we have completed the proof of this lemma.
\end{proof}

\begin{lemma}
Let Assumption \ref{assumption Lip and growth} hold.
For every $(t,x)\in[0,T]\times\bR^d$, 
let $\big(X^{t,x}_s\big)_{s\in[t,T]}$ be the
stochastic process defined in \eqref{SDE}.
Then it holds for all $x,y\in\bR^d$, $t\in[0,T]$,
$t'\in[t,T]$, $s\in[t',T]$ and $q\in[2,\infty)$ that
\begin{equation}                                    \label{L q continuity SDE}
\bE\left[\big\|
X^{t,x}_s-X^{t',y}_s\big\|^q\right]
\leq 
C_{q,2}
\left[(1+T^{q/2})\big(C_{q,1}Ld^p\big)^{q/2}(t'-t)^{q/2}(d^p+\|x\|^2)^{q/2}
+\|x-y\|^q\right],
\end{equation}
where $C_{q,1}$ is defined in \eqref{def C q 1}, and
\begin{equation}
\label{def C q 2}
C_{q,2}:=5^{q-1}\exp\left\{5^{q-1}L^{q/2}T^q\big((4q)^q+T^{q/2}\big)\right\}.
\end{equation}
\end{lemma}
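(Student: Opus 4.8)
The plan is to compare $X^{t,x}$ and $X^{t',y}$ on their common time window $[t',s]$ by expanding the difference via the SDE \eqref{SDE}, and then to close a Grönwall estimate in which the motion of $X^{t,x}$ on the short initial window $[t,t']$ is isolated as a controllable error term. (One could instead invoke the flow property $X^{t,x}_s=X^{t',X^{t,x}_{t'}}_s$ for $s\ge t'$ together with a stability estimate for SDEs started at $t'$ from random initial data, but the direct decomposition avoids introducing such an auxiliary result.)

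Concretely, for $s\in[t',T]$ I would write
$$X^{t,x}_s-X^{t',y}_s=(x-y)+\int_t^{t'}\mu(X^{t,x}_r)\,dr+\int_t^{t'}\sigma(X^{t,x}_r)\,dW_r+\int_{t'}^s\big[\mu(X^{t,x}_r)-\mu(X^{t',y}_r)\big]\,dr+\int_{t'}^s\big[\sigma(X^{t,x}_r)-\sigma(X^{t',y}_r)\big]\,dW_r,$$
so that, setting $\phi(s):=\bE\big[\sup_{u\in[t',s]}\|X^{t,x}_u-X^{t',y}_u\|^q\big]$ and using $\|\sum_{i=1}^5a_i\|^q\le 5^{q-1}\sum_{i=1}^5\|a_i\|^q$, the quantity $\phi(s)$ breaks into the initial gap $5^{q-1}\|x-y\|^q$, two ``short-window'' terms depending only on $X^{t,x}$ on $[t,t']$, and two ``comparison'' terms on $[t',s]$.

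For the short-window terms I would use Jensen's inequality in time, the moment form of the Burkholder--Davis--Gundy inequality (constant $(4q)^q$, as in the previous proof), the linear growth bound \eqref{assumption growth} — which, since $d^p\ge 1$, gives $\|\mu(z)\|^q\vee\|\sigma(z)\|_F^q\le (Ld^p)^{q/2}(d^p+\|z\|^2)^{q/2}$ — and the moment estimate \eqref{q moment est SDE}; here exactly one factor $(t'-t)$ is retained from the time-Hölder step and the surplus powers of $(t'-t)$ are bounded by $T$, yielding a bound of the shape $c\,(t'-t)^{q/2}(C_{q,1}Ld^p)^{q/2}(d^p+\|x\|^2)^{q/2}$ with $c=c(q,L,T)$, the factor $1+T^{q/2}$ recording the drift contribution (with a stray $T^{q/2}$) against the diffusion one and the incidental constants $(4q)^q$, $e^{\rho_{q,1}(t'-t)q/2}$ being absorbed into $C_{q,2}$. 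For the two comparison terms I would instead use the Lipschitz bound \eqref{assumption Lip mu sigma}, i.e.\ $\|\mu(X^{t,x}_r)-\mu(X^{t',y}_r)\|^q\le L^{q/2}\|X^{t,x}_r-X^{t',y}_r\|^q$ and likewise for $\sigma$, to obtain (after Jensen and BDG) a bound $c_q\int_{t'}^s\phi(r)\,dr$ with $c_q=c_q(q,L,T)$.

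Combining yields $\phi(s)\le 5^{q-1}\big[\|x-y\|^q+c\,(t'-t)^{q/2}(C_{q,1}Ld^p)^{q/2}(d^p+\|x\|^2)^{q/2}\big]+c_q\int_{t'}^s\phi(r)\,dr$; since $\phi(s)<\infty$ for every $s$ by \eqref{q moment est SDE} applied to both $X^{t,x}$ and $X^{t',y}$, Grönwall's lemma gives $\phi(s)\le 5^{q-1}[\cdots]\,e^{c_q(s-t')}$, and bounding $s-t'\le T$ together with $c_q\le 5^{q-1}L^{q/2}T^q((4q)^q+T^{q/2})$ recovers $C_{q,2}$ as in \eqref{def C q 2} and hence, via $\bE[\|X^{t,x}_s-X^{t',y}_s\|^q]\le\phi(s)$, the claimed inequality \eqref{L q continuity SDE}. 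The probabilistic ingredients (decomposition, BDG, Grönwall) are routine; the only genuine obstacle is the bookkeeping of constants — ensuring that precisely $(t'-t)^{q/2}$ rather than $(t'-t)^q$ survives, and that every incidental constant, including the exponential factor $e^{\rho_{q,1}T}$ produced by \eqref{q moment est SDE}, is dominated by the explicit definition of $C_{q,2}$.
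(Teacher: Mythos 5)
Your proof is correct and follows essentially the same route as the paper: the same five-way decomposition across the short window $[t,t']$ and the comparison window $[t',s]$, Jensen plus Burkholder--Davis--Gundy combined with the linear growth bound \eqref{assumption growth} and the moment estimate \eqref{q moment est SDE} on the short-window pieces, Jensen plus BDG combined with the Lipschitz bound \eqref{assumption Lip mu sigma} on the comparison pieces, and finally Gr\"onwall. The only cosmetic difference is that you run Gr\"onwall on the running supremum $\phi(s)=\bE[\sup_{u\in[t',s]}\|X^{t,x}_u-X^{t',y}_u\|^q]$ whereas the paper estimates $\bE[\|X^{t,x}_s-X^{t',y}_s\|^q]$ pointwise in $s$; both close the loop to the same constant $C_{q,2}$.
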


\begin{proof}
We fix $x,y\in\bR^d$, $t\in[0,T]$,
$t'\in[t,T]$, $s\in[t',T]$, and $q\in[2,\infty)$  
throughout the proof of this lemma.
By \eqref{SDE}, we first observe that
\begin{equation}
\bE\left[\big\|
X^{t,x}_s-X^{t',y}_s\big\|^q\right]
\leq 
5^{q-1}\|x-y\|^q
+
5^{q-1}\sum_{i=1}^3A_i,
\label{ineq X t t' x y}
\end{equation}
where
\begin{align*}
&
A_1
:=
\bE\left[\bigg|\int_t^{t'}\mu(X^{t,x}_r)\,dr\bigg|^q\right]
+
\bE\left[\bigg\|\sum_{j=1}^d\int_t^{t'}\sigma^j(X^{t,x}_r)\,dW^j_r\bigg\|^q\right],
\\
&
A_2
:=
\bE\left[\bigg|
\int_{t'}^s\big(\mu(X^{t,x}_r)-\mu(X^{t',y}_r)\big)\,dr
\bigg|^q\right],
\end{align*}
and
$$
A_3
:=
\bE\left[\bigg\|
\sum_{j=1}^d
\int_{t'}^s\big(\sigma^j(X^{t,x}_r)-\sigma^j(X^{t',y}_r)\big)\,dW^j_r
\bigg\|^q\right].
$$
By \eqref{assumption growth}, \eqref{q moment est SDE}, Jensen's inequality,
and Burkholder-Davis-Gundy inequality it holds that
\begin{align}
A_1
&
\leq
(t'-t)^{q-1}\int_t^{t'}\bE\left[|\mu(X^{t,x}_r)|^q\right]dr
+
(4q)^q\left(
\bE\left[\int_t^{t'}\|\sigma(X^{t,x}_r)\|_F^2\,dr\right]
\right)^{q/2}
\nonumber\\
&
\leq
(t'-t)^{q-1}(Ld^p)^{q/2}\int_t^{t'}\bE\left[(d^p+\|X^{t,x}_r\|^2)^{q/2}\right]dr
+
(4q)^q(Ld^p)^{q/2}
\left(\int_t^{t'}\bE\left[d^p+\|X^{t,x}_r\|^2\right]dr\right)^{q/2}
\nonumber\\
&
\leq
(t'-t)^q\big(C_{q,1}Ld^p\big)^{q/2}(d^p+\|x\|^2)^{q/2}
+
(t'-t)^{q/2}\big(C_{q,1}Ld^p\big)^{q/2}(d^p+\|x\|^2)^{q/2},
\label{est A 1 q}
\end{align}
where $C_{q,1}$ is the positive constant defined in \eqref{def C q 1}.
Moreover, by \eqref{assumption Lip mu sigma}, Jensen's inequality,
and Burkholder-Davis-Gundy inequality we have that
\begin{align}
A_2
&
\leq
(s-t')^{q-1}\int_{t'}^s
\bE\left[
\big|\mu(X^{t,x}_r)-\mu(X^{t',y}_r)\big|^q
\right]dr
\leq
T^{q-1}L^{q/2}\int_{t'}^s
\bE\left[
\big\|X^{t,x}_r-X^{t',y}_r\big\|^q
\right]
dr,
\label{est A 2 q}
\end{align}
and
\begin{align}
A_3
&
\leq 
(4q)^q
\left(
\bE\left[
\int_{t'}^s
\big\|\sigma(X^{t,x}_r)-\sigma(X^{t',y}_r)\big\|_F^2
\,dr
\right]
\right)^{q/2}
\leq
(4q)^qT^{\frac{q-2}{2}}L^{q/2}
\int_{t'}^s\bE\left[\big\|X^{t,x}_r-X^{t',y}_r\big\|^q\right]dr.
\label{est A 3 q}
\end{align}
Then combining \eqref{ineq X t t' x y}, \eqref{est A 1 q}, \eqref{est A 2 q},
and \eqref{est A 3 q} yields that
\begin{align*}
\bE\left[\big\|
X^{t,x}_s-X^{t',y}_s\big\|^q\right]
\leq 
&
5^{q-1}\|x-y\|^q
+
5^{q-1}(1+T^{q/2})\big(C_{q,1}Ld^p\big)^{q/2}(t'-t)^{q/2}(d^p+\|x\|^2)^{q/2}
\\
&
+
5^{q-1}L^{q/2}T^{\frac{q-2}{2}}\big((4q)^q+T^{q/2}\big)
\int_{t'}^s\bE\left[\big\|X^{t,x}_r-X^{t',y}_r\big\|^q\right]dr.
\end{align*}
This together with \eqref{SDE moment est} and Gr\"onwall's lemma imply that
\eqref{L q continuity SDE}. The proof of this lemma is therefore completed.
\end{proof}

\subsection{Derivatives of the solutions of SDEs}
\label{section deri SDE}

\quad\\
The following well-known lemma describes the differentiability of the solution of
\eqref{SDE} with respect to the initial value, and we refer to
Theorem 3.4 in \cite{Kunita} for its proof. 

\begin{lemma}[\cite{Kunita}, Theorem 3.4]                                 
\label{lemma classical derivative}
Let Assumptions \ref{assumption Lip and growth} and \ref{assumption gradient} hold.
For every $(t,x)\in[0,T]\times\bR^d$, let $(X^{t,x}_s)_{s\in[t,T]}$ be the
stochastic process defined in \eqref{SDE}.
Then for every $(t,x)\in[0,T]\times\bR^d$, $k\in\{1,2,\dots,d\}$,
and $s\in[t,T]$, the classical derivative of 
$X^{t,x}_s$ with respect to $x_k$, denoted by 
$\frac{\partial}{\partial x_k} X^{t,x}_s$, 
exists and is given by the following SDE 
\begin{align}
\frac{\partial}{\partial x_k} X^{t,x}_s
=
&
e_k+\int_t^s
(\nabla\mu)\big(X^{t,x}_{r}\big)
\frac{\partial}{\partial x_k}X^{t,x}_{r}\,dr
+
\sum_{j=1}^d
\int_t^s(\nabla\sigma^j)\big(X^{t,x}_{r}\big)
\frac{\partial}{\partial x_k}X^{t,x}_{r}\,dW^j_r.
\label{SDE Classical Derivative}
\end{align}
\end{lemma}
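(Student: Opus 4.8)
This is a classical result (it is Theorem 3.4 in \cite{Kunita}), so the paper simply invokes that reference; here I sketch the route a self-contained argument would take. Fix $k\in\{1,\dots,d\}$ and, for $\delta\neq 0$ small, set $Y^\delta_s:=\delta^{-1}\big(X^{t,x+\delta e_k}_s-X^{t,x}_s\big)$. Subtracting the integral forms of \eqref{SDE} for the two initial points and applying the fundamental theorem of calculus coordinatewise, one checks that $Y^\delta$ solves the linear SDE
$$
Y^\delta_s=e_k+\int_t^s a^\delta_r Y^\delta_r\,dr+\sum_{j=1}^d\int_t^s b^{\delta,j}_r Y^\delta_r\,dW^j_r,
$$
where $a^\delta_r:=\int_0^1(\nabla\mu)\big(\lambda X^{t,x+\delta e_k}_r+(1-\lambda)X^{t,x}_r\big)\,d\lambda$ and $b^{\delta,j}_r$ is defined in the same way with $\nabla\sigma^j$ replacing $\nabla\mu$. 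By \eqref{bbd partials} these coefficient processes are bounded uniformly in $\delta$ and $r$, so a Burkholder--Davis--Gundy plus Gr\"onwall estimate of the same kind as in the proof of Lemma \ref{lemma q moment est SDE} yields, for every $q\in[2,\infty)$, the uniform bound $\sup_{0<|\delta|\leq 1}\bE\big[\sup_{s\in[t,T]}\|Y^\delta_s\|^q\big]<\infty$.

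Next I would let $Z$ denote the solution of the linear SDE \eqref{SDE Classical Derivative} (well posed, since its coefficients are bounded) and show that $Y^\delta_s\to Z_s$ in $L^q(\bP)$ as $\delta\to 0$. Writing the SDE satisfied by $Y^\delta-Z$, the difference of drift coefficients decomposes as $\big(a^\delta_r-(\nabla\mu)(X^{t,x}_r)\big)Y^\delta_r+(\nabla\mu)(X^{t,x}_r)\big(Y^\delta_r-Z_r\big)$, and analogously for the diffusion part. The first summand tends to $0$ in $L^q$: by continuity of $\nabla\mu$, the $L^q$-continuity of the flow $X^{t,\cdot}_r$ (compare \eqref{L q continuity SDE}) and dominated convergence, $a^\delta_r\to(\nabla\mu)(X^{t,x}_r)$ in every $L^p$, while the uniform moment bound on $Y^\delta$ controls the product via H\"older's inequality. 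The second summand is handled by Gr\"onwall's lemma. Hence $Z_s$ is the $L^q$-derivative of $X^{t,x}_s$ with respect to $x_k$.

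The remaining, and delicate, point is to upgrade this $L^q$-derivative to an almost-sure classical derivative that is continuous in $x$. Here one argues by a Kolmogorov--Chentsov type estimate in the spatial variable: using the uniform moment bounds one shows that, for $p$ large, $\bE\big[\|Y^{\delta_1}_s-Y^{\delta_2}_s\|^p\big]$ and the spatial increments of the candidate derivative process $x\mapsto Z^{(x)}_s$ are H\"older-controlled in their arguments, so that $(x,s)\mapsto X^{t,x}_s$ admits a modification which is $C^1$ in $x$ with spatial derivative $Z^{(x)}_s$; the identity $X^{t,x+he_k}_s-X^{t,x}_s=\int_0^1 h\,Z^{(x+\lambda he_k)}_s\,d\lambda$ then identifies $Z$ with $\frac{\partial}{\partial x_k}X^{t,x}_s$. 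The main obstacle is precisely this passage from $L^q$-differentiability (cheap, via Gr\"onwall) to pathwise $C^1$-regularity of the flow jointly in the space and time variables; since this is exactly the content of \cite{Kunita}, Theorem 3.4, in the paper it suffices to cite that result.
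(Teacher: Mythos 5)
The paper gives no proof for this lemma and simply cites \cite{Kunita}, Theorem~3.4, which is exactly the conclusion you reach at the end of your sketch. Your outline of the difference-quotient SDE, uniform moment bounds via Burkholder--Davis--Gundy and Gr\"onwall, $L^q$-convergence of the quotients to the solution of \eqref{SDE Classical Derivative}, and the remaining Kolmogorov--Chentsov upgrade to pathwise $C^1$-regularity is a correct and standard account of what the cited theorem establishes, so there is no discrepancy with the paper.
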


\begin{lemma}                                       \label{lemma L2 derivative}
Let Assumptions \ref{assumption Lip and growth},
and \ref{assumption gradient} hold.
For every $(t,x)\in[0,T]\times\bR^d$, let $(X^{t,x}_s)_{s\in[t,T]}$ be the
stochastic process defined in \eqref{SDE}.
Then for all $(t,x)\in[0,T]\times\bR^d$, $k\in\{1,2,\dots,d\}$
and $s\in[t,T]$, the $L_2(\bP)$-derivative of 
$X^{t,x}_s$ with respect to $x_k$, denoted by 
$D_{x_k} X^{t,x}_s$, 
exists and coincides with the classical derivative 
$\frac{\partial}{\partial x_k}X^{t,x}_s$ given by 
\eqref{SDE Classical Derivative}, i.e.,
\begin{equation}                                          \label{L2derivative}
\lim_{\delta\to 0}
\bE\left[\left\|\frac{X^{t,x+\delta e_k}-X^{t,x}_s}{\delta}-
\frac{\partial}{\partial x_k}X^{t,x}_s\right\|^2\right]=0.
\end{equation}
\end{lemma}

\begin{proof}
For every $k\in\{1,2,\dots,d\}$, $(t,x)\in[0,T]\times\bR^d$, 
$s\in[t,T]$, and $\delta\in(0,1)$, 
we define $N_t(s,x,k,\delta)$ by
$$
N_t(s,x,k,\delta):=\frac{X^{t,x+\delta e_k}_s-X^{t,x}_s}{\delta}.
$$
Assumptions \ref{assumption Lip and growth} and \ref{assumption gradient}, 
and e.g., Theorem 3.3 in \cite{Kunita}
ensure that there exists a
positive constant $C_{T,d}$ only depending on $T$ and $d$ satisfying for all 
$k\in\{1,2,\dots,d\}$, 
$t\in[0,T]$, $x,x'\in\bR^d$, $s\in[t,T]$, and $\delta,\delta'\in(0,1)$
\begin{equation}                                             \label{est1}
\bE \Big[\sup_{s\in[t,T]}\big\|N_t(s,x,k,\delta)\big\|^2\Big]\leq C_{T,d},
\end{equation}
and
\begin{equation}                                            \label{est2}
\bE\Big[\sup_{s\in[t,T]}
\big\|N_t(s,x,k,\delta)-N_t(s,x',k,\delta')\big\|^2\Big]
\leq C_{T,d}\big(\|x-x'\|^{2}+|\delta-\delta'|^{2}\big).
\end{equation}
Hence, by Fatou's lemma we have for all $k\in\{1,2,\dots,d\}$, 
$(t,x)\in[0,T]\times\bR^d$, and $s\in[t,T]$ that
\begin{align*}
\lim_{\delta\to 0}
\bE\left[
\left\|
\frac{X^{t,x+\delta e_k}-X^{t,x}_s}{\delta}-
\frac{\partial}{\partial x_k}X^{t,x}_s
\right\|^2
\right]
&
=\lim_{\delta\to 0} \bE\left[
\big\|N_t(s,x,k,\delta)-\lim_{\delta'\to 0}N_t(s,x,k,\delta')\big\|^2
\right]
\\
&
\leq \lim_{\delta\to 0}
\left(
\liminf_{\delta'\to 0}
\bE\Big[\big\|N_t(s,x,k,\delta)-N_t(s,x,k,\delta')\big\|^2\Big]
\right)
\\
&
\leq \lim_{\delta\to 0}
\left(\liminf_{\delta'\to 0} C_{T,d}|\delta-\delta'|^{2}\right)=0.
\end{align*}
The proof of this lemma is therefore completed.
\end{proof}

\begin{lemma}                                      \label{lemma SDE partial est}
Let Assumptions \ref{assumption Lip and growth} and \ref{assumption gradient} hold.
For every $(t,x)\in[0,T]\times\bR^d$ and $k\in\{1,2,\dots,d\}$, 
let $\big(\frac{\partial}{\partial x_k} X^{t,x}_s\big)_{s\in[t,T]}$ be the
stochastic process defined in \eqref{SDE Classical Derivative}.
Then it holds for all $(t,x)\in[0,T]\times\bR^d$, 
$k\in\{1,2,\dots,d\}$, and $q\in[2,\infty)$ that
\begin{equation}                                      \label{moment est partial q}
\bE\left[\sup_{s\in[t,T]}\Big\|
\frac{\partial}{\partial x_k}X^{t,x}_s\Big\|^q\right]
\leq C_{d,q,0},
\end{equation}
where 
\begin{equation}
\label{def C q 0}
C_{d,q,0}
:=3^{q-1}
\exp\big\{
3^{q-1}(Ld)^{\frac{q}{2}}T^{\frac{q-2}{2}}
\big[T^{\frac{q}{2}}+(4q)^q\big]
\big\}.
\end{equation}
In particular, it holds for all $d\in\bN$, $(t,x)\in[0,T]\times\bR^d$, 
and $k\in\{1,2,\dots,d\}$ that
\begin{equation}                                      \label{moment est partial}
\bE\left[\sup_{s\in[t,T]}\Big\|
\frac{\partial}{\partial x_k}X^{t,x}_s\Big\|^2\right]
\leq C_{d,0},
\end{equation}
where $C_{d,0}:=3\exp\{3Ld(T+64)T\}$.

If we further let Assumption \ref{assumption bbd partials global} hold, 
then it holds for all $(t,x)\in[0,T]\times\bR^d$, 
$k\in\{1,2,\dots,d\}$, and $q\in[2,\infty)$ that
\begin{equation}                                      \label{moment est partial q 1}
\bE\left[\sup_{s\in[t,T]}\Big\|
\frac{\partial}{\partial x_k}X^{t,x}_s\Big\|^q\right]
\leq C'_{q ,0},
\end{equation}
where 
\begin{equation}
\label{def C q 0 '}
C'_{q,0}
:=3^{q-1}
\exp\big\{
3^{q-1}K^{\frac{q}{2}}T^{\frac{q-2}{2}}
\big[T^{\frac{q}{2}}+(4q)^q\big]
\big\}.
\end{equation}
\end{lemma}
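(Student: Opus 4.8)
The plan is to regard \eqref{SDE Classical Derivative} as a linear SDE with globally bounded coefficients and to run a Gr\"onwall argument that mirrors the proof of Lemma \ref{lemma q moment est SDE}. Fix $(t,x)\in[0,T]\times\bR^d$, $k\in\{1,\dots,d\}$ and $q\in[2,\infty)$, and abbreviate $Y_s:=\frac{\partial}{\partial x_k}X^{t,x}_s$, which by Lemma \ref{lemma classical derivative} solves \eqref{SDE Classical Derivative}. The first ingredient is to upgrade the pointwise bounds of \eqref{bbd partials}: since $\|\tfrac{\partial}{\partial x_k}\mu^d(y)\|^2\le L$ and $\|\tfrac{\partial}{\partial x_k}\sigma^d(y)\|_F^2\le L$ for every $k$, summing over $k\in\{1,\dots,d\}$ gives the uniform bounds $\|(\nabla\mu^d)(y)\|_F^2\le Ld$ and $\sum_{j=1}^d\|(\nabla\sigma^{d,j})(y)\|_F^2\le Ld$ for all $y\in\bR^d$. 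In particular the coefficients of \eqref{SDE Classical Derivative} are globally bounded, so by standard moment estimates for linear SDEs (e.g.\ Theorem 3.3 in \cite{Kunita}) $\bE[\sup_{s\in[t,T]}\|Y_s\|^q]<\infty$, which is exactly the a priori finiteness needed to invoke Gr\"onwall at the end.

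Next I would split, using $\|e_k\|=1$ and $(a+b+c)^q\le 3^{q-1}(a^q+b^q+c^q)$, into a constant term, a drift term and a stochastic-integral term. For the drift term, Jensen's inequality together with $\|Av\|\le\|A\|_F\|v\|$ and $\|(\nabla\mu^d)\|_F^2\le Ld$ gives
$$
\bE\Big[\sup_{u\in[t,s]}\Big\|\int_t^u(\nabla\mu^d)(X^{t,x}_r)Y_r\,dr\Big\|^q\Big]
\le T^{q-1}(Ld)^{q/2}\int_t^s\bE\Big[\sup_{v\in[t,r]}\|Y_v\|^q\Big]\,dr .
$$
For the stochastic-integral term, apply the Burkholder--Davis--Gundy inequality with constant $(4q)^q$ to the martingale $u\mapsto\sum_{j=1}^d\int_t^u(\nabla\sigma^{d,j})(X^{t,x}_r)Y_r\,dW^j_r$, whose quadratic variation is $\int_t^u\sum_{j=1}^d\|(\nabla\sigma^{d,j})(X^{t,x}_r)Y_r\|^2\,dr\le Ld\int_t^u\|Y_r\|^2\,dr$, and then use Jensen's inequality in the time variable, giving
$$
\bE\Big[\sup_{u\in[t,s]}\Big\|\sum_{j=1}^d\int_t^u(\nabla\sigma^{d,j})(X^{t,x}_r)Y_r\,dW^j_r\Big\|^q\Big]
\le (4q)^qT^{(q-2)/2}(Ld)^{q/2}\int_t^s\bE\Big[\sup_{v\in[t,r]}\|Y_v\|^q\Big]\,dr .
$$
Writing $\phi(s):=\bE[\sup_{r\in[t,s]}\|Y_r\|^q]$ and using $T^{q-1}=T^{(q-2)/2}T^{q/2}$, these combine to
$$
\phi(s)\le 3^{q-1}+3^{q-1}(Ld)^{q/2}T^{(q-2)/2}\big(T^{q/2}+(4q)^q\big)\int_t^s\phi(r)\,dr ,
$$
and Gr\"onwall's lemma (legitimate because $\phi(T)<\infty$), together with $s-t\le T$, yields \eqref{moment est partial q} with the constant \eqref{def C q 0}; specialising to $q=2$ gives \eqref{moment est partial}.

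Finally, for the last assertion the only modification is to replace, in the two displayed estimates, the bounds $\|(\nabla\mu^d)\|_F^2\le Ld$ and $\sum_j\|(\nabla\sigma^{d,j})\|_F^2\le Ld$ (which carry the dimensional factor) by the $d$-free bounds $\|(\nabla\mu^d)\|_F^2\le K$ and $\sum_j\|(\nabla\sigma^{d,j})\|_F^2\le K$ furnished by Assumption \ref{assumption bbd partials global}; repeating the Gr\"onwall step verbatim with $Ld$ replaced by $K$ produces \eqref{moment est partial q 1} with the constant \eqref{def C q 0 '}. I do not expect any genuine obstacle: this is a routine linear-SDE moment estimate, and the only points requiring mild care are the a priori finiteness of $\phi(T)$ before invoking Gr\"onwall and the bookkeeping of the powers of $T$ and of the factors $Ld$ versus $K$.
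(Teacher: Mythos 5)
Your proof is correct and follows essentially the same route as the paper: split $(a+b+c)^q\le 3^{q-1}(a^q+b^q+c^q)$, estimate the drift term by Jensen and the stochastic-integral term by Burkholder--Davis--Gundy with constant $(4q)^q$, using $\|\nabla\mu^d\|_F^2\le Ld$ and $\sum_j\|\nabla\sigma^{d,j}\|_F^2\le Ld$ (resp.\ $K$ under Assumption \ref{assumption bbd partials global}), then close with Gr\"onwall. The one place where you diverge is the justification for applying Gr\"onwall: the paper localizes with the stopping times $\tau^k_n=\inf\{s\ge t:\|\frac{\partial}{\partial x_k}X^{t,x}_s\|\ge n\}\wedge T$, derives a uniform bound for $a^{k,n}(T)$, and then lets $n\to\infty$ via Fatou, whereas you instead invoke a priori finiteness of $\bE[\sup_s\|Y_s\|^q]$ from a known moment estimate for the derivative flow. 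Both are standard; the localization argument is self-contained and avoids citing an external finiteness result, while your variant is shorter. As a side remark, your Gr\"onwall step yields the exponent $3^{q-1}(Ld)^{q/2}T^{(q-2)/2}[T^{q/2}+(4q)^q]\,T$ (with the trailing $T$), which is what the paper's own computation \eqref{a k n est 2} gives and what is consistent with the displayed $C_{d,0}=3\exp\{3Ld(T+64)T\}$; the formula \eqref{def C q 0} in the lemma statement appears to have dropped that final factor of $T$, so your constant is the correct one.
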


\begin{proof}
We fix $(t,x)\in[0,T]\times\bR^d$ and $q\in[2,\infty)$ throughout this proof.
For each $k\in\{1,2,\dots,d\}$, we define a sequence of stopping times by
\begin{equation}
\tau^k_n:=\inf\left\{s\geq t: 
\Big\|\frac{\partial}{\partial x_k}X^{t,x}_s\Big\|\geq n\right\}
\wedge T, \quad n\in\bN,
\end{equation} 
By Lemma \ref{lemma classical derivative}, 
H\"older’s inequality, Burkholder-Davis-Gundy inequality 
(see, e.g., Theorem VII.92 in \cite{DM1982}), 
it holds for all $k\in\{1,2,\dots,d\}$, $n\in\bN$, and $s\in [t,T]$ that
\begin{align}
a^{k,n}(s):= \bE\left[\sup_{r\in[t,s\wedge\tau^k_n]}\Big\|
\frac{\partial}{\partial x_k}X^{t,x}_r\Big\|^q\right]
\leq 
3^{q-1}+3^{q-1}\big(A^{k,n}_1(s)+A^{k,n}_2(s)\big),
\label{ineq a k n}
\end{align}
where 
$$
A^{k,n}_1(s)
:=(s-t)^{q-1}\bE\left[\int_t^{s\wedge \tau^k_n}\Big\|
\big(\nabla\mu\big)\big(X^{t,x}_{r}\big)
\frac{\partial}{\partial x_k}X^{t,x}_{r}\Big\|^q\,dr\right],
$$
and
$$
A^{k,n}_2(s)
:=
(4q)^q
\left(
\bE\left[
\sum_{j=1}^d
\int_t^{s\wedge\tau^k_n}\Big\|\big(\nabla\sigma^j\big)\big(X^{t,x}_{r}\big)
\frac{\partial}{\partial x_k}X^{t,x}_{r}\Big\|^2\,dr
\right]
\right)^{\frac{q}{2}}.
$$
Then by \eqref{bbd partials} and Cauchy-Schwarz inequality, 
we notice for $k\in\{1,2,\dots,d\}$, $n\in\bN$, and $s\in [t,T]$ that
\begin{align}
A^{k,n}_1(s)
&
=
(s-t)^{q-1}\bE\left[\int_t^{s\wedge \tau^k_n}
\left[
\sum_{j=1}^d\left(
\sum_{i=1}^d
\frac{\partial}{\partial x_i}\mu^j(X^{t,x}_r)
\frac{\partial}{\partial x_k}X^{t,x,i}_r
\right)^2
\right]^{\frac{q}{2}}
\,dr\right]
\nonumber\\
&
\leq 
(s-t)^{q-1}\bE\left[\int_t^{s\wedge \tau^k_n}
\big\|(\nabla\mu)(X^{t,x}_r)\big\|_F^q
\cdot\Big\|\frac{\partial}{\partial x_k}X^{t,x}_r\Big\|^q
\,dr\right]
\nonumber\\
&
\leq
(s-t)^{q-1}(Ld)^{\frac{q}{2}}
\bE\left[\int_t^{s\wedge \tau^k_n}
\Big\|\frac{\partial}{\partial x_k}X^{t,x}_r\Big\|^q
\,dr\right]
\label{turtle 1}
\end{align} 
and
\begin{align}
A^{k,n}_2(s)
\leq 
&
(4q)^q
\left(
\bE\left[\sum_{j=1}^d\int_t^{s\wedge \tau^k_n}
\big\|(\nabla\sigma^j)(X^{t,x})\big\|_F^2
\cdot\Big\|\frac{\partial}{\partial x_k}X^{t,x}_r\Big\|^2
\,dr\right]
\right)^{\frac{q}{2}}
\nonumber\\
&
\leq
(4q)^q(Ld)^{\frac{q}{2}}(s-t)^{\frac{q-2}{2}}
\bE\left[\int_t^{s\wedge \tau^k_n}
\Big\|\frac{\partial}{\partial x_k}X^{t,x}_r\Big\|^q
\,dr\right].
\label{turtle 2}
\end{align}
Combing \eqref{ineq a k n}, \eqref{turtle 1}, and \eqref{turtle 2} yields for all 
$k\in\{1,2,\dots,d\}$, $n\in\bN$, and $s\in[t,T]$ that
\begin{align}
a^{k,n}(s)
&
\leq 
3^{q-1}+3^{q-1}(Ld)^{\frac{q}{2}}T^{\frac{q-2}{2}}
\big[T^{\frac{q}{2}}+(4q)^q\big]
\bE\left[\int_t^{s\wedge \tau^k_n}
\Big\|\frac{\partial}{\partial x_k}X^{t,x}_r\Big\|^q\,dr\right]
\nonumber\\
&
\leq 
3^{q-1}+3^{q-1}(Ld)^{\frac{q}{2}}T^{\frac{q-2}{2}}
\big[T^{\frac{q}{2}}+(4q)^q\big]
\int_t^{s}
a^{k,n}(r)\,dr.
\label{a k n est 1}
\end{align}
This together with Gr\"onwall's lemma imply 
for all $k\in\{1,2,\dots,d\}$ and $n\in\bN$ that
\begin{equation}                                  \label{a k n est 2}
a^{k,n}(T)
\leq 
3^{q-1}
\exp\left\{
3^{q-1}(Ld)^{\frac{q}{2}}T^{\frac{q-2}{2}}
\big[T^{\frac{q}{2}}+(4q)^q\big]T
\right\}.
\end{equation}
Furthermore, applying Fatou's lemma and taking limit 
as $n\to\infty$ in \eqref{a k n est 2}
yields \eqref{moment est partial q}.
Taking $q=2$ in \eqref{moment est partial q}, we get \eqref{moment est partial}.
Moreover, Assumption \ref{assumption bbd partials global} and
the application of analogous arguments as used to obtain \eqref{moment est partial}
ensure \eqref{moment est partial q 1}.
Therefore we have completed the proof of this lemma. 
\end{proof}

\begin{lemma}
Let Assumptions \ref{assumption Lip and growth} and \ref{assumption gradient} hold.
For each $(t,x)\in[0,T]\times\bR^d$ and $k\in\{1,2,\dots,d\}$, 
let $(X^{t,x}_s)_{s\in[t,T]}:[t,T]\times\bR^d\to \bR^d$,
and 
$\big(\frac{\partial}{\partial x_k}X^{t,x}_s\big)_{s\in[t,T]}
:[t,T]\times\bR^d\to \bR^d$ 
be the stochastic process defined in \eqref{SDE} 
and \eqref{SDE Classical Derivative}.
Then it holds for all $k\in\{1,2,\dots,d\}$, $x,y\in\bR^d$, $t\in[0,T]$,
$t'\in[t,T]$, and $s\in[t',T]$ that
\begin{equation}
\label{L 2 continuity partial X}
\bE\left[\Big\|
\frac{\partial}{\partial x_k}X^{t,x}_s
-
\frac{\partial}{\partial y_k}X^{t',y}_s
\Big\|^2\right]
\leq
C_{d,1}\big[(t'-t)(d^p+\|x\|^2)+\|x-y\|^2\big]
e^{8Ld(1+T)T},
\end{equation}
where $C_{d,1}$ is a positive constant defined by
\begin{equation}
\label{def C 1}
C_{d,1}:=4(1+T)\big[2L_0dT((1+T)C_{4,1}Ld^p+1)(C_{d,4,0}C_{4,2})^{1/2}+LdC_{d,0}\big],
\end{equation}
with $C_{d,0}$ being the constant defined in \eqref{moment est partial},
and $C_{d,4,0}$, $C_{4,1}$, and $C_{4,2}$ being the constants defined by 
\eqref{def C q 0}, \eqref{def C q 1}, and \eqref{def C q 2}, respectively,
with $q=4$.

Moreover, if we further let Assumption \ref{assumption bbd partials global} hold,
then it holds for all $k\in\{1,2,\dots,d\}$, $x,y\in\bR^d$, $t\in[0,T]$,
$t'\in[t,T]$, and $s\in[t',T]$ that
\begin{equation}
\label{L 2 continuity partial X 2}
\bE\left[\Big\|
\frac{\partial}{\partial x_k}X^{t,x}_s
-
\frac{\partial}{\partial y_k}X^{t',y}_s
\Big\|^2\right]
\leq
C_{d,2}\big[(t'-t)(d^p+\|x\|^2)+\|x-y\|^2\big]
e^{8K(1+T)T},
\end{equation}
where $C_{d,2}$ is a positive constant defined by
\begin{equation}
\label{def C 2}
C_{d,2}:=4(1+T)\big[2L_0dT((1+T)C_{4,1}Ld^p+1)(C'_{4,0}C_{4,2})^{1/2}+KC'_{2,0}\big],
\end{equation}
with
$C'_{2,0}$ and $C'_{4,0}$ being the positive constant 
defined by \eqref{def C q 0 '} with $q=2$ and $q=4$, respectively.
\end{lemma}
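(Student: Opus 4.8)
The strategy is to establish a Gr\"onwall inequality for
\[
\phi(s):=\bE\Big[\Big\|\tfrac{\partial}{\partial x_k}X^{t,x}_s-\tfrac{\partial}{\partial y_k}X^{t',y}_s\Big\|^{2}\Big],\qquad s\in[t',T],
\]
which is finite, uniformly in all parameters, by the moment bound \eqref{moment est partial} of Lemma \ref{lemma SDE partial est}. Fix $k,x,y,t,t',s$ as in the statement, abbreviate $\Delta_r:=\tfrac{\partial}{\partial x_k}X^{t,x}_r-\tfrac{\partial}{\partial y_k}X^{t',y}_r$ and $\mathfrak{a}:=(t'-t)(d^p+\|x\|^2)+\|x-y\|^2$, and observe that summing \eqref{bbd partials} over the $d$ coordinates gives $\|\nabla\mu^d(z)\|_F^2\leq Ld$ and $\sum_{j=1}^d\|\nabla\sigma^{d,j}(z)\|_F^2\leq Ld$, while summing \eqref{gradient mu sigma} over the $d$ coordinates gives $\|\nabla\mu^d(z)-\nabla\mu^d(z')\|_F^2\leq L_0d\|z-z'\|^2$ and $\sum_{j=1}^d\|\nabla\sigma^{d,j}(z)-\nabla\sigma^{d,j}(z')\|_F^2\leq L_0d\|z-z'\|^2$. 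By Lemma \ref{lemma classical derivative}, both derivative processes solve the linear SDE \eqref{SDE Classical Derivative}; evaluating it at $t'$ gives $\Delta_{t'}=\tfrac{\partial}{\partial x_k}X^{t,x}_{t'}-e_k=\int_t^{t'}(\nabla\mu)(X^{t,x}_r)\tfrac{\partial}{\partial x_k}X^{t,x}_r\,dr+\sum_{j=1}^d\int_t^{t'}(\nabla\sigma^{j})(X^{t,x}_r)\tfrac{\partial}{\partial x_k}X^{t,x}_r\,dW^j_r$, and running it from $t'$ gives, for $r\in[t',s]$,
\[
\Delta_r=\Delta_{t'}+\int_{t'}^r b_u\,du+\sum_{j=1}^d\int_{t'}^r c^j_u\,dW^j_u,
\]
where $b_u:=(\nabla\mu)(X^{t,x}_u)\tfrac{\partial}{\partial x_k}X^{t,x}_u-(\nabla\mu)(X^{t',y}_u)\tfrac{\partial}{\partial y_k}X^{t',y}_u$ and $c^j_u$ is the same expression with $\nabla\sigma^j$ in place of $\nabla\mu$.

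First I would bound the initial discrepancy: applying Jensen's inequality to the $du$-integral, It\^o's isometry to the $dW$-integrals (legitimate since the integrands are square-integrable by \eqref{moment est partial}), the bounds $\|\nabla\mu\|_F^2,\sum_j\|\nabla\sigma^j\|_F^2\leq Ld$, and the $L^2$-moment estimate \eqref{moment est partial}, one obtains $\bE[\|\Delta_{t'}\|^2]\leq 2(1+T)LdC_{d,0}(t'-t)$, and since $d\geq1,p>0$ force $d^p\geq1$ this is $\leq 2(1+T)LdC_{d,0}\,\mathfrak{a}$. Next, for $u\in[t',s]$ I would split $b_u=\big[(\nabla\mu)(X^{t,x}_u)-(\nabla\mu)(X^{t',y}_u)\big]\tfrac{\partial}{\partial x_k}X^{t,x}_u+(\nabla\mu)(X^{t',y}_u)\Delta_u$, and likewise $c^j_u$. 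The second (feedback) summand satisfies $\bE[\|(\nabla\mu)(X^{t',y}_u)\Delta_u\|^2]\leq Ld\,\phi(u)$ and $\sum_j\bE[\|(\nabla\sigma^j)(X^{t',y}_u)\Delta_u\|^2]\leq Ld\,\phi(u)$. For the first summand, Cauchy--Schwarz together with the gradient-Lipschitz bounds yields
\[
\bE\Big[\big\|[(\nabla\mu)(X^{t,x}_u)-(\nabla\mu)(X^{t',y}_u)]\tfrac{\partial}{\partial x_k}X^{t,x}_u\big\|^2\Big]\leq L_0d\big(\bE[\|X^{t,x}_u-X^{t',y}_u\|^4]\big)^{1/2}\big(\bE[\|\tfrac{\partial}{\partial x_k}X^{t,x}_u\|^4]\big)^{1/2},
\]
and the same for the $\sigma^j$-pieces; bounding the two factors via the $L^4$-stability estimate \eqref{L q continuity SDE} and the $L^4$-moment bound \eqref{moment est partial q}, both with $q=4$, and using $\sqrt{a+b}\leq\sqrt a+\sqrt b$, $\sqrt{1+T^2}\leq1+T$, and the fact that both $(t'-t)(d^p+\|x\|^2)$ and $\|x-y\|^2$ are $\leq\mathfrak{a}$, this first summand is $\leq L_0d(C_{d,4,0}C_{4,2})^{1/2}\big[(1+T)C_{4,1}Ld^p+1\big]\,\mathfrak{a}$. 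This is the only place where the growth factor $d^p+\|x\|^2$ enters.

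Then I would assemble everything: from $\|\Delta_s\|^2\leq 2\|\Delta_{t'}\|^2+4\big\|\int_{t'}^s b_u\,du\big\|^2+4\big\|\sum_j\int_{t'}^s c^j_u\,dW^j_u\big\|^2$, Jensen for the drift integral (contributing a factor $s-t'\leq T$), It\^o's isometry for the stochastic integrals, and the three estimates just obtained, one reaches
\[
\phi(s)\leq C_{d,1}\,\mathfrak{a}+8Ld(1+T)\int_{t'}^s\phi(r)\,dr,\qquad s\in[t',T],
\]
with $C_{d,1}$ exactly the constant \eqref{def C 1}; since $\phi$ is bounded, Gr\"onwall's lemma and $s-t'\leq T$ then give \eqref{L 2 continuity partial X}. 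For the sharper bound \eqref{L 2 continuity partial X 2} one repeats the argument verbatim, but under Assumption \ref{assumption bbd partials global} replaces $\|\nabla\mu\|_F^2,\sum_j\|\nabla\sigma^j\|_F^2\leq Ld$ by $\leq K$ (using \eqref{bbd partials global}) and the derivative-process moment bounds $C_{d,0},C_{d,4,0}$ by the dimension-free bounds $C'_{2,0},C'_{4,0}$ from \eqref{moment est partial q 1}; this produces the feedback constant $8K(1+T)$ and the prefactor $C_{d,2}$ of \eqref{def C 2}.

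The individual steps are routine It\^o-calculus estimates, so I do not expect a serious obstacle; the only delicate points are (i) arranging the splitting of $b_u,c^j_u$ so that precisely the term linear in $\Delta_u$ feeds the Gr\"onwall loop while the remainder is controlled entirely by the already-established $L^4$-bounds \eqref{L q continuity SDE}, \eqref{moment est partial q}, and (ii) the bookkeeping of how $L,L_0$ (resp.\ $K$) transform under the coordinate sums --- yielding $Ld,L_0d$ (resp.\ $K$) --- so that the precise constants in \eqref{def C 1}, \eqref{def C 2} come out. Keeping $\phi$ a priori finite, which \eqref{moment est partial} guarantees, is what makes the concluding Gr\"onwall step legitimate.
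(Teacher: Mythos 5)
Your proposal is correct and follows essentially the same argument as the paper's proof of \eqref{L 2 continuity partial X} and \eqref{L 2 continuity partial X 2}: decompose the discrepancy into the contribution from $[t,t']$ (the paper's $A_1$, your $\Delta_{t'}$), the drift and diffusion contributions from $[t',s]$, split the latter two into a "feedback" part proportional to $\Delta_u$ that goes into Gr\"onwall and a "correction" part that is absorbed by the $L^4$ estimates \eqref{L q continuity SDE} and \eqref{moment est partial q} via Cauchy--Schwarz, and conclude with Gr\"onwall. Your arithmetic with the Frobenius-norm coordinate sums ($Ld, L_0d$ in the general case, $K$ under Assumption \ref{assumption bbd partials global}) matches the paper's and produces the constants in \eqref{def C 1} and \eqref{def C 2}.
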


\begin{proof}
We fix $k\in\{1,2,\dots,d\}$, $x,y\in\bR^d$, $t\in[0,T]$,
$t'\in[t,T]$, $s\in[t',T]$, and $q\in(2,\infty)$  
throughout the proof of this lemma.
By \eqref{SDE Classical Derivative}, we first notice that
\begin{equation}
\label{est t t' x y s k}
\bE\left[\Big\|
\frac{\partial}{\partial x_k}X^{t,x}_s
-
\frac{\partial}{\partial y_k}X^{t',y}_s
\Big\|^2\right]
\leq 
4\sum_{i=1}^3A_i,
\end{equation}
where
\begin{align*}
&
A_1
:=
\bE\left[\bigg\|
\int_t^{t'}
(\nabla\mu)\big(X^{t,x}_{r}\big)
\frac{\partial}{\partial x_k}X^{t,x}_{r}
\,dr
\bigg\|^2\right]
+
\bE\left[\bigg\|
\sum_{j=1}^d
\int_t^{t'}
(\nabla\sigma^j)\big(X^{t,x}_{r}\big)
\frac{\partial}{\partial x_k}X^{t,x}_{r}
\,dW^j_r
\bigg\|^2\right],
\\
&
A_2
:=
\bE\left[\bigg\|
\int_{t'}^{s}
\Big[
(\nabla\mu)\big(X^{t,x}_{r}\big)
\frac{\partial}{\partial x_k}X^{t,x}_{r}
-
(\nabla\mu)\big(X^{t',y}_{r}\big)
\frac{\partial}{\partial y_k}X^{t',y}_{r}
\Big]
\,dr
\bigg\|^2\right],
\end{align*}
and
\begin{equation}
A_3
:=
\bE\left[\bigg\|
\sum_{j=1}^d
\int_{t'}^{s}
\Big[
(\nabla\sigma^j)\big(X^{t,x}_{r}\big)
\frac{\partial}{\partial x_k}X^{t,x}_{r}
-
(\nabla\sigma^j)\big(X^{t',y}_{r}\big)
\frac{\partial}{\partial y_k}X^{t',y}_{r}
\Big]
\,dW^j_r
\bigg\|^2\right].
\end{equation}
By \eqref{bbd partials}, \eqref{moment est partial}, 
It\^o's isometry, Jensen's inequality, and Cauchy-Schwarz inequality, 
we obtain that
\begin{align}
A_1
&
\leq
(t'-t)
\bE\left[
\int_t^{t'}
\Big\|
(\nabla\mu)\big(X^{t,x}_{r}\big)
\frac{\partial}{\partial x_k}X^{t,x}_{r}
\Big\|^2
\,dr
\right]
+
\bE\left[
\int_t^{t'}
\sum_{j=1}^d
\Big\|
(\nabla\sigma^j)\big(X^{t,x}_{r}\big)
\frac{\partial}{\partial x_k}X^{t,x}_{r}
\Big\|^2
\,dr
\right]
\nonumber\\
&
\leq
2(t'-t)\bE\left[
\int_t^{t'}
\big\|(\nabla \mu)(X^{t,x}_r)\big\|_F^2
\Big\|\frac{\partial}{\partial x_k}X^{t,x}_r\Big\|^2
\,dr
\right]
+
2\bE\left[
\int_t^{t'}
\sum_{j=1}^d
\big\|(\nabla \sigma^j)(X^{t,x}_r)\big\|_F^2
\Big\|\frac{\partial}{\partial x_k}X^{t,x}_r\Big\|^2
\,dr
\right]
\nonumber\\
&
\leq
2LdC_{d,0}(t'-t)^2+2LdC_{d,0}(t'-t),
\label{est A 1}
\end{align}
where $C_{d,0}$ is the positive constant defined below \eqref{moment est partial}.
Moreover, by \eqref{bbd partials}, \eqref{gradient mu sigma}, 
\eqref{moment est partial q}, \eqref{moment est partial}, 
Jensen's inequality, and H\"older's inequality it holds that
\begin{align}
A_2
&
\leq
2(s-t')
\bE\left[
\int_{t'}^s
\big\|(\nabla \mu)(X^{t,x}_r)\big\|_F^2
\cdot
\Big\|
\frac{\partial}{\partial x_k}X^{t,x}_r
-
\frac{\partial}{\partial y_k}X^{t',y}_r
\Big\|^2
\,dr
\right]
\nonumber\\
& \quad
+
2(s-t')
\bE\left[
\int_{t'}^s
\big\|(\nabla\mu)(X^{t,x}_r)-(\nabla\mu)(X^{t',y}_r)\big\|_F^2
\cdot
\Big\|
\frac{\partial}{\partial y_k}X^{t',y}_r
\Big\|^2
\,dr
\right]
\nonumber\\
&
\leq
2Ld(s-t')
\int_{t'}^s
\bE\left[
\Big\|
\frac{\partial}{\partial x_k}X^{t,x}_r
-
\frac{\partial}{\partial y_k}X^{t',y}_r
\Big\|^2
\right]
dr
\nonumber\\
& \quad
+
2L_0d(s-t')
\bE\left[
\int_{t'}^s
\big\|X^{t,x}_r-X^{t',y}_r\big\|^2
\cdot
\Big\|
\frac{\partial}{\partial y_k}X^{t',y}_r
\Big\|^2
\,dr
\right]
\nonumber\\
&
\leq
2Ld(s-t')
\int_{t'}^s
\bE\left[
\Big\|
\frac{\partial}{\partial x_k}X^{t,x}_r
-
\frac{\partial}{\partial y_k}X^{t',y}_r
\Big\|^2
\right]
dr
\nonumber\\
& \quad
+2L_0d(s-t')^2C_{d,4,0}^{1/2}
\bigg(
\sup_{r\in[t,T]}
\bE\left[
\big\|
X^{t,x}_r-X^{t',y}_r
\big\|^4
\right]
\bigg)^{1/2},
\label{est A 2}
\end{align}
where $C_{d,4,0}$ is defined by \eqref{def C q 0} with $q=4$.
Analogously, by \eqref{bbd partials}, \eqref{gradient mu sigma}, 
\eqref{moment est partial q}, \eqref{moment est partial}, 
It\^o's isometry, and H\"older's inequality
we obtain that
\begin{align}
A_3
&
\leq
2
\bE\left[
\int_{t'}^s
\sum_{j=1}^d
\big\|(\nabla \sigma^j)(X^{t,x}_r)\big\|_F^2
\cdot
\Big\|
\frac{\partial}{\partial x_k}X^{t,x}_r
-
\frac{\partial}{\partial y_k}X^{t',y}_r
\Big\|^2
\,dr
\right]
\nonumber\\
& \quad
+
2
\bE\left[
\int_{t'}^s
\sum_{j=1}^d
\big\|(\nabla\sigma^j)(X^{t,x}_r)-(\nabla\sigma^j)(X^{t',y}_r)\big\|_F^2
\cdot
\Big\|
\frac{\partial}{\partial y_k}X^{t',y}_r
\Big\|^2
\,dr
\right]
\nonumber\\
&
\leq
2Ld
\int_{t'}^s
\bE\left[
\Big\|
\frac{\partial}{\partial x_k}X^{t,x}_r
-
\frac{\partial}{\partial y_k}X^{t',y}_r
\Big\|^2
\right]
dr
+
2L_0d
\bE\left[
\int_{t'}^s
\big\|X^{t,x}_r-X^{t',y}_r\big\|_F^2
\cdot
\Big\|
\frac{\partial}{\partial y_k}X^{t',y}_r
\Big\|^2
\,dr
\right]
\nonumber\\
&
\leq
2Ld
\int_{t'}^s
\bE\left[
\Big\|
\frac{\partial}{\partial x_k}X^{t,x}_r
-
\frac{\partial}{\partial y_k}X^{t',y}_r
\Big\|^2
\right]
dr
+2L_0d(s-t')C_{d,4,0}^{1/2}
\bigg(
\sup_{r\in[t,T]}
\bE\left[
\big\|
X^{t,x}_r-X^{t',y}_r
\big\|^4
\right]
\bigg)^{1/2}.
\label{est A 3}
\end{align}
Combining \eqref{L q continuity SDE}, \eqref{est t t' x y s k},
\eqref{est A 1}, \eqref{est A 2}, and \eqref{est A 3} yields that
\begin{align*}
&
\bE\left[\Big\|
\frac{\partial}{\partial x_k}X^{t,x}_s
-
\frac{\partial}{\partial y_k}X^{t',y}_s
\Big\|^2\right]
\\
&
\leq
4LdC_{d,0}(1+T)(t'-t)
+
8L_0dT(1+T)(C_{d,4,0}C_{4,2})^{1/2}
\big[(1+T)(C_{4,1}Ld^p)(t'-t)(d^p+\|x\|^2)+\|x-y\|^2\big]
\\
& \quad
+
8Ld(1+T)\int_{t'}^s
\bE\left[\Big\|
\frac{\partial}{\partial x_k}X^{t,x}_r
-
\frac{\partial}{\partial y_k}X^{t',y}_r
\Big\|^2\right]
dr
\nonumber\\
&
\leq
C_{d,1}\big[(t'-t)(d^p+\|x\|^2)+\|x-y\|^2\big]
+
8Ld(1+T)\int_{t'}^s
\bE\left[\Big\|
\frac{\partial}{\partial x_k}X^{t,x}_r
-
\frac{\partial}{\partial y_k}X^{t',y}_r
\Big\|^2\right]
dr,
\nonumber
\end{align*}
where $C_{d,1}$ is the positive constant defined by \eqref{def C 1}.
This together with \eqref{moment est partial q} and Gr\"onwall's lemma imply
\eqref{L 2 continuity partial X}.
Moreover, using Assumption \ref{assumption bbd partials global} and an
analogous argument we obtain \eqref{L 2 continuity partial X 2}.
Thus, the proof of this lemma is completed.
\end{proof}

\begin{lemma}
Let Assumptions \ref{assumption Lip and growth}, \ref{assumption ellip}, 
and \ref{assumption gradient} hold.
For each $(t,x)\in[0,T)\times\bR^d$, 
let $V^{t,x}=(V^{t,x,k})_{k\in\{1,2,\dots,d\}}:[t,T)\times\bR^d\to \bR^d$
be the stochastic process defined in \eqref{def proc V}.
Then it holds for all $t\in[0,T)$, $t'\in[t,T)$, $s\in(t',T]$, $x,x'\in\bR^d$ 
and $q\in[2,\infty)$ that
\begin{equation}
\label{L2 est proc V}
\bE\Big[\big\|V^{t,x}_s\big\|^2\Big]
\leq
d\varepsilon_d^{-1}C_{d,0}(s-t)^{-1},
\end{equation}
and
\begin{align}
\bE\Big[
\big\|V^{t,x}_s-V^{t',x'}_s\big\|^2
\Big]
\leq
&
4d\frac{(t'-t)\varepsilon_d^{-1}C_{d,0}}{(s-t)(s-t')}
+
4d^2\varepsilon_d^{-1}(s-t')^{-1}
\nonumber\\
&
\cdot
\big[
d^2\varepsilon_d^{-1}(C_{d,4,0}C_{4,2})^{1/2}((1+T)C_{4,1}Ld^p+1)
+C_{d,2}e^{8Ld(1+T)T}
\big]
\nonumber\\
&
\cdot
\big[(t'-t)(d^p+\|x\|^2)+\|x-x'\|^2\big],
\label{L2 cont est proc V}
\end{align}
where $\varepsilon_d$, $C_{d,0}$, $C_{d,1}$, $C_{d,4,0}$, $C_{4,1}$, and $C_{4,2}$
are the positive constants introduced in
\eqref{sigma ellip}, \eqref{moment est partial}, \eqref{def C 1},
\eqref{def C q 0}, \eqref{def C q 1}, and \eqref{def C q 2}, respectively,
with $q=4$.

Moreover, if we further let Assumption \ref{assumption bbd partials global} hold,
then it holds for all $t\in[0,T)$, $t'\in[t,T)$, $s\in(t',T]$, 
$x,x'\in\bR^d$, and $q\in[2,\infty)$ that
\begin{equation}
\label{L q est proc V}
\bE\Big[\big\|V^{t,x}_s\big\|^q\Big]
\leq
(4q)^q(d\varepsilon_d)^{-q/2}(C'_{2,0})^{q/2}(s-t)^{-q/2},
\end{equation}
and
\begin{align}
&
\bE\Big[
\big\|V^{t,x}_s-V^{t',x'}_s\big\|^2
\Big]
\nonumber\\
&
\leq
4d\frac{(t'-t)\varepsilon_d^{-1}C'_{2,0}}{(s-t)(s-t')}
+
4d^2\varepsilon_d^{-1}(s-t')^{-1}
\left[
d^2\varepsilon_d^{-1}(C_{d,4,0}C_{4,2})^{1/2}((1+T)C_{4,1}Ld^p+1)
+C_{d,2}e^{8K(1+T)T}
\right]
\nonumber\\
& \quad
\cdot
\big[(t'-t)(d^p+\|x\|^2)+\|x-x'\|^2\big],
\label{L2 cont est proc V *}
\end{align}
where $K$ is the positive constant introduced in \eqref{bbd partials global},
and $C'_{2,0}$ is the positive constant defined by \eqref{def C q 0 '}.
\end{lemma}

\begin{proof}
Throughout the proof of this lemma, 
we fix $t\in[0,T)$, $t'\in[t,T)$, $s\in[t',T)$, and $x,x'\in\bR^d$.
By \eqref{sigma inverse est}, \eqref{moment est partial}, and It\^o's isometry,
we have for all $k\in\{1,2,\dots,d\}$ that
\begin{align*}
\bE\Big[\big\|V^{t,x,k}_s\big\|^2\Big]
&
=
\bE\left[
\left(
\frac{1}{s-t}
\int_t^s
\left[
\sigma^{-1}(X^{t,x}_r)\frac{\partial}{\partial x_k}X^{t,x}_r
\right]^T
dW_r
\right)^2
\right]
\\
&
=
(s-t)^{-2}\bE\left[
\int_t^s\Big\|
\sigma^{-1}(X^{t,x}_r)\frac{\partial}{\partial x_k}X^{t,x}_r
\Big\|^2\,dr
\right]
\\
&
\leq 
\varepsilon_d^{-1}(s-t)^{-2}
\int_t^s\bE\left[\Big\|\frac{\partial}{\partial x_k}X^{t,x}_r\Big\|^2\right]dr
\\
&
\leq
\varepsilon_d^{-1}C_{d,0}(s-t)^{-1}.
\end{align*}
This proves \eqref{L2 est proc V}.
Furthermore, by \eqref{sigma inverse est}, \eqref{moment est partial q 1}, 
and Burkholder-Davis-Gundy inequality
we notice for all $k\in\{1,2,\dots,d\}$ that
\begin{align*}
\bE\left[\big\|V^{t,x,k}_s\big\|^q\right]
&
\leq
\frac{(4q)^q}{(s-t)^q}
\left(
\bE\left[
\int_t^s
\Big\|
\sigma^{-1}(X^{t,x}_r)\frac{\partial}{\partial x_k}X^{t,x}_r
\Big\|^2
\,dr
\right]
\right)^{q/2}
\\
&
\leq
\frac{(4q)^q\varepsilon_d^{-q/2}}{(s-t)^q}
\left(
\int_t^s\bE\left[\Big\|\frac{\partial}{\partial x_k}X^{t,x}_r\Big\|^2\right]dr
\right)^{q/2}
\\
&
\leq
(4q)^q\varepsilon_d^{-q/2}(C'_{2,0})^{q/2}(s-t)^{-q/2}.
\end{align*}
This proves \eqref{L q est proc V}.
Next, to show \eqref{L2 cont est proc V} we notice for all $k\in\{1,2,\dots,d\}$ that
\begin{equation}
\label{ineq V t t' x x' k}
\bE\Big[
\big\|V^{t,x,k}_s-V^{t',x',k}_s\big\|^2
\Big]
\leq
4\sum_{i=1}^4A_{k,i},
\end{equation}
where
\begin{align*}
&
A_{k,1}
:=
\bE\left[
\left(
\frac{1}{s-t}
\int_t^{t'}
\left[
\sigma^{-1}(X^{t,x}_r)\frac{\partial}{\partial x_k}X^{t,x}_r
\right]^T
dW_r
\right)^2
\right],
\\
&
A_{k,2}
:=
\bE\left[
\left(
\Big(\frac{1}{s-t'}-\frac{1}{s-t}\Big)
\int_{t'}^s
\left[
\sigma^{-1}(X^{t,x}_r)\frac{\partial}{\partial x_k}X^{t,x}_r
\right]^T
dW_r
\right)^2
\right],
\\
&
A_{k,3}
:=
\bE\left[
\left(
\frac{1}{s-t'}
\int_{t'}^s
\left[
\big(\sigma^{-1}(X^{t,x}_r)-\sigma^{-1}(X^{t',x'}_r)\big)
\frac{\partial}{\partial x_k}X^{t,x}_r
\right]^T
dW_r
\right)^2
\right],
\end{align*}
and
$$
A_{k,4}
:=
\bE\left[
\left(
\frac{1}{s-t'}
\int_{t'}^s
\left[
\sigma^{-1}(X^{t',x'}_r)
\Big(
\frac{\partial}{\partial x_k}X^{t,x}_r
-
\frac{\partial}{\partial x'_k}X^{t',x'}_r
\Big)
\right]^T
dW_r
\right)^2
\right].
$$
By \eqref{sigma inverse est}, \eqref{moment est partial}, and It\^o's isometry,
it holds for all $k\in\{1,2,\dots,d\}$ that
\begin{align}
A_{k,1}
&
=
\frac{1}{(s-t)^2}
\bE\left[
\int_t^{t'}
\Big\|\sigma^{-1}(X^{t,x}_r)\frac{\partial}{\partial x_k}X^{t,x}_r\Big\|^2\,dr
\right]
\leq
\frac{\varepsilon_d^{-1}}{(s-t)^2}\int_t^{t'}
\bE\left[\Big\|\frac{\partial}{\partial x_k}X^{t,x}_r\Big\|^2\right]dr
\nonumber
\\
&
\leq
\frac{(t'-t)\varepsilon_d^{-1}C_{d,0}}{(s-t)^2},
\label{est A k 1}
\end{align}
and
\begin{align}
A_{k,2}
&
=
\frac{(t'-t)^2}{(s-t')^2(s-t)^2}
\bE\left[
\int_{t'}^s
\Big\|\sigma^{-1}(X^{t,x}_r)\frac{\partial}{\partial x_k}X^{t,x}_r\Big\|^2
\,dr
\right]
\nonumber\\
&
\leq
\frac{(t'-t)^2\varepsilon_d^{-1}}{(s-t')^2(s-t)^2}
\int_{t'}^s
\bE\left[\Big\|\frac{\partial}{\partial x_k}X^{t,x}_r\Big\|^2\right]
dr
\nonumber\\
&
\leq
\frac{(t'-t)^2\varepsilon_d^{-1}C_{d,0}}{(s-t')(s-t)^2}.
\label{est A k 2}
\end{align}
Furthermore, by \eqref{moment est partial q}, \eqref{bbd partial sigma inv},
\eqref{L q continuity SDE}, It\^o's isometry, the mean-value theorem,
and Cauchy-Schwarz inequality we have for all $k\in\{1,2,\dots,d\}$ that
\begin{align}
A_{k,3}
&
=
\frac{1}{(s-t')^2}
\bE\left[
\int_{t'}^s
\Big\|
\big[\sigma^{-1}(X^{t,x}_r)-\sigma^{-1}(X^{t',x'}_r)\big]
\frac{\partial}{\partial x_k}X^{t,x}_r
\Big\|^2
\,dr
\right]
\nonumber\\
&
\leq
\frac{1}{(s-t')^2}
\int_{t'}^s
\bE\left[
\big\|\sigma^{-1}(X^{t,x}_r)-\sigma^{-1}(X^{t',x'}_r)\big\|_F^2
\cdot 
\Big\|\frac{\partial}{\partial x_k}X^{t,x}_r\Big\|^2
\right]
dr
\nonumber\\
&
\leq
\frac{Ld^3\varepsilon_d^{-2}}{(s-t')^2}
\int_{t'}^s
\bE\left[
\big\|X^{t,x}_r-X^{t',x'}_r\big\|^2
\cdot
\Big\|\frac{\partial}{\partial x_k}X^{t,x}_r\Big\|^2
\right]
dr
\nonumber\\
&
\leq
\frac{Ld^3\varepsilon_d^{-2}}{(s-t')^2}
\int_{t'}^s
\left(\bE\left[\big\|X^{t,x}_r-X^{t',x'}_r\big\|^4\right]\right)^{1/2}
\left(\bE\left[\Big\|\frac{\partial}{\partial x_k}X^{t,x}_r\Big\|^4\right]\right)^{1/2}
dr
\nonumber\\
&
\leq
\frac{Ld^3\varepsilon_d^{-2}(C_{d,4,0}C_{4,2})^{1/2}}{s-t'}
\big[
(1+T)C_{4,1}Ld^p(t'-t)(d^p+\|x\|^2)+\|x-x'\|^2
\big],
\label{est A k 3}
\end{align}
where $C_{d,4,0}$, $C_{4,1}$, and $C_{4,2}$ are the positive constants defined in
\eqref{def C q 0}, \eqref{def C q 1}, and \eqref{def C q 2}, respectively,
with $q=4$.
In addition, by \eqref{sigma inverse est}, \eqref{L 2 continuity partial X},
It\^o's isometry, and Cauchy-Schwarz inequality we obtain for all
$k\in\{1,2,\dots,d\}$ that
\begin{align}
A_{k,4}
&
=
\frac{1}{(s-t')^2}
\bE\left[
\int_{t'}^s
\Big\|
\sigma^{-1}(X^{t',x'}_r)
\Big(
\frac{\partial}{\partial x_k}X^{t,x}_r
-
\frac{\partial}{\partial x'_k}X^{t',x'}_r
\Big)
\Big\|_F^2
\,dr
\right]
\nonumber\\
&
\leq
\frac{d\varepsilon_d^{-1}}{(s-t')^2}
\int_{t'}^s
\bE\left[
\Big\|
\frac{\partial}{\partial x_k}X^{t,x}_r
-
\frac{\partial}{\partial x'_k}X^{t',x'}_r
\Big\|^2
\right]
dr
\nonumber\\
&
\leq
(s-t')^{-1}d\varepsilon_d^{-1}C_{d,2}e^{8Ld(1+T)T}
\big[(t'-t)(d^p+\|x\|^2)+\|x-x'\|^2\big],
\label{est A k 4}
\end{align}
where $C_{d,2}$ and $\varepsilon_d$ are the positive constants defined in
\eqref{def C 2} and \eqref{sigma ellip}, respectively.
Combining \eqref{ineq V t t' x x' k}, \eqref{est A k 1}, \eqref{est A k 2},
\eqref{est A k 3}, and \eqref{est A k 4} yields \eqref{L2 cont est proc V}.
By Assumption \ref{assumption bbd partials global} 
and analogous arguments to obtain \eqref{L2 cont est proc V},
we can obtain \eqref{L2 cont est proc V *}.
Hence, the proof of this lemma is completed.
\end{proof}

\begin{lemma}                                      \label{lemma two SDEs}
Let Assumptions \ref{assumption Lip and growth}  
and \ref{assumption gradient}
hold, and let
$\big(X^{t,x}_s\big)_{s\in[t,T]}:[t,T]\times\Omega\to\bR^d$ be the unique 
$\bF$-adapted continuous process satisfying \eqref{SDE}.
Let $B\subseteq \bR^d$ be a closed set,
and let $\bar{\mu}\in C^3(\bR^d,\bR^d)$ and 
$\bar{\sigma}\in C^3(\bR^d,\bR^{d\times d})$ such that
\begin{equation}                                        \label{cond B}
\bar{\mu}(x)=\mu(x),\quad \bar{\sigma}(x)=\sigma(x) 
\quad \text{for all $x\in\ B$}.
\end{equation}
Assume that $\bar{\mu}$ and $\bar{\sigma}$ satisfy
Assumptions \ref{assumption Lip and growth}
and \ref{assumption gradient}.
Moreover, for each $(t,x)\in[0,T]\times\bR^d$ 
let $\big(\bar{X}^{t,x}_s\big)_{s\in[t,T]}:[t,T]\times\Omega\to\bR^d$ be an 
$\bF$-adapted continuous process satisfying that $\bar{X}^{t,x}_t=x$,
and almost surely for all $s\in[t,T]$ 
$$                                             
d\bar{X}^{t,x}_{s}
=\bar{\mu}\big(\bar{X}^{t,x}_s\big)\,ds
+\bar{\sigma}\big(\bar{X}^{t,x}_s\big)\,dW_s.
$$
For each $(t,x)\in[0,T]\times\bR^d$ and $k\in\{1,2,\dots,d\}$,
let $\tau^{t,x}:\Omega\to[t,T]$ and $\tau^{t,x,k}:\Omega\to[t,T]$ 
be stopping times defined by
\begin{equation}                                     \label{def tau B}
\tau^{t,x}:=\inf
\big\{
s\geq t: X^{t,x}_s\notin B \; \text{or} \; \bar{X}^{t,x}_s\notin B
\big\}
\wedge T,
\end{equation}
and
\begin{equation}                                     \label{def tau k B}
\tau^{t,x,k}:=\inf
\Big\{
s\geq t: 
X^{t,x}_s\notin B \; \text{or} \; \bar{X}^{t,x}_s\notin B
\; \text{or} \; 
\frac{\partial}{\partial x_k} X^{t,x}_s\notin B 
\; \text{or} \; 
\frac{\partial}{\partial x_k}\bar{X}^{t,x}_s\notin B
\Big\}
\wedge T.
\end{equation}
Then it holds for all $(t,x)\in[0,T]\times\bR^d$ and $k\in\{1,2,\dots,d\}$ that
\begin{equation}                                               \label{prob 1 tau}
\bP\left(
\mathbf{1}_{\{s\leq\tau^{t,x}\}}\big\|X^{t,x}_s-\bar{X}^{t,x}_s\big\|=0
\; \text{for} \; \text{all} \; s\in[t,T]
\right)=1,
\end{equation}
and
\begin{equation}                                             \label{prob 1 tau k}
\bP\left(
\mathbf{1}_{\{s\leq\tau^{t,x,k}\}}
\Big\|\frac{\partial}{\partial x_k} X^{t,x}_s
-\frac{\partial}{\partial x_k} \bar{X}^{t,x}_s\Big\|=0
\; \text{for} \; \text{all} \; s\in[t,T]
\right)=1.
\end{equation}
\end{lemma}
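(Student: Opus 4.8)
The plan is to establish both \eqref{prob 1 tau} and \eqref{prob 1 tau k} by a localization/uniqueness argument: on the stochastic interval $[t,\tau^{t,x}]$ (respectively $[t,\tau^{t,x,k}]$) the two processes solve the \emph{same} stochastic differential equation, so pathwise uniqueness of SDEs with Lipschitz coefficients forces them to coincide up to the stopping time. The key point is that before the stopping time, both $X^{t,x}$ and $\bar X^{t,x}$ stay in the closed set $B$, and on $B$ the coefficients agree by \eqref{cond B}.

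First I would prove \eqref{prob 1 tau}. Fix $(t,x)\in[0,T]\times\bR^d$ and write $\tau:=\tau^{t,x}$. For $s\in[t,T]$, apply It\^o's formula / the SDE definitions to the stopped difference $Y_s:=X^{t,x}_{s\wedge\tau}-\bar X^{t,x}_{s\wedge\tau}$, giving
\begin{align*}
Y_s=\int_t^{s}\mathbf{1}_{\{r\le\tau\}}\big(\mu(X^{t,x}_r)-\bar\mu(\bar X^{t,x}_r)\big)\,dr
+\int_t^{s}\mathbf{1}_{\{r\le\tau\}}\big(\sigma(X^{t,x}_r)-\bar\sigma(\bar X^{t,x}_r)\big)\,dW_r.
\end{align*}
On the event $\{r\le\tau\}$ we have $X^{t,x}_r\in B$ and $\bar X^{t,x}_r\in B$ by the definition \eqref{def tau B} of $\tau$ (and right-continuity of the paths together with closedness of $B$), so by \eqref{cond B} one may replace $\bar\mu(\bar X^{t,x}_r)$ by $\mu(\bar X^{t,x}_r)$ and $\bar\sigma(\bar X^{t,x}_r)$ by $\sigma(\bar X^{t,x}_r)$. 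Then the global Lipschitz bound \eqref{assumption Lip mu sigma} on $\mu,\sigma$, together with It\^o's isometry, Jensen's inequality and the Burkholder--Davis--Gundy inequality, yields $\bE\big[\sup_{r\in[t,s]}\|Y_r\|^2\big]\le C\int_t^s\bE\big[\sup_{u\in[t,r]}\|Y_u\|^2\big]\,dr$ for a finite constant $C$ depending only on $L,T$; finiteness of the left-hand side follows from \eqref{SDE moment est}. Gr\"onwall's lemma then gives $\bE\big[\sup_{s\in[t,T]}\|Y_s\|^2\big]=0$, hence $\mathbf{1}_{\{s\le\tau\}}\|X^{t,x}_s-\bar X^{t,x}_s\|=0$ for all $s\in[t,T]$ almost surely by continuity in $s$, which is \eqref{prob 1 tau}.

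For \eqref{prob 1 tau k} I would repeat the same scheme one level up, now for the derivative processes. Fix $k$ and write $\tau_k:=\tau^{t,x,k}$. By Lemma \ref{lemma classical derivative}, $\frac{\partial}{\partial x_k}X^{t,x}$ and $\frac{\partial}{\partial x_k}\bar X^{t,x}$ satisfy the linear SDE \eqref{SDE Classical Derivative} with coefficients built from $\nabla\mu,\nabla\sigma^j$ (resp. $\nabla\bar\mu,\nabla\bar\sigma^j$) evaluated along the underlying diffusion. On $\{r\le\tau_k\}$ all four of $X^{t,x}_r,\bar X^{t,x}_r,\frac{\partial}{\partial x_k}X^{t,x}_r,\frac{\partial}{\partial x_k}\bar X^{t,x}_r$ lie in $B$; in particular, since $\tau_k\le\tau$, the already-proved \eqref{prob 1 tau} gives $X^{t,x}_r=\bar X^{t,x}_r$ on $\{r\le\tau_k\}$, and \eqref{cond B} (differentiated: $\nabla\mu=\nabla\bar\mu$ and $\nabla\sigma^j=\nabla\bar\sigma^j$ on the interior of $B$; but more robustly, since $\bar\mu,\mu$ agree on $B$ and both are $C^3$, their derivatives agree on $B$ — to be safe one invokes that $B$ is the closure of its interior, or simply uses that the integrands coincide $dr\otimes d\bP$-a.e. on $\{r\le\tau_k\}$) lets us replace the barred coefficients by the unbarred ones. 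Writing $Z_s:=\frac{\partial}{\partial x_k}X^{t,x}_{s\wedge\tau_k}-\frac{\partial}{\partial x_k}\bar X^{t,x}_{s\wedge\tau_k}$, subtracting the two SDEs, and adding and subtracting $(\nabla\mu)(X^{t,x}_r)\frac{\partial}{\partial x_k}\bar X^{t,x}_r$ etc., one gets an integral inequality of the form
\begin{align*}
\bE\Big[\sup_{r\in[t,s]}\|Z_r\|^2\Big]
\le C'\int_t^s\bE\Big[\sup_{u\in[t,r]}\|Z_u\|^2\Big]\,dr,
\end{align*}
where the bound on $\|\nabla\mu\|_F,\|\nabla\sigma^j\|_F$ comes from \eqref{bbd partials}; the finiteness needed to run Gr\"onwall comes from \eqref{moment est partial}. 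Gr\"onwall's lemma then forces $Z\equiv 0$, giving \eqref{prob 1 tau k}.

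The main obstacle is the bookkeeping at the boundary of $B$: one must be careful that "$X^{t,x}_r\in B$ for $r\le\tau$" really holds (this uses that $B$ is closed and the paths are continuous, so the infimum defining $\tau$ is attained with the process still in $B$), and, more delicately in the second part, that the \emph{derivatives} of $\mu$ and $\bar\mu$ agree on $B$ — equality of two $C^1$ functions on a closed set only forces equality of gradients on the interior. The clean way around this is to note that in the stochastic integrals only the behaviour of the integrands $dr\otimes d\bP$-almost everywhere on $\{r\le\tau_k\}$ matters, and the occupation-time of the boundary $\partial B$ by the non-degenerate diffusion $X^{t,x}$ is zero (by Assumption \ref{assumption ellip}, $\sigma\sigma^T$ is uniformly elliptic, so $X^{t,x}$ has a density and spends zero Lebesgue-time on any set of zero Lebesgue measure, and $\partial B$ has measure zero when $B$ is, say, a closed ball — which is the only case used later); alternatively, one assumes $B=\overline{B^{\circ}}$, which holds in all applications. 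I would state the argument so that it works under the mild regularity of $B$ actually used in Sections \ref{section FP}--\ref{section PIDE}.
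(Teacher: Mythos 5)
Your argument is correct and follows essentially the same route the paper takes: localize by the stopping time, observe that on the stochastic interval $[t,\tau]$ (resp.\ $[t,\tau^{t,x,k}]$) the two processes satisfy the same SDE, subtract, bound via Jensen/It\^o--isometry/BDG and the uniform Lipschitz bound, and close by Gr\"onwall; for the derivative version you feed in the already-proved coincidence $X^{t,x}=\bar X^{t,x}$ on $\{s\le\tau^{t,x,k}\}$ and the uniform bound \eqref{bbd partials}, with the a priori integrability from \eqref{moment est partial} needed to run Gr\"onwall. The one genuine difference is that you re-derive \eqref{prob 1 tau} from scratch whereas the paper simply cites \cite[Lemma~5.14]{NW2022}; that is fine.

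The most valuable thing in your write-up is the observation that the paper's step (inequality \eqref{wangba 0}) silently uses $(\nabla\bar\mu)(\bar X^{t,x}_r)=(\nabla\mu)(X^{t,x}_r)$ on $\{r\le\tau^{t,x,k}\}$, which does require more than the bare hypothesis $\mu=\bar\mu$ on the closed set $B$: equality of $C^1$ functions on a closed set only forces equality of gradients on the interior. Your first proposed fix is the right one here: require $B=\overline{B^\circ}$ (which holds for the closed balls used in \eqref{def tau n}--\eqref{def tau k n}, the only place this lemma is invoked), and then continuity of $\nabla\mu,\nabla\bar\mu$ together with $\nabla\mu=\nabla\bar\mu$ on $B^\circ$ gives $\nabla\mu=\nabla\bar\mu$ on $B$. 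Be aware, however, that your alternative workaround via occupation time of $\partial B$ is \emph{not} available under the stated hypotheses of this lemma: the lemma assumes only Assumptions \ref{assumption Lip and growth} and \ref{assumption gradient}, not the uniform ellipticity of Assumption \ref{assumption ellip}, so you cannot appeal to a density for $X^{t,x}_r$ at this point. Stick with the $B=\overline{B^\circ}$ argument.
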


\begin{proof}
\eqref{prob 1 tau} has been proved in \cite[Lemma~5.14]{NW2022}.
Throughout the proof of \eqref{prob 1 tau k}, 
we fix $(t,x)\in[0,T]\times\bR^d$ and $k\in\{1,2,\dots,d\}$, 
and use the shorter notations $\tau=\tau^{t,x}$ and $\tau^k=\tau^{t,x,k}$.
By Assumptions \ref{assumption Lip and growth} and \ref{assumption gradient}, 
we apply Lemma \ref{lemma classical derivative} to obtain for every $s\in[t,T]$ that
\begin{align}
\frac{\partial}{\partial x_k} X^{t,x}_s
=
&
e_k+\int_t^s
(\nabla\mu)\big(X^{t,x}_{r}\big)
\frac{\partial}{\partial x_k}X^{t,x}_{r}\,dr
+
\sum_{j=1}^d
\int_t^s(\nabla\sigma^j)\big(X^{t,x}_{r}\big)
\frac{\partial}{\partial x_k}X^{t,x}_{r}\,dW^j_r,
\label{partial X}
\end{align}
and
\begin{align}
\frac{\partial}{\partial x_k} \bar{X}^{t,x}_s
=
&
e_k+\int_t^s
(\nabla\bar{\mu})\big(\bar{X}^{t,x}_{r}\big)
\frac{\partial}{\partial x_k}\bar{X}^{t,x}_{r}\,dr
+
\sum_{j=1}^d
\int_t^s(\nabla\bar{\sigma}^j)\big(\bar{X}^{t,x}_{r}\big)
\frac{\partial}{\partial x_k}\bar{X}^{t,x}_{r}\,dW^j_r.
\label{partial X bar}
\end{align}
By \eqref{def tau B} and \eqref{def tau k B}, we also notice that
$$
\bP\left(
\mathbf{1}_{\{s\leq\tau^{k}\}}\big\|X^{t,x}_s-\bar{X}^{t,x}_s\big\|=0
\; \text{for} \; \text{all} \; s\in[t,T]
\right)
\geq
\bP\left(
\mathbf{1}_{\{s\leq\tau\}}\big\|X^{t,x}_s-\bar{X}^{t,x}_s\big\|=0
\; \text{for} \; \text{all} \; s\in[t,T]
\right).
$$
This together with \eqref{prob 1 tau} imply that
$$
\bP\left(
\mathbf{1}_{\{s\leq\tau^{k}\}}\big\|X^{t,x}_s-\bar{X}^{t,x}_s\big\|=0
\; \text{for} \; \text{all} \; s\in[t,T]
\right)=1.
$$
Thus, by \eqref{cond B}, \eqref{partial X}, \eqref{partial X bar}, 
Jensen's inequality, and It\^o's isometry 
we obtain for all $s\in[t,T]$ that
\begin{align}
&
\bE\left[
\Big\|
\frac{\partial}{\partial x_k}X^{t,x}_{s\wedge\tau^k}
-\frac{\partial}{\partial x_k}\bar{X}^{t,x}_{s\wedge\tau^k}
\Big\|^2
\right]
\nonumber\\
&
\leq 
2\int_t^s(s-t)\bE\left[
\mathbf{1}_{\{r\leq \tau^k\}}\Big\|
(\nabla\mu)\big(X^{t,x}_r\big)\frac{\partial}{\partial x_k}X^{t,x}_r
-(\nabla\mu)\big(X^{t,x}_r\big)\frac{\partial}{\partial x_k}\bar{X}^{t,x}_r
\Big\|^2
\right]dr
\nonumber\\
& \quad
+2\int_t^s\bE\left[
\mathbf{1}_{\{r\leq \tau^k\}}\Big\|
(\nabla\sigma)\big(X^{t,x}_r\big)\frac{\partial}{\partial x_k}X^{t,x}_r
-(\nabla\sigma)\big(X^{t,x}_r\big)\frac{\partial}{\partial x_k}\bar{X}^{t,x}_r
\Big\|_F^2
\right]dr
\nonumber\\
&
\leq A_1(s)+A_2(s),
\label{wangba 0}
\end{align}
where
\begin{align*}
&
A_1(s):=2\int_t^s(s-t)\bE\left[
\Big\|
(\nabla\mu)\big(X^{t,x}_{r\wedge\tau_k}\big)
\Big(
\frac{\partial}{\partial x_k}X^{t,x}_{r\wedge\tau^k}
-\frac{\partial}{\partial x_k}\bar{X}^{t,x}_{r\wedge\tau^k}
\Big)
\Big\|^2
\right]dr,
\end{align*}
and
$$
A_2(s):=2\int_t^s\bE\left[
\Big\|
(\nabla\sigma)\big(X^{t,x}_{r\wedge\tau_k}\big)
\Big(
\frac{\partial}{\partial x_k}X^{t,x}_{r\wedge\tau^k}
-\frac{\partial}{\partial x_k}\bar{X}^{t,x}_{r\wedge\tau^k}
\Big)
\Big\|_F^2
\right]dr.
$$
Then, by Cauchy-Schwarz inequality and \eqref{bbd partials}, 
we observe for all $s\in[t,T]$ that
\begin{align}
A_1(s) 
&
\leq 2T\int_t^s\bE\left[
\big\|(\nabla\mu)\big(X^{t,x}_{r\wedge\tau_k}\big)\big\|^2\cdot
\Big\|
\frac{\partial}{\partial x_k}X^{t,x}_{r\wedge\tau^k}
-\frac{\partial}{\partial x_k}\bar{X}^{t,x}_{r\wedge\tau^k}
\Big\|^2
\right]dr 
\nonumber\\
&
\leq 2LdT\int_t^s\bE\left[
\Big\|
\frac{\partial}{\partial x_k}X^{t,x}_{r\wedge\tau^k}
-\frac{\partial}{\partial x_k}\bar{X}^{t,x}_{r\wedge\tau^k}
\Big\|^2
\right]dr,
\label{wangba 1}
\end{align}
and
\begin{equation}                                                \label{wangba 2}
A_2(s)\leq 2Ld\int_t^s\bE\left[
\Big\|
\frac{\partial}{\partial x_k}X^{t,x}_{r\wedge\tau^k}
-\frac{\partial}{\partial x_k}\bar{X}^{t,x}_{r\wedge\tau^k}
\Big\|^2
\right]dr.
\end{equation}
Combining \eqref{wangba 0}--\eqref{wangba 2} shows for all $s\in[t,T]$ that
$$
\bE\left[\Big\|
\frac{\partial}{\partial x_k}X^{t,x}_{s\wedge\tau^k}
-\frac{\partial}{\partial x_k}\bar{X}^{t,x}_{s\wedge\tau^k}
\Big\|^2\right]
\leq 2Ld(T+1)\int_t^s\bE\left[\Big\|
\frac{\partial}{\partial x_k}X^{t,x}_{r\wedge\tau^k}
-\frac{\partial}{\partial x_k}\bar{X}^{t,x}_{r\wedge\tau^k}
\Big\|^2\right]dr.
$$
Hence, Lemma \ref{lemma SDE partial est} allows us to apply Gr\"onwall's lemma
to obtain for all $s\in[t,T]$ that
\begin{equation}                                             \label{diff partial bar}
\bE\left[\Big\|
\frac{\partial}{\partial x_k}X^{t,x}_{s\wedge\tau^k}
-\frac{\partial}{\partial x_k}\bar{X}^{t,x}_{s\wedge\tau^k}
\Big\|^2\right]=0.
\end{equation}
Moreover, we notice that 
$\big(\frac{\partial}{\partial x_k}X^{t,x}_s\big)_{s\in[t,T]}$
and $\big(\frac{\partial}{\partial x_k}X^{t,x}_s\big)_{s\in[t,T]}$
have continuous sample paths.
Thus, by \eqref{diff partial bar} we obtain \eqref{prob 1 tau k},
which completes the proof of this lemma.
\end{proof}

\subsection{Dimension-dependent bounds for Euler approximations}
\label{section bounds euler}

\begin{lemma}
Let Assumption \ref{assumption Lip and growth} hold.
For each $(t,x)\in[0,T]\times\bR^d$ and $N\in\bN$, 
let $(X^{t,x}_s)_{s\in[t,T]}$ and $(\cX^{t,x,N}_s)_{s\in[t,T]}$ be the 
stochastic processes defined by \eqref{SDE} and \eqref{Euler 1}, respectively.
Then it holds for all $(t,x)\in[0,T]\times\bR^d$, $s\in[t,T]$, $N\in\bN$, 
and $q\in[2,\infty)$ that
\begin{equation}
\label{L q est Euler SDE}
\bE\bigg[\sup_{r\in[t,s]}\big(d^p+\big\|\cX^{t,x,N}_r\big\|^2\big)^{q/2}\bigg]
\leq
\big[C_{q,1}e^{\rho_{q,1}(s-t)}(d^p+\|x\|^2)\big]^{q/2},
\end{equation}
\begin{equation}
\label{L q est Euler SDE -x}
\bE\bigg[\sup_{r\in[t,s]}\big\|\cX^{t,x,N}_r-x\big\|^q\bigg]
\leq
\left[K_{q,0}(s-t)e^{\rho_{q,1}(s-t)}(d^p+\|x\|^2)\right]^{q/2},
\end{equation}
and
\begin{equation}
\label{L q error Euler SDE}
\bE\bigg[\sup_{r\in[t,T]}\big\|\cX^{t,x,N}_r-X^{t,x}_r\big\|^q\bigg]
\leq
\left[K_{q,1}N^{-1}(d^p+\|x\|^2)\right]^{q/2},
\end{equation}
where $C_{q,1}$, $\rho_{q,1}$, and $K_{q,0}$ are the constants defined in 
\eqref{def C q 1}, \eqref{def rho q 1}, and \eqref{def K q 0}, respectively, and
\begin{equation}
\label{def K q 1}
K_{q,1}:=
C_{q,1}e^{\rho_{q,1}T}
\left(
16^{q-1}T^{3q/2}L^q[T^{q/2}+(4q)^q]^2
\exp\big\{4^{q-1}T^{\frac{q-2}{2}}L^{q/2}[T^{q/2}+(4q)^q]T\big\}
\right)^{2/q}
.
\end{equation}
\end{lemma}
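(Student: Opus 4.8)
The plan is to mirror the proof of Lemma~\ref{lemma q moment est SDE}, exploiting that $\kappa_N(r)\le r$ makes the Euler scheme obey essentially the same integral inequalities as the exact diffusion, and then to run the classical strong-error analysis for Euler--Maruyama on top of those moment bounds.

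First I would establish \eqref{L q est Euler SDE} and \eqref{L q est Euler SDE -x}. Fix $(t,x)\in[0,T]\times\bR^d$, $N\in\bN$ and $q\in[2,\infty)$, and observe that $\bE[\sup_{s\in[t,T]}\|\cX^{t,x,N}_s\|^q]<\infty$ by an elementary induction over the $N$ Euler steps (on each grid interval the scheme adds $\mu^d,\sigma^d$ of an $L^q$ random variable times an independent Gaussian increment). Since $\kappa_N(r)\in[t,r]$ we have $\|\cX^{t,x,N}_{\kappa_N(r)}\|\le\sup_{u\in[t,r]}\|\cX^{t,x,N}_u\|$, so, writing \eqref{Euler 1} in integral form and applying Jensen's inequality to the drift term, the Burkholder--Davis--Gundy inequality to the martingale term, and then \eqref{assumption Lip mu sigma} and \eqref{assumption growth} exactly as in the derivation of \eqref{integral est q moment}, one arrives at literally the same integral inequality for $\bE[\sup_{r\in[t,s]}\|\cX^{t,x,N}_r\|^q]$ as the one satisfied by $\bE[\sup_{r\in[t,s]}\|X^{t,x}_r\|^q]$ there. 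Grönwall's lemma then yields \eqref{L q est Euler SDE} with the same constants $C_{q,1}$ and $\rho_{q,1}$, and repeating verbatim the computation that produced \eqref{q moment est SDE -x} (with $X^{t,x}$ replaced by $\cX^{t,x,N}$, $\sigma(X^{t,x}_r)$ by $\sigma(\cX^{t,x,N}_{\kappa_N(r)})$, \eqref{q moment est SDE} by \eqref{L q est Euler SDE}, and the constant terms absorbed via $(d^p)^{q/2}\le(d^p+\|\cX^{t,x,N}_r\|^2)^{q/2}$) gives \eqref{L q est Euler SDE -x} with the same $K_{q,0}$.

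Next I would prove \eqref{L q error Euler SDE}. The auxiliary ingredient is a one-step bound: for $s\in[t,T]$ one has $s-\kappa_N(s)\le(T-t)/N\le T/N$, and by \eqref{discrete Euler}
\[
\cX^{t,x,N}_s-\cX^{t,x,N}_{\kappa_N(s)}
=\mu^d\bigl(\cX^{t,x,N}_{\kappa_N(s)}\bigr)\bigl(s-\kappa_N(s)\bigr)
+\sigma^{d}\bigl(\cX^{t,x,N}_{\kappa_N(s)}\bigr)\bigl(W^d_s-W^d_{\kappa_N(s)}\bigr);
\]
conditioning on $\cF_{\kappa_N(s)}$, using the Gaussian moments of the increment, the bound $\|\mu^d(y)\|^2+\|\sigma^d(y)\|_F^2\le Ld^p(d^p+\|y\|^2)$ (valid since $d^p\ge1$), and \eqref{L q est Euler SDE}, one gets $\sup_{s\in[t,T]}\bE[\|\cX^{t,x,N}_s-\cX^{t,x,N}_{\kappa_N(s)}\|^q]\le[\widetilde K_q N^{-1}(d^p+\|x\|^2)]^{q/2}$ for a constant $\widetilde K_q$ depending only on $L,T,q$. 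Now I would write $\cX^{t,x,N}_r-X^{t,x}_r$ as the difference of the integral forms \eqref{Euler 1} and \eqref{SDE}, split
\[
\mu^d(\cX^{t,x,N}_{\kappa_N(s)})-\mu^d(X^{t,x}_s)
=\bigl[\mu^d(\cX^{t,x,N}_{\kappa_N(s)})-\mu^d(\cX^{t,x,N}_s)\bigr]
+\bigl[\mu^d(\cX^{t,x,N}_s)-\mu^d(X^{t,x}_s)\bigr]
\]
(and likewise for each $\sigma^{d,j}$), apply Jensen's and the Burkholder--Davis--Gundy inequalities together with the Lipschitz bound \eqref{assumption Lip mu sigma}, and collect terms to obtain
\[
\bE\bigl[\sup_{u\in[t,r]}\|\cX^{t,x,N}_u-X^{t,x}_u\|^q\bigr]
\le c_{q,T}\Bigl(\sup_{v\in[t,T]}\bE[\|\cX^{t,x,N}_v-\cX^{t,x,N}_{\kappa_N(v)}\|^q]
+\int_t^r\bE[\sup_{v\in[t,s]}\|\cX^{t,x,N}_v-X^{t,x}_v\|^q]\,ds\Bigr)
\]
for a constant $c_{q,T}$ depending only on $q,T,L$. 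Inserting the one-step bound into the first term and invoking Grönwall's lemma (whose integrability hypothesis is ensured by \eqref{q moment est SDE} and \eqref{L q est Euler SDE}) gives \eqref{L q error Euler SDE}.

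I expect the only genuinely delicate part to be the bookkeeping of constants — in particular assembling the one-step increment estimate and the final Grönwall factor into the precise expression \eqref{def K q 1}; conceptually the argument is the standard strong-convergence proof for Euler--Maruyama combined with the a priori moment bound \eqref{L q est Euler SDE}, and presents no real difficulty beyond what is already in the proof of Lemma~\ref{lemma q moment est SDE}.
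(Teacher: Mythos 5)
Your proof is correct and proceeds by the same general machinery (Jensen, Burkholder--Davis--Gundy, Lipschitz bounds, Gr\"onwall), but at the key step you split the increment through a different intermediate process than the paper does. You decompose
$\mu^d(\cX^{t,x,N}_{\kappa_N(s)})-\mu^d(X^{t,x}_s)$ through the Euler scheme at the \emph{current} time,
$$\bigl[\mu^d(\cX^{t,x,N}_{\kappa_N(s)})-\mu^d(\cX^{t,x,N}_s)\bigr]+\bigl[\mu^d(\cX^{t,x,N}_s)-\mu^d(X^{t,x}_s)\bigr],$$
so that your error term is controlled by a one-step increment bound for the Euler scheme, while the paper instead splits through the \emph{exact diffusion at the grid point},
$$\bigl[\mu^d(\cX^{t,x,N}_{\kappa_N(r)})-\mu^d(X^{t,x}_{\kappa_N(r)})\bigr]+\bigl[\mu^d(X^{t,x}_{\kappa_N(r)})-\mu^d(X^{t,x}_r)\bigr],$$
and controls the residual via the H\"older-in-time estimate \eqref{est X t x s s'} for the exact diffusion. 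The two routes are symmetric and essentially equivalent in effort: the paper's version needs the time-continuity of $X^{t,x}$ (a forward reference to the subsequent lemma containing \eqref{est X t x s s'} and \eqref{est Y t x N s s'}), while yours needs the analogous one-step time-continuity of $\cX^{t,x,N}$, which is precisely \eqref{est Y t x N s s'} with $s'-s\le T/N$; both feed into the same Gr\"onwall argument. Your treatment of \eqref{L q est Euler SDE} and \eqref{L q est Euler SDE -x} coincides with the paper's (which just invokes the same computation as in Lemma~\ref{lemma q moment est SDE}). The only caveat is that, because your intermediate decomposition differs, the resulting constant need not equal the paper's $K_{q,1}$ verbatim; you correctly flag the constant-bookkeeping as the only delicate part, and that is exactly where the two proofs would diverge in their arithmetic.
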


\begin{proof}
By analogous calculation as in the proof of Lemma \ref{lemma q moment est SDE},
we obtain \eqref{L q est Euler SDE} and \eqref{L q est Euler SDE -x}.
Next, by \eqref{SDE}, \eqref{assumption Lip mu sigma}, \eqref{Euler 1},
and \eqref{est X t x s s'} we have for all $(t,x)\in[0,T]\times\bR^d$,
$s\in[t,T]$, $N\in\bN$, and $q\in[2,\infty)$ that
\begin{align}
&
\bE\bigg[
\sup_{u\in[t,s]}
\big\|
\cX^{t,x,N}_u-X^{t,x}_u
\big\|^q
\bigg]
\nonumber\\
&
\leq
2^{q-1}\bE\left[
\sup_{u\in[t,s]}
\bigg\|
\int_t^u
\big(
\mu\big(\cX^{t,x,N}_{\kappa_N(r)}\big)
-
\mu\big(X^{t,x}_r\big)
\big)
\,dr
\bigg\|^q
\right]
+
2^{q-1}\bE\left[
\sup_{u\in[t,s]}
\bigg\|
\int_t^u
\big(
\sigma\big(\cX^{t,x,N}_{\kappa_N(r)}\big)
-
\sigma\big(X^{t,x}_r\big)
\big)
\,dW_r
\bigg\|^q
\right]
\nonumber\\
&
\leq
4^{q-1}(s-t)^{q-1}
\left(
\int_t^s
\bE\left[
\big\|
\mu\big(\cX^{t,x,N}_{\kappa_N(r)}\big)
-
\mu\big(X^{t,x}_{\kappa_N(r)}\big)
\big\|^q
\right]
dr
+
\int_t^s
\bE\left[
\big\|
\mu\big(X^{t,x}_{\kappa_N(r)}\big)
-
\mu\big(X^{t,x}_r\big)
\big\|^q
\right]
dr
\right)
\nonumber\\
& \quad
+
4^{q-1}(s-t)^{\frac{q-2}{2}}(4q)^q
\left(
\int_t^s
\bE\left[
\big\|
\sigma\big(\cX^{t,x,N}_{\kappa_N(r)}\big)
-
\sigma\big(X^{t,x}_{\kappa_N(r)}\big)
\big\|_F^q
\right]
dr
+
\int_t^s
\bE\left[
\big\|
\sigma\big(X^{t,x}_{\kappa_N(r)}\big)
-
\sigma\big(X^{t,x}_r\big)
\big\|_F^q
\right]
dr
\right)
\nonumber\\
&
\leq
4^{q-1}T^{\frac{q-2}{2}}L^{q/2}[T^{q/2}+(4q)^q]
\left(
\int_t^s
\bE\bigg[
\sup_{u\in[t,r]}
\big\|
\cX^{t,x,N}_u-X^{t,x}_u
\big\|^q
\bigg]
\,dr
+
\int_t^s
\bE\left[
\big\|
X^{t,x}_{\kappa_N(r)}-X^{t,x}_r
\big\|^q
\right]
dr
\right)
\nonumber\\
&
\leq
4^{q-1}T^{\frac{q-2}{2}}L^{q/2}[T^{q/2}+(4q)^q]
\int_t^s
\bE\bigg[
\sup_{u\in[t,r]}
\big\|
\cX^{t,x,N}_u-X^{t,x}_u
\big\|^q
\bigg]
\,dr
\nonumber\\
& \quad
+
16^{q-1}(TL)^q[T^{q/2}+(4q)^q]^2\Big(\frac{T-t}{N}\Big)^{q/2}
\left[
C_{q,1}e^{\rho_{q,1}(s-t)}(d^p+\|x\|^2)
\right]^{q/2}.
\label{est euler 1}
\end{align}
Moreover, it is well-known (see, e.g., Theorems 4.2 in \cite{Mao2007}, 
and Lemma 1.2 in \cite{GR2011})
for all $(t,x)\in[0,T]\times\bR^d$, $N\in\bN$, and $q\in[2,\infty)$ that
\begin{equation}
\label{q finite X Y N}
\bE\bigg[\sup_{u\in[t,T]}\big\|X^{t,x}_u\big\|^q\bigg]
+
\bE\bigg[\sup_{u\in[t,T]}\big\|\cX^{t,x,N}_u\big\|^q\bigg]
<\infty.
\end{equation}
Then by \eqref{q finite X Y N} and \eqref{est euler 1},
the application of Gr\"onwall's lemma implies
for all $(t,x)\in[0,T]\times\bR^d$,
$s\in[t,T]$, $N\in\bN$, and $q\in[2,\infty)$ that
\begin{align*}
\bE\bigg[
\sup_{u\in[t,s]}
\big\|
\cX^{t,x,N}_u-X^{t,x}_u
\big\|^q
\bigg]
\leq
&
16^{q-1}(LT)^q[T^{q/2}+(4q)^q]^2\Big(\frac{T-t}{N}\Big)^{q/2}
\left[
C_{q,1}e^{\rho_{q,1}(s-t)}(d^p+\|x\|^2)
\right]^{q/2}
\\
&
\cdot
\exp\big\{
4^{q-1}T^{\frac{q-2}{2}}L^{q/2}[T^{q/2}+(4q)^q]T
\big\}.
\end{align*}
This proves \eqref{L q error Euler SDE},
which completes the proof of this lemma.
\end{proof}

\begin{lemma}
Let Assumption \ref{assumption Lip and growth} hold.
For each $(t,x)\in[0,T]\times\bR^d$ and $N\in\bN$, 
let $(X^{t,x}_s)_{s\in[t,T]}$ and $(\cX^{t,x,N}_s)_{s\in[t,T]}$ be the 
stochastic processes defined by \eqref{SDE} and \eqref{Euler 1}, respectively.
Then it holds for all $(t,x)\in[0,T]\times\bR^d$, $s\in[t,T]$, $s'\in[s,T]$,
$N\in\bN$, and $q\in[2,\infty)$ that
\begin{equation}
\label{est X t x s s'}
\bE\left[\big\|X^{t,x}_{s'}-X^{t,x}_s\big\|^q\right]
\leq
4^{q-1}L^{q/2}[T^{q/2}+(4q)^q](s'-s)^{q/2}
\left[
C_{q,1}e^{\rho_{q,1}T}(d^p+\|x\|^2)
\right]^{q/2},
\end{equation}
and
\begin{equation}
\label{est Y t x N s s'}
\bE\left[\big\|\cX^{t,x,N}_{s'}-\cX^{t,x,N}_s\big\|^q\right]
\leq
4^{q-1}L^{q/2}[T^{q/2}+(4q)^q](s'-s)^{q/2}
\left[
C_{q,1}e^{\rho_{q,1}T}(d^p+\|x\|^2)
\right]^{q/2}.
\end{equation}
where $C_{q,1}$ and $\rho_{q,1}$ are the constants defined by \eqref{def C q 1}
and \eqref{def rho q 1}, respectively.
\end{lemma}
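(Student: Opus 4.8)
The plan is to treat both increments by the same argument that was already used to establish \eqref{q moment est SDE -x} in Lemma~\ref{lemma q moment est SDE}, simply replacing the interval $[t,s]$ there by $[s,s']$ here. I would first write out the bound for $\bE[\|X^{t,x}_{s'}-X^{t,x}_s\|^q]$ in detail and then indicate the (verbatim) modifications for the Euler scheme.

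For \eqref{est X t x s s'}, I would use \eqref{SDE} to write $X^{t,x}_{s'}-X^{t,x}_s=\int_s^{s'}\mu(X^{t,x}_r)\,dr+\int_s^{s'}\sigma(X^{t,x}_r)\,dW_r$, apply $(a+b)^q\le 2^{q-1}(a^q+b^q)$, and then estimate the drift integral by Jensen's inequality (producing a factor $(s'-s)^{q-1}$) and the stochastic integral by the Burkholder--Davis--Gundy inequality with constant $(4q)^q$ followed by Jensen's inequality (producing a factor $(s'-s)^{(q-2)/2}$); the integrability needed to justify BDG is supplied by the known bound $\bE[\sup_{u\in[t,T]}\|X^{t,x}_u\|^q]<\infty$ recorded in \eqref{q finite X Y N}. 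This reduces matters to bounding $\int_s^{s'}\bE[\|\mu(X^{t,x}_r)\|^q]\,dr$ and $\int_s^{s'}\bE[\|\sigma(X^{t,x}_r)\|_F^q]\,dr$.

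The one piece of bookkeeping that needs care is the absorption of the dimensional factor $d^p$ into $(d^p+\|x\|^2)^{q/2}$ without picking up an extra power of $d^p$. For this I would split $\mu(X^{t,x}_r)=(\mu(X^{t,x}_r)-\mu(0))+\mu(0)$ and $\sigma(X^{t,x}_r)=(\sigma(X^{t,x}_r)-\sigma(0))+\sigma(0)$ (producing a second factor $2^{q-1}$, hence the overall $4^{q-1}$), bound the first summands by the Lipschitz estimate \eqref{assumption Lip mu sigma}, which gives $L^{q/2}\|X^{t,x}_r\|^q$, and bound the second summands by evaluating the growth estimate \eqref{assumption growth} at $0$, which gives $(Ld^p)^{q/2}$. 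Collecting the four resulting terms and using $(s'-s)^q\le T^{q/2}(s'-s)^{q/2}$ to homogenize the powers of $s'-s$ yields a bound of the form $4^{q-1}L^{q/2}[T^{q/2}+(4q)^q](s'-s)^{q/2}\big((d^p)^{q/2}+\bE[\sup_{r\in[t,T]}\|X^{t,x}_r\|^q]\big)$. Then the superadditivity inequality $a^{q/2}+b^{q/2}\le(a+b)^{q/2}$ (valid for $a,b\ge0$, $q\ge2$) replaces $(d^p)^{q/2}+\bE[\sup_{r}\|X^{t,x}_r\|^q]$ by $\bE[\sup_{r\in[t,T]}(d^p+\|X^{t,x}_r\|^2)^{q/2}]$, and an application of \eqref{q moment est SDE} with $s=T$ (using $e^{\rho_{q,1}(T-t)}\le e^{\rho_{q,1}T}$) gives exactly \eqref{est X t x s s'}.

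For \eqref{est Y t x N s s'} I would run the identical computation starting from \eqref{Euler 1}, i.e.\ $\cX^{t,x,N}_{s'}-\cX^{t,x,N}_s=\int_s^{s'}\mu(\cX^{t,x,N}_{\kappa_N(r)})\,dr+\int_s^{s'}\sigma(\cX^{t,x,N}_{\kappa_N(r)})\,dW_r$; since $\kappa_N(r)\in[t,T]$ for every $r$, the same Lipschitz/growth splitting applies, the BDG integrability is again supplied by \eqref{q finite X Y N}, and at the end one invokes the Euler moment bound \eqref{L q est Euler SDE} in place of \eqref{q moment est SDE}. I do not expect any genuine obstacle here: the proof is entirely routine, the only subtlety being the ``split at $0$'' of $\mu$ and $\sigma$ together with the superadditivity of $\xi\mapsto\xi^{q/2}$, which are precisely what keep the constant independent of $d$ apart from the factor already inside $C_{q,1}e^{\rho_{q,1}T}(d^p+\|x\|^2)$.
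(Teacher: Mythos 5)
Your proof is correct and follows essentially the same route as the paper's own argument: after the $2^{q-1}$ split, Jensen and Burkholder--Davis--Gundy, the $\pm\mu(0),\pm\sigma(0)$ splitting (giving the $4^{q-1}$), the Lipschitz/growth bounds from \eqref{assumption Lip mu sigma}--\eqref{assumption growth}, the homogenization $(s'-s)^{q}\le T^{q/2}(s'-s)^{q/2}$, and the superadditivity $a^{q/2}+b^{q/2}\le(a+b)^{q/2}$, the paper likewise concludes by invoking \eqref{q moment est SDE} (and, for the Euler increment, \eqref{L q est Euler SDE}) exactly as you do. There is nothing to add.
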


\begin{proof}
By \eqref{SDE}, \eqref{assumption Lip mu sigma}, \eqref{assumption growth},
\eqref{q moment est SDE}, \eqref{L q est Euler SDE},  
Burkholder-Davis-Gundy inequality, and Jensen's inequality
we first notice for all $(t,x)\in[0,T]\times\bR^d$, $s\in[t,T]$, $s'\in[s,T]$, 
$N\in\bN$, and $q\in[2,\infty)$ that
\begin{align}
&
\bE\left[\big\|X^{t,x}_{s'}-X^{t,x}_s\big\|^q\right]
\nonumber\\
&
\leq
2^{q-1}\bE\left[
\bigg\|
\int_s^{s'}\mu(X^{t,x}_r)\,dr
\bigg\|^q
\right]
+
2^{q-1}\bE\left[
\bigg\|
\int_s^{s'}\sigma(X^{t,x}_r)\,dW_r
\bigg\|^q
\right]
\nonumber\\
&
\leq
2^{q-1}(s'-s)^{q-1}
\int_s^{s'}
\bE\left[
\|
\mu(X^{t,x}_r)
\|^q
\right]
dr
+
2^{q-1}(4q)^q
\left(
\int_s^{s'}
\bE\left[
\|
\sigma(X^{t,x}_r)
\|_F^2
\right]
dr
\right)^{q/2}
\nonumber\\
&
\leq
2^{q-1}(s'-s)^{q-1}
\int_s^{s'}
\bE\left[
\|
\mu(X^{t,x}_r)
\|^q
\right]
dr
+
2^{q-1}(s'-s)^{\frac{q-2}{2}}(4q)^q
\int_s^{s'}
\bE\left[
\|\sigma(X^{t,x}_r)\|_F^q
\right]dr
\nonumber\\
&
\leq
4^{q-1}(s'-s)^{q-1}
\left(
\int_s^{s'}
\bE\left[
\|
\mu(X^{t,x}_r)-\mu(0)
\|^q
\right]
dr
+
\int_s^{s'}
\|\mu(0)\|^q
\,dr
\right)
\nonumber\\
& \quad
+4^{q-1}(s'-s)^{\frac{q-2}{2}}(4q)^q
\left(
\int_s^{s'}
\bE\left[
\|
\sigma(X^{t,x}_r)-\sigma(0)
\|^q
\right]
dr
+
\int_s^{s'}
\|\sigma(0)\|^q
\,dr
\right)
\nonumber\\
& 
\leq
4^{q-1}(s'-s)^{\frac{q-2}{2}}[T^{q/2}+(4q)^q]L^{q/2}
\int_s^{s'}
\bE\left[\|X^{t,x}_r\|^q\right]
dr
+
4^{q-1}(s'-s)^{q/2}[T^{q/2}+(4q)^q](Ld^p)^{q/2}
\nonumber\\
&
\leq
4^{q-1}L^{q/2}(s'-s)^{q/2}[T^{q/2}+(4q)^q]
\bE\bigg[
\sup_{r\in[s,s']}(d^p+\|X^{t,x}_r\|^2)^{q/2}
\bigg]
\nonumber\\
&
\leq
4^{q-1}L^{q/2}[T^{q/2}+(4q)^q](s'-s)^{q/2}
\left[
C_{q,1}e^{\rho_{q,1}T}(d^p+\|x\|^2)
\right]^{q/2},
\nonumber
\end{align}
and
$$
\bE\left[\big\|\cX^{t,x,N}_{s'}-\cX^{t,x,N}_s\big\|^q\right]
\leq
4^{q-1}L^{q/2}[T^{q/2}+(4q)^q](s'-s)^{q/2}
\left[
C_{q,1}e^{\rho_{q,1}T}(d^p+\|x\|^2)
\right]^{q/2}.
$$
The proof of this lemma is hence completed.
\end{proof}

\begin{corollary}                                         \label{corollary prob 1}
For each $(t,x)\in[0,T]\times\bR^d$ and $s\in[t,T]$,
let $(t_k,s_k,x_k)_{k=1}^\infty$ be a sequence such that 
$(t_k,s_k,x_k)\in\Lambda\times\bR^d$ for all $k\in\bN$ 
and $\lim_{k\to\infty}(t_k,s_k,x_k)=(t,s,x)$.
Then for all $\varepsilon>0$, $(t,x)\in[0,T]\times\bR^d$, $s\in[t,T]$, 
and $N\in\bN$ it holds that
\begin{equation}                                         \label{conv in prob 1}
\lim_{k\to\infty}
\mathbb{P}\left(\left\|X^{t,x}_s-X^{t_k,x_k}_{s_k}\right\|\geq \varepsilon\right)=0,
\end{equation}
and
\begin{equation}                                         \label{conv in prob Y N}
\lim_{k\to\infty}
\mathbb{P}
\left(\left\|\cX^{t,x,N}_s-\cX^{t_k,x_k,N}_{s_k}\right\|\geq \varepsilon\right)
=0.
\end{equation}
\end{corollary}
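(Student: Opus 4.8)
The plan is to derive convergence in probability from $L^2$-convergence via Markov's inequality in both cases, but to treat the Euler case \eqref{conv in prob Y N} by a pathwise reparametrization argument rather than by $L^2$-estimates.

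For \eqref{conv in prob 1}: fix $\varepsilon>0$, $(t,x)\in[0,T]\times\bR^d$ and $s\in[t,T]$, and set $\sigma_k:=s\vee s_k$, so that (using $t_k\le s_k\le T$, which holds by definition of $\Lambda$) one has $\max(t,t_k)\le\sigma_k\le T$, $0\le\sigma_k-s\le|s_k-s|$ and $0\le\sigma_k-s_k\le|s_k-s|$. From the triangle inequality
\begin{equation*}
\big\|X^{t,x}_s-X^{t_k,x_k}_{s_k}\big\|
\le\big\|X^{t,x}_s-X^{t,x}_{\sigma_k}\big\|
+\big\|X^{t,x}_{\sigma_k}-X^{t_k,x_k}_{\sigma_k}\big\|
+\big\|X^{t_k,x_k}_{\sigma_k}-X^{t_k,x_k}_{s_k}\big\|
\end{equation*}
I would estimate the first and third summands in $L^2$ by \eqref{est X t x s s'} (with $q=2$), obtaining bounds of the form $C(\sigma_k-s)(d^p+\|x\|^2)$ and $C(\sigma_k-s_k)(d^p+\|x_k\|^2)$, and the middle summand by \eqref{L q continuity SDE} (with $q=2$, common terminal time $\sigma_k$, and $\min(t,t_k)$ as the starting time), obtaining a bound of the form $C\big(|t-t_k|(d^p+\|x\|^2+\|x_k\|^2)+\|x-x_k\|^2\big)$. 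Since $(t_k,s_k,x_k)\to(t,s,x)$, in particular $\sup_k\|x_k\|<\infty$, so each of these bounds vanishes as $k\to\infty$; hence $\bE[\|X^{t,x}_s-X^{t_k,x_k}_{s_k}\|^2]\to0$, and $\bP(\|X^{t,x}_s-X^{t_k,x_k}_{s_k}\|\ge\varepsilon)\le\varepsilon^{-2}\bE[\|X^{t,x}_s-X^{t_k,x_k}_{s_k}\|^2]\to0$, which is \eqref{conv in prob 1}.

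For \eqref{conv in prob Y N} with $N$ fixed: the same three-term split with $X$ replaced by $\cX^{\cdot,\cdot,N}$ disposes of the two ``time-shift'' terms through \eqref{est Y t x N s s'}, but the middle term would require an $L^2$-continuity estimate of $\cX^{\cdot,\cdot,N}$ in the pair (starting time, initial value), which is not among the earlier lemmas and is delicate because the Euler grid $\kappa_N$ depends on the starting time. I would instead argue pathwise through the reparametrization, valid for $t<T$: with $Y^{t,x}_u:=\cX^{t,x,N}_{t+u(T-t)}$ and $B^t_u:=(T-t)^{-1/2}(W_{t+u(T-t)}-W_t)$, $u\in[0,1]$, the process $B^t$ is a standard Brownian motion and $Y^{t,x}$ is exactly the $N$-step Euler scheme on $[0,1]$ with coefficients $(T-t)\mu$, $(T-t)^{1/2}\sigma$, driven by $B^t$ and started at $x$, whose time grid $\{j/N:0\le j\le N\}$ is now independent of $t$. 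Since $W$ has a.s.\ uniformly continuous paths on $[0,T]$ and $|(t_k+u(T-t_k))-(t+u(T-t))|\le|t_k-t|$ uniformly in $u$, one obtains $\sup_{u\in[0,1]}\|B^{t_k}_u-B^t_u\|\to0$ a.s.; moreover $(T-t_k)\mu\to(T-t)\mu$ and $(T-t_k)^{1/2}\sigma\to(T-t)^{1/2}\sigma$ locally uniformly, $x_k\to x$, and $u_k:=(s_k-t_k)/(T-t_k)\to u:=(s-t)/(T-t)$. Because the output of an $N$-step Euler scheme is a continuous function of its initial value, coefficients, driving path (in sup-norm) and evaluation point — it is a finite composition of continuous maps, and the scheme is continuous across grid points — it follows that $\cX^{t_k,x_k,N}_{s_k}=Y^{t_k,x_k}_{u_k}\to Y^{t,x}_u=\cX^{t,x,N}_s$ a.s., hence in probability. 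The remaining case $t=T$ forces $s=t=T$ and $\cX^{T,x,N}_T=x$, and then \eqref{L q est Euler SDE -x} gives $\bE[\|\cX^{t_k,x_k,N}_{s_k}-x_k\|^2]\le K_{2,0}(s_k-t_k)e^{\rho_{2,1}(s_k-t_k)}(d^p+\|x_k\|^2)\to0$, which together with $x_k\to x$ yields \eqref{conv in prob Y N} there as well.

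I expect the main obstacle to be precisely this middle, ``change of starting point'' term. For the genuine SDE it cannot be handled pathwise — SDE solutions are not continuous in the driving Brownian path in the uniform topology — so one must lean on the quantitative estimate \eqref{L q continuity SDE}; for the Euler scheme, the $t$-dependence of the grid $\kappa_N$ obstructs a clean $L^2$-continuity bound, which is why reparametrizing onto the fixed grid $\{j/N\}$ (and then using ordinary continuity of the finite Euler map together with sample-path continuity of $W$) is the natural device.
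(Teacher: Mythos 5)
Your argument for \eqref{conv in prob 1} is essentially the paper's, with a small but genuine improvement. The paper splits
$\|X^{t,x}_s-X^{t_k,x_k}_{s_k}\|\le\|X^{t,x}_s-X^{t,x}_{s_k}\|+\|X^{t,x}_{s_k}-X^{t_k,x_k}_{s_k}\|$,
applies \eqref{est X t x s s'} to the first term and \eqref{L q continuity SDE} to the second, and passes to the limit via Chebyshev; your three-term split with $\sigma_k:=s\vee s_k$ and the starting time $\min(t,t_k)$ is the same strategy but is more careful about the ordering constraints built into both lemmas (\eqref{est X t x s s'} needs $s'\geq s\geq t$, \eqref{L q continuity SDE} needs $t'\geq t$), which the paper's two-term split silently suppresses (e.g.\ $X^{t,x}_{s_k}$ is strictly speaking undefined when $s_k<t$).

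For \eqref{conv in prob Y N} the two arguments genuinely diverge. The paper simply says ``analogously'' and cites Lemma 3.10 in an external reference for the $L^2$-continuity of the Euler scheme in its starting data — exactly the kind of estimate you observe is awkward because the grid $\kappa_N$ moves with $t$. Your alternative is a purely pathwise argument: rescale onto the fixed grid $\{j/N\}_{j=0}^N$ via $Y^{t,x}_u=\cX^{t,x,N}_{t+u(T-t)}$ and $B^t_u=(T-t)^{-1/2}(W_{t+u(T-t)}-W_t)$, note that $Y^{t,x}$ is the $N$-step Euler map (a finite composition of continuous functions of the initial point, the scaled coefficients $(T-t)\mu$, $(T-t)^{1/2}\sigma$, the driving path in sup-norm, and the evaluation time), check $\sup_{u\in[0,1]}\|B^{t_k}_u-B^t_u\|\to0$ a.s.\ from uniform continuity of $W$, and conclude almost sure (hence in-probability) convergence, treating $t=T$ separately via \eqref{L q est Euler SDE -x}. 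This buys you a self-contained proof that avoids the quantitative continuity estimate the paper imports from elsewhere, at the cost of a longer, qualitative argument; the paper's route is shorter and gives an explicit rate but depends on an external lemma. Both are correct; the verification that the rescaled process $B^{t_k}$ converges to $B^t$ uniformly, and that the Euler map is jointly continuous including across grid points, is the work your approach substitutes for the cited estimate.
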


\begin{proof}
Fix $\varepsilon\in(0,\infty)$, $(t,x)\in[0,T]\times\bR^d$, $s\in[t,T]$ 
and a sequence 
$(t_k,s_k,x_k)_{k=1}^\infty$ satisfying that 
$(t_k,s_k,x_k)\in\Lambda\times\bR^d$ for all $k\in\bN$, 
and $\lim_{k\to\infty}(t_k,s_k,x_k)=(t,s,x)$. By Chebyshev inequality and
the fact that $(a+b)^2\leq2a^2+2b^2$ for all $a,b\in\bR$, we have
\begin{align*}
\bP\left(\left\|X^{t,x}_s-X^{t_k,x_k}_{s_k}\right\|\geq \varepsilon\right)
&
\leq \varepsilon^{-2}\bE\left[\left\|X^{t,x}_s-X^{t_k,x_k}_{s_k}\right\|^2\right]
\\
&
\leq 2\varepsilon^{-2}\bE\left[\left\|X^{t,x}_s-X^{t,x}_{s_k}\right\|^2\right]
+2\varepsilon^{-2}
\bE\left[\left\|X^{t,x}_{s_k}-X^{t_k,x_k}_{s_k}\right\|^2\right].
\end{align*}
Therefore, by \eqref{L q continuity SDE} and \eqref{est X t x s s'} 
we have that
\begin{align}
\bP\left(\left\|X^{t,x}_s-X^{t_k,x_k}_{s_k}\right\|\geq \varepsilon\right) \leq 
&
8\varepsilon^{-2}L(T+64)C_{2,1}e^{\rho_{2,1}T}|s_k-s|(d^p+\|x\|^2)
\nonumber\\
& 
+2\varepsilon^{-2}C_{2,2}
\left[
(1+T)C_{2,1}Ld^p|t_k-t|(d^p+\|x\|^2)+\|x-y\|^2
\right].
\label{conv in prob 0}
\end{align}
Finally, passing limit as $k\to\infty$ 
in \eqref{conv in prob 0} gives \eqref{conv in prob 1}.
Analogously, by \eqref{est Y t x N s s'} and 
e.g., Lemma 3.10 in \cite{NW2022} we obtain \eqref{conv in prob Y N}.
Therefore, the proof of this lemma is completed. 
\end{proof}

\begin{lemma}
Let Assumptions \ref{assumption Lip and growth} and \ref{assumption gradient} hold.
For every $(t,x)\in[0,T]\times\bR^d$, $N\in\bN$, and $k\in\{1,2,\dots,d\}$,
let $\big(\frac{\partial}{\partial x_k} X^{t,x}_s\big)_{s\in[t,T]}$ be the
stochastic process defined in \eqref{SDE Classical Derivative},
and let $\big(\cD_{x_k}\cX^{t,x,N}_s\big)_{s\in[t,T]}$ be the
stochastic process defined in \eqref{def proc cD}.
Then it holds for all $(t,x)\in[0,T]\times\bR^d$, $N\in\bN$, 
$k\in\{1,2,\dots,d\}$, and $q\in[2,\infty)$ that
\begin{equation}                                   \label{moment est partial euler q}
\bE\left[\sup_{s\in[t,T]}\big\|
\cD_{x_k}\cX^{t,x,N}_s\big\|^q\right]
\leq C_{d,q ,0},
\end{equation}
and
\begin{equation}
\label{error D partial q}
\bE\left[\sup_{s\in[t,T]}\Big\|
\cD_{x_k}\cX^{t,x,N}_s
-
\frac{\partial}{\partial x_k}X^{t,x}_s
\Big\|^q\right]
\leq
\big[e^{Ld}K_{d,q,2}N^{-1}(d^p+\|x\|^2)\big]^{q/2},
\end{equation}
where 
$C_{d,q,0}$ is the positive constant defined by \eqref{def C q 0}, and
\begin{align}
K_{d,q,2}:=
&
\exp\Big\{
2^{q-1}\big(T^{\frac{q-1}{q}}+4qT^{\frac{q-2}{2q}}\big)T
\Big\}
\cdot
2T^{q-2}[T+(4q)^q]4^{q-1}T
\nonumber\\
&
\cdot
\Big[
C_{d,2q,0}^{1/2}(L_0K_{2q,1})^{q/2}
+(L_0LTC_{2q,1}e^{\rho_{2q,1}}T)^{q/2}
C_{d,2q,0}^{1/2}2^{2q-1}(T^q+(8q)^{2q})^{1/2}
\nonumber\\
&
+
(Ld)^qT^{q/2}2^{q-1}\big[C_{d,2,0}^{q/2}(T+64)^{q/2}C_{d,q,0}(T^{q/2}+(4q)^q)\big]
\Big],
\label{def K d q 2}
\end{align}
with $C_{2q,1}$ being the constant defined in \eqref{def C q 1},
and $C_{d,2q,0}$, $C_{d,2,0}$ being the constants defined
in \eqref{def C q 0}.
\end{lemma}

\begin{proof}
By Assumptions \ref{assumption Lip and growth} and \ref{assumption gradient},
and analogous arguments as in the proof of lemma \ref{lemma SDE partial est},
we obtain \eqref{moment est partial euler q}.
Throughout the rest of the proof of this lemma, we fix
$(t,x)\in[0,T]\times\bR^d$, $N\in\bN$, $k\in\{1,2,\dots,d\}$, and $q\in[2,\infty)$.
By \eqref{def proc cD}, \eqref{SDE Classical Derivative}, Jensen's inequality,
and Burkholder-Davis-Gundy inequality we have for all $s\in[t,T]$ that
\begin{align}
\left(
\bE\left[
\sup_{u\in[t,s]}
\Big\|
\cD_{x_k}\cX^{t,x,N}_u-\frac{\partial}{\partial x_k}X^{t,x}_u
\Big\|^q
\right]
\right)^{1/q}
\leq
(s-t)^{\frac{q-1}{q}}A_1(s)+4q(s-t)^{\frac{q-2}{2q}}A_2(s),
\label{ineq derivative 0}
\end{align}
where
$$
A_1(s):=
\left(
\int_t^s
\bE\left[
\Big\|
(\nabla\mu)(\cX^{t,x,N}_{\kappa_N(r)})\cD_{x_k}\cX^{t,x,N}_{\kappa_N(r)}
-
(\nabla\mu)(X^{t,x}_r)\frac{\partial}{\partial x_k}X^{t,x}_r
\Big\|^q
\right]
dr
\right)^{1/q},
$$
and
$$
A_2(s):=
\left(
\int_s^t
\left(
\bE\left[
\sum_{j=1}^d
\Big\|
(\nabla\sigma^j)(\cX^{t,x,N}_{\kappa_N(r)})\cD_{x_k}\cX^{t,x,N}_{\kappa_N(r)}
-
(\nabla\sigma^j)(X^{t,x}_r)\frac{\partial}{\partial x_k}X^{t,x}_r
\Big\|^2
\right]
\right)^{q/2}
dr
\right)^{1/q}.
$$
Furthermore, Minkowski inequality ensures for all $s\in[t,T]$ that
\begin{equation}
\label{A 2 s ineq}
A_2(s)\leq\sum_{i=1}^4A_{2,i}(s),
\end{equation}
where
\begin{align*}
&
A_{2,1}(s)
:=
\left(
\int_t^s
\left(
\bE\left[
\sum_{j=1}^d
\Big\|
(\nabla\sigma^j)(\cX^{t,x,N}_{\kappa_N(r)})\cD_{x_k}\cX^{t,x,N}_{\kappa_N(r)}
-
(\nabla\sigma^j)(X^{t,x,N}_{\kappa_N(r)})\cD_{x_k}\cX^{t,x,N}_{\kappa_N(r)}
\Big\|^2
\right]
\right)^{q/2}
dr
\right)^{1/q},
\\
&
A_{2,2}(s)
:=
\left(
\int_t^s
\left(
\bE\left[
\sum_{j=1}^d
\Big\|
(\nabla\sigma^j)(X^{t,x,N}_{\kappa_N(r)})\cD_{x_k}\cX^{t,x,N}_{\kappa_N(r)}
-
(\nabla\sigma^j)(X^{t,x,N}_r)\cD_{x_k}\cX^{t,x,N}_{\kappa_N(r)}
\Big\|^2
\right]
\right)^{q/2}
dr
\right)^{1/q},
\\
&
A_{2,3}(s)
:=
\left(
\int_t^s
\left(
\bE\left[
\sum_{j=1}^d
\Big\|
(\nabla\sigma^j)(X^{t,x,N}_r)\cD_{x_k}\cX^{t,x,N}_{\kappa_N(r)}
-
(\nabla\sigma^j)(X^{t,x,N}_r)\cD_{x_k}\cX^{t,x,N}_r
\Big\|^2
\right]
\right)^{q/2}
dr
\right)^{1/q},
\end{align*}
and
$$
A_{2,4}(s)
:=
\left(
\int_t^s
\left(
\bE\left[
\sum_{j=1}^d
\Big\|
(\nabla\sigma^j)(X^{t,x,N}_r)\cD_{x_k}\cX^{t,x,N}_r
-
(\nabla\sigma^j)(X^{t,x,N}_r)\frac{\partial}{\partial x_k}X^{t,x}_r
\Big\|^2
\right]
\right)^{q/2}
dr
\right)^{1/q}.
$$
By \eqref{gradient mu sigma}, \eqref{L q error Euler SDE}, 
\eqref{moment est partial euler q}, and H\"older's inequality
it holds for all $s\in[t,T]$ that
\begin{align}
A_{2,1}^q(s)
&
\leq
\int_t^s\left(
\bE\left[
\Big(
\sum_{j=1}^d
\big\|
(\nabla\sigma^j)(\cX^{t,x,N}_{\kappa_N(r)})
-
(\nabla\sigma^j)(X^{t,x}_{\kappa_N(r)})
\big\|_F^2
\Big)
\big\|\cD_{x_k}\cX^{t,x,N}_{\kappa_N(r)}\big\|^2
\right]
\right)^{q/2}dr
\nonumber\\
&
\leq
L_0^{q/2}\int_t^s\bE\left[
\big\|\cX^{t,x,N}_{\kappa_N(r)}-X^{t,x}_{\kappa_N(r)}\big\|^q
\big\|\cD_{x_k}\cX^{t,x,N}_{\kappa_N(r)}\big\|^q
\right]dr
\nonumber\\
&
\leq
L_0^{q/2}T
\left(
\bE\left[
\sup_{r\in[t,s]}\big\|\cX^{t,x,N}_r-X^{t,x}_r\big\|^{2q}
\right]
\right)^{1/2}
\left(
\bE\left[
\big\|\cD_{x_k}\cX^{t,x,N}_r\big\|^{2q}
\right]
\right)^{1/2}
\nonumber\\
&
\leq
L_0^{q/2}C_{d,2q,0}^{1/2}T\big[K_{2q,1}N^{-1}(d^p+\|x\|^2)\big]^{q/2}.
\label{A q 2 1 est}
\end{align}
Similarly, by \eqref{gradient mu sigma}, \eqref{est X t x s s'},
\eqref{moment est partial euler q}, and H\"older's inequality
we have for all $s\in[t,T]$ that
\begin{align}
A_{2,2}^q(s)
&
\leq
L_0^{q/2}
\left(
\sup_{r\in[t,s]}
\bE\left[\big\|X^{t,x}_{\kappa_N(r)}-X^{t,x}_r\big\|^{2q}\right]
\right)^{1/2}
\left(
\bE\left[\sup_{t\in[t,s]}\big\|\cD_{x_k}\cX^{t,x,N}_r\big\|^{2q}\right]
\right)^{1/2}
\nonumber\\
&
\leq
L_0^{q/2}C_{d,2q,0}^{1/2}T2^{2q-1}L^{q/2}[T^q+(8q)^{2q}]^{1/2}
\Big(\frac{T-t}{N}\Big)^{q/2}
\big[C_{2q,1}e^{\rho_{2q,1}T}(d^p+\|x\|^2)\big]^{q/2}.
\label{A q 2 2 est}
\end{align}
To obtain an appropriate estimate for $A^q_{2,3}$, by \eqref{bbd partials},
\eqref{def proc cD}, \eqref{moment est partial euler q}, 
Burkholder-Davis-Gundy inequality, and Cauchy-Schwarz inequality 
we have for all $s\in[t,T]$, $s'\in[s,T]$, and $q'\in[2,\infty)$ that
\begin{align}
&
\bE\left[\big\|\cD_{x_k}\cX^{t,x,N}_{s'}-\cD_{x_k}\cX^{t,x,N}_s\big\|^{q'}\right]
\nonumber\\
&
\leq
2^{q'-1}\bE\left[
\bigg\|
\int_s^{s'}(\nabla \mu)(\cX^{t,x,N}_{\kappa_N(r)})\cD_{x_k}\cX^{t,x,N}_{\kappa_N(r)}\,dr
\bigg\|^{q'}
\right]
+
2^{q'-1}\bE\left[
\bigg\|
\sum_{j=1}^d
\int_s^{s'}
(\nabla \sigma^j)(\cX^{t,x,N}_{\kappa_N(r)})\cD_{x_k}\cX^{t,x,N}_{\kappa_N(r)}
\,dW^j_r
\bigg\|^{q'}
\right]
\nonumber\\
&
\leq
[2(s'-s)]^{q'-1}\int_s^{s'}\bE\left[
\big\|(\nabla \mu)(\cX^{t,x,N}_{\kappa_N(r)})\big\|_F^{q'}
\cdot\|\cD_{x_k}\cX^{t,x,N}_{\kappa_N(r)}\|_F^{q'}
\right]dr
\nonumber\\
& \quad
+
2^{q'-1}(4q')^{q'}(s'-s)^{\frac{q'-2}{2}}\int_s^{s'}\bE\bigg[
\bigg(\sum_{j=1}^d
\big\|(\nabla \sigma^j)(\cX^{t,x,N}_{\kappa_N(r)})\big\|_F^{2}\bigg)^{q'/2}
\cdot\|\cD_{x_k}\cX^{t,x,N}_{\kappa_N(r)}\|_F^{q'}
\bigg]\,dr
\nonumber\\
&
\leq
2^{q'-1}T^{q'/2}(Ld)^{q'/2}(s'-s)^{q'/2}
\left(
(s'-s)^{q'/2}
\bE\left[\sup_{r\in[t,T]}\big\|\cD_{x_k}\cX^{t,x,N}_{\kappa_N(r)}\big\|^{q'}\right]
+
(4q')^{q'}
\bE\left[\sup_{r\in[t,T]}\big\|\cD_{x_k}\cX^{t,x,N}_{\kappa_N(r)}\big\|^{q'}\right]
\right)
\nonumber\\
&
\leq
2^{q'-1}(Ld)^{q'/2}C_{d,q',0}[T^{q'/2}+(4q')^{q'}](s'-s)^{q'/2}.
\label{D s s' est}
\end{align}
This together with \eqref{bbd partials} and Cauchy-Schwarz inequality
imply for all $s\in[t,T]$ that
\begin{align}
A_{2,3}^q
&
\leq
\int_t^s\left(
\bE\left[
\bigg(
\sum_{j=1}^d\big\|(\nabla \sigma^j)(X^{t,x}_r)\big\|_F^2
\bigg)
\big\|\cD_{x_k}\cX^{t,x,N}_{\kappa_N(r)}-\cD_{x_k}\cX^{t,x,N}_r\big\|^2
\right]
\right)^{q/2}
dr
\nonumber\\
&
\leq
(Ld)^qT[2C_{d,2,0}(T+64)]^{q/2}\Big(\frac{T-t}{N}\Big)^{q/2}.
\label{A q 2 3 est}
\end{align}
Similarly, by \eqref{bbd partials}, Cauchy-Schwarz inequality, 
and Jensen's inequality we obtain for all $s\in[t,T]$ that
\begin{equation}
\label{A q 2 4 est}
A_{2,4}^q
\leq
(Ld)^{q/2}\int_t^s\bE\left[
\sup_{u\in[t,r]}\Big\|\cD_{x_k}\cX^{t,x,N}_r-\frac{\partial}{\partial x_k}X^{t,x}_r\Big\|^q
\right]dr. 
\end{equation}
Combining \eqref{A 2 s ineq}, \eqref{A q 2 1 est}, \eqref{A q 2 2 est}, 
\eqref{A q 2 3 est}, and \eqref{A q 2 4 est} yields for all $s\in[t,T]$ that
\begin{equation}
\label{A q 2 est}
A_2^q(s)
\leq
c_{d,2}N^{-q/2}(d^p+\|x\|^2)^{q/2}
+
(Ld)^{q/2}\int_t^s
\bE\left[\sup_{u\in[t,r]}
\Big\|\cD_{x_k}\cX^{t,x,N}_r-\frac{\partial}{\partial x_k}X^{t,x}_r\Big\|^q\right]
dr,
\end{equation}
where
\begin{align*}
c_{d,2}:=
&
4^{q-1}T\big[
C_{d,2q,0}^{1/2}(L_0K_{2q,1})^{q/2}
+(L_0LTC_{2q,1}e^{\rho_{2q,1}}T)^{q/2}
C_{d,2q,0}^{1/2}2^{2q-1}(T^q+(8q)^{2q})^{1/2}
\\
&
+
(Ld)^q(2C_{d,2,0}(T+64)T)^{q/2}
\big].
\end{align*}
Analogously, we also have for all $s\in[t,T]$ that
\begin{equation}
\label{A q 1 est}
A_1^q(s)
\leq
c_{d,1}N^{-q/2}(d^p+\|x\|^2)^{q/2}
+
(Ld)^{q/2}\int_t^s
\bE\left[\sup_{u\in[t,r]}
\Big\|\cD_{x_k}\cX^{t,x,N}_r-\frac{\partial}{\partial x_k}X^{t,x}_r\Big\|^q\right]
dr,
\end{equation}
where
\begin{align*}
c_{d,1}:=
&
4^{q-1}T\big[
C_{d,2q,0}^{1/2}(L_0K_{2q,1})^{q/2}
+(L_0LTC_{2q,1}e^{\rho_{2q,1}}T)^{q/2}
C_{d,2q,0}^{1/2}2^{2q-1}(T^q+(8q)^{2q})^{1/2}
\\
&
+
(Ld)^q2^{q-1}C_{d,q,0}T^{q/2}[T^{q/2}+(4q)^q]
\big].
\end{align*}
Then combining \eqref{ineq derivative 0}, \eqref{A q 2 est}, and \eqref{A q 1 est}
yields for all $s\in[t,T]$ that
\begin{align*}
&
\bE\left[
\sup_{u\in[t,s]}\Big\|
\cD_{x_k}\cX^{t,x,N}_u-\frac{\partial}{\partial x_k}X^{t,x}_u
\Big\|^q
\right]
\\
&
\leq
2^{q-1}\big(T^{\frac{q-1}{q}}c_{d,1}
+4qT^{\frac{q-2}{2q}}c_{d,2}\big)
N^{-q/2}(d^p+\|x\|^2)^{q/2}
\\
& \quad
+
2^{q-1}\big(T^{\frac{q-1}{q}}+4qT^{\frac{q-2}{2q}}\big)(Ld)^{q/2}
\int_t^s\bE\left[
\sup_{u\in[t,r]}\Big\|
\cD_{x_k}\cX^{t,x,N}_u-\frac{\partial}{\partial x_k}X^{t,x}_u
\Big\|^q
\right]dr.
\end{align*}
Hence, by \eqref{L q error Euler SDE} and \eqref{moment est partial euler q}
the application of Gr\"onwall's lemma shows that
\begin{align*}
&
\bE\left[
\sup_{u\in[t,T]}\Big\|
\cD_{x_k}\cX^{t,x,N}_u-\frac{\partial}{\partial x_k}X^{t,x}_u
\Big\|^q
\right]
\\
&
\leq
2^{q-1}T^{q-2}[T+(4q)^q]\big(c_{d,1}+c_{d,2}\big)
N^{-q/2}(d^p+\|x\|^2)^{q/2}
\exp\big\{
2^{q-1}\big(T^{\frac{q-1}{q}}+4qT^{\frac{q-2}{2q}}\big)(Ld)^{q/2}(T-t)
\big\}.
\end{align*}
This proves \eqref{error D partial q}.
The proof of this lemma is hence completed.
\end{proof}

\begin{lemma}
Let Assumptions \ref{assumption Lip and growth}, \ref{assumption ellip}, 
and \ref{assumption gradient} hold. 
For each $(t,x)\in[0,T)\times\bR^d$ and $N\in\bN$, 
let $V^{t,x}=(V^{t,x,k})_{k\in\{1,2,\dots,d\}}:[t,T)\times\bR^d\to \bR^d$
be the stochastic process defined in \eqref{def proc V},
and let $\cV^{t,x,N}=(\cV^{t,x,N,k})_{k\in\{1,2,\dots,d\}}:[t,T)\times\bR^d\to \bR^d$
be the stochastic process defined in \eqref{def euler proc V}.
Then it holds for all $(t,x)\in[0,T)\times\bR^d$, $s\in(t,T]$, $N\in\bN$, 
and $q\in[2,\infty)$ that
\begin{equation}
\label{cV L q est 0}
\bE\left[\big\|\cV^{t,x,N}_s\big\|^q\right]
\leq
(4q)^q\big[d\varepsilon_d^{-1}C_{d,2,0}(s-t)^{-1}\big]^{q/2},
\end{equation}
and
\begin{align}
\bE\left[
\big\|\cV^{t,x,N}_s-V^{t,x}_s\big\|^2
\right]
\leq
C_{d,3}e^{Ld}N^{-1}(s-t)^{-1}(d^p+\|x\|^2),
\label{V cV error 0}
\end{align}
with $C_{d,2,0}$ being the positive constant defined by \eqref{def C q 0} with $q=2$,
and
$$
C_{d,3}:=
d\varepsilon_d^{-1}
\big[
Ld^2C_{d,4,0}^{1/2}K_{4,1}
+8L^2d\varepsilon_d^{-1}C_{d,4,0}^{1/2}(T+2^8)C_{4,1}e^{\rho_{4,1}T}
+2LdC_{2,2}(T+64)+K_{d,2,2}
\big].
$$
Here, $C_{d,4,0}$, $C_{4,1}$, $K_{4,1}$, and $\rho_{4,1}$ are the constants
defined by \eqref{def C q 0}, \eqref{def C q 1}, \eqref{def K q 1}, and
\eqref{def rho q 1}, respectively, with $q=4$,
and $C_{2,2}$, $K_{d,2,2}$ are the constants defined by 
\eqref{def C q 2} and \eqref{def K d q 2}, respectively, with $q=2$.
\end{lemma}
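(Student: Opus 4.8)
The plan is to derive both bounds by exactly the route used for the corresponding estimates on $V^{t,x}$ in the preceding lemma: write the $k$-th component of each of $\cV^{t,x,N}_s$ and $V^{t,x}_s$ as $\tfrac{1}{s-t}$ times a scalar stochastic integral, apply the Burkholder--Davis--Gundy inequality (for general $q$) or It\^o's isometry (for $q=2$), and then feed in the a priori bounds \eqref{moment est partial euler q} on $\cD^{t,x,N,k}$, \eqref{L q error Euler SDE} on $\cX^{t,x,N}-X^{t,x}$, \eqref{est X t x s s'} on the path increments of $X^{t,x}$, \eqref{D s s' est} on the increments of $\cD^{t,x,N,k}$, and \eqref{error D partial q} on $\cD^{t,x,N,k}-\frac{\partial}{\partial x_k}X^{t,x}$.

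For \eqref{cV L q est 0} I first fix $k$ and note that the $k$-th component of $\cV^{t,x,N}_s$ is $\tfrac{1}{s-t}\int_t^s\big([\sigma(\cX^{t,x,N}_{\kappa_N(r)})]^{-1}\cD^{t,x,N,k}_{\kappa_N(r)}\big)^T\,dW_r$. By Burkholder--Davis--Gundy, by \eqref{sigma inverse est} in the form $\|[\sigma(y)]^{-1}z\|^2\le\varepsilon_d^{-1}\|z\|^2$, and by \eqref{moment est partial euler q} with $q=2$, the $q$-th moment of this scalar is at most $(4q)^q\big[\varepsilon_d^{-1}C_{d,2,0}(s-t)^{-1}\big]^{q/2}$. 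Summing over $k$ via the elementary inequality $\|a\|^q\le d^{q/2-1}\sum_{k=1}^d|a_k|^q$ (valid for $q\ge2$, by H\"older) converts the resulting factor $d$ into the factor $d$ inside the bracket, which is \eqref{cV L q est 0}.

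For \eqref{V cV error 0} the crucial step is the decomposition, for each $k\in\{1,\dots,d\}$, of the integrand of $\cV^{t,x,N,k}_s-V^{t,x,k}_s$:
\begin{align*}
&[\sigma(\cX^{t,x,N}_{\kappa_N(r)})]^{-1}\cD^{t,x,N,k}_{\kappa_N(r)}-[\sigma(X^{t,x}_r)]^{-1}\frac{\partial}{\partial x_k}X^{t,x}_r\\
&=\Big([\sigma(\cX^{t,x,N}_{\kappa_N(r)})]^{-1}-[\sigma(X^{t,x}_{\kappa_N(r)})]^{-1}\Big)\cD^{t,x,N,k}_{\kappa_N(r)}
+\Big([\sigma(X^{t,x}_{\kappa_N(r)})]^{-1}-[\sigma(X^{t,x}_r)]^{-1}\Big)\cD^{t,x,N,k}_{\kappa_N(r)}\\
&\quad+[\sigma(X^{t,x}_r)]^{-1}\Big(\cD^{t,x,N,k}_{\kappa_N(r)}-\cD^{t,x,N,k}_r\Big)
+[\sigma(X^{t,x}_r)]^{-1}\Big(\cD^{t,x,N,k}_r-\frac{\partial}{\partial x_k}X^{t,x}_r\Big).
\end{align*}
After squaring, using $(\sum_{i=1}^4a_i)^2\le4\sum_{i=1}^4a_i^2$, It\^o's isometry, and pulling out the prefactor $(s-t)^{-2}$, I estimate the four resulting time integrals separately: the first by the Lipschitz continuity of $x\mapsto[\sigma(x)]^{-1}$ (which follows from \eqref{assumption Lip mu sigma}, \eqref{bbd inverse sigma} and the identity $A^{-1}-B^{-1}=A^{-1}(B-A)B^{-1}$) together with \eqref{L q error Euler SDE}, \eqref{moment est partial euler q} and Cauchy--Schwarz in $\bP$; the second likewise, but using the path regularity \eqref{est X t x s s'} and the bound $|\kappa_N(r)-r|\le(T-t)/N$; the third by \eqref{sigma inverse est} together with \eqref{D s s' est} (with $q'=2$) and again $|\kappa_N(r)-r|\le(T-t)/N$; and the fourth by \eqref{sigma inverse est} together with \eqref{error D partial q} with $q=2$. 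Each of the four bounds contributes exactly one power of $(s-t)^{-1}$ (obtained by cancelling one of the two powers of $(s-t)$ against $\int_t^s\,dr$) and one power of $N^{-1}$; since $d^p\ge1$ one may freely multiply by $(d^p+\|x\|^2)\ge1$ where it does not already appear. Summing over $k$ and collecting constants produces \eqref{V cV error 0} with $C_{d,3}$ as stated, the four summands of $C_{d,3}$ corresponding one-to-one to the four terms of the decomposition.

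The computation is essentially bookkeeping; the only genuine point of care is keeping the singular prefactor $(s-t)^{-2}$ under control, which works precisely because every integrand above has a second moment bounded uniformly in $r\in[t,s]$, so that the time integral contributes exactly one factor $(s-t)$. A secondary subtlety is that the discrepancy between $[\sigma(\cX^{t,x,N}_{\kappa_N(r)})]^{-1}$ and $[\sigma(X^{t,x}_r)]^{-1}$ (and similarly $\cD^{t,x,N,k}_{\kappa_N(r)}$ versus $\frac{\partial}{\partial x_k}X^{t,x}_r$) is not controlled by any single earlier estimate and must first be split into an Euler-error part at the common time $\kappa_N(r)$ and a time-regularity part, which is why the decomposition has four terms rather than two.
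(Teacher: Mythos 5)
Your proposal is correct and follows essentially the same route as the paper: the paper bounds each component $\cV^{t,x,N,k}_s$ by BDG (for general $q$) or It\^o's isometry (for $q=2$), uses $\|\sigma^{-1}(y)z\|^2\le\varepsilon_d^{-1}\|z\|^2$ together with \eqref{moment est partial euler q}, and for \eqref{V cV error 0} uses exactly the same four-term splitting you describe, estimating the four pieces via \eqref{bbd partial sigma inv} with the mean-value theorem (the paper's version of your resolvent identity), \eqref{L q error Euler SDE}, \eqref{est X t x s s'}, \eqref{D s s' est}, and \eqref{error D partial q}, then sums over $k$. Your explicit use of $\|a\|^q\le d^{q/2-1}\sum_k|a_k|^q$ to assemble the vector bound \eqref{cV L q est 0} is the step the paper leaves implicit, and is correct.
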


\begin{proof}
For notational convenience, we fix $(t,x)\in[0,T)\times\bR^d$, $s\in(t,T]$,
$N\in\bN$, and $q\in[2,\infty)$ throughout the proof of this lemma.
Then by \eqref{sigma inverse est}, \eqref{def euler proc V}, 
\eqref{moment est partial euler q}, and Burkholder-Davis-Gundy inequality
we notice for all $k\in\{1,2,\dots,d\}$ that
\begin{align*}
\bE\left[\big\|\cV^{t,x,N,k}_s\big\|^q\right]
&
\leq
\frac{(4q)^q}{(s-t)^q}
\left(
\bE\left[
\int_t^s
\big\|
\sigma^{-1}(\cX^{t,x,N}_{\kappa_N(r)})\cD_{x_k}\cX^{t,x,N}_{\kappa_N(r)}
\big\|_F^2
\,dr
\right]
\right)^{q/2}
\\
&
\leq
\frac{(4q)^q\varepsilon_d^{-q/2}}{(s-t)^q}
\left(
\int_t^s\bE\left[\big\|\cD_{x_k}\cX^{t,x,N}_{\kappa_N(r)}\big\|^2\right]dr
\right)^{q/2}
\\
&
\leq
(4q)^q\varepsilon_d^{-q/2}C_{d,2,0}^{q/2}(s-t)^{-q/2}.
\end{align*}
This proves \eqref{cV L q est 0}.
Next, by \eqref{def proc V}, \eqref{def euler proc V}, and It\^o's isometry
we have for all $k\in\{1,2,\dots,d\}$ that
\begin{align}
\bE\left[
\big\|\cV^{t,x,N}_s-V^{t,x}_s\big\|^2
\right]
&
=
\frac{1}{(s-t)^2}
\int_t^s\bE\left[
\Big\|
\sigma^{-1}(\cX^{t,x,N,k}_{\kappa_N(r)})\cD_{x_k}\cX^{t,x,N}_{\kappa_N(r)}
-
\sigma^{-1}(X^{t,x}_{r})\frac{\partial}{\partial x_k}X^{t,x}_r
\Big\|^2
\right]dr
\nonumber\\
&
\leq
4\sum_{i=1}^4A_{k,i},
\label{V cV ineq}
\end{align}
where
\begin{align*}
&
A_{k,1}:=
\frac{1}{(s-t)^2}
\int_t^s\bE\left[
\Big\|
\big[
\sigma^{-1}(\cX^{t,x,N,k}_{\kappa_N(r)})-\sigma^{-1}(X^{t,x}_{\kappa_N(r)})
\big]
\cD_{x_k}\cX^{t,x,N}_{\kappa_N(r)}
\Big\|^2
\right]dr,
\\
&
A_{k,2}:=
\frac{1}{(s-t)^2}
\int_t^s\bE\left[
\Big\|
\big[
\sigma^{-1}(X^{t,x}_{\kappa_N(r)})-\sigma^{-1}(X^{t,x}_{r})
\big]
\cD_{x_k}\cX^{t,x,N}_{\kappa_N(r)}
\Big\|^2
\right]dr,
\\
&
A_{k,3}:=
\frac{1}{(s-t)^2}
\int_t^s\bE\left[
\Big\|
\sigma^{-1}(X^{t,x}_{r})
\big[\cD_{x_k}\cX^{t,x,N}_{\kappa_N(r)}-\cD_{x_k}\cX^{t,x,N}_{r}\big]
\Big\|^2
\right]dr,
\\
&
A_{k,4}:=
\frac{1}{(s-t)^2}
\int_t^s\bE\left[
\Big\|
\sigma^{-1}(X^{t,x}_{r})
\Big[
\cD_{x_k}\cX^{t,x,N}_{r}-\frac{\partial}{\partial x_k}X^{t,x}_r
\Big]
\Big\|^2
\right]dr.
\end{align*}
Then by \eqref{bbd partial sigma inv}, \eqref{L q error Euler SDE},
\eqref{moment est partial euler q}, the mean-value theorem, 
and Cauchy Schwarz inequality, we obtain for all $k\in\{1,2,\dots,d\}$ that
\begin{align}
A_{k,1}
&
\leq
\frac{1}{(s-t)^2}\int_t^s\bE\left[
\big\|
\sigma^{-1}(\cX^{t,x,N,k}_{\kappa_N(r)})
-
\sigma^{-1}(X^{t,x}_{\kappa_N(r)})
\big\|_F^2
\cdot
\big\|\cD_{x_k}\cX^{t,x,N}_{\kappa_N(r)}\big\|^2
\right]dr
\nonumber\\
&
\leq
\frac{Ld^3\varepsilon_d^{-2}}{(s-t)^2}
\int_t^s
\left(\bE\left[\big\|
\cX^{t,x,N,k}_{\kappa_N(r)}-X^{t,x}_{\kappa_N(r)}
\big\|^4\right]\right)^{1/2}
\left(\bE\left[\big\|
\cD_{x_k}\cX^{t,x,N}_{\kappa_N(r)}
\big\|^4\right]\right)^{1/2}
dr
\nonumber\\
&
\leq
Ld^3\varepsilon_d^{-2}C_{d,4,0}^{1/2}K_{4,1}N^{-1}(s-t)^{-1}(d^p+\|x\|^2).
\label{cV V A k 1}
\end{align}
Similarly, by \eqref{bbd partial sigma inv}, \eqref{est X t x s s'},
\eqref{moment est partial euler q}, the mean-value theorem,
and Cauchy-Schwarz inequality it holds for all $k\in\{1,2,\dots,d\}$ that
\begin{align}
A_{k,2}
&
\leq
\frac{Ld^3\varepsilon_d^{-2}}{(s-t)^2}
\int_t^s
\left(\bE\left[\big\|
X^{t,x}_{\kappa_N(r)}-X^{t,x}_{r}
\big\|^4\right]\right)^{1/2}
\left(\bE\left[\big\|
\cD_{x_k}\cX^{t,x,N}_{\kappa_N(r)}
\big\|^4\right]\right)^{1/2}
dr
\nonumber\\
&
\leq
8L^2d^3\varepsilon_d^{-2}C_{d,4,0}^{1/2}(T+2^8)TC_{4,1}e^{\rho_{4,1}T}
N^{-1}(s-t)^{-1}(d^p+\|x\|^2).
\label{cV V A k 2}
\end{align}
Furthermore, by \eqref{bbd inverse sigma}, \eqref{D s s' est}, and
Cauchy Schwarz inequality we have for all $k\in\{1,2,\dots,d\}$ that
\begin{align}
A_{k,3}
&
\leq
\frac{d\varepsilon_d^{-1}}{(s-t)^2}
\int_t^s\bE\left[\big\|
\cD_{x_k}\cX^{t,x,N}_{\kappa_N(r)}-\cD_{x_k}\cX^{t,x,N}_r
\big\|^2\right]dr
\nonumber\\
&
\leq
2Ld^2\varepsilon_d^{-1}C_{2,2}(T+64)N^{-1}(s-t)^{-1}(d^p+\|x\|^2).
\label{cV V A k 3}
\end{align}
Analogously, by \eqref{bbd inverse sigma}, \eqref{error D partial q},
and Cauchy Schwarz inequality it holds for all $k\in\{1,2,\dots,d\}$ that
\begin{align}
A_{k,4}
&
\leq
\frac{d\varepsilon_d^{-1}}{(s-t)^2}
\int_t^s
\bE\left[\big\|
\cD_{x_k}\cX^{t,x,N}_r-X^{t,x}_r
\big\|^2\right]dr
\nonumber\\
&
\leq
e^{Ld}d\varepsilon_d^{-1}K_{d,2,2}N^{-1}(s-t)^{-1}(d^p+\|x\|^2).
\label{cV V A k 4}
\end{align}
Then combining \eqref{V cV ineq}, \eqref{cV V A k 1}, \eqref{cV V A k 2},
\eqref{cV V A k 3}, and \eqref{cV V A k 4} yields \eqref{V cV error 0}.
Therefore, the proof of this lemma is completed. 
\end{proof}

\begin{lemma}
Let Assumptions \ref{assumption Lip and growth}, \ref{assumption ellip}, 
and \ref{assumption gradient} hold.
For each $(t,x)\in[0,T)\times\bR^d$ and $N\in\bN$, 
let $\cV^{t,x,N}=(\cV^{t,x,N,k})_{k\in\{1,2,\dots,d\}}:[t,T)\times\bR^d\to \bR^d$
be the stochastic process defined in \eqref{def euler proc V}.
Then there is a constant $\gamma_d$ 
only depending on $d$, $\varepsilon_d$, $L$, $L_0$, $p$, and $T$
satisfying for all $t\in[0,T)$, $t'\in[t,T)$, $s\in(t',T]$, $x,x'\in\bR^d$ 
and $N\in\bN$ that
\begin{align}
\bE\Big[
\big\|\cV^{t,x,N}_s-\cV^{t',x',N}_s\big\|^2
\Big]
\leq
&
\frac{\gamma_d(t'-t)}{(s-t)(s-t')}
+
\frac{\gamma_d\big[(t'-t)(d^p+\|x\|^2)+\|x-x'\|^2\big]}{s-t'}.
\label{L2 cont est proc cV}
\end{align}
\end{lemma}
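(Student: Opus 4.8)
The plan is to repeat, in the Euler-discretised setting, the four-term decomposition used to prove the continuous-time analogue \eqref{L2 cont est proc V}, with $X^{t,x}$ and $\tfrac{\partial}{\partial x_k}X^{t,x}$ replaced by $\cX^{t,x,N}$ and $\cD^{t,x,N,k}$. Fix $t\le t'<s\le T$, $x,x'\in\bR^d$, $N\in\bN$ and $k\in\{1,\dots,d\}$, and write $\kappa_N$, $\kappa_N'$ for the two mesh projections attached to the initial times $t$ and $t'$. By \eqref{def euler proc V}, $\cV^{t,x,N,k}_s-\cV^{t',x',N,k}_s$ is the sum of: (1)~$\tfrac1{s-t}\int_t^{t'}[\sigma^{-1}(\cX^{t,x,N}_{\kappa_N(r)})\cD^{t,x,N,k}_{\kappa_N(r)}]^{T}\,dW_r$; (2)~$(\tfrac1{s-t}-\tfrac1{s-t'})\int_{t'}^{s}[\sigma^{-1}(\cX^{t',x',N}_{\kappa_N'(r)})\cD^{t',x',N,k}_{\kappa_N'(r)}]^{T}\,dW_r$; (3)~$\tfrac1{s-t}\int_{t'}^{s}[(\sigma^{-1}(\cX^{t,x,N}_{\kappa_N(r)})-\sigma^{-1}(\cX^{t',x',N}_{\kappa_N'(r)}))\cD^{t,x,N,k}_{\kappa_N(r)}]^{T}\,dW_r$; and (4)~$\tfrac1{s-t}\int_{t'}^{s}[\sigma^{-1}(\cX^{t',x',N}_{\kappa_N'(r)})(\cD^{t,x,N,k}_{\kappa_N(r)}-\cD^{t',x',N,k}_{\kappa_N'(r)})]^{T}\,dW_r$. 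Applying It\^o's isometry to each, controlling $\|\sigma^{-1}\|$ through \eqref{sigma inverse est}--\eqref{bbd inverse sigma}, using Lipschitz continuity of $\sigma^{-1}$ (as in \eqref{est A k 3}), the $N$-uniform moment bounds \eqref{L q est Euler SDE} and \eqref{moment est partial euler q}, H\"older's inequality and summing over $k$, the terms (1) and (2) produce the contribution $\tfrac{\gamma_d(t'-t)}{(s-t)(s-t')}$ exactly as in \eqref{est A k 1}--\eqref{est A k 2}, while (3) and (4) produce $\tfrac{\gamma_d[(t'-t)(d^p+\|x\|^2)+\|x-x'\|^2]}{s-t'}$ — provided one has, uniformly in $s\in[t',T]$ and in $N$, the discrete counterparts of \eqref{L q continuity SDE} and \eqref{L 2 continuity partial X}, namely $\bE[\|\cX^{t,x,N}_u-\cX^{t',x',N}_u\|^q]\le C_{d,q}\big(((t'-t)(d^p+\|x\|^2))^{q/2}+\|x-x'\|^q\big)$ for $q\in[2,\infty)$ and $\bE[\|\cD^{t,x,N,k}_u-\cD^{t',x',N,k}_u\|^2]\le C_d\big((t'-t)(d^p+\|x\|^2)+\|x-x'\|^2\big)$.

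To obtain these two stability estimates I would run the same Gr\"onwall scheme that produced \eqref{L q continuity SDE} and \eqref{L 2 continuity partial X}: decompose $\cX^{t,x,N}_u-\cX^{t',x',N}_u=(x-x')+(\cX^{t,x,N}_{t'}-x)+\int_{t'}^{u}[\mu(\cX^{t,x,N}_{\kappa_N(r)})-\mu(\cX^{t',x',N}_{\kappa_N'(r)})]\,dr+\int_{t'}^{u}[\sigma(\cX^{t,x,N}_{\kappa_N(r)})-\sigma(\cX^{t',x',N}_{\kappa_N'(r)})]\,dW_r$, bound $\cX^{t,x,N}_{t'}-x$ on $[t,t']$ by \eqref{L q est Euler SDE -x}, split the integrand difference through the intermediate point $\cX^{t,x,N}_{\kappa_N'(r)}$ (admissible since $\kappa_N'(r)\in[t',T]$), use the Lipschitz conditions \eqref{assumption Lip mu sigma}, and close by Gr\"onwall together with \eqref{L q est Euler SDE}; the bound for $\cD^{t,x,N,k}$ is entirely analogous, additionally using \eqref{gradient mu sigma}, \eqref{moment est partial euler q}, the just-established stability of $\cX^{t,x,N}$, and the Euler time-regularity estimates \eqref{est Y t x N s s'} and \eqref{D s s' est}.

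The one genuinely new point — and the step I expect to be the main obstacle — is that $\cX^{t,x,N}$ and $\cX^{t',x',N}$ live on the two \emph{different} meshes $\{t+n\tfrac{T-t}{N}\}$ and $\{t'+n\tfrac{T-t'}{N}\}$, so the cross terms $\cX^{t,x,N}_{\kappa_N(r)}-\cX^{t,x,N}_{\kappa_N'(r)}$ and $\cD^{t,x,N,k}_{\kappa_N(r)}-\cD^{t,x,N,k}_{\kappa_N'(r)}$ created by the splittings above have to be absorbed through the time-regularity bounds \eqref{est Y t x N s s'}, \eqref{D s s' est} applied over the mesh gap $|\kappa_N(r)-\kappa_N'(r)|$, together with the elementary estimate $\int_{t'}^{s}|\kappa_N(r)-\kappa_N'(r)|\,dr\le 2T(t'-t)$. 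The latter follows by comparing the two arithmetic progressions: their $n$-th nodes differ by $(t'-t)(1-\tfrac nN)$, so off an exceptional subset of $[t',s]$ of Lebesgue measure $\le \tfrac{N}{2}(t'-t)$ one has $|\kappa_N(r)-\kappa_N'(r)|\le t'-t$, while on that exceptional set $|\kappa_N(r)-\kappa_N'(r)|\le 2T/N$; the factors of $N$ cancel and the resulting bound is proportional to $t'-t$, as it must be since the left-hand side of \eqref{L2 cont est proc cV} vanishes when $(t',x')=(t,x)$. Collecting all constants — which, after this cancellation, depend only on $d,\varepsilon_d,L,L_0,p,T$ — into a single $\gamma_d$ then gives \eqref{L2 cont est proc cV}.
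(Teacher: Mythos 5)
Your proposal is correct, and it follows exactly the strategy the paper intends: the paper's own proof consists of the single sentence ``By analogous arguments as in the proof of \eqref{L2 cont est proc V}, we can obtain \eqref{L2 cont est proc cV}.'' Your four-term decomposition of $\cV^{t,x,N}_s-\cV^{t',x',N}_s$ is a valid (slightly re-ordered, but equivalent) version of the paper's $A_{k,1},\dots,A_{k,4}$, and your plan to feed it through It\^o isometry, \eqref{sigma inverse est}, \eqref{bbd partial sigma inv}, and the Euler moment bounds \eqref{L q est Euler SDE}, \eqref{moment est partial euler q} mirrors \eqref{est A k 1}--\eqref{est A k 4}.

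The one place you go beyond the paper is also the place where the paper's ``analogous argument'' is actually non-trivial: the two Euler schemes live on the meshes $\{t+n(T-t)/N\}$ and $\{t'+n(T-t')/N\}$, so the discrete analogues of \eqref{L q continuity SDE} and \eqref{L 2 continuity partial X} are not verbatim copies and the cross terms $\cX^{t,x,N}_{\kappa_N(r)}-\cX^{t,x,N}_{\kappa_N'(r)}$, $\cD^{t,x,N,k}_{\kappa_N(r)}-\cD^{t,x,N,k}_{\kappa_N'(r)}$ must be controlled. Your mesh-comparison estimate is correct and tighter than what you need: since $\kappa_N(r)\in[r-(T-t)/N,r]$ and $\kappa_N'(r)\in[r-(T-t')/N,r]$ one always has $|\kappa_N(r)-\kappa_N'(r)|\le (T-t)/N\le T/N$, and on the ``good'' set where both maps choose the same generation $n$ the gap is $(t'-t)(1-n/N)\le t'-t$; the exceptional set has Lebesgue measure $\sum_{n=1}^{N-1}(t'-t)(1-n/N)=\tfrac{(N-1)(t'-t)}{2}$, so $\int_{t'}^s|\kappa_N(r)-\kappa_N'(r)|\,dr\le (s-t')(t'-t)+\tfrac{N(t'-t)}{2}\cdot\tfrac{T}{N}\le 2T(t'-t)$ as you assert, with the $N$-dependence cancelling. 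Combined with \eqref{est Y t x N s s'} and \eqref{D s s' est} over the mesh gap, and the intermediate-point splitting through $\cX^{t,x,N}_{\kappa_N'(r)}$ and $\cD^{t,x,N,k}_{\kappa_N'(r)}$, your Gr\"onwall scheme closes and yields the $N$-uniform discrete stability estimates; from there the argument runs exactly as in the continuous case. The proof is complete and, if anything, more careful than the paper's one-line appeal to analogy.
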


\begin{proof}
By analogous arguments as in the proof of \eqref{L2 cont est proc V},
we can obtain \eqref{L2 cont est proc cV}.
\end{proof}

\subsection{Some properties of the coefficient functions in PDE \eqref{APIDE}}
\label{section property coe}

\begin{lemma}
Let $d\in\bN$ and $\sigma\in C([0,T]\times\bR^d,\bR^d\times\bR^d)$ satisfy 
Assumptions \ref{assumption Lip and growth}, \ref{assumption ellip},
and \ref{assumption gradient}. 
Then it holds for all $k\in\{1,2,\dots,d\}$ and $x\in\bR^d$ that 
$\frac{\partial}{\partial x_k}\sigma^{-1}(x)$ exists and satisfies
\begin{equation}
\label{bbd partial sigma inv}
\Big\|\frac{\partial}{\partial x_k}\sigma^{-1}(x)\Big\|_F^2
\leq Ld^2\varepsilon_d^{-2}.
\end{equation} 
\end{lemma}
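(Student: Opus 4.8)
The plan is to derive \eqref{bbd partial sigma inv} from the classical formula for the derivative of a matrix inverse, combined with the pointwise bounds already recorded in Remark \ref{Remark bbd deri} (namely \eqref{bbd partials}) and in \eqref{bbd inverse sigma}. Throughout, fix $d\in\bN$, $k\in\{1,2,\dots,d\}$, $x\in\bR^d$, and abbreviate $\sigma:=\sigma^d$. First I would establish that $x\mapsto\sigma^{-1}(x)$ is (continuously) differentiable. By the standing hypotheses in Section \ref{section setting}, $\sigma\in C^3(\bR^d,\bR^{d\times d})$, and by Assumption \ref{assumption ellip} the matrix $\sigma(x)$ is invertible for every $x\in\bR^d$ (equivalently $\det\sigma(x)\neq 0$, which in any case also follows from \eqref{sigma ellip}, since that bound forces $\sigma(x)[\sigma(x)]^T$, and hence $(\det\sigma(x))^2$, to be strictly positive). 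Cramer's rule therefore expresses each entry of $\sigma^{-1}(x)$ as a polynomial in the entries of $\sigma(x)$ divided by the nowhere-vanishing $C^3$ function $\det\sigma(x)$; hence every entry of $\sigma^{-1}$ is $C^1$ in $x$, and in particular $\frac{\partial}{\partial x_k}\sigma^{-1}(x)$ exists.

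Next I would differentiate the identity $\sigma(x)\,\sigma^{-1}(x)=\mathbf{I}_d$ with respect to $x_k$. By the product rule this yields
\[
\Big(\frac{\partial}{\partial x_k}\sigma(x)\Big)\sigma^{-1}(x)
+\sigma(x)\,\frac{\partial}{\partial x_k}\sigma^{-1}(x)=0,
\]
and left-multiplying by $\sigma^{-1}(x)$ gives the well-known formula
\[
\frac{\partial}{\partial x_k}\sigma^{-1}(x)
=-\,\sigma^{-1}(x)\,\Big(\frac{\partial}{\partial x_k}\sigma(x)\Big)\,\sigma^{-1}(x).
\]
Taking Frobenius norms, using submultiplicativity $\|AB\|_F\le\|A\|_F\|B\|_F$ twice, and then inserting $\big\|\frac{\partial}{\partial x_k}\sigma(x)\big\|_F^2\le L$ from \eqref{bbd partials} and $\|\sigma^{-1}(x)\|_F^2\le d\varepsilon_d^{-1}$ from \eqref{bbd inverse sigma}, I obtain
\[
\Big\|\frac{\partial}{\partial x_k}\sigma^{-1}(x)\Big\|_F
\le\|\sigma^{-1}(x)\|_F^2\,\Big\|\frac{\partial}{\partial x_k}\sigma(x)\Big\|_F
\le d\varepsilon_d^{-1}\cdot L^{1/2},
\]
and squaring this inequality gives exactly \eqref{bbd partial sigma inv}.

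The argument is essentially bookkeeping, and I do not expect a genuine obstacle: no Gr\"onwall estimate, stopping time, or probabilistic input is needed here (unlike the preceding lemmas). The only point deserving a word of care is the differentiability of $x\mapsto\sigma^{-1}(x)$, which is covered by the Cramer's-rule remark above --- alternatively one may invoke smoothness of the matrix-inversion map on the open set of invertible matrices together with the chain rule, using $\sigma\in C^3$. Everything else is the one-line product-rule computation followed by the two norm bounds cited above.
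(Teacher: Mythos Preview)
Your proof is correct and follows essentially the same approach as the paper: derive the formula $\frac{\partial}{\partial x_k}\sigma^{-1}(x)=-\sigma^{-1}(x)\big(\frac{\partial}{\partial x_k}\sigma(x)\big)\sigma^{-1}(x)$ and then apply \eqref{bbd partials} and \eqref{bbd inverse sigma} with submultiplicativity of the Frobenius norm. The only cosmetic difference is that the paper obtains the derivative formula by a direct limit computation on the difference quotient, whereas you differentiate the identity $\sigma\sigma^{-1}=\mathbf{I}_d$; these are equivalent standard derivations.
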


\begin{proof}
We first observe for all $x\in\bR^d$ and $k\in\{1,2,\dots,d\}$ that
\begin{align*}
\frac{\partial}{\partial x_k}\sigma^{-1}(x)
&
=\lim_{\delta\to 0}
\frac{\sigma^{-1}(x+\delta e_k)-\sigma^{-1}(x)}{\delta}
\\
&
=\lim_{\delta\to 0}
\frac
{\sigma^{-1}(x+\delta e_k)\sigma(x)\sigma^{-1}(x)
-\sigma^{-1}(x+\delta e_k)\sigma(x+\delta e_k)\sigma^{-1}(x)}
{\delta}
\\
&
=\lim_{\delta\to 0}
\frac
{\sigma^{-1}(x+\delta e_k)[\sigma(x)-\sigma(x+\delta e_k)]\sigma^{-1}(x)}
{\delta}
\\
&
=
-\sigma^{-1}(x)
\Big[\lim_{\delta\to 0}\frac{\sigma(x+\delta e_k)-\sigma(x)}{\delta}\Big]
\sigma^{-1}(x)
\\
&
=-\sigma^{-1}(x)\Big[\frac{\partial}{\partial x_k}\sigma(x)\Big]\sigma^{-1}(x).
\end{align*}
This together with \eqref{bbd partials} and \eqref{bbd inverse sigma} imply 
for all $x\in\bR^d$ and $k\in\{1,2,\dots,d\}$ that
\begin{align*}
\Big\|\frac{\partial}{\partial x_k}\sigma^{-1}(x)\Big\|_F
\leq 
\|\sigma^{-1}(x)\|_F^2
\Big\|\frac{\partial}{\partial x_k}\sigma(x)\Big\|_F
\leq 
L^{1/2}d\varepsilon_d^{-1}.
\end{align*}
Hence, we obtain \eqref{bbd partial sigma inv}.
\end{proof}

\begin{lemma}                                       \label{lemma sigma n}
Let Assumptions \ref{assumption Lip and growth}, \ref{assumption ellip},
and \ref{assumption gradient} hold.
Then there exists a sequence of matrix-valued functions 
$\sigma^{(n)}:\bR^d\to\bR^d\times\bR^d$, $n\in\bN$, 
such that the following holds.
\begin{enumerate}[(i)]
\item
$\sigma^{(n)}\in C^{3}_b(\bR^d,\bR^{d\times d})$
for all $n\in\bN$.
\item
For each $n\in\bN$, $\sigma^{(n)}(x)=\sigma(x)$ for all $\|x\|\leq n$.
\item
For all $k\in\{1,2,\dots,d\}$ 
and every compact set $\cK\subset\bR^d$
\begin{equation}
\label{conv sigma n}
\lim_{n\to\infty}\sup_{x\in\cK}
\left[
\big\|\sigma^{(n)}(x)-\sigma(x)\big\|_F
+
\Big\|\frac{\partial}{\partial x_k}\sigma^{(n)}(x)
-\frac{\partial}{\partial x_k}\sigma(x)\Big\|_F
\right]
=0.
\end{equation}
\item
There exists a positive constant $C_{(d),1}$ only depending
on $p$, $T$, $L$, $L_0$ , and $d$ satisfying for all 
$n\in\bN$, $k\in\{1,2,\dots,d\}$, and $x,y\in\bR^d$ that
\begin{align}
&                                   
\big\|\sigma^{(n)}(x)-\sigma^{(n)}(y)\big\|_F^2
+
\Big\|\frac{\partial}{\partial x_k}\sigma^{(n)}(x)
-\frac{\partial}{\partial y_k}\sigma^{(n)}(y)\Big\|_F^2
\leq C_{(d),1}\|x-y\|^2,
\label{Lip sigma n}
\\
&
\big\|\sigma^{(n)}(x)\big\|_F^2\leq C_{(d),1}(1+\|x\|^2).
\label{LG sigma n}
\end{align}
\item
For all $n\in\bN$ and $x\in\bR^d$,
$\sigma^{(n)}(x)$ is invertible and satisfies that
\begin{equation}                                           
\label{ellip sigma n}
y^T\sigma^{(n)}(x)\big[\sigma^{(n)}(x)\big]^Ty\geq \varepsilon_d\|y\|^2
\quad \text{for all } y\in\bR^d,
\end{equation}
where $\varepsilon_d$ is the positive constant defined in \eqref{sigma ellip}.
\end{enumerate}

\end{lemma}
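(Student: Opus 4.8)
The plan is to realize $\sigma^{(n)}$ as a composition $\sigma^{(n)}:=\sigma\circ\phi_n$, where $\phi_n:\bR^d\to\bR^d$ is a fixed smooth radial truncation that equals the identity on $\{\|x\|\le n\}$ and has image contained in a fixed ball $\{\|x\|\le c_dn\}$. Concretely, I would pick once and for all a profile $h\in C^\infty([0,\infty),(0,1])$ with $h\equiv1$ on $[0,1]$, $h$ non-increasing, $h(s)=2s^{-1/2}$ for all large $s$, and interpolated smoothly in between, set $\psi(x):=h(\|x\|^2)x$, and define $\phi_n(x):=n\,\psi(x/n)$. The representation $\psi(x)=h(\|x\|^2)x$ makes $\psi\in C^\infty(\bR^d,\bR^d)$ (smooth through the origin), with $\psi=\mathrm{id}$ on $\{\|x\|\le1\}$, $\|\psi(x)\|\le2$ everywhere, $\psi(0)=0$, and all derivatives of $\psi$ up to order $3$ bounded; by the scaling one gets $\phi_n=\mathrm{id}$ on $\{\|x\|\le n\}$, $\|\phi_n(x)\|\le2n$, $\phi_n(0)=0$, and $D^j\phi_n(x)=n^{1-j}(D^j\psi)(x/n)$ for $j\ge1$, so $\nabla\phi_n$ and its derivatives up to second order are bounded uniformly in $n$ by constants depending only on $d$; write $c_d$ for a constant dominating $\mathrm{Lip}(\psi)$ and these bounds.

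With this choice, items (i)--(iii) and (v) are essentially immediate. For (i): since $\sigma\in C^3(\bR^d,\bR^{d\times d})$ (Assumption \ref{assumption gradient}) its derivatives up to order $3$ are bounded on the compact ball $\{\|x\|\le2n\}$, so the higher-order chain rule expresses $D^j\sigma^{(n)}$ for $j\le3$ as a finite sum of products of $(D^i\sigma)(\phi_n(\cdot))$ and $D^i\phi_n$, all bounded, whence $\sigma^{(n)}\in C^3_b$. Item (ii) holds because $\phi_n(x)=x$ for $\|x\|\le n$. For (iii), given a compact $\cK$ choose $N$ with $\cK\subseteq\{\|x\|\le N\}$; then for every $n\ge N$ one has $\phi_n|_\cK=\mathrm{id}$, so $\sigma^{(n)}$ and its first partials coincide with those of $\sigma$ on $\cK$ and the supremum in \eqref{conv sigma n} equals $0$. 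Item (v) is the real payoff of composing with $\phi_n$: for all $x,y\in\bR^d$,
\[
y^T\sigma^{(n)}(x)\big[\sigma^{(n)}(x)\big]^Ty
=y^T\sigma(\phi_n(x))\big[\sigma(\phi_n(x))\big]^Ty\ge\varepsilon_d\|y\|^2
\]
by \eqref{sigma ellip} evaluated at $\phi_n(x)$, and invertibility of $\sigma^{(n)}(x)$ follows from that of $\sigma(\phi_n(x))$ (Assumption \ref{assumption ellip}); thus \eqref{ellip sigma n} holds with the same $\varepsilon_d$ --- precisely the property that a naive cutoff $\chi_n\sigma$ would destroy.

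Finally, for the uniform bounds in (iv): since $\|\phi_n(x)\|=\|\phi_n(x)-\phi_n(0)\|\le c_d\|x\|$, \eqref{assumption growth} gives $\|\sigma^{(n)}(x)\|_F^2\le Ld^p(1+c_d^2\|x\|^2)$, and \eqref{assumption Lip mu sigma} gives $\|\sigma^{(n)}(x)-\sigma^{(n)}(y)\|_F^2\le L\|\phi_n(x)-\phi_n(y)\|^2\le Lc_d^2\|x-y\|^2$. For the first partials, the chain rule gives $\tfrac{\partial}{\partial x_k}\sigma^{(n)}(x)=\sum_{j=1}^d(\partial_{x_j}\sigma)(\phi_n(x))\big(\tfrac{\partial}{\partial x_k}(\phi_n)_j\big)(x)$; I would split the difference $\tfrac{\partial}{\partial x_k}\sigma^{(n)}(x)-\tfrac{\partial}{\partial y_k}\sigma^{(n)}(y)$ into the block where $\nabla\phi_n$ varies --- bounded using $\|D^2\phi_n\|\le c_d$ together with $\|\partial_{x_j}\sigma\|_F^2\le L$ from \eqref{bbd partials} --- and the block where $\partial_{x_j}\sigma$ varies --- bounded using $\|\nabla\phi_n\|\le c_d$ together with $\|\partial_{x_j}\sigma(\phi_n(x))-\partial_{x_j}\sigma(\phi_n(y))\|_F^2\le L_0c_d^2\|x-y\|^2$ from \eqref{gradient mu sigma} --- and then sum over $j,k$, obtaining \eqref{Lip sigma n} with a constant $C_{(d),1}$ depending only on $p,T,L,L_0,d$. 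The one genuinely delicate point is the one-time construction of $h$ (equivalently $\psi$): one must check simultaneously that $\psi$ is $C^\infty$ through the origin, that $\|\psi\|$ and the first three derivatives of $\psi$ stay bounded, and that $h(s)\sqrt{s}$ is bounded; with the ansatz $\psi(x)=h(\|x\|^2)x$ all of this reduces to elementary one-variable estimates on $h$, $sh'(s)$, $s^2h''(s)$, $s^3h'''(s)$, which are easy to arrange, so I do not expect real difficulty beyond bookkeeping.
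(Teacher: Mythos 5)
Your proposal is correct and takes essentially the same approach as the paper: both replace $\sigma$ by $\sigma\circ\phi_n$, where $\phi_n$ is a smooth radial retraction equal to the identity on $\{\|x\|\le n\}$ with image in a ball of radius $O(n)$, so that the ellipticity bound \eqref{sigma ellip} transfers verbatim to $\sigma^{(n)}$. The paper's $\phi_n(x)=h^{(n)}(x)x$ with $h^{(n)}(x)=(2n/\|x\|)^{\alpha(x/n)}$ is exactly of the form $n\psi(x/n)$ that you use; your scaling identity $D^j\phi_n(x)=n^{1-j}(D^j\psi)(x/n)$ gives the uniform-in-$n$ derivative bounds in one line, whereas the paper verifies the same bounds by direct computation of the derivatives of $h^{(n)}$.
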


\begin{proof}
Throughout this proof, let $\alpha:\bR^d\to[0,1]$ be a smooth function such that
$$
\alpha(x)=
\begin{cases}
0, & \|x\|\leq 1;\\
1, & \|x\|\geq 2.
\end{cases}
$$
For each $n\in\bN$, we define a Borel function $h^{(n)}:\bR^d/\{0\}\to(0,\infty)$ by
$$
h^{(n)}(x):=\left(2n/\|x\|\right)^{\alpha(x/n)},\quad x\in\bR^d/\{0\}.
$$
Then for each $n\in\bN$, define $\sigma^{(n)}$ by
$$
\sigma^{(n)}(x):=
\begin{cases}
\sigma(0), & x=0;\\
\sigma\big(h^n(x)x\big), & x\neq 0.
\end{cases}
$$
Notice that
$$
\sigma^{(n)}(x)=
\begin{cases}
\sigma(x), & \|x\|\leq n;\\
\sigma\Big(2nx/\|x\|\Big), & \|x\|\geq 2n.
\end{cases}
$$
By \eqref{assumption growth}, \eqref{sigma ellip}, 
and the construction of $\sigma^{(n)}$, 
it is easy to see that (ii), (iii), and (v) hold,
and $\sigma^{(n)}\in C^{3}(\bR^d,\bR^d\times\bR^d)$ for all $n\in\N$,
and it holds for all $n\in\N$ and $x,y\in\bR^d$ that 
\begin{equation}
\label{bbd LG sigma n}
\big\|\sigma^{(n)}(x)\big\|_F^2\leq Ld^p\big[1+(4n)^2\big],
\quad
\big\|\sigma^{(n)}(x)\big\|_F^2\leq 4Ld^p(1+\|x\|^2).
\end{equation}
To show that $\sigma^{(n)}$ has bounded derivative up to order $3$, 
we first notice for
all $n\in\N$, $i,k\in\{1,\dots,d\}$, and $x\in\bR^d$ that
\begin{equation}
\label{deri est 1}
\frac{\partial}{\partial x_i}\left(2nx_k/\|x\|\right)
=2n\delta_{ik}/\|x\|-2nx_ix_k/\|x\|^3,
\end{equation}
where $\delta_{ii}=1$ for all $i$, and $\delta_{ij}=0$ for $i\neq j$.
Hence, it holds for all $n\in\N$, $i,k\in\{1,\dots,d\}$, 
and $x\in\bR^d$ with $\|x\|\geq 2n$ that
\begin{equation}
\label{deri est 2}
\Big|
\frac{\partial}{\partial x_i}\left(2nx_k/\|x\|\right)
\Big|
\leq 2n\delta_{ik}/\|x\|+n(x_i^2+x_k^2)/\|x\|^3
\leq 3/2.
\end{equation}
This together with \eqref{bbd partials} and \eqref{deri est 1}
imply for $i\in\{1,\dots,d\}$, $n\in\bN$, 
and $x\in\bR^d$ with $\|x\|\geq 2n$ that
\begin{align}
\Big\|\frac{\partial}{\partial x_i}\sigma^{(n)}(x)\Big\|_F
&
=
\Bigg\|
\sum_{k=1}^d\Big(\frac{\partial}{\partial x_k}\sigma\Big)\left(2nx/\|x\|\right)
\frac{\partial}{\partial x_i}\left(2nx_k/\|x\|\right)
\Bigg\|_F
\nonumber\\
&
=
\Bigg\|
\sum_{k=1}^d\Big(\frac{\partial}{\partial x_k}\sigma\Big)
\left(2nx/\|x\|\right)
\left(2n\delta_{ik}/\|x\|-2nx_ix_k/\|x\|^3\right)
\Bigg\|_F
\label{partial sigma n 1}
\\
&
\leq
\frac{3}{2}
\sum_{k=1}^d
\Bigg\|
\Big(\frac{\partial}{\partial x_k}\sigma\Big)\left(2nx/\|x\|\right)
\Bigg\|_F
\nonumber\\
&
\leq 3L^{1/2}d/2.
\label{partial sigma n 2}
\end{align}
Similarly, \eqref{bbd partials}, \eqref{bbd 2nd partials}, and \eqref{partial sigma n 1} 
imply for all $n\in\bN$, $i,j\in\{1,2,\dots,d\}$, 
and $x\in\bR^d$ with $\|x\|\geq 2n$ that
\begin{align}
&
\Bigg\|
\frac{\partial^2}{\partial x_i\partial x_j}\sigma^{(n)}(x)
\Bigg\|_F
\nonumber\\
&
=
\Bigg\|
\sum_{k=1}^d\sum_{l=1}^d
\Big(\frac{\partial^2}{\partial x_k\partial x_l}\sigma\Big)
\left(2nx/\|x\|\right)
\frac{\partial}{\partial x_j}\left(2nx_l/\|x\|\right)
\left(2n\delta_{ik}/\|x\|-2nx_ix_k/\|x\|^3\right)
\nonumber\\ 
& \quad
+\sum_{k=1}^d\Big(\frac{\partial}{\partial x_k}\sigma\Big)
\left(2nx/\|x\|\right)
\frac{\partial}{\partial x_j}
\left(2n\delta_{ik}/\|x\|-2nx_ix_k/\|x\|^3\right)
\Bigg\|_F
\nonumber\\
&
=\Bigg\|\sum_{k=1}^d\sum_{l=1}^d
\Big(\frac{\partial^2}{\partial x_k\partial x_l}\sigma\Big)
\left(2nx/\|x\|\right)
\left(2n\delta_{jl}/\|x\|-2nx_jx_l/\|x\|^3\right)
\left(2n\delta_{ik}/\|x\|-2nx_ix_k/\|x\|^3\right)
\nonumber\\
& \quad
+
\sum_{k=1}^d\Big(\frac{\partial}{\partial x_k}\sigma\Big)
\left(2nx/\|x\|\right)
\left(
-2n\delta_{ik}x_j/\|x\|^3+6nx_jx_kx_i/\|x\|^5
-2n(\delta_{ij}x_k+x_i\delta_{kj})/\|x\|^3
\right)
\Bigg\|_F
\nonumber\\
&
\leq 
c_1
\sum_{k=1}^d
\sum_{l=1}^d
\left\|
\Big(\frac{\partial^2}{\partial x_k\partial x_l}\sigma\Big)
\left(2nx/\|x\|\right)
\right\|_F
+
c_1
\sum_{k=1}^d
\left\|
\Big(\frac{\partial}{\partial x_k}\sigma\Big)
\left(2nx/\|x\|\right)
\right\|_F
\nonumber\\
&
\leq c_1d(d+1)L_0^{1/2},
\label{partial sigma n 3}
\end{align}
where $c_1$ is a positive constant independent of $n$.
Furthermore, by the construction of $h^{(n)}$ we notice for all $n\in\bN$, 
$x\in\bR^d$ with $n\leq\|x\|\leq 2n$, and $k\in\{1,2,\dots,d\}$ that
\begin{equation}
\label{bbd g n 0}
\big|h^{(n)}(x)\big|\leq 2,
\end{equation}
and
\begin{align}
\Big|\frac{\partial}{\partial x_k}h^{(n)}(x)\Big|
&
=
\Big|
\frac{\partial}{\partial x_k}
\exp\left\{\alpha(x/n)\log^{2n/\|x\|}\right\}
\Big|
\nonumber\\
&
=
\left|
h^{(n)}(x)
\left[
n^{-1}\Big(\frac{\partial}{\partial x_k}\alpha\Big)(x/n)\log^{2n/\|x\|}
+\alpha(x/n)\frac{\|x\|}{2n}\frac{\partial}{\partial x_k}(2n/\|x\|)
\right]
\right|
\nonumber\\
&
=
\left|
h^{(n)}(x)
\left[
n^{-1}\Big(\frac{\partial}{\partial x_k}\alpha\Big)(x/n)\log^{2n/\|x\|}
-\alpha(x/n)\frac{x_k}{\|x\|^2}
\right]
\right|
\label{deri g n}
\\
&
\leq 
h^{(n)}(x)
\left[
n^{-1}\log^2\left(\sup_{y\in\bR^d}\|\nabla \alpha(y)\|\right)+n^{-1}
\right]
\nonumber\\
&
\leq n^{-1}c_2,
\label{bbd g n 1}
\end{align}
where 
$
c_2:=2\left[
\log^2\left(\sup_{y\in\bR^d}\|\nabla \alpha(y)\|\right)+1
\right].
$
Analogously, \eqref{bbd g n 0}--\eqref{bbd g n 1} and the construction of $h^{(n)}$
imply for all $n\in\bN$, $k,l\in\{1,2,\dots,d\}$, 
and $x\in\bR^d$ with $n\leq\|x\|\leq 2n$ that
\begin{align}
\Big|
\frac{\partial^2}{\partial x_k\partial x_l}h^{(n)}(x)
\Big|
&
=
\left|
\frac{\partial}{\partial x_l}
\left(
h^{(n)}(x)
\left[
n^{-1}\Big(\frac{\partial}{\partial x_k}\alpha\Big)(x/n)\log^{2n/\|x\|}
+\alpha(x/n)\frac{x_k}{\|x\|^2}
\right]
\right)
\right|
\nonumber\\
&
\leq
\Big|\frac{\partial}{\partial x_l}h^{(n)}(x)\Big|
\cdot
\left|\left[
n^{-1}\Big(\frac{\partial}{\partial x_k}\alpha\Big)(x/n)\log^{2n/\|x\|}
+\alpha(x/n)\frac{x_k}{\|x\|^2}
\right]\right|
\nonumber\\
& \quad 
+
\big|h^{(n)}(x)\big|
\cdot
\Bigg|\Bigg[
n^{-2}\Big(\frac{\partial^2}{\partial x_k \partial x_l}\alpha\Big)(x/n)
\log^{2n/\|x\|}
+
n^{-1}\Big(\frac{\partial}{\partial x_k}\alpha\Big)(x/n)\frac{x_l}{\|x\|^2}
\nonumber\\
& \quad
+
n^{-1}\Big(\frac{\partial}{\partial x_l}\alpha\Big)(x/n)\frac{x_k}{\|x\|^2}
+
\alpha(x/n)\frac{\delta_{kl}\|x\|^2-2x_kx_l}{\|x\|^4}
\Bigg]\Bigg|
\nonumber\\
&
\leq n^{-2}c_3,
\label{bbd g n 2}
\end{align}
where
$$
c_3:=c_2\left[\log^2\Big(\sup_{y\in\bR^d}\|\nabla\alpha(y)\|\Big)+1\right]^2
+2\left[
\log^2\Big(\sup_{y\in\bR^d}\|\operatorname{Hess}\alpha(y)\|_F\Big)
+2\Big(\sup_{y\in\bR^d}\|\nabla\alpha(y)\|\Big)+3
\right].
$$
Then, by \eqref{bbd partials}, \eqref{bbd g n 0}, \eqref{bbd g n 1}, \eqref{bbd g n 2},
and the construction of $\sigma^{(n)}$ we obtain for all $n\in\bN$, 
$i,j\in\{1,2,\dots,d\}$, and $x\in\bR^d$ with $n\leq\|x\|\leq 2n$ that
\begin{align}
\Big\|\frac{\partial}{\partial x_i}\sigma^{(n)}(x)\Big\|_F
&
=
\left\|
\sum_{k=1}^d\Big(\frac{\partial}{\partial x_k}\sigma\Big)(h^{(n)}(x)x)
\frac{\partial}{\partial x_i}\big(h^{(n)}(x)x_k\big)
\right\|_F
\nonumber\\
&
=
\left\|
\sum_{k=1}^d\Big(\frac{\partial}{\partial x_k}\sigma\Big)(h^{(n)}(x)x)
\left(
x_k\frac{\partial}{\partial x_i}h^{(n)}(x)+h^{(n)}(x)\delta_{ki}
\right)
\right\|_F
\nonumber\\
&
\leq
\sum_{k=1}^d L(|x_k|n^{-1}c_2+2)
\leq 2Ld(c_2+1).
\label{bbd sigma n 1}
\end{align}
Analogously, by \eqref{bbd partials}, \eqref{bbd 2nd partials},
\eqref{bbd g n 0}, \eqref{bbd g n 1}, and \eqref{bbd g n 2} 
it holds for all $n\in\bN$, $i,j\in\{1,2,\dots,d\}$, and $(t,x)\in[0,T]\times\bR^d$
with $n\leq \|x\| \leq 2n$ that
\begin{align}
&
\Big\|\frac{\partial^2}{\partial x_i \partial x_j}
\sigma^{(n)}(x)\Big\|_F
\nonumber\\
&
=
\left\|
\frac{\partial}{\partial x_j}
\left[
\sum_{k=1}^d\Big(\frac{\partial}{\partial x_k}\sigma\Big)(h^{(n)}(x)x)
\Big(
x_k\frac{\partial}{\partial x_i}h^{(n)}(x)+h^{(n)}(x)\delta_{ki}
\Big)
\right]
\right\|_F
\nonumber\\
&
=
\Bigg\|
\sum_{k=1}^d\sum_{l=1}^d
\left[
\Big(\frac{\partial^2}{\partial x_k \partial x_l}\sigma\Big)(h^{(n)}(x)x)
\Big(
x_l\frac{\partial}{\partial x_j}h^{(n)}(x)+h^{(n)}(x)\delta_{lj}
\Big)
\Big(
x_k\frac{\partial}{\partial x_i}h^{(n)}(x)+h^{(n)}(x)\delta_{ki}
\Big)
\right]
\nonumber\\
& \quad
+
\sum_{k=1}^d\Big(\frac{\partial}{\partial x_k}\sigma\Big)(h^{(n)}(x)x)
\Big(
\delta_{kj}\frac{\partial}{\partial x_i}h^{(n)}(x)
+
x_k\frac{\partial^2}{\partial x_i \partial x_j}h^{(n)}(x)
+
\delta_{ki}\frac{\partial}{\partial x_j}h^{(n)}(x)
\Big)
\Bigg\|_F
\nonumber\\
&
\leq 
\sum_{k=1}^d\sum_{l=1}^dL_0(|x_l|n^{-1}c_2+2)(|x_k|n^{-1}c_2+2)
+
\sum_{k=1}^dL_0(2n^{-1}c_2+|x_k|n^{-2}c_3)
\nonumber\\
&
\leq 4d^2L_0(c_2+1)^2+2dL_0(c_2+c_3).
\label{bbd sigma n 2}
\end{align}
In addition, \eqref{bbd partials}, \eqref{bbd 2nd partials}, 
and the construction of $\sigma^{(n)}$ ensure for all
$n\in\bN$, $i,j\in\{1,2,\dots,d\}$, and $x\in\bR^d$ with $\|x\|\leq n$
that
\begin{equation}
\label{bbd sigma n 0}
\Big\|\frac{\partial}{\partial x_i}\sigma^{(n)}(x)\Big\|_F^2\leq L,
\quad
\Big\|\frac{\partial}{\partial x_i \partial x_j}\sigma^{(n)}(x)\Big\|_F^2\leq L_0.
\end{equation}
Combining \eqref{bbd LG sigma n}, \eqref{partial sigma n 2}, \eqref{partial sigma n 3},
\eqref{partial sigma n 1}, \eqref{bbd sigma n 1}, \eqref{bbd sigma n 2},
and \eqref{bbd sigma n 0}, we have that the derivatives of $\sigma^{n}$ up to
order $2$, $n\in\bN$, are bounded.
By analogous calculation, 
we obtain that the third derivatives of $\sigma^{(n)}$, $n\in\bN$, 
are bounded. Hence, it holds for all $n\in\bN$ that 
$\sigma^{(n)}\in C_b^3(\bR^d,\bR^{d\times d})$.
Finally, by \eqref{bbd LG sigma n}, 
\eqref{partial sigma n 2}, \eqref{partial sigma n 3},
\eqref{bbd sigma n 1}, \eqref{bbd sigma n 2}, \eqref{bbd sigma n 0},
and the mean-value theorem we obtain \eqref{Lip sigma n}.
The proof of this lemma is therefore completed.
\end{proof}

\begin{lemma}
\label{lemma mu n}
Let Assumptions \ref{assumption Lip and growth} and \ref{assumption gradient} hold. 
Then there exist functions  
$\mu^{(n)}:\bR^d\to\bR^d$, $n\in\bN$,
such that the following holds.
\begin{enumerate}[(i)]
\item
$\mu^{(n)}\in C^{3}_b(\bR^d,\bR^d)$
for all $n\in\bN$.
\item
For each $n\in\bN$, $\mu^{(n)}(x)=\mu(x)$ for all $\|x\|\leq n$.
\item
For all $k\in\{1,2,\dots,d\}$ 
and every compact set $\cK\subset\bR^d$
\begin{equation}
\label{conv mu n}
\lim_{n\to\infty}\sup_{x\in\cK}
\left[
\big\|\mu^{(n)}(x)-\mu(x)\big\|
+
\Big\|\frac{\partial}{\partial x_k}\mu^{(n)}(x)
-\frac{\partial}{\partial x_k}\mu(x)\Big\|
\right]
=0.
\end{equation}
\item
There exist a positive constant $C_{(d),2}$ only depending on $p$, $T$,
$L$, $L_0$, and $d$
satisfying for all $n\in\bN$, $k\in\{1,2,\dots,d\}$, and $x,y\in\bR^d$ that
\begin{equation}  
\label{Lip mu n}                                
\big\|\mu^{(n)}(x)-\mu^{(n)}(y)\big\|^2
+
\Big\|\frac{\partial}{\partial x_k}\mu^{(n)}(x)
-\frac{\partial}{\partial y_k}\mu^{(n)}(y)\Big\|^2
\leq C_{(d),2}\|x-y\|^2,
\end{equation}
and
\begin{equation}
\label{LG mu n}
\big\|\mu^{(n)}(x)\big\|^2\leq C_{(d),2}(1+\|x\|^2).
\end{equation}
\end{enumerate}
\end{lemma}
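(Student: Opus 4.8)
The plan is to mimic the construction and argument used in the proof of Lemma~\ref{lemma sigma n}, now applied to the vector field $\mu$; since there is no ellipticity condition to preserve, only items (i)--(iv) have to be checked. First I would fix a smooth cutoff $\alpha\colon\bR^d\to[0,1]$ with $\alpha(x)=0$ for $\|x\|\le 1$ and $\alpha(x)=1$ for $\|x\|\ge 2$, set $h^{(n)}(x):=(2n/\|x\|)^{\alpha(x/n)}$ for $x\in\bR^d\setminus\{0\}$, and define $\mu^{(n)}(x):=\mu\big(h^{(n)}(x)x\big)$ for $x\ne 0$ together with $\mu^{(n)}(0):=\mu(0)$. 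Exactly as in the proof of Lemma~\ref{lemma sigma n} one then obtains $\mu^{(n)}(x)=\mu(x)$ whenever $\|x\|\le n$ and $\mu^{(n)}(x)=\mu(2nx/\|x\|)$ whenever $\|x\|\ge 2n$, which yields (ii) at once. For (iii) I would simply note that every compact set $\cK\subset\bR^d$ is contained in $\{x\in\bR^d\colon\|x\|\le n_0\}$ for some $n_0\in\bN$, so that $\mu^{(n)}$ and $\frac{\partial}{\partial x_k}\mu^{(n)}$ coincide on $\cK$ with $\mu$ and $\frac{\partial}{\partial x_k}\mu$ for all $n\ge n_0$, whence the supremum in \eqref{conv mu n} vanishes for large $n$.

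For (i) and (iv) I would reuse the bounds on $h^{(n)}$ and its first and second partial derivatives established inside the proof of Lemma~\ref{lemma sigma n} (cf.\ \eqref{deri est 1}--\eqref{deri est 2} and \eqref{bbd g n 0}--\eqref{bbd g n 2}), which do not involve $\sigma$ and hence are available here. Combining these with the chain rule, with the uniform first-order bound \eqref{bbd partials} and the uniform second-order bound \eqref{bbd 2nd partials} for $\mu$, and treating the three regions $\{\|x\|\le n\}$, $\{n\le\|x\|\le 2n\}$, $\{\|x\|\ge 2n\}$ separately exactly as in \eqref{partial sigma n 1}--\eqref{partial sigma n 3}, \eqref{bbd sigma n 1}, \eqref{bbd sigma n 2} and \eqref{bbd sigma n 0}, produces a constant $C_{(d),2}$ depending only on $p$, $T$, $L$, $L_0$ and $d$ with $\big\|\frac{\partial}{\partial x_i}\mu^{(n)}(x)\big\|^2\le C_{(d),2}$ and $\big\|\frac{\partial^2}{\partial x_i\partial x_j}\mu^{(n)}(x)\big\|^2\le C_{(d),2}$ for all $n,i,j,x$; the mean-value theorem then gives \eqref{Lip mu n}. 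For \eqref{LG mu n} I would observe that $\|h^{(n)}(x)x\|\le 2\|x\|$ in each of the three regions, so that \eqref{assumption growth} yields $\|\mu^{(n)}(x)\|^2\le Ld^p(1+4\|x\|^2)\le 4Ld^p(1+\|x\|^2)$, which is \eqref{LG mu n} after enlarging $C_{(d),2}$. Finally, for the $C^3_b$-part of (i), for each fixed $n$ the map $x\mapsto h^{(n)}(x)x$ and all its derivatives are bounded on $\bR^d$ and this map sends $\bR^d$ into a bounded set, on which the continuous third-order derivatives of $\mu$ are bounded; one more chain-rule computation, analogous to the one leading to \eqref{partial sigma n 3} but carried one order higher, then shows $\mu^{(n)}\in C^3_b(\bR^d,\bR^d)$.

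The main obstacle, as in Lemma~\ref{lemma sigma n}, is purely computational: organizing the chain-rule expansion of $\frac{\partial^2}{\partial x_i\partial x_j}\mu^{(n)}$ (and, for the $C^3$ statement, of the third partials) on the annulus $\{n\le\|x\|\le 2n\}$ and checking that every resulting term carries a factor of the form $h^{(n)}(x)$, $x_k/n$, $x_k/n^2$, or a derivative of $\alpha$, arranged so that the bound is independent of $n$. Since the scalar coefficients that appear are precisely those already controlled in \eqref{deri est 1}--\eqref{deri est 2} and \eqref{bbd g n 0}--\eqref{bbd g n 2}, this reduces to bookkeeping identical in structure to \eqref{partial sigma n 1}--\eqref{bbd sigma n 2}, so I would carry out the estimates only for $\frac{\partial}{\partial x_i}\mu^{(n)}$ and $\frac{\partial^2}{\partial x_i\partial x_j}\mu^{(n)}$ explicitly and dispatch the rest by writing "by an analogous argument to the proof of Lemma~\ref{lemma sigma n}."
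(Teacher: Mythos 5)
Your proposal follows exactly the same route as the paper's own (one-line) proof, which simply instructs the reader to repeat the construction and estimates of Lemma~\ref{lemma sigma n} verbatim for $\mu$ in place of $\sigma$, using \eqref{assumption Lip mu sigma}, \eqref{assumption growth}, and \eqref{bbd partials}--\eqref{bbd 2nd partials}. Your more detailed write-out of where the three regions, the $h^{(n)}$ bounds, and the chain-rule expansion enter is consistent with and faithfully reconstructs that argument.
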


\begin{proof}
Using \eqref{assumption Lip mu sigma}, \eqref{assumption growth},
and \eqref{bbd partials}--\eqref{bbd 2nd partials},
one can follow the proof of Lemma \ref{lemma sigma n} step by step
to get the desired results.
\end{proof}

\section{\textbf{Existence of the viscosity solutions of semilinear PDEs}}
\label{appendix existence PDE}
In this section, we study the existence of viscosity solutions 
for semilinear PDEs in the form of \eqref{APIDE}, 
and provide the proof of Proposition \ref{theorem PDE existence}.
We assume the settings in Section \ref{section setting}, 
fix $d$ and $\theta$, and omit the superscript $d$ and $\theta$ in the notations.

To show the construction of the viscosity solution to PDE \eqref{APIDE}.
We will start with the case that 
the functions $\mu$, $\sigma$, $g$, and $f$ are bounded and regular enough,
and prove that there exists an unique classical solution to PDE \eqref{APIDE}
which has a probabilistic representation of Feynman-Kac and Bismut-Elworthy-Li type.
Then we will use an analogous approximation procedure 
as in Section 2.5 in \cite{beck2021nonlinear}
to obtain the existence of the viscosity solution to PDE \eqref{APIDE},
and establish the probabilistic representation of the viscosity solution
under the settings in Section \ref{section setting}
(cf. Proposition \ref{theorem PDE existence}). 

\begin{lemma}                            \label{lemma smooth PDE}
Let Assumptions \ref{assumption Lip and growth}, \ref{assumption ellip},
and \ref{assumption gradient} hold. 
Moreover, assume that $\mu\in C^3_b(\bR^d,\bR^d)$, 
$\sigma\in C^3_b(\bR^d,\bR^d\times\bR^d)$,
$g\in C^3_b(\bR^d,\bR)$, 
and $f(t,\cdot,\cdot,\cdot)\in C^3_b(\bR^d\times\bR\times\bR^d,\bR)$ 
for all $t\in[0,T]$. Then there exists a classical solution
$u\in C^{1,2}([0,T]\times\bR^d,\bR)$ of PDE \eqref{APIDE} satisfying for all
$(t,x)\in[0,T)\times\bR^d$ that
\begin{align}
&
(u(t,x),\nabla_x u(t,x))
\nonumber\\
&
=\bE\left[g(X^{t,x}_T)\left(1,\frac{1}{T-t}
\int_t^T\left(\big[\sigma(X^{t,x}_{r})\big]^{-1}
DX^{t,x}_{r}\right)^T\,dW_r\right)\right]  
\nonumber\\
& \quad    
+\int_t^T\bE\left[
f\big(s,X^{t,x}_s,u(s,X^{t,x}_s),(\nabla_x u)(s,X^{t,x}_s)\big)
\left(1,\frac{1}{s-t}
\int_t^s\left(\big[\sigma(X^{t,x}_{r})\big]^{-1}
DX^{t,x}_{r}\right)^T\,dW_r\right)
\right]ds.
\label{Feynman-Kac BEL smooth}
\end{align}
\end{lemma}

\begin{proof}
First note that Theorem 3.2 in \cite{PP1992} ensures that PDE \eqref{APIDE} 
has a classical solution $u\in C^{1,2}([0,T]\times\bR^d,\bR)$.
Then by Feynman-Kac formula 
(see, e.g., Theorem 7.6 in \cite{KS1991}, and Theorem 17.4.10 in \cite{CE}), 
we obtain for all $(t,x)\in[0,T]\times\bR^d$ that
\begin{equation}                                                   \label{FK smooth}
u(t,x)=\bE\big[g(X^{t,x}_T)\big]+\int_t^T\bE\big
[f\big(s,X^{t,x}_s,u(s,X^{t,x}_s),
(\nabla_x u)(s,X^{t,x}_s)\big)\big]\,ds.
\end{equation}
Furthermore, by Lemma \ref{lemma L2 derivative} 
the application of Bismut-Elworthy-Li formula 
(see, e.g., Theorem 2.1 in \cite{PZ1997}, Theorem 2.1 in \cite{EL1994},
and Proposition 3.2 in \cite{FLLLT1999})
to \eqref{FK smooth} yields for all $(t,x)\in[0,T)\times\bR^d$ that
\begin{align}
\nabla_x u(t,x)
&
=\bE\left[\frac{g(X^{t,x}_T)}{T-t}
\int_t^T\left(\big[\sigma(X^{t,x}_{r})\big]^{-1}DX^{t,x}_{r}
\right)^T\,dW_r\right]
\nonumber\\
& \quad                                        
+\int_t^T \bE\bigg[
f\big(s,X_s^{t,x},u(s,X_s^{t,x}),
(\nabla_x u)(s,X^{t,x}_s)\big)
\frac{1}{s-t}
\int_t^s\left(\big[\sigma(X^{t,x}_{r})\big]^{-1}DX^{t,x}_{r}\right)^T
\,dW_r\bigg]\,ds.
\nonumber
\end{align}
The proof of this lemma is thus completed. 
\end{proof}

Now we present some lemmas which will be used later on for the approximation
of the viscosity solution of PDE \eqref{APIDE}.

\begin{lemma}                                  \label{lemma continuity G}
Let Assumption \ref{assumption Lip and growth} hold. Then the mapping
\begin{equation}
\label{G mapping}
(0,T)\times\bR^d\times\bR\times\bR^d\times\bS^d\ni(t,x,r,y,A)
\mapsto G(t,x,r,y,A)\in\bR
\end{equation}
is continuous.
\end{lemma}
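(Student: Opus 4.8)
The plan is to decompose $G$ into the three summands appearing in its definition \eqref{def operator G}, namely the drift term $(t,x,r,y,A)\mapsto-\langle y,\mu(x)\rangle$, the second-order term $(t,x,r,y,A)\mapsto-\frac{1}{2}\operatorname{Trace}(\sigma(x)[\sigma(x)]^TA)$, and the source term $(t,x,r,y,A)\mapsto-f(t,x,r,y)$, and to show that each of these is continuous on the full product space $(0,T)\times\bR^d\times\bR\times\bR^d\times\bS^d$. Since a finite sum of continuous real-valued functions is continuous, this immediately yields the claim.

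For the drift term, I would use that Assumption~\ref{assumption Lip and growth}, in particular \eqref{assumption Lip mu sigma}, forces $\mu$ to be globally Lipschitz, hence continuous — alternatively one may simply invoke $\mu\in C^3(\bR^d,\bR^d)$ from the standing setting — so that $(x,y)\mapsto(\mu(x),y)$ is a continuous map into $\bR^d\times\bR^d$; composing with the jointly continuous Euclidean pairing $\bR^d\times\bR^d\ni(a,b)\mapsto\langle a,b\rangle$ gives continuity of $(x,y)\mapsto\langle y,\mu(x)\rangle$. For the second-order term, $\sigma$ is continuous (again by \eqref{assumption Lip mu sigma}, or by $\sigma\in C^3(\bR^d,\bR^{d\times d})$), matrix transposition and matrix multiplication are continuous because they are polynomial in the matrix entries, and the trace is linear hence continuous; chaining these shows $(x,A)\mapsto\operatorname{Trace}(\sigma(x)[\sigma(x)]^TA)$ is jointly continuous. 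For the source term, continuity is exactly the standing hypothesis $f^d\in C([0,T]\times\bR^d\times\bR\times\bR^d,\bR)$ from Section~\ref{section setting}; if one prefers to avoid invoking that directly, \eqref{assumption Lip f} shows $f$ is globally Lipschitz in $(x,r,y)$ uniformly in $t$, which together with continuity in $t$ again gives joint continuity.

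I do not expect any genuine obstacle here: the lemma is purely a matter of recognising $G$ as a composition and sum of elementary continuous maps (the Euclidean inner product, matrix product, trace, and the assumed-continuous coefficient functions $\mu$, $\sigma$, $f$). The only point requiring a moment's care is that the conclusion asks for \emph{joint} continuity on the product space, but this is automatic since every building block used is itself jointly continuous, and appending the ``dummy'' variables (for instance $r$ and $A$ in the drift term, or $A$ in the source term) preserves continuity.
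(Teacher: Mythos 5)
Your proof is correct and takes essentially the same approach as the paper, which simply observes that continuity follows from the structure of $G$ together with $\mu\in C(\bR^d,\bR^d)$, $\sigma\in C(\bR^d,\bR^{d\times d})$, and $f\in C([0,T]\times\bR^d\times\bR\times\bR^d,\bR)$; you have merely spelled out the elementary composition argument that the paper leaves implicit.
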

\begin{proof}
The structure of $G$ and the assumptions $\mu\in C(\bR^d,\bR^d)$, 
$\sigma\in C(\bR^d,\bR^d\times\bR^d)$,
and $f\in C([0,T]\times\bR^d,\bR^{2d+1})$ implies
that mapping \eqref{G mapping} is continuous.
\end{proof}
To introduce the next lemmas, 
for every $n\in\bN$ let $g^{(n)}\in C(\bR^d,\bR)$, 
$\mu^{(n)}\in C(\bR^d,\bR^d)$,
and $\sigma^{(n)}\in C(\bR^d,\bR^{d\times d})$.
Then we make the following assumptions.  
\begin{assumption}                            \label{assumption conv compact}
It holds for every non-empty compact set 
$\mathcal{K}\subseteq \bR^d$ 
and every non-empty compact set
$\cK'\subseteq [0,T]\times\bR^d\times\bR\times\bR^d$ that 
\begin{equation}
\label{conv compact}
\lim_{n\to\infty}\left[ 
\sup_{x\in\mathcal{K}}
\Big(\big|g^{(n)}(x)-g(x)\big|
+\big\|\mu^{(n)}(x)-\mu(x)\big\|
+\big\|\sigma^{(n)}(x)-\sigma(x)\big\|_F\Big)
\right]=0,                                             
\end{equation}
and
\begin{equation}
\label{conv compact f}
\lim_{n\to\infty}
\left[
\sup_{(t,x,v,w)\in\cK'}
\big|
f(t,x,v,w)-f^{(n)}(t,x,v,w)
\big|
\right]=0.
\end{equation}
\end{assumption}

\begin{assumption}                           \label{assumption Lip and growth n}
There exists a constant $C_{(d),1}>0$ satisfying 
for all $n\in\bN$, $x,y\in\bR^d$, $t\in[0,T]$, 
$v_1,v_2\in\bR$, and $w_1,w_2\in\bR^d$ that
\begin{align}   
&                        
|g^{(n)}(x)-g^{(n)}(y)|^2+\|\mu^{(n)}(x)-\mu^{(n)}(y)\|^2
+\|\sigma^{(n)}(x)-\sigma^{(n)}(y)\|_F^2
\leq C_{(d),1}\|x-y\|^2,
\label{assumption Lip n}
\\
& 
|f^{(n)}(t,x,v_1,w_1)-f^{(n)}(t,y,v_2,w_2)|\leq C_{(d),1}(\|x-y\|+|v_1-v_2|+\|w_1-w_2\|)
\label{assumption Lip f n}
\end{align}
and
\begin{equation}                                     \label{assumption growth n}
|f^{(n)}(t,x,0,\mathbf{0})|^2+|g^{(n)}(x)|^2
+\|\mu^{(n)}(x)\|^2+\|\sigma^{(n)}(x)\|_F^2
\leq C_{(d),1}(d^p+\|x\|^2).
\end{equation}
\end{assumption}
                                    
Then for each $n\in\bN$, let 
$
G^{(n)}:(0,T)\times \bR^d\times\bR\times\bR^d\times\bS^d
\to \bR
$
be a function defined for all
$(t,x,r,y,A)
\in(0,T)\times\bR^d\times\bR\times\bR^d\times\bS^d$ by 
\begin{align*}
G^{(n)}(t,x,r,y,A):=
&
-\langle y,\mu^{(n)}(x)\rangle
-\frac{1}{2}\operatorname{Trace}
\big(\sigma^{(n)}(x)[\sigma^{(n)}(x)]^TA\big)
-f^{(n)}(t,x,r,y).
\end{align*}

\begin{lemma}                                      \label{lemma conv compact G}
Let Assumptions \ref{assumption Lip and growth}, \ref{assumption conv compact}
and \ref{assumption Lip and growth n} hold.
Then for every compact set 
$\mathcal{K}\subseteq(0,T)\times\bR^d\times\bR\times\bR^d\times\bS^d$ 
it holds that
\begin{equation}                                          \label{conv compact G}
\lim_{n\to\infty}\left(
\sup_{(t,x,r,y,A)\in\mathcal{K}}
\big|G(t,x,r,y,A)-G^{(n)}(t,x,r,y,A)\big|
\right)=0.
\end{equation}
\end{lemma}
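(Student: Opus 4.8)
The plan is to exploit the explicit structure of $G$ and $G^{(n)}$, which are affine in $(y,A)$ with coefficients built from $\mu,\sigma$ (resp.\ $\mu^{(n)},\sigma^{(n)}$) plus the term $f$ (resp.\ $f^{(n)}$), and to reduce the claim to the three convergences in Assumption~\ref{assumption conv compact}. First I would fix a non-empty compact set $\cK\subseteq(0,T)\times\bR^d\times\bR\times\bR^d\times\bS^d$. Since the coordinate projections of $\cK$ are compact, there exist a compact set $\cK_1\subseteq\bR^d$ containing every $x$ occurring in $\cK$, a compact set $\cK'\subseteq[0,T]\times\bR^d\times\bR\times\bR^d$ containing every $(t,x,r,y)$ occurring in $\cK$, and finite constants $R_y:=\sup_{(t,x,r,y,A)\in\cK}\|y\|$ and $R_A:=\sup_{(t,x,r,y,A)\in\cK}\|A\|_F$.

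Next, using the definitions of $G$ and $G^{(n)}$, the triangle inequality, the Cauchy--Schwarz inequality, and the bound $|\operatorname{Trace}(MA)|\leq\|M\|_F\|A\|_F$ valid for $M\in\bR^{d\times d}$ and $A\in\bS^d$, I would obtain for every $(t,x,r,y,A)\in\cK$ the estimate
\begin{align*}
\big|G(t,x,r,y,A)-G^{(n)}(t,x,r,y,A)\big|
&\leq
R_y\,\big\|\mu(x)-\mu^{(n)}(x)\big\|
+\frac{R_A}{2}\,\big\|\sigma(x)[\sigma(x)]^T-\sigma^{(n)}(x)[\sigma^{(n)}(x)]^T\big\|_F\\
&\quad+\big|f(t,x,r,y)-f^{(n)}(t,x,r,y)\big|.
\end{align*}
The first and third terms on the right-hand side are handled directly: $\sup_{x\in\cK_1}\|\mu(x)-\mu^{(n)}(x)\|\to 0$ by \eqref{conv compact}, and $\sup_{(t,x,r,y)\in\cK'}|f(t,x,r,y)-f^{(n)}(t,x,r,y)|\to 0$ by \eqref{conv compact f} (applied with $v=r$ and $w=y$). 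For the middle term I would write the telescoping identity
\[
\sigma(x)[\sigma(x)]^T-\sigma^{(n)}(x)[\sigma^{(n)}(x)]^T
=\big(\sigma(x)-\sigma^{(n)}(x)\big)[\sigma(x)]^T
+\sigma^{(n)}(x)\big([\sigma(x)]^T-[\sigma^{(n)}(x)]^T\big),
\]
and then use submultiplicativity of the Frobenius norm together with $\sup_{x\in\cK_1}\|\sigma(x)\|_F<\infty$ and the uniform-in-$n$ bound $\sup_{n\in\bN}\sup_{x\in\cK_1}\|\sigma^{(n)}(x)\|_F<\infty$ (which follows from \eqref{assumption growth n} and compactness of $\cK_1$) to deduce
\[
\sup_{x\in\cK_1}\big\|\sigma(x)[\sigma(x)]^T-\sigma^{(n)}(x)[\sigma^{(n)}(x)]^T\big\|_F
\leq C\sup_{x\in\cK_1}\big\|\sigma(x)-\sigma^{(n)}(x)\big\|_F\;\longrightarrow\;0
\]
by \eqref{conv compact}, for a suitable finite constant $C$ depending only on $\cK_1$.

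Finally I would take the supremum over $(t,x,r,y,A)\in\cK$ in the displayed estimate and let $n\to\infty$, which yields \eqref{conv compact G}. There is no serious obstacle; the only point requiring a little care is the quadratic term $\sigma\sigma^T$, and this is precisely where one needs the uniform-in-$n$ growth control furnished by Assumption~\ref{assumption Lip and growth n} rather than merely the pointwise convergence in Assumption~\ref{assumption conv compact}.
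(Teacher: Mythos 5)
Your proof is correct and fills in the details that the paper leaves unstated (the paper's proof of this lemma is a one-line appeal to "the structure of $G$"). The linear-in-$(y,A)$ decomposition, the trace/Cauchy--Schwarz bound, the telescoping of $\sigma\sigma^T-\sigma^{(n)}(\sigma^{(n)})^T$, and the use of \eqref{assumption growth n} for the uniform-in-$n$ Frobenius bound on $\sigma^{(n)}$ over $\cK_1$ are exactly what the paper's assertion requires.
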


\begin{proof}
Assumptions \ref{assumption Lip and growth}, \ref{assumption conv compact},
and \ref{assumption Lip and growth n}, and the structure of $G$
ensure \eqref{conv compact G}.
\end{proof}

\begin{lemma}                                       \label{v solution approximation}
Let Assumptions \ref{assumption Lip and growth}, \ref{assumption conv compact},    
and \ref{assumption Lip and growth n} hold.
For every $n\in\bN$, let $u\in C_{lin}((0,T)\times\bR^d)$, 
and $u^{(n)}\in C_{lin}((0,T)\times\bR^d)$ 
with $u(T,\cdot)=g$ and $u^{(n)}(T,\cdot)=g^{(n)}$.
Assume for all non-empty compact sets 
$\mathcal{K}\subseteq(0,T)\times\bR^d$ that 
\begin{equation}                                        \label{conv compact u}
\lim_{n\to\infty}\Bigg[
\sup_{(t,x)\in\mathcal{K}}
\big|u^{(n)}(t,x)-u(t,x)\big|\Bigg]= 0.                                                                                                                    
\end{equation}
Moreover, assume for all $n\in\bN$ that $u^{(n)}$ 
is a viscosity solution of 
\begin{align}
-\frac{\partial}{\partial t}u^{(n)}(t,x)
+G^{(n)}(t,x,u^{(n)}(t,x),\nabla_x u^{(n)}(t,x),\operatorname{Hess}_x
u^{(n)}(t,x),u^{(n)}(t,\cdot))=0
\;\text{on $(0,T)\times \bR^d$}
\label{APIDE n}            
\end{align}
with $u^{(n)}(T,\cdot)=g^{(n)}$.
Then $u$ is a viscosity solution 
of PDE \eqref{APIDE} with $u(T,\cdot)=g$.
\end{lemma}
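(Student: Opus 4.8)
The plan is to verify directly that $u$ is both a viscosity subsolution and a viscosity supersolution of \eqref{APIDE}; the terminal condition $u(T,\cdot)=g$ is assumed outright, so nothing has to be checked there. By the obvious symmetry between the two defining inequalities I will only carry out the subsolution case, the supersolution case being obtained by reversing every inequality and replacing ``maximum'' by ``minimum''. So fix $(t_0,x_0)\in(0,T)\times\bR^d$ and a test function $\varphi\in C^{1,2}((0,T)\times\bR^d)$ with $\varphi(t_0,x_0)=u(t_0,x_0)$ and $u\le\varphi$. First I would replace $\varphi$ by
\[
\tilde{\varphi}(t,x):=\varphi(t,x)+\|x-x_0\|^4+(t-t_0)^2 ,
\]
which has the same value and the same first- and second-order derivatives as $\varphi$ at $(t_0,x_0)$, still satisfies $u\le\tilde{\varphi}$, and for which $(t_0,x_0)$ is the \emph{unique} maximizer of $u-\tilde{\varphi}$ (indeed $(u-\tilde{\varphi})(t,x)=(u-\varphi)(t,x)-\|x-x_0\|^4-(t-t_0)^2<0=(u-\tilde{\varphi})(t_0,x_0)$ whenever $(t,x)\ne(t_0,x_0)$). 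In particular, on any closed box $\bar{B}_r:=\{(t,x):|t-t_0|\le r,\ \|x-x_0\|\le r\}\subseteq(0,T)\times\bR^d$ one has $\max_{\bar{B}_r}(u-\tilde{\varphi})=0$, attained only at $(t_0,x_0)$, while $\max_{\partial B_r}(u-\tilde{\varphi})=-2\delta<0$ for some $\delta=\delta(r)>0$.

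Next I would use the local uniform convergence \eqref{conv compact u}. For $n$ large enough that $\sup_{\bar{B}_r}|u^{(n)}-u|<\delta$, the continuous function $u^{(n)}-\tilde{\varphi}$ satisfies $\max_{\partial B_r}(u^{(n)}-\tilde{\varphi})<-\delta$ while its value at $(t_0,x_0)$ exceeds $-\delta$, so its maximum over $\bar{B}_r$ is attained at some point $(t_n,x_n)$ interior to $\bar{B}_r$. Moreover $(t_n,x_n)\to(t_0,x_0)$ and $u^{(n)}(t_n,x_n)\to u(t_0,x_0)$: any subsequential limit $(t_*,x_*)\in\bar{B}_r$ of $(t_n,x_n)$ satisfies, using uniform convergence on $\bar{B}_r$ together with continuity of $u$ and $\tilde{\varphi}$, that $(u-\tilde{\varphi})(t_*,x_*)=\lim_n(u^{(n)}-\tilde{\varphi})(t_n,x_n)\ge\lim_n(u^{(n)}-\tilde{\varphi})(t_0,x_0)=0$, which by uniqueness of the maximizer forces $(t_*,x_*)=(t_0,x_0)$. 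Hence $(t_n,x_n)$ is an interior local maximum point of $u^{(n)}-\tilde{\varphi}$, and applying the viscosity subsolution property of $u^{(n)}$ for \eqref{APIDE n} at $(t_n,x_n)$ gives
\[
-\frac{\partial}{\partial t}\tilde{\varphi}(t_n,x_n)
+G^{(n)}\big(t_n,x_n,u^{(n)}(t_n,x_n),\nabla_x\tilde{\varphi}(t_n,x_n),\operatorname{Hess}_x\tilde{\varphi}(t_n,x_n)\big)\le 0 .
\]
Since the definition of viscosity solution used here is phrased with globally dominating test functions, one invokes the standard equivalence with the formulation using only local maxima of $u^{(n)}-\varphi$; concretely the local maximum at $(t_n,x_n)$ can be turned into a global one by adding to $\tilde{\varphi}$ a smooth function that vanishes to second order at $(t_n,x_n)$ and grows faster at infinity than the linear-growth function $u^{(n)}$.

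Finally I would pass to the limit $n\to\infty$ in the displayed inequality. Because $\tilde{\varphi}\in C^{1,2}$ and $(t_n,x_n)\to(t_0,x_0)$, the quantities $\partial_t\tilde{\varphi}(t_n,x_n)$, $\nabla_x\tilde{\varphi}(t_n,x_n)$, $\operatorname{Hess}_x\tilde{\varphi}(t_n,x_n)$ converge to their values at $(t_0,x_0)$, while $u^{(n)}(t_n,x_n)\to u(t_0,x_0)$; in particular the arguments fed into $G^{(n)}$ eventually lie in a fixed compact set $\mathcal K\subseteq(0,T)\times\bR^d\times\bR\times\bR^d\times\bS^d$. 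Lemma \ref{lemma conv compact G} then gives $\sup_{\mathcal K}|G-G^{(n)}|\to 0$ and Lemma \ref{lemma continuity G} gives continuity of $G$, so $G^{(n)}(t_n,x_n,\dots)\to G\big(t_0,x_0,u(t_0,x_0),\nabla_x\tilde{\varphi}(t_0,x_0),\operatorname{Hess}_x\tilde{\varphi}(t_0,x_0)\big)$. Taking limits and recalling that $\tilde{\varphi}$ and $\varphi$ agree up to second order at $(t_0,x_0)$ yields
\[
-\frac{\partial}{\partial t}\varphi(t_0,x_0)
+G\big(t_0,x_0,u(t_0,x_0),\nabla_x\varphi(t_0,x_0),\operatorname{Hess}_x\varphi(t_0,x_0)\big)\le 0 ,
\]
which is exactly the required subsolution inequality. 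I expect the main obstacle to be the localization step in the second paragraph — producing the near-maxima $(t_n,x_n)$ of $u^{(n)}-\tilde{\varphi}$, showing they converge to $(t_0,x_0)$ while staying interior, and reconciling the global-test-function convention of the definition with the local-maximum one — since this is where the local uniform convergence \eqref{conv compact u}, the openness of $(0,T)\times\bR^d$, and the linear growth of the $u^{(n)}$ are used; the remaining steps are continuity, the two lemmas on $G$, and routine bookkeeping.
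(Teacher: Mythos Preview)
Your proof is correct and is the standard stability argument for viscosity solutions. The paper's own proof, however, takes a different route: it simply observes that Lemma~\ref{lemma continuity G} and Lemma~\ref{lemma conv compact G} furnish exactly the hypotheses of Corollary~2.20 in \cite{beck2021nonlinear}, and invokes that corollary as a black box. In other words, the paper outsources the entire argument to an external stability result, whereas you have written out that stability result from scratch. What you have done is essentially the content of the cited corollary: strict-maximum perturbation of the test function, localization of near-maxima via locally uniform convergence, and passage to the limit using the uniform convergence $G^{(n)}\to G$ on compacta together with continuity of $G$. Your approach is self-contained and makes transparent where each hypothesis is used; the paper's approach is shorter but requires the reader to track down the external reference.
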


\begin{proof}
By Lemma \ref{lemma continuity G} and Lemma \ref{lemma conv compact G},
the application of Corollary 2.20 in \cite{beck2021nonlinear} yields 
that $u$ is a viscosity solution of PDE \eqref{APIDE}.
\end{proof}

\begin{proposition}  
\label{proposition PDE existence bounded}                                            
Let Assumptions \ref{assumption Lip and growth}, \ref{assumption ellip},
and \ref{assumption gradient} hold. 
Moreover, assume that $\mu\in C^3_b(\bR^d,\bR^d)$, 
$\sigma\in C^3_b(\bR^d,\bR^d\times\bR^d)$,
$g\in C_b(\bR^d,\bR)$, 
and $f(t,\cdot,\cdot,\cdot)\in C_b(\bR^d\times\bR\times\bR^d,\bR)$ 
for all $t\in[0,T]$.
Then the following holds:
\begin{enumerate}[(i)]
\item
There exists a unique pair of Borel functions $(u,w)$ such that
$u\in C_b([0,T)\times\bR^d,\bR)$, $w\in C([0,T)\times\bR^d,\bR^d)$, 
and
\begin{equation}
\label{growth FP PIDE bbd}
\sup_{(s,y)\in[0,T)\times\bR^d}
\left(\frac{|u(s,y)|+(T-s)^{1/2}\|w(s,y)\|}{(d^p+\|y\|^2)^{1/2}}
\right)<\infty,
\end{equation}
and
\begin{align}
&
(u(t,x),w(t,x))
\nonumber\\
&
=\bE\left[g(X^{t,x}_T)\left(1,\frac{1}{T-t}
\int_t^T\left(\big[\sigma(X^{t,x}_{r})\big]^{-1}
DX^{t,x}_{r}\right)^T\,dW_r\right)\right]  
\nonumber\\
& \quad    
+\int_t^T\bE\left[
f\big(s,X^{t,x}_s,u(s,X^{t,x}_s),w(s,X^{t,x}_s)\big)
\left(1,\frac{1}{s-t}
\int_t^s\left(\big[\sigma(X^{t,x}_{r})\big]^{-1}
DX^{t,x}_{r}\right)^T\,dW_r\right)
\right]ds
\label{Feynman-Kac BEL bbd PDE}
\end{align}
for all $(t,x)\in[0,T)\times\bR^d$.
\item
The function $u:[0,T]\times\bR^d\to\bR$ defined in \eqref{Feynman-Kac BEL bbd PDE} 
with $u(T,\cdot):=g$
is a viscosity solution of PDE \eqref{APIDE}.
\item
For all $(t,x)\in[0,T)\times\bR^d$ the gradient of $u$ exists and satisfies
$\nabla_x u(t,x)=w(t,x)$.
\end{enumerate}
\end{proposition}

To prove the above proposition, we recall the notion of mollifications
of functions. 
For $\varepsilon>0$ and 
locally integrable functions $\phi:\bR^d\to\bR$ we use the notation 
$\phi^{(\varepsilon)}$ for the mollification of $\phi$, defined by 
\begin{equation}                                       \label{def mollification}
\phi^{(\varepsilon)}(x)
:=\varepsilon^{-d}\int_{\bR^d}\phi(y)k((x-y)/\varepsilon)\,dy
=\int_{\bR^d}\phi(x-\varepsilon z)k(z)\,dz,\quad x\in\bR^d , 
\end{equation}
where $k:\bR^d\to\bR$ is a fixed nonnegative smooth function on $\bR^d$ 
such that $k(x)=0$ for $|x|\geq1$, $k(-x)=k(x)$ 
for $x\in\bR^d$, and $\int_{\bR^d}k(x)\,dx=1$.

\begin{proof}[Proof of Proposition \ref{proposition PDE existence bounded}]
First notice that Corollary \ref{corollary FP} ensures (i).  
Let $\{\varepsilon_n\}_{n=1}^\infty$ be a sequence taking values in $(0,1]$ 
such that $\lim_{n\to\infty}\varepsilon_n=0$.
Then for each $n\in\bN$ and $t\in[0,T]$, 
we denote $g^{(\varepsilon_n)}$ and $f^{(\varepsilon_n)}(t,\cdot,\cdot,\cdot)$
the mollifications of $g$ and $f(t,\cdot,\cdot,\cdot)$, respectively.  
By the well-known properties of mollifications, we observe that
\begin{equation}
\label{bbd smooth g f molli}
g^{(\varepsilon_n)}\in C^{\infty}_b(\bR^d,\bR) \quad  \text{and} \quad 
f^{(\varepsilon_n)}(t,\cdot,\cdot,\cdot)
\in C^{\infty}_b(\bR^d\times\bR\times\bR^d,\bR) 
\;\; \text{for all $n\in\bN$ and $t\in[0,T]$}.
\end{equation}
Moreover by \eqref{assumption Lip mu sigma}, \eqref{def mollification}, 
and Jensen's inequality we have
for all $n\in\bN$, $x\in\bR^d$, $t\in[0,T]$, $v\in\bR$, and $w\in\bR^d$ that
\begin{align}
\big\|g^{(\varepsilon_n)}(x)-g(x)\big\|^2
&
=\left\|
\int_{\bR^d}g(x-\varepsilon_ny)k(y)\,dy
-\int_{\bR^d}g(x)k(y)\,dy
\right\|^2\nonumber\\
&
\leq \int_{\bR^d}\big\|
g(x-\varepsilon_ny)-g(x)
\big\|^2k(y)\,dy\leq\varepsilon_n^2L, 
\nonumber                         
\end{align}
and
\begin{equation*}
\big\|f^{(\varepsilon_n)}(t,x,v,w)-f(t,x,v,w)\big\|^2\leq\varepsilon_n^2L.
\end{equation*}
Hence, it holds that
\begin{equation}
\label{conv compact g f molli}
\lim_{n\to\infty}
\left[
\Big(\sup_{x\in\bR^d}\big|g^{(\varepsilon_n)}(x)-g(x)\big|\Big)
+
\Big(\sup_{(t,x,v,w)\in[0,T]\times\bR^d}
\big|f^{(\varepsilon_n)}(t,x,v,w)-f(t,x,v,w)\big|\Big)
\right]
=0.
\end{equation}
Next, for each $n\in\bN$ we consider the PDE 
\begin{align}
\frac{\partial}{\partial t}u^n(t,x)+\langle\nabla_x u^n(t,x),\mu(x)\rangle
&
+\frac{1}{2}\operatorname{Trace}\big(\sigma\sigma^T(x)
\operatorname{Hess}_xu^n(t,x)\big)
\nonumber\\
&
+f^{(\varepsilon_n)}(t,x,u^n(t,x),\nabla_x u^n(t,x))=0
\quad \text{on} \quad (0,T)\times\bR^d
\label{PDE smooth bbd}
\end{align}
with $u^n(T,x)=g^{(\varepsilon_n)}(x)$ for all $x\in \bR^d$.
Notice that \eqref{bbd smooth g f molli} and Lemma \ref{lemma smooth PDE}
ensures for all $n\in\bN$ that \eqref{PDE smooth bbd} has a unique classical
solution in $u^n\in C^{1,2}([0,T]\times\bR^d)$ satisfying 
for all $n\in\bN$ and $(t,x)\in[0,T)\times\bR^d$ that
\begin{align}
&
(u^n(t,x),\nabla_x u^n(t,x))
\nonumber\\
&
=\bE\left[g^{(\varepsilon_n)}(X^{t,x}_T)\left(1,\frac{1}{T-t}
\int_t^T\left(\sigma^{-1}(X^{t,x}_{r})
DX^{t,x}_{r}\right)^T\,dW_r\right)\right]  
\nonumber\\
& \quad    
+\int_t^T\bE\left[
f^{(\varepsilon_n)}\big(s,X^{t,x}_s,u^n(s,X^{t,x}_s),(\nabla_x u^n)(s,X^{t,x}_s)\big)
\left(1,\frac{1}{s-t}
\int_t^s\left(\sigma^{-1}(X^{t,x}_{r})
DX^{t,x}_{r}\right)^T\,dW_r\right)
\right]ds.
\label{Feynman-Kac BEL smooth bbd PDE n}
\end{align}
For each $n\in\bN$ and $s\in[0,T)$, we define $E_n(s)$ by
\begin{equation}
\label{def E n s}
E_n(s):=\sup_{(r,y)\in[s,T)\times\bR^d}\Big[
\big|u^n(r,y)-u(r,y)\big|+(T-r)^{1/2}\big\|(\nabla_y u^n)(r,y)-w(r,y)\big\|
\Big]
\end{equation}
To show the convergence of $E_n(0)$,
by \eqref{assumption Lip f} and \eqref{assumption Lip mu sigma} 
we first observe for all $(t,x)\in[0,T)\times\bR^d$ and $n\in\bN$ that
\begin{equation}
\label{smooth conv 1}
\left|\bE\big[g^{(\varepsilon_n)}(X^{t,x}_T)\big]-\bE\big[g(X^{t,x}_T)\big]\right|
\leq \sup_{y\in\bR^d}\left[\big|g^{(\varepsilon_n)}(y)-g(y)\big|\right],
\end{equation}
and
\begin{align}
&
\left|
\int_t^T\bE\left[
f^{(\varepsilon_n)}\big(s,X^{t,x}_s,u^n(s,X^{t,x}_s),(\nabla_x u^n)(s,X^{t,x}_s)\big)
-f\big(s,X^{t,x}_s,u(s,X^{t,x}_s),w(s,X^{t,x}_s)\big)
\right]
\right|
\nonumber\\
&
\leq
\left|
\int_t^T\bE\left[
f^{(\varepsilon_n)}\big(s,X^{t,x}_s,u^n(s,X^{t,x}_s),(\nabla_x u^n)(s,X^{t,x}_s)\big)
-f\big(s,X^{t,x}_s,u^n(s,X^{t,x}_s),(\nabla_x u^n)(s,X^{t,x}_s)\big)
\right]
\right|
\nonumber\\
& \quad
+\left|
\int_t^T\bE\left[
f\big(s,X^{t,x}_s,u^n(s,X^{t,x}_s),(\nabla_x u^n)(s,X^{t,x}_s)\big)
-f\big(s,X^{t,x}_s,u(s,X^{t,x}_s),w(s,X^{t,x}_s)\big)
\right]
\right|
\nonumber\\
&
\leq
\sup_{(s,v)\in[0,T]\times\bR^{2d+1}}\left[T\big|
f^{(\varepsilon_n)}(s,v)-f(s,v)
\big|\right]
\nonumber\\
& \quad
+\int_t^T
\frac
{L^{\frac{1}{2}}(1+T^{\frac{1}{2}})\bE\left[
\big|u^n(s,X^{t,x}_s)-u(s,X^{t,x}_s)\big|
+(T-s)^{1/2}\big\|(\nabla_x u^n)(s,X^{t,x}_s)-w(s,X^{t,x}_s)\big\|
\right]
}
{(T-s)^{1/2}}
\,ds
\nonumber\\
&
\leq
\sup_{(s,v)\in[0,T]\times\bR^{2d+1}}\left[T\big|
f^{(\varepsilon_n)}(s,v)-f(s,v)
\big|\right]
+L^{\frac{1}{2}}(1+T^{\frac{1}{2}})\int_t^T(T-s)^{-1/2}
E_n(s)\,ds.
\label{smooth conv 2}
\end{align}
Furthermore, by Cauchy-Schwarz, It\^o's isometry, \eqref{moment est partial},
and \eqref{sigma inverse est} we have for all $n\in\bN$, $k\in\{1,2,\dots,d\}$,
and $(t,x)\in[0,T)\times\bR^d$ that
\begin{align}
&
(T-t)^{1/2}
\left|
\bE\left[
\big(g^{(\varepsilon_n)}(X^{t,x}_T)-g(X^{t,x}_T)\big)
\frac{1}{T-t}\int_t^T\left(
\sigma^{-1}(X^{t,x}_r)\frac{\partial}{\partial x_k}X^{t,x}_r
\right)dW_r
\right]
\right|
\nonumber\\
&
\leq \sup_{y\in\bR^d}\left[\big|g^{(\varepsilon_n)}(y)-g(y)\big|\right]
(T-t)^{-1/2}\left(\bE\left[
\int_t^T
\Big\|\sigma^{-1}(X^{t,x}_r)\frac{\partial}{\partial x_k}X^{t,x}_r\Big\|^2dr
\right]\right)^{1/2}
\nonumber\\
&
\leq 
\sup_{y\in\bR^d}\left[\big|g^{(\varepsilon_n)}(y)-g(y)\big|\right]
(T-t)^{-1/2}
\left(\varepsilon_d^{-1}
\int_t^T\bE\left[\Big\|\frac{\partial}{\partial x_k}X^{t,x}_r\Big\|^2\right]dr
\right)^{1/2}
\nonumber\\
&
\leq C_{d,0}^{1/2}\varepsilon_d^{-1/2}
\sup_{y\in\bR^d}\left[\big|g^{(\varepsilon_n)}(y)-g(y)\big|\right].
\label{smooth conv 3}
\end{align}
In addition, by Cauchy-Schwarz, It\^o's isometry, H\"older inequality,
\eqref{assumption Lip f}, \eqref{sigma inverse est}, and \eqref{moment est partial} it holds for all
$(t,x)\in[0,T)\times\bR^d$, $k\in\{1,2,\dots,d\}$, $n\in\bN$, 
and $\beta\in(0,1)$ that
\begin{align}
&
(T-t)^{\frac{1}{2}}\Bigg|\int_t^T\bE
\Bigg[
\Big(
f^{(\varepsilon_n)}
\big(s,X^{t,x}_s,u^n(s,X^{t,x}_s),(\nabla_x u^n)(s,X^{t,x}_s)\big)
-
f
\big(s,X^{t,x}_s,u(s,X^{t,x}_s),w(s,X^{t,x}_s)\big)
\Big)
\nonumber\\
& \quad \cdot
\frac{1}{s-t}\int_t^s\Big(
\sigma^{-1}(X^{t,x}_r)\frac{\partial}{\partial x_k}X^{t,x}_r
\Big)^T\,dW_r
\Bigg]\,
ds\Bigg|
\nonumber\\
&
\leq 
(T-t)^{\frac{1}{2}}
\sup_{(s,v)\in[0,T]\times\bR^{2d+1}}
\Big[\big|f^{(\varepsilon_n)}(s,v)-f(s,v)\big|\Big]
\cdot\int_t^T\frac{1}{s-t}
\bE\left[\bigg|
\int_t^s\Big(
\sigma^{-1}(X^{t,x}_r)\frac{\partial}{\partial x_k}X^{t,x}_r
\Big)^T\,dW_r\bigg|
\right]ds
\nonumber\\
& \quad
+(T-t)^{\frac{1}{2}}
\int_t^T\bE\Bigg[
\left|
f\big(s,X^{t,x}_s,u^n(s,X^{t,x}_s),(\nabla_x u^n)(s,X^{t,x}_s)\big)
-
f\big(s,X^{t,x}_s,u(s,X^{t,x}_s),w(s,X^{t,x}_s)\big)
\right|
\nonumber\\
& \quad
\cdot
\frac{1}{s-t}
\bigg\|
\int_t^s
\Big(\sigma^{-1}(X^{t,x}_s)\frac{\partial}{\partial x_k}X^{t,x}_r\Big)^T\,dW_r
\bigg\|
\Bigg]\,ds
\nonumber\\
&
\leq
(T-t)^{\frac{1}{2}}
\sup_{(s,v)\in[0,T]\times\bR^{2d+1}}
\Big[\big|f^{(\varepsilon_n)}(s,v)-f(s,v)\big|\Big]
\cdot\int_t^T\frac{1}{s-t}
\Bigg(\bE\Bigg[
\int_t^s\Big|
\sigma^{-1}(X^{t,x}_r)\frac{\partial}{\partial x_k}X^{t,x}_r
\Big|^2\,dr
\Bigg]\Bigg)^{1/2}ds
\nonumber\\
& \quad
+(T-t)^{\frac{1}{2}}
\int_t^T\bE\Bigg[
\frac
{
L^{\frac{1}{2}}(1+T^{\frac{1}{2}})\Big(
\big|u^n(s,X^{t,x}_s)-u(s,X^{t,x}_s)\big|
+(T-s)^{\frac{1}{2}}\big\|(\nabla_x u^n)(s,X^{t,x}_s)-w(s,X^{t,x}_s)\big\|
\Big)
}
{(T-s)^\frac{1}{2}}
\nonumber\\
& \quad
\cdot
\frac{1}{s-t}
\left\|
\int_t^s
\Big(\sigma^{-1}(X^{t,x}_s)\frac{\partial}{\partial x_k}X^{t,x}_r\Big)^T\,dW_r
\right\|
\Bigg]\,ds
\nonumber\\
&
\leq
(T-t)^{\frac{1}{2}}
\sup_{(s,v)\in[0,T]\times\bR^{2d+1}}
\Big[\big|f^{(\varepsilon_n)}(s,v)-f(s,v)\big|\Big]
\cdot\int_t^T\frac{1}{s-t}
\Bigg(\bE\Bigg[
\int_t^s\Big\|
\sigma^{-1}(X^{t,x}_r)\frac{\partial}{\partial x_k}X^{t,x}_r
\Big\|^2\,dr
\Bigg]\Bigg)^{1/2}ds
\nonumber\\
& \quad
+(T-t)^{\frac{1}{2}}L^{\frac{1}{2}}(1+T^{\frac{1}{2}})\int_t^T
\frac
{
E_n(s)
}
{(T-s)^{\frac{1}{2}}}
\cdot
\frac{1}{s-t}
\Bigg(\bE\Bigg[
\int_t^s\Big\|
\sigma^{-1}(X^{t,x}_r)\frac{\partial}{\partial x_k}X^{t,x}_r
\Big\|^2\,dr
\Bigg]\Bigg)^{1/2}\,ds
\nonumber\\
&
\leq
(T-t)^{\frac{1}{2}}
\sup_{(s,v)\in[0,T]\times\bR^{2d+1}}
\Big[\big|f^{(\varepsilon_n)}(s,v)-f(s,v)\big|\Big]
\cdot\int_t^T\frac{1}{s-t}
\Bigg(\varepsilon_d^{-1}
\int_t^s \bE\left[\Big\|
\frac{\partial}{\partial x_k}X^{t,x}_r
\Big\|^2
\right]dr\Bigg)^{1/2}ds
\nonumber\\
& \quad
+(T-t)^{\frac{1}{2}}L^{\frac{1}{2}}(1+T^{\frac{1}{2}})\int_t^T
\frac
{
E_n(s)
}
{(T-s)^{\frac{1}{2}}}
\cdot
\frac{1}{s-t}
\Bigg(\varepsilon_d^{-1}
\int_t^s \bE\left[\Big\|
\frac{\partial}{\partial x_k}X^{t,x}_r
\Big\|^2
\right]dr\Bigg)^{1/2}\,ds
\nonumber\\
&
\leq
(T-t)^{1/2}C_{d,0}^{1/2}\varepsilon_d^{-1/2}
\sup_{(s,v)\in[0,T]\times\bR^{2d+1}}
\Big[\big|f^{(\varepsilon_n)}(s,v)-f(s,v)\big|\Big]
\int_t^T(s-t)^{-1/2}\,ds
\nonumber\\
& \quad
+(T-t)^{1/2}(LC_{d,0})^{1/2}(1+T^{1/2})\varepsilon_d^{-1/2}
\int_t^T
(s-t)^{-\frac{1}{2}}(T-s)^{-\frac{1}{2}}E_n(s)\,ds.
\label{smooth conv 4}
\end{align}
Then combining \eqref{Feynman-Kac BEL smooth bbd PDE n}, 
\eqref{Feynman-Kac BEL bbd PDE}, \eqref{def E n s}, \eqref{smooth conv 1}, 
\eqref{smooth conv 2}, \eqref{smooth conv 3}, and \eqref{smooth conv 4} yields 
for all $t\in[0,T]$ and $n\in\bN$ that
\begin{align*}
E_n(t)
&
\leq
\big[1+T+(1+2T)dC_{d,0}^{1/2}\varepsilon_d^{-1/2}\big]
\nonumber\\
& \quad
\cdot
\Bigg(
\sup_{(s,v)\in[0,T]\times\bR^{2d+1}}
\Big[\big|f^{(\varepsilon_n)}(s,v)-f(s,v)\big|\Big]
+\sup_{y\in\bR^d}\left[\big|g^{(\varepsilon_n)}(y)-g(y)\big|\right]
\Bigg)
\nonumber\\
& \quad
+
L^{1/2}(1+T^{1/2})
\big[1+(TC_{d,0}\varepsilon_d^{-1})^{1/2}\big]
\int_t^T
\left[
(T-s)^{-1/2}+(s-t)^{-1/2}(T-s)^{-1/2}
\right]
E_n(s)\,ds.
\end{align*}
Hence, by \eqref{time integral est 1/2} and Gr\"onwall's lemma 
we have for all $n\in\bN$ that
\begin{equation}
\label{gronwall E n 0}
E_n(0)\leq 
c_1e^{c_2T}
\Bigg(
\sup_{(s,v)\in[0,T]\times\bR^{2d+1}}
\Big[\big|f^{(\varepsilon_n)}(s,v)-f(s,v)\big|\Big]
+\sup_{y\in\bR^d}\left[\big|g^{(\varepsilon_n)}(y)-g(y)\big|\right]
\Bigg),
\end{equation}
where
$$
c_1:=1+T+(1+2T)dC_{d,0}^{1/2}\varepsilon_d^{-1/2},
$$
and
$$
c_2:=
2(2+T^{1/2})(1+T^{1/2})L^{1/2}
\big[1+(TC_{d,0}\varepsilon_d^{-1})^{1/2}\big].
$$
Then notice that \eqref{conv compact g f molli} and \eqref{gronwall E n 0}
ensures that
$
\lim_{n\to\infty}E_n(0)=0.
$
This implies for all every compact set $\cK\in (0,T)\times\bR^d$ that
\begin{equation}
\label{compact conv u w n}
\lim_{n\to\infty}
\sup_{(r,y)\in\cK}\Big[
\big|u^n(r,y)-u(r,y)\big|+\big\|(\nabla_y u^n)(r,y)-w(r,y)\big\|
\Big]
=0.
\end{equation}
Therefore, by the fact that $u^n$ is a viscosity solution of 
PDE \eqref{PDE smooth bbd} for all $n\in\bN$, and
Lemma \ref{v solution approximation} we obtain (ii).
Moreover, \eqref{compact conv u w n} and e.g., 
\cite[Section 16.3.5, Theorem 4]{zorich2016mathematical} ensure (iii).
The proof of this lemma is thus completed.
\end{proof}

\subsection{Proof of Proposition \ref{theorem PDE existence}}
\label{section proof existence PDE}

\begin{proof}[Proof of Proposition \ref{theorem PDE existence}]
Throughout this proof, for $N\in\{d,2d+1\}$ 
let $\chi_N\in C^\infty_c(\bR^N,\bR)$ such that 
$\chi^{(n)}(x)=1$ for $\|x\|\leq 1$, 
$0\leq\chi_N(x)\leq 1$ for $1<\|x\|<2$,
and $\chi_N(x)=0$ for $\|x\|\geq 2$.
Then for each $n\in\bN$, $t\in[0,T]$, $x\in\bR^d$, and $v\in\bR^{2d+1}$ 
define 
$$\chi_N^{(n)}(x):=\chi_N(x/n), \quad N\in\{d,2d+1\},$$ 
and
\begin{align*}
g^{(n)}(x):=g(x)\chi^{(n)}_d(x), \quad f^{(n)}(t,v):=f(t,v)\chi^{(n)}_{2d+1}(v).
\end{align*}
By the construction of $g^{(n)}$ and $f^{(n)}$, it is easy to see that
$g^{(n)}\in C_b(\bR^d,\bR)$ and $f^{(n)}(t,\cdot)\in C_b(\bR^{2d+1},\bR)$
for all $n\in\bN$.
Also notice that we have for all $n\in\bN$, $t\in[0,T]$, 
$x,x'\in\bR^d$ with $\|x\|\wedge\|x'\|\geq 2n$, 
and $v,v'\in\bR^{2d+1}$ with $\|v\|\wedge\|v'\|\geq 2n$
that
\begin{equation}
\label{Lip g n f n 0}
\big\|g^{(n)}(x)-g^{(n)}(x')\big\|
+
\big|f^{(n)}(t,v)-f^{(n)}(t,v')\big|
=0.
\end{equation}
Moreover, by \eqref{assumption Lip mu sigma}, \eqref{assumption growth f g},
and the mean value theorem
it holds for all $n\in\bN$ and $x,x'\in\bR^d$ with 
$\|x'\|\leq 2n$ that  
\begin{align}
&
\big|g^{(n)}(x)-g^{(n)}(x')\big|^2
=\big|g(x)\chi^{(n)}_d(x)-g(x')\chi^{(n)}_d(x')\big|^2
\nonumber\\
&
\leq 2\big|g(x)\chi^{(n)}_d(x)-g(x')\chi^{(n)}_d(x)\big|^2
+2\big|
g(x')(\chi^{(n)}_d(x)-\chi^{(n)}_d(x'))
\big|^2
\nonumber\\
&
\leq
2L\|x-x'\|^2\cdot \sup_{y\in\bR^d}\big(|\chi_d(y)|^2\big)
+2\cdot\sup_{\|y\|\leq 2n}\big(|g(y)|^2\big)\cdot
\sup_{y\in\bR^d}\big(\big\|\nabla\chi^{(n)}_d(y)\big\|^2\big)\cdot\|x-x'\|^2
\nonumber\\
&
\leq
2L\|x-x'\|^2
+\frac{2L(d^p+4n^2)}{n^2}\cdot
\sup_{y\in\bR^d}\big(\big\|\nabla\chi_d(y)\big\|^2\big)\cdot\|x-x'\|^2
\nonumber\\
&
\leq L_1\|x-x'\|^2,
\label{Lip g n}
\end{align}
where
$$
L_1:=2L\left[
1+5d^p\cdot
\sup_{y\in\bR^d}\big(\big\|\nabla\chi_d(y)\big\|^2\big)
\right].
$$
Similarly, \eqref{assumption Lip f}, \eqref{assumption growth f g},
and the mean-value theorem ensures for all 
$n\in\bN$ and $v,v'\in\bR^{2d+1}$ with $\|v'\|\leq 2n$ that
\begin{equation}
\label{Lip f n}
\big|f^{(n)}(v)-f^{(n)}(v')\big|^2
\leq 
L_2\|v-v'\|^2,
\end{equation}
where
$$
L_2:=2L\left[
1+5(1+d^p)\cdot
\sup_{y\in\bR^d}\big(\big\|\nabla\chi_{2d+1}(y)\big\|^2\big)
\right].
$$
Furthermore, by \eqref{assumption growth} and \eqref{assumption Lip f} 
we have for all $n\in\bN$ and
$(t,x)\in[0,T]\times\bR^d$ and $v\in\bR^{2d+1}$ that
\begin{equation}
\label{LG f g n}
\big|g^{(n)}(x)\big|^2\leq \big|g(x)\big|^2\leq L(d^p+\|x\|^2)
\quad \text{and} \quad
\big|f^{(n)}(v)\big|^2\leq L(d^p+\|v\|^2)
\end{equation}
In addition, by \eqref{assumption growth f g} and the construction of $g^{(n)}$
it holds for all $n\in\bN$, $x\in\bR^d$, and $\varepsilon\in(0,1)$ that
\begin{align}
\frac{|g^{(n)}(x)-g(x)|}{(d^p+\|x\|^2)^{\frac{1+\varepsilon}{2}}}
&
=
\frac{\mathbf{1}_{\{\|x\|\geq n\}}\big|g(x)(\chi^{(n)}_d(x)-1)\big|}
{(d^p+\|x\|^2)^{\frac{1+\varepsilon}{2}}}
\leq
\frac{L^{1/2}(d^p+\|x\|^2)^{1/2}}
{(d^p+n^2)^{\frac{\varepsilon}{2}}(d^p+\|x\|^2)^{1/2}}
=\frac{L^{1/2}}{(d^p+n^2)^{\frac{\varepsilon}{2}}}.
\label{est g n g local}
\end{align}
Analogously, \eqref{assumption Lip f}, \eqref{assumption growth f g}, 
and the construction of $f^{(n)}$
implies for all $n\in\bN$, $(t,v)\in [0,T]\times\bR^{2d+1}$, 
and $\varepsilon\in(0,1)$ that
\begin{align}
\frac{|f^{(n)}(t,v)-f(t,v)|}{(d^p+\|v\|^2)^{\frac{1+\varepsilon}{2}}}
&
=
\frac{\mathbf{1}_{\{\|v\|\geq n\}}\big|f(t,v)(\chi^{(n)}_{2d+1}(v)-1)\big|}
{(d^p+\|v\|^2)^{\frac{1+\varepsilon}{2}}}
\leq
\frac{L^{1/2}(d^p+\|v\|^2)^{1/2}}
{(d^p+n^2)^{\frac{\varepsilon}{2}}(d^p+\|v\|^2)^{1/2}}
=\frac{L^{1/2}}{(d^p+n^2)^{\frac{\varepsilon}{2}}}.
\label{est f n f local}
\end{align}
Note that \eqref{est g n g local} and \eqref{est f n f local} ensures for 
every compact set $\cK\in[0,T]\times\bR^d$ and every compact set $\cK'\in\bR^{2d+1}$ that
\begin{equation}
\label{conv g n f n}
\lim_{n\to\infty}
\bigg(
\sup_{x\in\cK}\Big[\big|g^{(n)}(x)-g(x)\big|\Big]
+
\sup_{(t,v)\in\cK'}\Big[\big|f^{(n)}(t,v)-f(t,v)\big|\Big]
\bigg)
=0.
\end{equation}
For each $n\in\bN$ and $(t,x)\in[0,T]\times\bR^d$, 
let $\big(X^{t,x,(n)}_s\big)_{s\in[0,T]}:[t,T]\times\bR^d\to\bR^d$ 
be the continuous stochastic process such that
\begin{equation}
dX^{t,x,(n)}_s
=\mu^{(n)}\big(X^{t,x,(n)}_s\big)\,ds
+\sigma^{(n)}\big(X^{t,x,(n)}_s\big)\,dW_s,
\quad s\in[0,T]
\end{equation}
with $X^{t,x}_0=x$,
where $\mu^{(n)}$ and $\sigma^{(n)}$ are the coefficients taken from
Lemma \ref{lemma mu n} and Lemma \ref{lemma sigma n}, respectively.
Moreover, for every $n\in\bN$, $(t,x)\in[0,T]\times\bR^d$, and $s\in[t,T]$
we use the notation 
$$
DX^{t,x,(n)}_s:=\left(\frac{\partial}{\partial x_1} X^{t,x,(n)}_s,
\frac{\partial}{\partial x_2} X^{t,x,(n)}_s,
\dots,\frac{\partial}{\partial x_d}X^{t,x,(n)}_s\right).
$$
For every $(t,x)\in[0,T]\times\bR^d$, we also use the notations
$\mu^{(0)}=\mu$, $\sigma^{(0)}=\sigma$,
$X^{t,x,(0)}=X^{t,x}$, and $DX^{t,x,(0)}=DX^{t,x}$.
Then by \eqref{Lip sigma n}, \eqref{LG sigma n}, \eqref{Lip mu n}, \eqref{LG mu n},
\eqref{Lip g n f n 0}, \eqref{Lip g n}, \eqref{Lip f n}, and \eqref{LG f g n}
we apply Corollary \ref{corollary FP}
(with $g^d\cal g^{(n)}$, $f^d\cal f^{(n)}$, 
$\mu^d\cal \mu^{(n)}$, $\sigma^d\cal\sigma^{(n)}$,
and $X^{d,0,t,x}\cal X^{t,x,(n)}$
in the notation of Corollary \ref{corollary FP})
to obtain for all $n\in\bN_0$ and $m\in\bN$ that
there exists a unique pair of Borel functions $(u^{n,m},w^{n,m})$
such that
$u^{n,m}\in C_{lin}([0,T)\times\bR^d,\bR)$, 
$w^{n,m}\in C([0,T)\times\bR^d,\bR^d)$, 
and
\begin{align}
u^{n,m}(t,x)
&
=\bE\left[g^{(m)}(X^{t,x,(n)}_T)\right]     
+\int_t^T\bE\left[
f^{(m)}\big(s,X^{t,x,(n)}_s,u^{n,m}(s,X^{t,x,(n)}_s),w^{n,m}(s,X^{t,x,(n)}_s)\big)
\right]ds
\label{BEL n k u}
\end{align}
and
\begin{align}
w^{n,m}(t,x)
&
=\bE\left[
g^{(m)}(X^{t,x,(n)}_T)
\frac{1}{T-t}
\int_t^T\left(\big[\sigma^{(n)}(X^{t,x,(n)}_{r})\big]^{-1}
DX^{t,x,(n)}_{r}\right)^T\,dW_r
\right]  
\nonumber\\
& \quad    
+\int_t^T\bE\Bigg[
f^{(m)}\big(s,X^{t,x,(n)}_s,u^{n,m}(s,X^{t,x,(n)}_s),w^{n,m}(s,X^{t,x,(n)}_s)\big)
\nonumber\\
& \quad
\cdot
\frac{1}{s-t}
\int_t^s\left(\big[\sigma(X^{t,x,(n)}_{r})\big]^{-1}
DX^{t,x,(n)}_{r}\right)^T\,dW_r
\Bigg]\,ds
\label{BEL n k grad}
\end{align}
for all $(t,x)\in[0,T)\times\bR^d$.
Proposition \ref{proposition PDE existence bounded} ensures for all
$n\in\bN$ and $m\in\bN$ that $u^{n,m}$ is a viscosity solution 
of the following PDE
\begin{align}
&
\frac{\partial}{\partial t}u^{n,m}(t,x)
+\langle\nabla_xu^{n,m}(t,x),\mu^{(n)}(t,x)\rangle
+\frac{1}{2}\operatorname{Trace}
\left(\sigma^{(n)}(t,x)[\sigma^{(n)}(t,x)]^T
\operatorname{Hess}_xu^{n,m}(t,x)\right)\nonumber\\
&
+f^{(m)}\big(t,x,u^{n,m}(t,x), \nabla_x u^{n,m}(t,x)\big)=0.       
\label{PDE n m k}
\end{align}
Proposition \ref{proposition PDE existence bounded} also implies 
for all $n,m\in\bN$ and $(t,x)\in[0,T)\times\bR^d$ 
that $\nabla_x u^{n,m}(t,x)$ exists and
coincides with $w^{n,m}(t,x)$.
For each $(t,x)\in[0,T]\times\bR^d$, $k\in\{1,2,\dots,d\}$, and $n\in\bN$
let $\tau^{t,x}_n:\Omega\to[t,T]$ and $\tau^{t,x,k}_n:\Omega\to[t,T]$ 
be stopping times defined by
\begin{equation}                                     \label{def tau n}
\tau^{t,x}_{n}:=\inf
\big\{
s\geq t: \max\big(\big\|X^{t,x}_s\big\|,\big\|X^{t,x,(n)}_s\big\|\big)>n
\big\}
\wedge T,
\end{equation}
and
\begin{equation}                                     \label{def tau k n}
\tau^{t,x,k}_n:=\inf
\Big\{
s\geq t: 
\max\Big(
\big\|X^{t,x}_s\big\|,\big\|X^{t,x,(n)}_s\big\|,
\Big\|\frac{\partial}{\partial x_k} X^{t,x}_s\Big\|,
\Big\|\frac{\partial}{\partial x_k} X^{t,x,(n)}_s\Big\|
\Big)
>n
\Big\}
\wedge T.
\end{equation}
Then by Assumption \ref{assumption Lip and growth}, 
Lemma \ref{lemma sigma n}, and Lemma \ref{lemma mu n}
the application of Lemma \ref{lemma two SDEs} yields for all 
$(t,x)\in[0,T]\times\bR^d$, $k\in\{1,2,\dots,d\}$, 
and $n\in\bN$ that
\begin{equation}                                        \label{prob 1 tau n}
\bP\left(
\mathbf{1}_{\{s\leq\tau^{t,x}_n\}}\big\|X^{t,x}_s-X^{t,x,(n)}_s\big\|=0
\; \text{for} \; \text{all} \; s\in[t,T]
\right)=1,
\end{equation}
and
\begin{equation}                                     \label{prob 1 tau k n}
\bP\left(
\mathbf{1}_{\{s\leq\tau^{t,x,k}_n\}}
\Big\|\frac{\partial}{\partial x_k} X^{t,x}_s
-\frac{\partial}{\partial x_k} X^{t,x,(n)}_s\Big\|=0
\; \text{for} \; \text{all} \; s\in[t,T]
\right)=1.
\end{equation}
Furthermore, by \eqref{SDE moment est}, 
\eqref{Lip sigma n}, \eqref{LG sigma n},
\eqref{Lip mu n}, \eqref{LG mu n}, and Lemma \ref{lemma SDE partial est}
it holds for all $n\in\bN$, $k\in\{1,2,\dots,d\}$, and $(t,x)\in[0,T]\times\bR^d$
that
\begin{equation}                                             \label{est sup X n}
\bE\bigg[\sup_{s\in[t,T]}\big\|X^{t,x,(n)}_s\big\|^2\bigg]
\leq c_{d,1}(d^p+\|x\|^2),
\quad
\bE\Bigg[
\sup_{s\in[t,T]}\Big\|\frac{\partial}{\partial x_k}X^{t,x,(n)}_s\Big\|^2\Bigg]
\leq c_{d,1},
\end{equation}
where $c_{d,1}$ is a positive constant only depending on 
$C_{(d),1}$ and $C_{(d),2}$.
Thus, by \eqref{SDE moment est}, \eqref{moment est partial}, 
\eqref{def tau n}, \eqref{def tau k n}, and Chebyshev's inequality, 
it holds for all $(t,x)\in[0,T]\times\bR^d$, $n\in\bN$, and $k\in\{1,2,\dots,d\}$
that
\begin{align}                                       
\bP\big(\tau^{t,x}_n<T\big)
&
\leq \bP\left(\big\|X^{t,x}_{\tau^{t,x}_n}\big\|
\geq n\right)
+\bP\left(\big\|X^{t,x,(n)}_{\tau^{t,x}_n}\big\|
\geq n\right)
\leq n^{-2}\left(\bE\left[\big\|X^{t,x}_{\tau^{t,x}_n}\big\|^2\right]
+\bE\left[\big\|X^{t,x,(n)}_{\tau^{t,x}_n}\big\|^2\right]\right)
\nonumber\\
&
\leq \frac{(C_{(d,2)}+c_{d,1})(d^p+\|x\|^2)}{n^2},                       
\label{tau t x n est}
\end{align}
and
\begin{align} 
&                                      
\bP\big(\tau^{t,x,k}_n<T\big)
\nonumber\\
&
\leq \bP\left(\big\|X^{t,x}_{\tau^{t,x,k}_n}\big\|
\geq n\right)
+\bP\left(\big\|X^{t,x,(n)}_{\tau^{t,x,k}_n}\big\|
\geq n\right)
+ \bP\left(\Big\|\frac{\partial}{\partial x_k} X^{t,x}_{\tau^{t,x,k}_n}\Big\|
\geq n\right)
+\bP\left(\Big\|\frac{\partial}{\partial x_k} X^{t,x,(n)}_{\tau^{t,x,k}_n}\Big\|
\geq n\right)
\nonumber\\
&
\leq 
n^{-2}\left(
\bE\left[\big\|X^{t,x}_{\tau^{t,x,k}_n}\big\|^2\right]
+
\bE\left[\big\|X^{t,x,(n)}_{\tau^{t,x,k}_n}\big\|^2\right]
+
\bE\left[\Big\|\frac{\partial}{\partial x_k} X^{t,x}_{\tau^{t,x,k}_n}\Big\|^2\right]
+
\bE\left[\Big\|\frac{\partial}{\partial x_k} 
X^{t,x,(n)}_{\tau^{t,x,k}_n}\Big\|^2\right]
\right)
\nonumber\\
&
\leq \frac{(C_{d,0}+c_{d,1})+(C_{(d,2)}+c_{d,1})(d^p+\|x\|^2)}{n^2}.                         
\label{tau t x k n est}
\end{align}
This together with \eqref{prob 1 tau n} implies for all $n\in\bN_0$, $m\in\bN$, 
and $(t,x)\in[0,T]\times\bR^d$ that
\begin{align}
\bE\left[
\big|
g^{(m)}\big(X^{t,x,(n)}_T\big)-g^{(m)}\big(X^{t,x}_T\big)
\big|
\right]
&
=\bE\Big[
\mathbf{1}_{\{\tau^{t,x}_n<T\}}\big|
g^{(m)}\big(X^{t,x,(n)}_T\big)-g^{(m)}\big(X^{t,x}_T\big)
\big|
\Big]
\nonumber\\
&
\leq 2\Big[\sup_{y\in\bR^d}\big|g^{(m)}(y)\big|\Big]
\big[\bP\big(\tau^{t,x}_n<T\big)\big]^{1/2}
\nonumber\\
&
\leq 2\bigg[\sup_{y\in\bR^d}\big|g^{(m)}(y)\big|\bigg]
\frac{\big[(C_{(d,2)}+c_{d,1})(d^p+\|x\|^2)\big]^{1/2}}{n}.
\label{whisky 1}
\end{align}
Next, by the triangle inequality we obtain for all $(t,x)\in[0,T)\times\bR^d$,
$m,n\in\bN$, and $k\in\{1,2,\dots,d\}$ that
\begin{align}
&
(T-t)^{1/2}\bE\Bigg[\bigg|
g^{(m)}(X^{t,x,(n)}_T)
\frac{1}{T-t}
\int_t^T\left(
\big[\sigma^{(n)}(X^{t,x,(n)}_r)\big]^{-1}
\frac{\partial}{\partial x_k}X^{t,x,(n)}_r
\right)^TdW_r
\nonumber\\
&
-
g^{(m)}(X^{t,x}_T)
\frac{1}{T-t}
\int_t^T\left(
\sigma^{-1}(X^{t,x}_r)
\frac{\partial}{\partial x_k}X^{t,x}_r
\right)^TdW_r
\bigg|\Bigg]
\nonumber\\
&
\leq 
\sum_{i=1}^3A^{k,m,n}_i(t,x),
\label{ineq A k m n}
\end{align}
where
\begin{align*}
&
A^{k,m,n}_1(t,x):=
(T-t)^{1/2}
\bE\Bigg[\bigg|
\big(g^{(m)}(X^{t,x,(n)}_T)-g^{(m)}(X^{t,x}_T)\big)
\frac{1}{T-t}
\int_t^T\left[
\sigma^{-1}(X^{t,x}_r)\frac{\partial}{\partial x_k}X^{t,x}_r
\right]^TdW_r
\bigg|\Bigg],
\\
&
A^{k,m,n}_2(t,x):=
(T-t)^{1/2}
\bE\Bigg[\bigg|
g^{(m)}(X^{t,x,(n)}_T)
\frac{1}{T-t}
\int_t^T\left[
\sigma^{-1}(X^{t,x}_r)
\Big(\frac{\partial}{\partial x_k}X^{t,x,(n)}_r
-\frac{\partial}{\partial x_k}X^{t,x}_r\Big)
\right]^TdW_r
\bigg|\Bigg],
\end{align*}
and
$$
A^{k,m,n}_3(t,x):=
(T-t)^{1/2}
\bE\Bigg[\bigg|
\frac
{g^{(m)}(X^{t,x,(n)}_T)}
{T-t}
\int_t^T\left[
\big(\big[\sigma^{(n)}(X^{t,x,(n)}_r)\big]^{-1}-\sigma^{-1}(X^{t,x}_r)\big)
\frac{\partial}{\partial x_k}X^{t,x,(n)}_r
\right]^TdW_r
\bigg|\Bigg].
$$
Then by \eqref{sigma inverse est}, \eqref{SDE moment est},
\eqref{moment est partial}, \eqref{def tau n}, \eqref{prob 1 tau n}, 
\eqref{tau t x n est}, H\"older's inequality, and It\^o's isometry 
we have for all $k\in\{1,2,\dots,d\}$, $m,n\in\bN$, 
and $(t,x)\in[0,T)\times\bR^d$ that
\begin{align}
&
A^{k,m,n}_1(t,x)
\nonumber\\
&
\leq (T-t)^{-1/2}
\left(
\bE\left[
\mathbf{1}_{\{\tau^{t,x}_n<T\}}
\big|g^{(m)}(X^{t,x,(n)}_T)-g^{(m)}(X^{t,x}_T)\big|^2
\right]
\right)^{1/2}
\left(
\bE\int_t^T
\Big\|
\sigma^{-1}(X^{t,x}_r)\frac{\partial}{\partial x_k}X^{t,x}_r
\Big\|^2\,dr
\right)^{1/2}
\nonumber\\
&
\leq (T-t)^{-1/2}2\Big[\sup_{y\in\bR^d}\big|g^{(m)}(y)\big|\Big]
\big[\bP\big(\tau^{t,x}_n<T\big)\big]^{1/2}
\left(
\int_t^T\varepsilon_d^{-1}
\bE\left[\Big\|\frac{\partial}{\partial x_k}X^{t,x}_r\Big\|^2\right]dr
\right)^{1/2}
\nonumber\\
&
\leq
2\big(\varepsilon_d^{-1}C_{d,0}\big)^{1/2}
\bigg[\sup_{y\in\bR^d}\big|g^{(m)}(y)\big|\bigg]
\frac{\big[(C_{(d,2)}+c_{d,1})(d^p+\|x\|^2)\big]^{1/2}}{n}.
\label{est A k m n 1}
\end{align}
Moreover, by \eqref{sigma inverse est}, 
\eqref{moment est partial}, \eqref{est sup X n}, \eqref{def tau k n}, 
\eqref{prob 1 tau k n},
\eqref{tau t x k n est}, Cauchy-Schwarz inequality, and It\^o's isometry
it holds for all $k\in\{1,2,\dots,d\}$, $m,n\in\bN$, and $(t,x)\in[0,T)\times\bR^d$
that
\begin{align}
&
A^{k,m,n}_2(t,x)
\nonumber\\
&
=
(T-t)^{1/2}
\bE\Bigg[\bigg|
\mathbf{1}_{\{\tau^{t,x,k}_n<t\}}
\frac{g^{(m)}(X^{t,x,(n)}_T)}{T-t}
\int_t^T\bigg[
\sigma^{-1}(X^{t,x}_r)
\Big(\frac{\partial}{\partial x_k}X^{t,x,(n)}_r
-\frac{\partial}{\partial x_k}X^{t,x}_r\Big)
\bigg]^T\,dW_r
\bigg|\Bigg]
\nonumber\\
&
\leq 
(T-t)^{-1/2}
\bigg[\sup_{y\in\bR^d}\big|g^{(m)}(y)\big|\bigg]
\big[\bP(\tau^{t,x,k}_n<T)\big]^{1/2}
\left(
\bE\left[
\int_t^T\Big\|
\sigma^{-1}(X^{t,x}_r)
\Big(\frac{\partial}{\partial x_k}X^{t,x,(n)}_r
-\frac{\partial}{\partial x_k}X^{t,x}_r\Big)
\Big\|^2\,dr
\right]
\right)^{1/2}
\nonumber\\
&
\leq
(T-t)^{-1/2}
\bigg[\sup_{y\in\bR^d}\big|g^{(m)}(y)\big|\bigg]
\big[\bP(\tau^{t,x,k}_n<T)\big]^{1/2}
\left(
\int_t^T\varepsilon_d^{-1}
\bE\left[
\Big\|
\frac{\partial}{\partial x_k}X^{t,x,(n)}_r
-\frac{\partial}{\partial x_k}X^{t,x}_r
\Big\|^2
\right]
dr
\right)^{1/2}
\nonumber\\
&
\leq 
\big[2\varepsilon_d^{-1}(c_{d,1}+C_{d,0})\big]^{1/2}
\bigg[\sup_{y\in\bR^d}\big|g^{(m)}(y)\big|\bigg]
\frac{\big[(C_{d,0}+c_{d,1})+(C_{(d,2)}+c_{d,1})(d^p+\|x\|^2)\big]^{1/2}}{n}.
\label{est A k m n 2}
\end{align}
By \eqref{ellip sigma n} and the same argument to obtain 
\eqref{sigma inverse est} we notice for all $n\in\bN$ and $x,y\in\bR^d$ that
\begin{equation}
\label{sigma n inverse est}
y^T\big(\big[\sigma^{(n)}(x)\big]^{-1}\big)^T[\sigma^{(n)}(x)\big]^{-1}y
\leq\varepsilon_d^{-1}\|y\|^2.
\end{equation}
Then by  \eqref{moment est partial}, \eqref{est sup X n}, \eqref{def tau k n}, 
\eqref{prob 1 tau k n}, \eqref{tau t x k n est}, \eqref{sigma n inverse est},
Cauchy-Schwarz inequality, and It\^o's isometry
we have for all $k\in\{1,2,\dots,d\}$, $m,n\in\bN$, and $(t,x)\in[0,T)\times\bR^d$
that
\begin{align}
&
A^{k,m,n}_3(t,x)
\nonumber\\
&
=
(T-t)^{-1/2}
\bE\Bigg[\bigg|
\mathbf{1}_{\{\tau^{t,x,k}_n<T\}}
\frac{g^{(m)}(X^{t,x,(n)}_T)}{T-t}
\int_t^T
\Big[
\big(
\big[\sigma^{(n)}(X^{t,x,(n)}_r)\big]^{-1}-\sigma^{-1}(X^{t,x}_r)
\big)
\frac{\partial}{\partial x_k}X^{t,x,(n)}_r
\Big]^T\,dW_r
\bigg|\Bigg]
\nonumber\\
&
\leq(T-t)^{1/2}
\bigg[\sup_{y\in\bR^d}\big|g^{(m)}(y)\big|\bigg]
\big[\bP(\tau^{t,x,k}_n<T)\big]^{1/2}
\nonumber\\
& \quad
\cdot
\left[
\left(
\bE\left[
\int_t^T
\Big\|
\big[\sigma^{(n)}(X^{t,x,(n)}_r)\big]^{-1}
\frac{\partial}{\partial x_k}X^{t,x,(n)}_r
\Big\|^2\,dr
\right]
\right)^{1/2}
+
\left(
\bE\left[
\int_t^T
\Big\|
\sigma^{-1}(X^{t,x}_r)
\frac{\partial}{\partial x_k}X^{t,x,(n)}_r
\Big\|^2\,dr
\right]
\right)^{1/2}
\right]
\nonumber\\
&
\leq
(T-t)^{-1/2}
\bigg[\sup_{y\in\bR^d}\big|g^{(m)}(y)\big|\bigg]
\big[\bP(\tau^{t,x,k}_n<T)\big]^{1/2}
2\left(
\int_t^T\varepsilon_d^{-1}
\bE\left[\Big\|\frac{\partial}{\partial x_k}X^{t,x,(n)}_r\Big\|^2\right]dr
\right)^{1/2}
\nonumber\\
&
\leq
2\big(c_{d,1}\varepsilon_d^{-1}\big)^{1/2}
\bigg[\sup_{y\in\bR^d}\big|g^{(m)}(y)\big|\bigg]
\frac{\big[(C_{d,0}+c_{d,1})+(C_{(d,2)}+c_{d,1})(d^p+\|x\|^2)\big]^{1/2}}{n}.
\label{est A k m n 3}
\end{align}
Combining \eqref{ineq A k m n}, \eqref{est A k m n 1}, \eqref{est A k m n 2},
and \eqref{est A k m n 3} yields for all $k\in\{1,2,\dots,d\}$, $m,n\in\bN$, 
and $(t,x)\in[0,T)\times\bR^d$ that
\begin{align}
&
(T-t)^{1/2}\bE\Bigg[\bigg\|
g^{(m)}(X^{t,x,(n)}_T)
\frac{1}{T-t}
\int_t^T\left(
\big[\sigma^{(n)}(X^{t,x,(n)}_r)\big]^{-1}
\frac{\partial}{\partial x_k}X^{t,x,(n)}_r
\right)^TdW_r
\nonumber\\
&
-
g^{(m)}(X^{t,x}_T)
\frac{1}{T-t}
\int_t^T\left(
\sigma^{-1}(X^{t,x}_r)
\frac{\partial}{\partial x_k}X^{t,x}_r
\right)^TdW_r
\bigg\|\Bigg]
\nonumber\\
&
\leq 
\mathfrak{c}_{d,1}
\bigg[\sup_{y\in\bR^d}\big|g^{(m)}(y)\big|\bigg]
\frac{(d^p+\|x\|^2)^{1/2}}{n},
\label{whisky 2}
\end{align}
where
$$
\mathfrak{c}_{d,1}:=6\varepsilon_d^{-1/2}(C_{d,0}\vee C_{(d,2)} +c_{d,1}).
$$
Next, by \eqref{prob 1 tau n} and the triangle inequality we notice 
for all $n,m\in\bN$ and $(t,x)\in[0,T)\times\bR^d$ that
\begin{align}
&
\int_t^T
\bE\Big[\big|
f^{(m)}\big(s,X^{t,x,(n)}_s,u^{n,m}(s,X^{t,x,(n)}_s),w^{n,m}(s,X^{t,x,(n)}_s)\big)
\nonumber\\
& \quad
-
f^{(m)}\big(s,X^{t,x}_s,u^{0,m}(s,X^{t,x}_s),w^{0,m}(s,X^{t,x}_s)\big)
\big|\Big]\,ds
\nonumber\\
&
\leq 
B^{n,m}_1(t,x)+B^{n,m}_2(t,x),
\label{ineq B n m}
\end{align}
where
\begin{align*}
B^{n,m}_1(t,x)
:=
\int_t^T
&
\bE\Big[\mathbf{1}_{\{\tau^{t,x}_n<T\}}\big|
f^{(m)}\big(s,X^{t,x,(n)}_s,u^{n,m}(s,X^{t,x,(n)}_s),w^{n,m}(s,X^{t,x,(n)}_s)\big)
\nonumber\\
&
-
f^{(m)}\big(s,X^{t,x}_s,u^{0,m}(s,X^{t,x}_s),w^{0,m}(s,X^{t,x}_s)\big)
\big|\Big]\,ds,
\end{align*}
and
\begin{align*}
B^{n,m}_2(t,x)
:=
\int_t^T
&
\bE\Big[\mathbf{1}_{\{\tau^{t,x}_n\geq T\}}\big|
f^{(m)}\big(s,X^{t,x}_s,u^{n,m}(s,X^{t,x}_s),w^{n,m}(s,X^{t,x}_s)\big)
\nonumber\\
&
-
f^{(m)}\big(s,X^{t,x}_s,u^{0,m}(s,X^{t,x}_s),w^{0,m}(s,X^{t,x}_s)\big)
\big|\Big]\,ds.
\end{align*}
By \eqref{tau t x n est}, we have for all $n,m\in\bN$ 
and $(t,x)\in[0,T)\times\bR^d$ that
\begin{align}
B^{n,m}_1(t,x)
&
\leq 
2T\bigg[\sup_{(s,v)\in[0,T]\times\bR^{2d+1}}\big|f^{(m)}(s,v)\big|\bigg]
\big[\bP(\tau^{t,x}_n<T)\big]^{1/2}
\nonumber\\
&
\leq
2T\bigg[\sup_{(s,v)\in[0,T]\times\bR^{2d+1}}\big|f^{(m)}(s,v)\big|\bigg]
\frac{(C_{(d,2)}+c_{d,1})^{1/2}(d^p+\|x\|^2)^{1/2}}{n}.
\label{est B n m 1}
\end{align}
For each $n,m\in\bN$ and $s\in[0,T)$, we define
\begin{equation}
\label{def E n m s}
E^{n,m}(s):=\sup_{r\in[s,T)}\sup_{y\in\bR^d}
\frac
{|u^{n,m}(r,y)-u^{0,m}(r,y)|+(T-s)^{1/2}\|w^{n,m}(r,y)-w^{0,m}(r,y)\|}
{(d^p+\|y\|^2)^{1/2}}.
\end{equation}
Then by \eqref{SDE moment est}, \eqref{Lip f n}, and H\"older inequality
it holds for all $n,m\in\bN$ and $(t,x)\in[0,T)\times\bR^d$ that
\begin{align}
&
B^{n,m}_2(t,x)
\nonumber\\
&
\leq
\int_t^TL_2^{1/2}
\bE\left[
|u^{n,m}(s,X^{t,x}_s)-u^{0,m}(s,X^{t,x}_s)|
+
\|w^{n,m}(s,X^{t,x}_s)-w^{0,m}(s,X^{t,x}_s)\|
\right]ds
\nonumber\\
&
\leq
L_2^{1/2}(T^{1/2}+1)
\int_t^T\bE
\bigg[
(T-s)^{-1/2}(d^p+\|X^{t,x}_s\|^2)^{1/2}
\nonumber\\
& \quad
\cdot
\frac
{|u^{n,m}(s,X^{t,x}_s)-u^{0,m}(s,X^{t,x}_s)|
+(T-s)^{1/2}\|w^{n,m}(s,X^{t,x}_s)-w^{0,m}(s,X^{t,x}_s)\|}
{(d^p+\|X^{t,x}_s\|^2)^{1/2}}
\bigg]\,ds
\nonumber\\
&
\leq
L_2^{1/2}(T^{1/2}+1)
\int_t^T
(T-s)^{-1/2}\left(\bE\big[d^p+\|X^{t,x}_s\|^2\big]\right)^{1/2}E^{n,m}(s)\,ds
\nonumber\\
&
\leq
L_2^{1/2}(T^{1/2}+1)(C_{(d,2)}+1)^{1/2}(d^p+\|x\|^2)^{1/2}
\int_t^T
(T-s)^{-1/2}E^{n,m}(s)\,ds.
\label{est B n m 2}
\end{align}
Combining \eqref{ineq B n m}, \eqref{est B n m 1}, and  \eqref{est B n m 2}
yields for all $n,m\in\bN$ and $(t,x)\in[0,T)\times\bR^d$ that
\begin{align}
&
\int_t^T
\bE\Big[\big|
f^{(m)}\big(s,X^{t,x,(n)}_s,u^{n,m}(s,X^{t,x,(n)}_s),w^{n,m}(s,X^{t,x,(n)}_s)\big)
\nonumber\\
& \quad
-
f^{(m)}\big(s,X^{t,x}_s,u^{0,m}(s,X^{t,x}_s),w^{0,m}(s,X^{t,x}_s)\big)
\big|\Big]\,ds
\nonumber\\
&
\leq
\mathfrak{c}_{d,2}\bigg[\sup_{(s,v)\in[0,T]\times\bR^{2d+1}}\big|f^{(m)}(s,v)\big|\bigg]
\frac{(d^p+\|x\|^2)^{1/2}}{n}
+
\mathfrak{c}_{d,3}(d^p+\|x\|^2)^{1/2}
\int_t^T(T-s)^{-1/2}E^{n,m}(s)\,ds
,
\label{whisky 3}
\end{align}
where
$$
\mathfrak{c}_{d,2}:=2T(C_{(d,2)}+c_{d,1})^{1/2}
\quad
\text{and}
\quad
\mathfrak{c}_{d,3}:=L_2^{1/2}(T^{1/2}+1)(C_{(d,2)}+1)^{1/2}.
$$
Next, notice for all $n,m\in\bN$, $k\in\{1,2,\dots,d\}$, 
and $(t,x)\in[0,T)\times\bR^d$ that
\begin{align}
&
(T-t)^{1/2}\int_t^T\bE\bigg[\Big|
f^{(m)}\big(s,X^{t,x}_s,u^{0,m}(s,X^{t,x}_s),w^{0,m}(s,X^{t,x}_s)\big)
\frac{1}{s-t}\int_t^s\Big(\big[\sigma(X^{t,x}_r)\big]^{-1}
\frac{\partial}{\partial x_k}X^{t,x}_r\Big)^T\,dW_r
\nonumber\\
&
-
f^{(m)}\big(s,X^{t,x,(n)}_s,u^{n,m}(s,X^{t,x,(n)}_s),w^{n,m}(s,X^{t,x,(n)}_s)\big)
\frac{1}{s-t}\int_t^s\Big(\big[\sigma^{(n)}(X^{t,x,(n)}_r)\big]^{-1}
\frac{\partial}{\partial x_k}X^{t,x,(n)}_r\Big)^T\,dW_r
\Big|\bigg]\,ds
\nonumber\\
&
\leq \sum_{i=1}^3B^{k,m,n}_i(t,x),
\label{ineq B k m n}
\end{align}
where
\begin{align*}
&
B^{k,m,n}_1(t,x)
:=(T-t)^{1/2}\int_t^T
\bE\bigg[\Big|
\big[
f^{(m)}\big(s,X^{t,x,(n)}_s,u^{n,m}(s,X^{t,x,(n)}_s),w^{n,m}(s,X^{t,x,(n)}_s)\big)
\\
& \qquad \qquad \qquad \; \;
-
f^{(m)}\big(s,X^{t,x}_s,u^{0,m}(s,X^{t,x}_s),w^{0,m}(s,X^{t,x}_s)\big)
\big]
\frac{1}{s-t}
\int_t^s\Big(\sigma^{-1}(X^{t,x}_r)
\frac{\partial}{\partial x_k}X^{t,x}_r\Big)^T\,dW_r
\Big|\bigg]\,ds,
\\
&
B^{k,m,n}_2(t,x)
:=(T-t)^{1/2}\int_t^T\bE\bigg[\Big|
f^{(m)}\big(s,X^{t,x,(n)}_s,u^{n,m}(s,X^{t,x,(n)}_s),w^{n,m}(s,X^{t,x,(n)}_s)\big)
\\
& \qquad \qquad \qquad \;\;
\cdot
\frac{1}{s-t}
\int_t^s\Big[\sigma^{-1}(X^{t,x}_r)
\Big(\frac{\partial}{\partial x_k}X^{t,x,(n)}_r-
\frac{\partial}{\partial x_k}X^{t,x}_r\Big)\Big]^T\,dW_r
\Big|\bigg]\,ds,
\end{align*}
and
\begin{align*}
B^{k,m,n}_3(t,x)
:=
&
(T-t)^{1/2}\int_t^T\bE\bigg[\Big|
f^{(m)}\big(s,X^{t,x,(n)}_s,u^{n,m}(s,X^{t,x,(n)}_s),w^{n,m}(s,X^{t,x,(n)}_s)\big)
\\
& \
\cdot
\frac{1}{s-t}
\int_t^s\Big[
\big(\big[\sigma^{(n)}(X^{t,x,(n)}_r)\big]^{-1}-\sigma^{-1}(X^{t,x}_r)\big)
\frac{\partial}{\partial x_k}X^{t,x,(n)}_r\Big]^T\,dW_r
\Big|\bigg]\,ds.
\end{align*}
By \eqref{prob 1 tau n}, we observe for all $m,n\in\bN$, $k\in\{1,2,\dots,d\}$,
and $(t,x)\in[0,T)\times\bR^d$ that
\begin{equation}
\label{ineq B k m n 1}
B^{k,m,n}_1(t,x)\leq B^{k,m,n}_{1,1}(t,x)+B^{k,m,n}_{1,2}(t,x),
\end{equation}
where
\begin{align*}
&
B^{k,m,n}_{1,1}(t,x)
\\
&
:=(T-t)^{1/2}\int_t^T
\bE\bigg[\Big|
\big[
\mathbf{1}_{\{\tau^{t,x}_s<T\}}
f^{(m)}\big(s,X^{t,x,(n)}_s,u^{n,m}(s,X^{t,x,(n)}_s),w^{n,m}(s,X^{t,x,(n)}_s)\big)
\\
& \quad \; \;
-
f^{(m)}\big(s,X^{t,x}_s,u^{0,m}(s,X^{t,x}_s),w^{0,m}(s,X^{t,x}_s)\big)
\big]
\frac{1}{s-t}
\int_t^s\Big(\sigma^{-1}(X^{t,x}_r)
\frac{\partial}{\partial x_k}X^{t,x}_r\Big)^T\,dW_r
\Big|\bigg]\,ds,
\end{align*}
and
\begin{align*}
&
B^{k,m,n}_{1,2}(t,x)
\\
&
:=(T-t)^{1/2}\int_t^T
\bE\bigg[\Big|
\big[
\mathbf{1}_{\{\tau^{t,x}_s\geq T\}}
f^{(m)}\big(s,X^{t,x}_s,u^{n,m}(s,X^{t,x}_s),w^{n,m}(s,X^{t,x}_s)\big)
\\
& \quad \; \;
-
f^{(m)}\big(s,X^{t,x}_s,u^{0,m}(s,X^{t,x}_s),w^{0,m}(s,X^{t,x}_s)\big)
\big]
\frac{1}{s-t}
\int_t^s\Big(\sigma^{-1}(X^{t,x}_r)
\frac{\partial}{\partial x_k}X^{t,x}_r\Big)^T\,dW_r
\Big|\bigg]\,ds.
\end{align*}
By \eqref{sigma inverse est}, \eqref{moment est partial}, \eqref{tau t x n est}, 
Cauchy-Schwarz inequality, and It\^o's isometry we have for all $m,n\in\bN$,
$k\in\{1,2,\dots,d\}$, and $(t,x)\in[0,T)\times\bR^d$ that
\begin{align}
&
B^{k,m,n}_{1,1}(t,x)
\nonumber\\
&
\leq
2(T-t)^{1/2}\bigg[\sup_{(s,v)\in[0,T]\times\bR^{2d+1}}\big|f^{(m)}(s,v)\big|\bigg]
\int_t^T\bE\left[
\frac{\mathbf{1}_{\{\tau^{t,x}_n<T\}}}{s-t}
\Big|
\int_t^s
\Big[\sigma^{-1}(X^{t,x}_r)\frac{\partial}{\partial x_k}X^{t,x}_r\Big]^T\,dW_r
\Big|
\right]ds
\nonumber\\
&
\leq
2(T-t)^{1/2}\bigg[\sup_{(s,v)\in[0,T]\times\bR^{2d+1}}\big|f^{(m)}(s,v)\big|\bigg]
\nonumber\\
& \quad
\cdot
\int_t^T\frac{\big[\bP(\tau^{t,x}_n<T)\big]^{1/2}}{s-t}
\left(
\bE\bigg[
\int_t^s\Big\|\sigma^{-1}(X^{t,x}_r)\frac{\partial}{\partial x_k}X^{t,x}_r\Big\|^2
\,dr
\bigg]
\right)^{1/2}ds
\nonumber\\
&
\leq
2(T-t)^{1/2}\bigg[\sup_{(s,v)\in[0,T]\times\bR^{2d+1}}\big|f^{(m)}(s,v)\big|\bigg]
\int_t^T\frac{\big[\bP(\tau^{t,x}_n<T)\big]^{1/2}}{s-t}
\left(
\int_t^s\bE\left[\Big\|\frac{\partial}{\partial x_k}X^{t,x}_r\Big\|^2\right]
\,dr
\right)^{1/2}ds
\nonumber\\
&
\leq
2(T-t)^{1/2}(\varepsilon_d^{-1}C_{d,0})^{1/2}
\bigg[\sup_{(s,v)\in[0,T]\times\bR^{2d+1}}\big|f^{(m)}(s,v)\big|\bigg]
\big[\bP(\tau^{t,x}_n<T)\big]^{1/2}
\int_t^T(s-t)^{-1/2}\,ds
\nonumber\\
&
\leq
4T(\varepsilon_d^{-1}C_{d,0})^{1/2}
\bigg[\sup_{(s,v)\in[0,T]\times\bR^{2d+1}}\big|f^{(m)}(s,v)\big|\bigg]
\frac{\big[(C_{(d,2)}+c_{d,1})(d^p+\|x\|^2)\big]^{1/2}}{n}.
\label{est B k m n 1 1}
\end{align}
By \eqref{sigma inverse est}, \eqref{moment est partial}, \eqref{time integral est},
\eqref{Lip f n}, It\^o's isometry, and H\"older's inequality
we also notice for all $m,n\in\bN$, $k\in\{1,2,\dots,d\}$, 
and $(t,x)\in[0,T)\times\bR^d$ that
\begin{align}
&
B^{k,m,n}_{1,2}(t,x)
\nonumber\\
&
\leq
(T-t)^{1/2}\int_t^T
\bE\bigg[
L^{1/2}\big(
\big|u^{n,m}(s,X^{t,x}_s)-u^{0,m}(s,X^{t,x}_s)\big|
+
\|w^{n,m}(s,X^{t,x}_s)-w^{0,m}(s,X^{t,x}_s)\|
\big)
\nonumber\\
& \quad
\cdot
\frac{1}{s-t}\Big|
\int_t^s\Big(
\sigma^{-1}(X^{t,x}_r)\frac{\partial}{\partial x_k}X^{t,x}_r
\Big)^T\,dW_r
\Big|
\bigg]
\,ds
\nonumber\\
&
\leq
(T-t)^{1/2}L^{1/2}(1+T^{1/2})\int_t^T
\bE\bigg[
\frac{\big(d^p+\|X^{t,x}_s\|^2\big)^{1/2}}{s-t}\Big|
\int_t^s\Big(
\sigma^{-1}(X^{t,x}_r)\frac{\partial}{\partial x_k}X^{t,x}_r
\Big)^T\,dW_r
\Big|
\nonumber\\
& \quad
\cdot
\frac
{\big|u^{n,m}(s,X^{t,x}_s)-u^{0,m}(s,X^{t,x}_s)\big|
+
(T-s)^{1/2}\|w^{n,m}(s,X^{t,x}_s)-w^{0,m}(s,X^{t,x}_s)\|}
{\big(d^p+\|X^{t,x}_s\|^2\big)^{1/2}}
\bigg]
\,ds
\nonumber\\
&
\leq
(T-t)^{1/2}L^{1/2}(1+T^{1/2})
\nonumber\\
& \quad
\cdot
\int_t^T\frac{E^{n,m}(s)}{(T-s)^{1/2}(s-t)}
\big(\bE\big[d^p+\|X^{t,x}_s\|^2\big]\big)^{\frac{1}{2}}
\bigg(
\bE\bigg[\int_t^s
\Big\|\sigma^{-1}(X^{t,x}_r)\frac{\partial}{\partial x_k}X^{t,x}_r\Big\|^2\bigg]
\,dr
\bigg)^{\frac{1}{2}}\,ds
\nonumber\\
&
\leq
(T-t)^{1/2}[L(1+C_{(d,2)})]^{1/2}(1+T^{1/2})(d^p+\|x\|^2)^{1/2}
\nonumber\\
& \quad
\cdot
\int_t^T
\frac{E^{n,m}(s)}{(T-s)^{1/2}(s-t)}
\left(
\int_t^s
\varepsilon_d^{-1}
\bE\left[\Big\|\frac{\partial}{\partial x_k}X^{t,x}_r\Big\|^2\right]
dr
\right)^{\frac{1}{2}}
ds
\nonumber\\
&
\leq
(T-t)^{1/2}\big[L(1+C_{(d,2)})\varepsilon_d^{-1}C_{d,0}\big]^{1/2}(1+T^{1/2})
(d^p+\|x\|^2)^{1/2}
\int_t^T(T-s)^{-1/2}(s-t)^{-1/2}E^{n,m}(s)\,ds.
\label{est B k m n 1 2}
\end{align}
Furthermore, by \eqref{sigma inverse est}, \eqref{moment est partial},
\eqref{prob 1 tau k n}, \eqref{est sup X n}, \eqref{tau t x k n est},
It\^o's isometry, and Cauchy-Schwarz inequality it holds for all
$m,n\in\bN$, $k\in\{1,2,\dots,d\}$, and $(t,x)\in[0,T)\times\bR^d$ that
\begin{align}
&
B^{k,m,n}_2(t,x)
\nonumber\\
&
\leq(T-t)^{1/2}
\bigg[\sup_{(s,v)\in[0,T]\times\bR^{2d+1}}\big|f^{(m)}(s,v)\big|\bigg]
\nonumber\\
& \quad
\cdot
\int_t^T\frac{1}{s-t}
\bE\left[
\mathbf{1}_{\{\tau^{t,x,k}_n<t\}}
\Big\|
\int_t^s\Big[
\sigma^{-1}(X^{t,x}_r)
\Big(
\frac{\partial}{\partial x_k}X^{t,x,(n)}_r
-
\frac{\partial}{\partial x_k}X^{t,x}_r
\Big)
\Big]^T\,dW_r
\Big\|
\right]ds
\nonumber\\
&
\leq
(T-t)^{1/2}
\bigg[\sup_{(s,v)\in[0,T]\times\bR^{2d+1}}\big|f^{(m)}(s,v)\big|\bigg]
\nonumber\\
& \quad
\cdot
\int_t^T
\frac{\big[\bP(\tau^{t,x,k}_n<T)\big]^{1/2}}{s-t}
\left(
\bE\bigg[
\int_t^s\Big\|
\frac{\partial}{\partial x_k}X^{t,x,(n)}_r
-
\frac{\partial}{\partial x_k}X^{t,x}_r
\Big\|^2\,dr
\bigg]
\right)^{1/2}
ds
\nonumber\\
&
\leq
(T-t)^{1/2}
\bigg[\sup_{(s,v)\in[0,T]\times\bR^{2d+1}}\big|f^{(m)}(s,v)\big|\bigg]
\big[\bP(\tau^{t,x,k}_n<T)\big]^{1/2}
\nonumber\\
& \quad
\cdot
\int_t^T\frac{1}{s-t}
\left[
\bigg(
\int_t^s\varepsilon_d^{-1}
\bE\left[\Big\|\frac{\partial}{\partial x_k}X^{t,x,(n)}_r\Big\|^2\right]dr
\bigg)^{1/2}
+
\bigg(
\int_t^s\varepsilon_d^{-1}
\bE\left[\Big\|\frac{\partial}{\partial x_k}X^{t,x}_r\Big\|^2\right]dr
\bigg)^{1/2}
\right]ds
\nonumber\\
&
\leq
(T-t)^{1/2}
\bigg[\sup_{(s,v)\in[0,T]\times\bR^{2d+1}}\big|f^{(m)}(s,v)\big|\bigg]
\big[\bP(\tau^{t,x,k}_n<T)\big]^{1/2}
\varepsilon_d^{-1/2}(C_{d,0}^{1/2}+c_{d,1}^{1/2})\int_t^T(s-t)^{-1/2}\,ds
\nonumber\\
&
\leq 
2T\varepsilon_d^{-1/2}(C_{d,0}^{1/2}+c_{d,1}^{1/2})
\bigg[\sup_{(s,v)\in[0,T]\times\bR^{2d+1}}\big|f^{(m)}(s,v)\big|\bigg]
\frac{\big[(C_{d,0}+c_{d,1})+(C_{(d,2)}+c_{d,1})(d^p+\|x\|^2)\big]^{1/2}}{n}.
\label{est B k m n 2}
\end{align}
By \eqref{sigma inverse est}, \eqref{prob 1 tau k n}, \eqref{est sup X n}, 
\eqref{tau t x k n est}, \eqref{sigma n inverse est},
It\^o's isometry, and Cauchy-Schwarz inequality we have for all
$m,n\in\bN$, $k\in\{1,2,\dots,d\}$, and $(t,x)\in[0,T)\times\bR^d$ that
\begin{align}
B^{k,m,n}_3(t,x)
&
\leq
(T-t)^{1/2}
\bigg[\sup_{(s,v)\in[0,T]\times\bR^{2d+1}}\big|f^{(m)}(s,v)\big|\bigg]
\nonumber\\
& \quad
\cdot
\int_t^T\frac{1}{s-t}\bE\bigg[\mathbf{1}_{\{\tau^{t,x,k}_n<T\}}
\Big|
\int_t^s
\Big[
\Big(
\big[\sigma^{(n)}(X^{t,x,(n)}_r)\big]^{-1}-\sigma^{-1}(X^{t,x}_r)
\Big)
\frac{\partial}{\partial x_k}X^{t,x,(n)}_r
\Big]^T\,dW_r
\Big|
\bigg]\,ds
\nonumber\\
&
\leq
(T-t)^{1/2}
\bigg[\sup_{(s,v)\in[0,T]\times\bR^{2d+1}}\big|f^{(m)}(s,v)\big|\bigg]
\int_s^T\frac{1}{s-t}\big[\bP(\tau^{t,x,k}_n<T)\big]^{1/2}
\nonumber\\
& \quad
\cdot
\left(
\bE\bigg[
\int_t^s
\Big\|
\Big(
\big[\sigma^{(n)}(X^{t,x,(n)}_r)\big]^{-1}-\sigma^{-1}(X^{t,x}_r)
\Big)
\frac{\partial}{\partial x_k}X^{t,x,(n)}_r
\Big\|^2
\,dr
\bigg]
\right)^{1/2}ds
\nonumber\\
&
\leq
(T-t)^{1/2}
\bigg[\sup_{(s,v)\in[0,T]\times\bR^{2d+1}}\big|f^{(m)}(s,v)\big|\bigg]
\big[\bP(\tau^{t,x,k}_n<T)\big]^{1/2}
\nonumber\\
& \quad
\cdot
\int_t^T\frac{2}{s-t}
\left(
\int_t^s\varepsilon_d^{-1}
\bE\left[\Big\|\frac{\partial}{\partial x_k}X^{t,x,(n)}_r\Big\|^2\right]dr
\right)^{1/2}ds
\nonumber\\
&
\leq
2(T-t)^{1/2}
\bigg[\sup_{(s,v)\in[0,T]\times\bR^{2d+1}}\big|f^{(m)}(s,v)\big|\bigg]
\big[\bP(\tau^{t,x,k}_n<T)\big]^{1/2}(\varepsilon_d^{-1}c_{d,1})^{1/2}
\int_t^T(s-t)^{-1/2}\,ds
\nonumber\\
&
\leq
4T(\varepsilon_d^{-1}c_{d,1})^{1/2}
\bigg[\sup_{(s,v)\in[0,T]\times\bR^{2d+1}}\big|f^{(m)}(s,v)\big|\bigg]
\frac{\big[(C_{d,0}+c_{d,1})+(C_{(d,2)}+c_{d,1})(d^p+\|x\|^2)\big]^{1/2}}{n}.
\label{est B k m n 3}
\end{align}
Then combining \eqref{ineq B k m n}, \eqref{ineq B k m n 1}, 
\eqref{est B k m n 1 1}, \eqref{est B k m n 1 2}, \eqref{est B k m n 2},
and \eqref{est B k m n 3} yields for all 
$m,n\in\bN$, $k\in\{1,2,\dots,d\}$, and $(t,x)\in[0,T)\times\bR^d$ that
\begin{align}
&
(T-t)^{1/2}\int_t^T\bE\bigg[\Big|
f^{(m)}\big(s,X^{t,x}_s,u^{0,m}(s,X^{t,x}_s),w^{0,m}(s,X^{t,x}_s)\big)
\frac{1}{s-t}\int_t^s\Big(\big[\sigma(X^{t,x}_r)\big]^{-1}
\frac{\partial}{\partial x_k}X^{t,x}_r\Big)^T\,dW_r
\nonumber\\
&
-
f^{(m)}\big(s,X^{t,x,(n)}_s,u^{n,m}(s,X^{t,x,(n)}_s),w^{n,m}(s,X^{t,x,(n)}_s)\big)
\frac{1}{s-t}\int_t^s\Big(\big[\sigma^{(n)}(X^{t,x,(n)}_r)\big]^{-1}
\frac{\partial}{\partial x_k}X^{t,x,(n)}_r\Big)^T\,dW_r
\Big|\bigg]\,ds
\nonumber\\
&
\leq 
\mathfrak{c}_{d,4}\bigg[\sup_{(s,v)\in[0,T]\times\bR^{2d+1}}\big|f^{(m)}(s,v)\big|\bigg]
\frac{(d^p+\|x\|^2)^{1/2}}{n}
\nonumber\\
& \quad
+
\mathfrak{c}_{d,5}(d^p+\|x\|^2)^{1/2}
\int_t^T(T-s)^{-1/2}(s-t)^{-1/2}E^{n,m}(s)\,ds,
\label{whisky 4}
\end{align}
where
$$
\mathfrak{c}_{d,4}=16T\varepsilon_d^{-1/2}(C_{d,0}\vee C_{(d,2)}+c_{d,1}),
$$
and
$$
\mathfrak{c}_{d,5}=
T^{1/2}\big[L(1+C_{(d,2)})\varepsilon_d^{-1}C_{d,0}\big]^{1/2}(1+T^{1/2})
\big[L(1+C_{(d,2)})\varepsilon_d^{-1}C_{d,0}\big]^{1/2}(1+T^{1/2}).
$$
By \eqref{whisky 1}, \eqref{whisky 2}, \eqref{whisky 3}, and \eqref{whisky 4}
we establish for all $m,n\in\bN$ and $t\in[0,T]$ that
\begin{align*}
E^{n,m}(t)
\leq 
&
\frac{\mathfrak{c}_{d,6}}{n}
\left(
\bigg[\sup_{y\in\bR^d}\big|g^{(m)}(y)\big|\bigg]
+
\bigg[\sup_{(s,v)\in[0,T]\times\bR^{2d+1}}\big|f^{(m)}(s,v)\big|\bigg]
\right)
\nonumber\\
&
+
(\mathfrak{c}_{d,3}+\mathfrak{c}_{d,5}d)
\int_t^T
\left[
(T-s)^{-1/2}+(T-s)^{-1/2}(s-t)^{-1/2}
\right]
E^{n,m}(s)
\,ds,
\end{align*}
where
$$
\mathfrak{c}_{d,6}=2(C_{d,0}\vee C_{(d,2)}+c_{d,1})+\mathfrak{c}_{d,2}
+(\mathfrak{c}_{d,1}+\mathfrak{c}_{d,4})d.
$$
This together with \eqref{time integral est 1/2} and Gr\"onwall's lemma
imply for all $m,n\in\bN$ and $t\in[0,T]$ that
\begin{align*}
E^{n,m}(t)
\leq 
&
\frac{\mathfrak{c}_{d,6}}{n}
\left(
\bigg[\sup_{y\in\bR^d}\big|g^{(m)}(y)\big|\bigg]
+
\bigg[\sup_{(s,v)\in[0,T]\times\bR^{2d+1}}\big|f^{(m)}(s,v)\big|\bigg]
\right)
\exp\big\{2(\mathfrak{c}_{d,3}+\mathfrak{c}_{d,5}d)(T^{1/2}+2)\big\}.
\end{align*}
This ensures for all $m\in\bN$ and $t\in[0,T]$ that
$
\lim_{n\to\infty}E^{n,m}(t)=0.
$
Therefore, by \eqref{def E n m s} it holds for all $m\in\bN$ 
and every compact set $\cK\subseteq(0,T)\times\bR^d$ that
\begin{equation}
\label{com conv u w n m}
\lim_{n\to\infty}
\sup_{(t,x)\in\cK}
\big[
|u^{n,m}(t,x)-u^{0,m}(t,x)|+\|w^{n,m}(t,x)-w^{0,m}(t,x)\|
\big]=0.
\end{equation}
By \eqref{conv sigma n}, \eqref{Lip sigma n}, \eqref{conv mu n}, \eqref{Lip mu n},
\eqref{Lip g n f n 0}, \eqref{Lip g n}, \eqref{Lip f n}, \eqref{LG f g n}, 
\eqref{PDE n m k}, and \eqref{com conv u w n m},
Lemma \ref{v solution approximation} ensures for all
$m\in\bN$ that $u^{0,m}$ is a viscosity solution of the PDE
\begin{align}
&
\frac{\partial}{\partial t}u^{0,m}(t,x)
+\langle\nabla_x u^{0,m}(t,x),\mu(t,x)\rangle
+\frac{1}{2}\operatorname{Trace}
\left(\sigma(t,x)\sigma^T(t,x)
\operatorname{Hess}_xu^{0,m}(t,x)\right)\nonumber\\
&
+f^{(m)}\big(t,x,u^{0,m}(t,x), \nabla_x u^{0,m}(t,x)\big)=0 .        
\label{PDE m k}
\end{align}
Moreover, by \eqref{com conv u w n m}, the fact that 
$\nabla_x u^{n,m}(t,x)=w^{n,m}(t,x)$ for all $n,m\in\bN$ and $(t,x)\in[0,T)\times\bR^d$,
and e.g., \cite[Section 16.3.5, Theorem 4]{zorich2016mathematical} we have for all
$(t,x)\in[0,T)\times\bR^d$ that $\nabla_x u^{0,m}(t,x)$ exists and coincides with
$w^{0,m}(t,x)$.
Next, the application of Corollary \ref{corollary FP} yields that
there exists a unique pair of Borel functions $(u,w)$ such that
$u\in C_{lin}([0,T)\times\bR^d,\bR)$, 
$w\in C([0,T)\times\bR^d,\bR^d)$, 
and
\begin{align}
u(t,x)
&
=\bE\left[g(X^{t,x}_T)\right]     
+\int_t^T\bE\left[
f\big(s,X^{t,x}_s,u(s,X^{t,x}_s),w(s,X^{t,x}_s)\big)
\right]ds
\label{BEL u}
\end{align}
and
\begin{align}
w(t,x)
&
=\bE\left[
g(X^{t,x}_T)
\frac{1}{T-t}
\int_t^T\left[\sigma^{-1}(X^{t,x}_{r})
DX^{t,x}_{r}\right]^T\,dW_r
\right]  
\nonumber\\
& \quad    
+\int_t^T\bE\Bigg[
f\big(s,X^{t,x}_s,u(s,X^{t,x}_s),w(s,X^{t,x}_s)\big)
\frac{1}{s-t}
\int_t^s\left[\sigma^{-1}(X^{t,x}_{r})
DX^{t,x}_{r}\right]^T\,dW_r
\Bigg]\,ds
\label{BEL grad}
\end{align}
for all $(t,x)\in[0,T)\times\bR^d$.
Moreover, Corollary \ref{corollary FP} also ensures that
\begin{equation}
\label{growth u w}
K_0:=\sup_{(s,y)\in[0,T)\times\bR^d}
\left(\frac{|u(s,y)|+(T-s)^{1/2}\|w(s,y)\|}{(d^p+\|y\|^2)^{1/2}}
\right)<\infty.
\end{equation}
Then by \eqref{SDE moment est}, \eqref{est g n g local}, and H\"older's inequality,
we observe for all $m\in\bN$, $(t,x)\in[0,T)\times\bR^d$, 
and $\epsilon\in(0,1)$ that
\begin{align}
\bE\left[\big|g^{(m)}(X^{t,x}_T)-g(X^{t,x}_T)\big|\right]
&
=
\bE
\left[
\frac{\big|g^{(m)}(X^{t,x}_T)-g(X^{t,x}_T)\big|}
{\big(d^p+\|X^{t,x}_T\|\big)^{\frac{1+\epsilon}{2}}}
\big(d^p+\|X^{t,x}_T\|\big)^{\frac{1+\epsilon}{2}}
\right]
\nonumber\\
&
\leq
\bigg[\sup_{y\in\bR^d}
\frac{|g^{(m)}(y)-g(y)|}{(d^p+\|y\|^2)^{\frac{1+\epsilon}{2}}}\bigg]
\left(\bE\left[d^p+\|X^{t,x}_T\|^2\right]\right)^{\frac{1+\epsilon}{2}}
\nonumber\\
&
\leq 
\frac{L^{1/2}}{(d^p+n^2)^{\frac{\epsilon}{2}}}(C_{(d,2)}+1)^{\frac{1+\epsilon}{2}}
(d^p+\|x\|^2)^{\frac{1+\epsilon}{2}}.
\label{rum 1}
\end{align}
By \eqref{sigma inverse est}, \eqref{SDE moment est}, \eqref{moment est partial},
\eqref{est g n g local}, Burkholder-Davis-Gundy inequality, and H\"older inequality,
we also have for all $m\in\bN$, $k\in\{1,2,\dots,d\}$, $(t,x)\in[0,T)\times\bR^d$,
and $\epsilon\in(0,1)$ that
\begin{align}
&
(T-t)^{1/2}\bE
\left[
\big|g^{(m)}(X^{t,x}_T)-g(X^{t,x}_T)\big|
\cdot
\frac{1}{T-t}
\bigg|\int_t^T\bigg[
\sigma^{-1}(X^{t,x}_r)\frac{\partial}{\partial x_k}X^{t,x}_r
\bigg]^T\,dW_r\bigg|
\right]
\nonumber\\
&
=(T-t)^{-1/2}\bE
\left[
\frac
{\big|g^{(m)}(X^{t,x}_T)-g(X^{t,x}_T)\big|}
{\big(d^p+\|X^{t,x}_T\|^2\big)^{\frac{1+\epsilon}{2}}}
\big(d^p+\|X^{t,x}_T\|^2\big)^{\frac{1+\epsilon}{2}}
\bigg|\int_t^T\bigg[
\sigma^{-1}(X^{t,x}_r)\frac{\partial}{\partial x_k}X^{t,x}_r
\bigg]^T\,dW_r\bigg|
\right]
\nonumber\\
&
\leq
(T-t)^{-1/2}\bigg[\sup_{y\in\bR^d}
\frac{|g^{(m)}(y)-g(y)|}{(d^p+\|y\|^2)^{\frac{1+\epsilon}{2}}}\bigg]
\left(\bE\left[d^p+\|X^{t,x}_T\|^2\right]\right)^{\frac{1+\epsilon}{2}}
\nonumber\\
& \quad
\cdot
\left(
\bE\left[
\bigg|\int_t^T\bigg[
\sigma^{-1}(X^{t,x}_r)\frac{\partial}{\partial x_k}X^{t,x}_r
\bigg]^T\,dW_r\bigg|^{\frac{2}{1-\epsilon}}
\right]
\right)^{\frac{1-\epsilon}{2}}
\nonumber\\
&
\leq
\frac{(T-t)^{-1/2}L^{1/2}}{(d^p+n^2)^{\frac{\epsilon}{2}}}
(1+C_{(d,2)})^{\frac{1+\epsilon}{2}}(d^p+\|x\|^2)^{\frac{1+\epsilon}{2}}
\frac{8}{1-\epsilon}
\left(
\bE\left[
\int_t^T
\Big\|
\sigma^{-1}(X^{t,x}_r)\frac{\partial}{\partial x_k}X^{t,x}_r
\Big\|^2
\,dr
\right]
\right)^{1/2}
\nonumber\\
&
\leq
\frac{(T-t)^{-1/2}L^{1/2}}{(d^p+n^2)^{\frac{\epsilon}{2}}}
\frac{8}{1-\epsilon}(1+C_{(d,2)})^{\frac{1+\epsilon}{2}}
(d^p+\|x\|^2)^{\frac{1+\epsilon}{2}}
\left(\int_t^T\varepsilon_d^{-1}
\bE\left[
\Big\|
\frac{\partial}{\partial x_k}X^{t,x}_r
\Big\|^2
\right]dr\right)^{1/2}
\nonumber\\
&
\leq
c_{\epsilon,1}^{(d)}(d^p+n^2)^{-\frac{\epsilon}{2}}(d^p+\|x\|^2)^{\frac{1+\epsilon}{2}},
\label{rum 2}
\end{align}
where
$$
c_{\epsilon,1}^{(d)}
:=
8(LC_{d,0}\varepsilon_d^{-1})^{1/2}(1-\epsilon)^{-1}
(1+C_{(d,2)})^{\frac{1+\epsilon}{2}}.
$$
Furthermore, we notice for all $m\in\bN$ and $(t,x)\in[0,T)\times\bR^d$ that
\begin{align}
&
\int_t^T
\bE\left[
\big|
f^{(m)}\big(s,X^{t,x}_s,u^{0,m}(s,X^{t,x}_s),w^{0,m}(s,X^{t,x}_s)\big)
-
f\big(s,X^{t,x}_s,u(s,X^{t,x}_s),w(s,X^{t,x}_s)\big)
\big|
\right]ds
\nonumber\\
&
\leq A^m_1(t,x)+A^m_2(t,x),
\label{ineq A m 1 2}
\end{align}
where
$$
A^m_1(t,x):=
\bE\left[
\big|
f^{(m)}\big(s,X^{t,x}_s,u(s,X^{t,x}_s),w(s,X^{t,x}_s)\big)
-
f\big(s,X^{t,x}_s,u(s,X^{t,x}_s),w(s,X^{t,x}_s)\big)
\big|
\right]ds,
$$
and
$$
A^m_2(t,x):=
\bE\left[
\big|
f^{(m)}\big(s,X^{t,x}_s,u^{0,m}(s,X^{t,x}_s),w^{0,m}(s,X^{t,x}_s)\big)
-
f^{(m)}\big(s,X^{t,x}_s,u(s,X^{t,x}_s),w(s,X^{t,x}_s)\big)
\big|
\right]ds.
$$
By \eqref{SDE moment est}, \eqref{est f n f local}, \eqref{growth u w}, 
and H\"older's inequality we obtain
for all $(t,x)\in[0,T)\times\bR^d$, $m\in\bN$, and $\epsilon\in(0,1)$ that
\begin{align}
&
A^m_1(t,x)
\nonumber\\
&
\leq
\int_t^T
\bE\Bigg[
\frac
{
\big|
f^{(m)}\big(s,X^{t,x}_s,u(s,X^{t,x}_s),w(s,X^{t,x}_s)\big)
-
f\big(s,X^{t,x}_s,u(s,X^{t,x}_s),w(s,X^{t,x}_s)\big)
\big|
}
{
\big(d^p+\|X^{t,x}_s\|^2+|u(s,X^{t,x}_s)|^2+\|w(s,X^{t,x}_s)\|^2\big)
^{\frac{1+\epsilon}{2}}
}
\nonumber\\
& \quad
\cdot
\big(d^p+\|X^{t,x}_s\|^2+|u(s,X^{t,x}_s)|^2+\|w(s,X^{t,x}_s)\|^2\big)
^{\frac{1+\epsilon}{2}}
\Bigg]ds
\nonumber\\
&
\leq
\bigg[
\sup_{(s,v)\in[0,T]\times\bR^{2d+1}}
\frac{\big|f^{(m)}(s,v)-f(s,v)\big|}{(d^p+\|v\|^2)^{\frac{1+\epsilon}{2}}}
\bigg]
\nonumber\\
& \quad
\cdot
\int_t^T
\bE\left[
\big(d^p+\|X^{t,x}_s\|^2\big)^{\frac{1+\epsilon}{2}}
+
\big(|u(s,X^{t,x}_s)|^2+\|w(s,X^{t,x}_s)\|^2\big)^{\frac{1+\epsilon}{2}}
\right]
ds
\nonumber\\
&
\leq
\frac{(Ld^p)^{1/2}}{(d^p+n^2)^{\frac{\epsilon}{2}}}
\Bigg(
T(1+C_{(d,2)})^{\frac{1+\epsilon}{2}}(d^p+\|x\|^2)^{\frac{1+\epsilon}{2}}
+(1+T^{1/2})^{1+\epsilon}
\nonumber\\
&
\cdot
\int_t^T
\frac{\big(|u(s,X^{t,x}_s)|+(T-s)^{1/2}\|w(s,X^{t,x}_s)\|\big)^{1+\epsilon}}
{\big(d^p+\|X^{t,x}_s\|^2\big)^{\frac{1+\epsilon}{2}}}
\frac{\big(d^p+\|X^{t,x}_s\|^2\big)^{\frac{1+\epsilon}{2}}}
{(T-s)^{\frac{1+\epsilon}{2}}}\,ds
\Bigg)
\nonumber\\
&
\leq
\frac{L^{1/2}}{(d^p+n^2)^{\frac{\epsilon}{2}}}
\Bigg(
T(1+C_{(d,2)})^{\frac{1+\epsilon}{2}}(d^p+\|x\|^2)^{\frac{1+\epsilon}{2}}
+(1+T^{1/2})^{1+\epsilon}
\nonumber\\
& \quad 
\cdot
\bigg[
\sup_{(s,y)\in[0,T)\times\bR^d}
\frac{|u(s,y)|+(T-s)^{1/2}\|w(s,y)\|}{(d^p+\|y\|^2)^{1/2}}
\bigg]^{1+\epsilon}
\int_t^T
\left(\bE\left[d^p+\|X^{t,x}_s\|^2\right]\right)^{\frac{1+\epsilon}{2}}
(T-s)^{-\frac{1+\epsilon}{2}}
\Bigg)
\nonumber\\
&
\leq
\frac{L^{1/2}}{(d^p+n^2)^{\frac{\epsilon}{2}}}
(1+C_{(d,2)})^{\frac{1+\epsilon}{2}}(d^p+\|x\|^2)^{\frac{1+\epsilon}{2}}
\left[
T+(1+T^{1/2})^{1+\epsilon}K_0^{1+\epsilon}
\int_t^T(T-s)^{-\frac{1+\epsilon}{2}}\,ds
\right]
\nonumber\\
&
=
\frac{L^{1/2}}{(d^p+n^2)^{\frac{\epsilon}{2}}}
(1+C_{(d,2)})^{\frac{1+\epsilon}{2}}(d^p+\|x\|^2)^{\frac{1+\epsilon}{2}}
\left[
T+(1+T^{1/2})^{1+\epsilon}K_0^{1+\epsilon}
\frac{2}{1-\epsilon}(T-t)^{\frac{1-\epsilon}{2}}
\right].
\label{est A m 1}
\end{align}
For every $m\in\bN$, $t\in[0,T)$, and $\epsilon\in(0,1)$ we define
\begin{equation}
\label{def E m epsilon}
E^{m,\epsilon}(t)
:=
\sup_{(s,y)\in[0,T)\times\bR^d}
\frac{|u^{0,m}(s,y)-u(s,y)|+(T-s)^{1/2}\|w^{0,m}(s,y)-w(s,y)\|}
{(d^p+\|y\|^2)^{\frac{1+\epsilon}{2}}}.
\end{equation}
Then by \eqref{SDE moment est}, \eqref{Lip f n}, and H\"older inequality,
it holds for all $m\in\bN$, $(t,x)\in[0,T)\times\bR^d$, and $\epsilon\in(0,1)$ that
\begin{align}
A^m_2(t,x)
&
\leq
\int_t^T
L_2^{1/2}\bE\left[
\big|u^{0,m}(s,X^{t,x}_s)-u(s,X^{t,x}_s)\big|
+
\big\|w^{0,m}(s,X^{t,x}_s)-w(s,X^{t,x}_s)\big\|
\right]
ds
\nonumber\\
&
\leq 
L_2^{\frac{1}{2}}(1+T^{\frac{1}{2}})
\int_t^T
\bigg[
\frac
{
\big|u^{0,m}(s,X^{t,x}_s)-u(s,X^{t,x}_s)\big|
+
(T-s)^{1/2}\big\|w^{0,m}(s,X^{t,x}_s)-w(s,X^{t,x}_s)\big\|
}
{\big(d^p+\|X^{t,x}_s\|^2\big)^{\frac{1+\epsilon}{2}}}
\nonumber\\
& \quad
\cdot
\frac{\big(d^p+\|X^{t,x}_s\|^2\big)^{\frac{1+\epsilon}{2}}}{(T-s)^{1/2}}
\bigg]\,ds
\nonumber\\
& 
\leq
L_2^{\frac{1}{2}}(1+T^{\frac{1}{2}})
\int_t^T
E^{m,\epsilon}(s)
\left(\bE\left[d^p+\|X^{t,x}_s\|^2\right]\right)^{\frac{1+\epsilon}{2}}
(T-s)^{-\frac{1}{2}}\,ds
\nonumber\\
&
\leq
L_2^{\frac{1}{2}}(1+T^{\frac{1}{2}})(1+C_{(d,2)})^{\frac{1+\epsilon}{2}}
(d^p+\|x\|^2)^{\frac{1+\epsilon}{2}}
\int_t^T
(T-s)^{-\frac{1}{2}}
E^{m,\epsilon}(s)
\,ds
\label{est A m 2}
\end{align}
Combining \eqref{ineq A m 1 2}, \eqref{est A m 1}, and \eqref{est A m 2} 
shows for all $m\in\bN$, $(t,x)\in[0,T)\times\bR^d$, and $\epsilon\in(0,1)$ that
\begin{align}
&
\int_t^T
\bE\left[
\big|
f^{(m)}\big(s,X^{t,x}_s,u^{0,m}(s,X^{t,x}_s),w^{0,m}(s,X^{t,x}_s)\big)
-
f\big(s,X^{t,x}_s,u(s,X^{t,x}_s),w(s,X^{t,x}_s)\big)
\big|
\right]ds
\nonumber\\
&
\leq 
c_{\epsilon,2}^{(d)}(d^p+n^2)^{-\frac{\epsilon}{2}}
(d^p+\|x\|^2)^{\frac{1+\epsilon}{2}}
+
\mathfrak{c}_{\epsilon,1}^{(d)}(d^p+\|x\|^2)^{\frac{1+\epsilon}{2}}
\int_t^T(T-s)^{-\frac{1}{2}}E^{m,\epsilon}(s)\,ds,
\label{rum 3}
\end{align}
where
$$
c_{\epsilon,2}^{(d)}
:=
(Ld^p)^{1/2}
(1+C_{(d,2)})^{\frac{1+\epsilon}{2}}
\big[
T+2(1+T^{1/2})^{1+\epsilon}K_0^{1+\epsilon}
(1-\epsilon)^{-1}T^{\frac{1-\epsilon}{2}}
\big],
$$
and
$$
\mathfrak{c}_{\epsilon,1}^{(d)}
:=
L_2^{1/2}(1+T^{1/2})
(1+C_{(d,2)})^{\frac{1+\epsilon}{2}}.
$$
Next, we notice for all $m\in\bN$ and $(t,x)\in[0,T)\times\bR^d$ that
\begin{align}
&
(T-t)^{1/2}\int_t^T\bE\bigg[\Big|
f^{(m)}\big(s,X^{t,x}_s,u^{0,m}(s,X^{t,x}_s),w^{0,m}(s,X^{t,x}_s)\big)
\frac{1}{s-t}\int_t^s\Big[\sigma^{-1}(X^{t,x}_r)
\frac{\partial}{\partial x_k}X^{t,x}_r\Big]^T\,dW_r
\nonumber\\
&
-
f\big(s,X^{t,x}_s,u(s,X^{t,x}_s),w(s,X^{t,x}_s)\big)
\frac{1}{s-t}\int_t^s\Big[\sigma^{-1}(X^{t,x}_r)
\frac{\partial}{\partial x_k}X^{t,x}_r\Big]^T\,dW_r
\Big|\bigg]\,ds
\nonumber\\
&
\leq \mathfrak{B}^{k,m}_1(t,x)+\mathfrak{B}^{k,m}_2(t,x),
\label{ineq B k m}
\end{align}
where
\begin{align*}
&
\mathfrak{B}^{k,m}_1(t,x)
:=(T-t)^{1/2}\int_t^T
\bE\bigg[\Big|
\big[
f^{(m)}\big(s,X^{t,x}_s,u(s,X^{t,x}_s),w(s,X^{t,x}_s)\big)
\\
& \qquad \qquad \qquad \; \;
-
f\big(s,X^{t,x}_s,u(s,X^{t,x}_s),w(s,X^{t,x}_s)\big)
\big]
\frac{1}{s-t}
\int_t^s\Big[\sigma^{-1}(X^{t,x}_r)
\frac{\partial}{\partial x_k}X^{t,x}_r\Big]^T\,dW_r
\Big|\bigg]\,ds,
\end{align*}
and
\begin{align*}
&
\mathfrak{B}^{k,m}_2(t,x)
:=(T-t)^{1/2}\int_t^T
\bE\bigg[\Big|
\big[
f^{(m)}\big(s,X^{t,x,(n)}_s,u^{0,m}(s,X^{t,x}_s),w^{0,m}(s,X^{t,x}_s)\big)
\\
& \qquad \qquad \qquad \; \;
-
f^{(m)}\big(s,X^{t,x}_s,u(s,X^{t,x}_s),w(s,X^{t,x}_s)\big)
\big]
\frac{1}{s-t}
\int_t^s\Big[\sigma^{-1}(X^{t,x}_r)
\frac{\partial}{\partial x_k}X^{t,x}_r\Big]^T\,dW_r
\Big|\bigg]\,ds.
\end{align*}
By \eqref{sigma inverse est}, \eqref{moment est partial},
\eqref{est f n f local}, \eqref{growth u w},
Burkholder-Davis-Gundy inequality, and H\"older's inequality,
it holds for all $(t,x)\in[0,T)\times\bR^d$, $m\in\bN$, and $\epsilon\in(0,1)$
that
\begin{align}
&
\mathfrak{B}^{k,m}_1(t,x)
\nonumber\\
&
\leq
(T-t)^{1/2}
\int_t^T\bE\bigg[
\frac
{
\big|f^{(m)}\big(s,X^{t,x}_s,u(s,X^{t,x}_s),w(s,X^{t,x}_s)\big)
-f\big(s,X^{t,x}_s,u(s,X^{t,x}_s),w(s,X^{t,x}_s)\big)\big|
}
{
\big(d^p+\|X^{t,x}_s\|^2+|u(s,X^{t,x}_s)|^2+\|w(s,X^{t,x}_s)\|^2\big)
^{\frac{1+\epsilon}{2}}
}
\nonumber\\
& \quad
\cdot
\big(d^p+\|X^{t,x}_s\|^2+|u(s,X^{t,x}_s)|^2+\|w(s,X^{t,x}_s)\|^2\big)
^{\frac{1+\epsilon}{2}}
\frac{1}{s-t}
\Big|
\int_t^s
\Big[\sigma^{-1}(X^{t,x}_r)\frac{\partial}{\partial x_k}X^{t,x}_r\Big]^T
\,dW_r
\Big|
\bigg]\,ds
\nonumber\\
&
\leq 
(T-t)^{1/2}
\bigg[\sup_{(s,v)\in[0,T]\times\bR^{2d+1}}
\frac{\big|f^{(m)}(s,v)-f(s,v)\big|}{(d^p+\|v\|^2)^{\frac{1+\epsilon}{2}}}
\bigg]
\int_t^T
\bE\bigg[
\frac{1}{s-t}
\Big|
\int_t^s
\Big[\sigma^{-1}(X^{t,x}_r)\frac{\partial}{\partial x_k}X^{t,x}_r\Big]^T
\,dW_r
\Big|
\nonumber\\
& \quad
\cdot
\Big(
\big(d^p+\|X^{t,x}_s\|^2\big)^{\frac{1+\epsilon}{2}}
+
\big(|u(s,X^{t,x}_s)|^2+\|w(s,X^{t,x}_s)\|^2\big)^{\frac{1+\epsilon}{2}}
\Big)
\bigg]\,ds
\nonumber\\
&
\leq
\frac{(T-t)^{1/2}L^{1/2}}{(d^p+n^2)^{\frac{\epsilon}{2}}}
\bigg(
\int_t^T\frac{1}{s-t}\bE\left[
\big(d^p+\|X^{t,x}_s\|^2\big)^{\frac{1+\epsilon}{2}}
\Big|
\int_t^s
\Big[\sigma^{-1}(X^{t,x}_r)\frac{\partial}{\partial x_k}X^{t,x}_r\Big]^T
\,dW_r
\Big|
\right]ds
\nonumber\\
& \quad
+(1+T^{1/2})^{1+\epsilon}\int_t^T
\bE\bigg[
\frac
{
\big(|u(s,X^{t,x}_s)|+(T-s)^{1/2}\|w(s,X^{t,x}_s)\|\big)^{1+\epsilon}
\big(d^p+\|X^{t,x}_s\|^2\big)^{\frac{1+\epsilon}{2}}
}
{
\big(d^p+\|X^{t,x}_s\|^2\big)^{\frac{1+\epsilon}{2}}(T-s)^{\frac{1+\epsilon}{2}}
}
\nonumber\\
& \quad
\cdot
\frac{1}{s-t}
\Big|
\int_t^s
\Big[\sigma^{-1}(X^{t,x}_r)\frac{\partial}{\partial x_k}X^{t,x}_r\Big]^T
\,dW_r
\Big|
\bigg]\,ds
\bigg)
\nonumber\\
&
\leq
\frac{(T-t)^{1/2}L^{1/2}}{(d^p+n^2)^{\frac{\epsilon}{2}}}
\bigg[
\int_t^T\frac{1}{s-t}\left(\bE
\big[d^p+\|X^{t,x}_s\|^2\big]\right)^{\frac{1+\epsilon}{2}}
\bigg(\bE\bigg[
\Big|
\int_t^s
\Big[\sigma^{-1}(X^{t,x}_r)\frac{\partial}{\partial x_k}X^{t,x}_r\Big]^T
\,dW_r
\Big|^{\frac{2}{1-\epsilon}}
\bigg]\bigg)^{\frac{1-\epsilon}{2}}ds
\nonumber\\
& \quad
+(1+T^{1/2})^{1+\epsilon}
\bigg[\sup_{[0,T)\times\bR^d}
\frac{|u(s,y)|+(T-s)^{1/2}\|w(s,y)\|}{(d^p+\|y\|^2)^{1/2}}
\bigg]^{1+\epsilon}
\nonumber\\
& \quad
\cdot
\int_t^T\frac{1}{(s-t)(T-s)^{\frac{1+\epsilon}{2}}}
\left(\bE
\big[d^p+\|X^{t,x}_s\|^2\big]\right)^{\frac{1+\epsilon}{2}}
\bigg(\bE\bigg[
\Big|
\int_t^s
\Big[\sigma^{-1}(X^{t,x}_r)\frac{\partial}{\partial x_k}X^{t,x}_r\Big]^T
\,dW_r
\Big|^{\frac{2}{1-\epsilon}}
\bigg]\bigg)^{\frac{1-\epsilon}{2}}ds
\nonumber\\
&
\leq
\frac{(T-t)^{1/2}L^{1/2}}{(d^p+n^2)^{\frac{\epsilon}{2}}}
\bigg[
\int_t^T
\frac{(1+C_{(d,2)})^{\frac{1+\epsilon}{2}}(d^p+\|x\|^2)^{\frac{1+\epsilon}{2}}}{s-t}
\cdot 
\frac{8}{1-\epsilon}
\bigg(
\bE\bigg[
\int_t^s
\Big\|\sigma^{-1}(X^{t,x}_r)\frac{\partial}{\partial x_k}X^{t,x}_r\Big\|^2
\,dr
\bigg]
\bigg)^{\frac{1}{2}}
ds
\nonumber\\
& \quad
+(1+T^{1/2})^{1+\epsilon}K_0^{1+\epsilon}
\int_t^T
\frac{(1+C_{(d,2)})^{\frac{1+\epsilon}{2}}(d^p+\|x\|^2)^{\frac{1+\epsilon}{2}}}
{(s-t)(T-s)^{\frac{1+\epsilon}{2}}}
\cdot 
\frac{8}{1-\epsilon}
\bigg(
\bE\bigg[
\int_t^s
\Big\|\sigma^{-1}(X^{t,x}_r)\frac{\partial}{\partial x_k}X^{t,x}_r\Big\|^2
\,dr
\bigg]
\bigg)^{\frac{1}{2}}
ds
\bigg]
\nonumber\\
&
\leq
\frac
{8(T-t)^{1/2}L^{1/2}(1+C_{(d,2)})^{\frac{1+\epsilon}{2}}
(d^p+\|x\|^2)^{\frac{1+\epsilon}{2}}}
{(d^p+n^2)^{\frac{\epsilon}{2}}(1-\epsilon)}
\Bigg[
\int_t^T\frac{1}{s-t}
\bigg(
\int_t^s\varepsilon_d^{-1}
\bE\left[\Big\|\frac{\partial}{\partial x_k}X^{t,x}_r\Big\|^2\right]dr
\bigg)^{1/2}
ds
\nonumber\\
& \quad
+(1+T^{1/2})^{1+\epsilon}K_0^{1+\epsilon}
\int_t^T\frac{1}{(s-t)(T-s)^{\frac{1+\epsilon}{2}}}
\bigg(
\int_t^s\varepsilon_d^{-1}
\bE\left[\Big\|\frac{\partial}{\partial x_k}X^{t,x}_r\Big\|^2\right]dr
\bigg)^{1/2}
ds
\Bigg]
\nonumber\\
&
\leq
\frac
{8(T-t)^{1/2}
\big(LC_{d,0}\varepsilon_d^{-1}\big)^{1/2}(1+C_{(d,2)})^{\frac{1+\epsilon}{2}}
(d^p+\|x\|^2)^{\frac{1+\epsilon}{2}}}
{(d^p+n^2)^{\frac{\epsilon}{2}}(1-\epsilon)}
\left(
\int_t^T
\left[
(s-t)^{-\frac{1}{2}}
+
(s-t)^{-\frac{1}{2}}(T-s)^{-\frac{1+\epsilon}{2}}
\right]
ds
\right)
\nonumber\\
& 
\leq
\frac
{8(T-t)^{1/2}\big(LC_{d,0}\varepsilon_d^{-1}\big)^{1/2}(1+C_{(d,2)})^{\frac{1+\epsilon}{2}}
(d^p+\|x\|^2)^{\frac{1+\epsilon}{2}}}
{(d^p+n^2)^{\frac{\epsilon}{2}}(1-\epsilon)}
\nonumber\\
& \quad
\cdot
\Bigg[
2(T-t)^{1/2}
+
\int_t^{\frac{T+t}{2}}
(s-t)^{-\frac{1}{2}}\Big(\frac{T-t}{2}\Big)^{-\frac{1+\epsilon}{2}}
ds
+
\int_{\frac{T+t}{2}}^{T}
\Big(\frac{T-t}{2}\Big)^{-\frac{1}{2}}(T-s)^{-\frac{1+\epsilon}{2}}
ds
\Bigg]
\nonumber\\
&
=
\frac
{
16\cdot 2^{\frac{\epsilon}{2}}(T-t)^{\frac{1-\epsilon}{2}}(2-\epsilon)
\big(LC_{d,0}\varepsilon_d^{-1}\big)^{1/2}
(1+C_{(d,2)})^{\frac{1+\epsilon}{2}}
(d^p+\|x\|^2)^{\frac{1+\epsilon}{2}}
}
{(d^p+n^2)^{\frac{\epsilon}{2}}(1-\epsilon)^2}.
\label{est B k m 1}
\end{align}
Furthermore, by \eqref{sigma inverse est}, \eqref{moment est partial},
\eqref{time integral est}, \eqref{Lip f n},
Burkholder-Davis-Gundy inequality, and H\"older's inequality we obtain for all
$m\in\bN$, $k\in\{1,2,\dots,d\}$, $(t,x)\in[0,T)\times\bR^d$, $\epsilon\in(0,1)$,
and $\beta\in(0,1)$ that
\begin{align}
\mathfrak{B}^{k,m}_2(t,x)
&
\leq
(T-t)^{1/2}
\int_t^T
\bE\bigg[
L_2^{1/2}\left(
\big|u^{0,m}(s,X^{t,x}_s)-u(s,X^{t,x}_s)\big|
+
\big\|w^{0,m}(s,X^{t,x}_s)-w(s,X^{t,x}_s)\big\|
\right)
\nonumber\\
& \quad
\cdot
\frac{1}{s-t}
\bigg|
\int_t^s
\Big[\sigma^{-1}(X^{t,x}_r)\frac{\partial}{\partial x_k}X^{t,x}_r\Big]^T
\,dW_r
\bigg|
\bigg]\,ds
\nonumber\\
&
\leq
(T-t)^{1/2}L^{1/2}(1+T^{1/2})
\int_t^T\bE\Bigg[
\bigg|
\int_t^s
\Big[\sigma^{-1}(X^{t,x}_r)\frac{\partial}{\partial x_k}X^{t,x}_r\Big]^T
\,dW_r
\bigg|
\frac
{\big(d^p+\|X^{t,x}_s\|^2\big)^\frac{1+\epsilon}{2}}
{(s-t)(T-s)^{1/2}}
\nonumber\\
& \quad
\cdot
\frac
{
\big|u^{0,m}(s,X^{t,x}_s)-u(s,X^{t,x}_s)\big|
+
(T-s)^{1/2}\big\|w^{0,m}(s,X^{t,x}_s)-w(s,X^{t,x}_s)\big\|
}
{\big(d^p+\|X^{t,x}_s\|^2\big)^\frac{1+\epsilon}{2}}
\Bigg]\,ds
\nonumber\\
&
\leq
(T-t)^{1/2}L^{1/2}(1+T^{1/2})
\int_t^T\frac{E^{m,\epsilon}(s)}{(s-t)(T-s)^{1/2}}
\left(\bE\left[d^p+\|X^{t,x}_s\|^2\right]\right)^{\frac{1+\epsilon}{2}}
\nonumber\\
& \quad
\cdot
\left(\bE\left[
\bigg|
\int_t^s
\Big[\sigma^{-1}(X^{t,x}_r)\frac{\partial}{\partial x_k}X^{t,x}_r\Big]^T
\,dW_r
\bigg|^{\frac{2}{1-\epsilon}}
\right]\right)^{\frac{1-\epsilon}{2}}
ds
\nonumber\\
&
\leq
(T-t)^{1/2}L^{1/2}(1+T^{1/2})
\int_t^T\frac{E^{m,\epsilon}(s)}{(s-t)(T-s)^{1/2}}
(1+C_{(d,2)})^{\frac{1+\epsilon}{2}}(d^p+\|x\|^2)^{\frac{1+\epsilon}{2}}
\nonumber\\
& \quad
\cdot
\frac{8}{1-\epsilon}
\left(
\bE\left[
\int_t^s
\Big\|\sigma^{-1}(X^{t,x}_r)\frac{\partial}{\partial x_k}X^{t,x}_r\Big\|^2
\,dr
\right]
\right)^{1/2}ds
\nonumber\\
&
\leq
(T-t)^{1/2}L^{1/2}(1+T^{1/2})
(1+C_{(d,2)})^{\frac{1+\epsilon}{2}}(d^p+\|x\|^2)^{\frac{1+\epsilon}{2}}
\nonumber\\
& \quad
\cdot
\int_t^T\frac{E^{m,\epsilon}(s)}{(s-t)(T-s)^{1/2}}
\frac{8}{1-\epsilon}
\left(
\int_t^s\varepsilon_d^{-1}
\bE\left[\Big\|\frac{\partial}{\partial x_k}X^{t,x}_r\Big\|^2\right]dr
\right)^{1/2}ds
\nonumber\\
&
\leq
8(T-t)^{1/2}\big(LC_{d,0}\varepsilon_d^{-1}\big)^{1/2}(1+T^{1/2})(1-\varepsilon)^{-1}
(1+C_{(d,2)})^{\frac{1+\epsilon}{2}}(d^p+\|x\|^2)^{\frac{1+\epsilon}{2}}
\nonumber\\
& \quad
\cdot
\int_t^T(s-t)^{-1/2}(T-s)^{-1/2}E^{m,\epsilon}(s)\,ds.
\label{est B k m 2}
\end{align}
Combing \eqref{ineq B k m}, \eqref{est B k m 1}, and \eqref{est B k m 2} shows
for all $m\in\bN$, $k\in\{1,2,\dots,d\}$, $(t,x)\in[0,T)\times\bR^d$, 
$\epsilon\in(0,1)$, and $\beta\in(0,1)$ that
\begin{align}
&
(T-t)^{1/2}\int_t^T\bE\bigg[\Big|
f^{(m)}\big(s,X^{t,x}_s,u^{0,m}(s,X^{t,x}_s),w^{0,m}(s,X^{t,x}_s)\big)
\frac{1}{s-t}\int_t^s\Big[\sigma^{-1}(X^{t,x}_r)
\frac{\partial}{\partial x_k}X^{t,x}_r\Big]^T\,dW_r
\nonumber\\
&
-
f\big(s,X^{t,x}_s,u(s,X^{t,x}_s),w(s,X^{t,x}_s)\big)
\frac{1}{s-t}\int_t^s\Big[\sigma^{-1}(X^{t,x}_r)
\frac{\partial}{\partial x_k}X^{t,x}_r\Big]^T\,dW_r
\Big|\bigg]\,ds
\nonumber\\
&
\leq 
c_{\epsilon,3}^{(d)}
(d^p+n^2)^{-\frac{\epsilon}{2}}
(d^p+\|x\|^2)^{\frac{1+\epsilon}{2}}
+
\mathfrak{c}_{\epsilon,2}^{(d)}
(d^p+\|x\|^2)^{\frac{1+\epsilon}{2}}
\int_t^T(s-t)^{-1/2}(T-s)^{-1/2}E^{m,\epsilon}(s)\,ds,
\label{rum 4}
\end{align}
where
$$
c_{\epsilon,3}^{(d)}:=
16\cdot 2^{\frac{\epsilon}{2}}T^{\frac{1-\epsilon}{2}}(2-\epsilon)
\big(LC_{d,0}\varepsilon_d^{-1}\big)^{1/2}
(1+C_{(d,2)})^{\frac{1+\epsilon}{2}}(1-\epsilon)^{-2},
$$
and
$$
\mathfrak{c}_{\epsilon,2}^{(d)}
:=
8\big(LC_{d,0}\varepsilon_d^{-1}\big)^{1/2}(1+T^{1/2})(1-\varepsilon)^{-1}
(1+C_{(d,2)})^{\frac{1+\epsilon}{2}}.
$$
Then by \eqref{rum 1}, \eqref{rum 2}, \eqref{def E m epsilon}
\eqref{rum 3}, and \eqref{rum 4}, we have for all
$m\in\bN$, $t\in[0,T)$, and $\epsilon\in(0,1)$ that
\begin{align*}
E^{m,\epsilon}(t)
\leq
&
\Big[
L^{1/2}(1+C_{(d,2)})^{\frac{1+\epsilon}{2}}
+dc_{\epsilon,1}^{(d)}+c_{\epsilon,2}^{(d)}+dc_{\epsilon,3}^{(d)}
\Big]
(d^p+n^2)^{-\frac{\epsilon}{2}}
\\
&
+
(\mathfrak{c}_{\epsilon,1}^{(d)}+d\mathfrak{c}_{\epsilon,2}^{(d)})
\int_t^T
\left[(T-s)^{-1/2}+(s-t)^{1/2}(T-s)^{-1/2}\right]
E^{m,\epsilon}(s)\,ds.
\end{align*}
This together with Gr\"onwall's lemma and \eqref{time integral est 1/2} 
imply for all $m\in\bN$, $t\in[0,T)$, and $\epsilon\in(0,1)$ that
$$
E^{m,\epsilon}(t)
\leq
\Big[
L^{1/2}(1+C_{(d,2)})^{\frac{1+\epsilon}{2}}
+dc_{\epsilon,1}^{(d)}+c_{\epsilon,2}^{(d)}+dc_{\epsilon,3}^{(d)}
\Big]
(d^p+n^2)^{-\frac{\epsilon}{2}}
\exp\big\{
2(T^{1/2}+2)(\mathfrak{c}_{\epsilon,1}^{(d)}+d\mathfrak{c}_{\epsilon,2}^{(d)})
\big\}.
$$
Hence, it holds for all $t\in[0,T)$ and $\epsilon\in(0,1)$that
$
\lim_{m\to\infty}E^{m,\epsilon}(t)=0.
$
This together with \eqref{def E m epsilon} implies for
every compact set $\cK\subseteq(0,T)\times\bR^d$ that
\begin{equation}
\label{com conv u w m}
\lim_{n\to\infty}
\sup_{(t,x)\in\cK}
\big[
|u^{0,m}(t,x)-u(t,x)|+\|w^{0,m}(t,x)-w^0(t,x)\|
\big]=0.
\end{equation}
By \eqref{Lip g n f n 0}, \eqref{Lip g n}, \eqref{Lip f n}, 
\eqref{conv g n f n}, \eqref{PDE m k}, and \eqref{com conv u w m},
Lemma \ref{v solution approximation} ensures that $u$ is a viscosity solution 
PDE \eqref{APIDE}, which proves (ii). 
In addition, by \eqref{com conv u w m}, the fact that 
$\nabla_x u^{0,m}(t,x)=w^{0,m}(t,x)$ for all $m\in\bN$ and $(t,x)\in[0,T)\times\bR^d$,
and e.g., \cite[Section 16.3.5, Theorem 4]{zorich2016mathematical} 
we obtain (iii).
Therefore, we have completed the proof of this theorem.
\end{proof}

\section{\textbf{Some proofs of the results presented in Section \ref{section general MLP}}}
\label{appendix MLP}

\subsection{Proof of Lemma \ref{lemma MLP property}}

\begin{proof}[Proof of Lemma \ref{lemma MLP property}]
\label{sec proof C1}
First notice that \eqref{def MLP general} and the measurability conditions
of $\bX^\theta$ and $\bV^\theta$ assumed in Setting \ref{MLP setting} establish (i),
and the construction of $U^\theta_{n,M}$ ensures (ii).
Moreover, (ii) and the fact that it holds for all $\theta\in\Theta$ that
$(\xi^{\theta,\upsilon})_{\upsilon\in\Theta}$,
$\big(\bX^{(\theta,\upsilon,t,x)}_s\big)
_{(t,s,x,\upsilon)\in\Delta\times\bR^d\times\Theta}$,
$\big(\bX^{\theta,t,x}_s\big)_{(t,s,x)\in\Delta\times\bR^d}$,
and $\xi^\theta$ are independent prove (iii).
Next, note that (ii), the fact that it holds 
for all $i,j\in \bZ$ and $\theta\in\Theta$ that
$\xi^{(\theta,i,j)}$ and $\big(\bX^{(\theta,i,j,t,x)}_s\big)
_{(t,s,x)\in\Delta\times\bR^d}$ are independent, and the fact that
it holds for all $\theta\in\Theta$ and $i,j,k,l\in\bZ$
with $(i,j)\neq(k,l)$ that
$$
\big(
\xi^{\theta,i,j,\upsilon},\bX^{\theta,i,j,\upsilon,t,x}_s,\bV^{\theta,i,j,\upsilon,t,x}_s
\big)_{(t,s,x,\upsilon)\in\Delta\times\bR^d\times\Theta}
\quad \text{and} \quad
\big(
\xi^{\theta,k,l,\upsilon},\bX^{\theta,k,l,\upsilon,t,x}_s,\bV^{\theta,k,l,\upsilon,t,x}_s
\big)_{(t,s,x,\upsilon)\in\Delta\times\bR^d\times\Theta}
$$
are independent establish (iv).
In addition, \eqref{def MLP general}, (iii), (iv), and the independence conditions 
of $\bX^\theta$, $\bV^\theta$, and $\xi^\theta$ assumed in Setting \ref{MLP setting}
ensure (v). We have therefore completed the proof of this lemma.
\end{proof}

\subsection{Proof of Lemma \ref{Lemma MLP property}}
\label{sec proof C2}

\begin{proof}[Proof of Lemma \ref{Lemma MLP property}]
We first observe that (ii) in Lemma \ref{lemma MLP property} and
the assumption that 
$
\big(\bX^{\theta,t,x}_s,\bV^{\theta,t,x}_s\big)
_{(\theta,t,s,x)\in\Theta\times\Delta\times\bR^d}
$
and
$(\xi^\theta)_{\theta\in\Theta}$ are independent ensure
for all $l\in\bN_0$, and $\eta,\zeta,\upsilon\in\Theta$ with
$\min\{\dim(\eta),\dim(\zeta)\}\geq\dim(\upsilon)$ that
$
\Big(\big(F(U^\eta_{l,M})-\mathbf{1}_{\{l\geq 1\}}F(U^\zeta_{l-1,M})\big)(t,x)\Big)
_{(t,x)\in[0,T)\times\bR^d},
$
$\xi^\upsilon$, and
$
\big(\bX^{\upsilon,t,x}_s,\bV^{\upsilon,t,x}_s\big)
_{(t,s,x)\in\Delta\times\bR^d}
$
are independent.
Hence, by the construction of $(\cR_t^\theta)_{(\theta,t)\in\Theta\times[0,T)}$,
and e.g., Lemma 2.2 in \cite{HJKNW2020} it holds for all $l\in\bN_0$, 
$(t,x)\in[0,T)\times\bR^d$, and $\eta,\zeta,\upsilon\in\Theta$ with
$\min\{\dim(\eta),\dim(\zeta)\}\geq\dim(\upsilon)$ that
\begin{align}
&
\bE\left[
\Big\|
(T-t)\varrho^{-1}\Big(\frac{\cR^{\upsilon}_t-t}{T-t}\Big)
\big(F\big(U^\eta_{l,M}\big)\big)
\big(\cR^{\upsilon}_t,\bX^{\upsilon,t,x}_{\cR^\upsilon_t}\big)
\big(1,\bV^{\upsilon,t,x}_{\cR^\upsilon_t}\big)
\Big\|^2
\right]
\nonumber\\
&
=(T-t)\int_t^T
\bE\left[
\varrho^{-1}\Big(\frac{s-t}{T-t}\Big)
\cdot
\big\|
\big(F\big(U^\eta_{l,M}\big)\big)
\big(s,\bX^{\upsilon,t,x}_s\big)
\big(1,\bV^{\upsilon,t,x}_s\big)
\big\|^2
\right]
ds,
\label{expectation time integral 1}
\end{align}
and
\begin{align}
&
\bE\left[
\Big\|
(T-t)\varrho^{-1}\Big(\frac{\cR_t^{(\upsilon,l,i)}-t}{T-t}\Big)
\Big(F(U^\eta_{l,M})-\mathbf{1}_{\{l \geq 1\}}F(U^\zeta_{l-1,M})\Big)
\Big(\cR^\upsilon_t,\bX^{\upsilon,t,x}_{\cR^\upsilon_t}\Big)
\Big(1,\bV^{\upsilon,t,x}_{\cR^\upsilon_t}\Big)
\Big\|^2
\right]
\nonumber\\
&
=(T-t)\int_t^T\varrho^{-1}\Big(\frac{s-t}{T-t}\Big)
\bE\left[
\Big\|
\big(F(U^\eta_{l,M})-\mathbf{1}_{\{l\geq 1\}}F(U^\zeta_{l-1,M})\big)
\big(s,\bX^{\upsilon,t,x}_s\big)
\big(1,\bV^{\upsilon,t,x}_s\big)
\Big\|^2
\right]
ds
\nonumber\\
&
=A^{\eta,\zeta,\upsilon}_{l,1}+A^{\eta,\zeta,\upsilon}_{l,2},
\label{ineq A l 1 2}
\end{align}
where
$$
A^{\eta,\zeta,\upsilon}_{l,1}
:=(T-t)\int_t^T\varrho^{-1}\Big(\frac{s-t}{T-t}\Big)
\bE\left[
\bE\left[
\big|
\big(F(U^\eta_{l,M})-\mathbf{1}_{\{l\geq 1\}}F(U^\zeta_{l-1,M})\big)
(s,z)
\big|^2
\right]
\Big|_{z=\bX^{\upsilon,t,x}_s}
\right]
ds,
$$
and
$$
A^{\eta,\zeta,\upsilon}_{l,2}
:=(T-t)\int_t^T\varrho^{-1}\Big(\frac{s-t}{T-t}\Big)
\bE\left[
\bE\left[
\big\|
\big(F(U^\eta_{l,M})-\mathbf{1}_{\{l\geq 1\}}F(U^\zeta_{l-1,M})\big)
(s,z)\cdot y
\big\|^2
\right]\Big|_{(z,y)=(\bX^{\upsilon,t,x}_s,\bV^{\upsilon,t,x}_s)}
\right]
ds.
$$
Then by \eqref{MLP Z est}, \eqref{MLP V est}, \eqref{MLP Lip f},
and Cauchy-Schwarz inequality, we obtain for all $l\in\bN_0$, 
$(t,x)\in[0,T)\times\bR^d$, and $\eta,\zeta,\upsilon\in\Theta$ with
$\min\{\dim(\eta),\dim(\zeta)\}\geq\dim(\upsilon)$ that
\begin{align}
A^{\eta,\zeta,\upsilon}_{l,1}(t,x)
&
\leq
(T-t)\int_t^T
\varrho^{-1}\Big(\frac{s-t}{T-t}\Big)
\frac{e^{-\rho s}}{T-s}\bE\left[d^p+\|\bX^{\upsilon,t,x}_s\|^2\right]
\nonumber\\
& \quad
\sup_{(r,z)\in[s,T)\times\bR^d}
\bigg(
\frac{e^{\rho r}(T-r)}{(d^p+\|z\|^2)}
\bE\left[
\big|\big(F(U^\eta_{l,M})
-\mathbf{1}_{\{l\geq 1\}}\bF(U^\zeta_{l-1,M})\big)(r,z)\big|^2
\right]
\bigg)
\,ds
\nonumber\\
&
\leq
ae^{-\rho t}(d^p+\|x\|^2)(T-t)
\int_t^T\varrho^{-1}\Big(\frac{s-t}{T-t}\Big)(T-s)^{-1}
\nonumber\\
& \quad
\sup_{(r,z)\in[s,T)\times\bR^d}
\bigg(
\frac{e^{\rho r}(T-r)}{(d^p+\|z\|^2)}
\bE\left[
\big|\big(F(U^\eta_{l,M})
-\mathbf{1}_{\{l\geq 1\}}\bF(U^\zeta_{l-1,M})\big)(r,z)\big|^2
\right]
\bigg)
\,ds,
\label{A l 1 est}
\end{align}
and
\begin{align}
A^{\eta,\zeta,\upsilon}_{l,2}(t,x)
&
\leq
(T-t)\int_t^T
\varrho^{-1}\Big(\frac{s-t}{T-t}\Big)
\frac{e^{-\rho s}}{T-s}
\bE\left[\big(d^p+\|\bX^{\upsilon,t,x}_s\|^2\big)\|\bV^{\upsilon,t,x}_s\|^2\right]
\nonumber\\
& \quad
\sup_{(r,z)\in[s,T)\times\bR^d}
\bigg(
\frac{e^{\rho r}(T-r)}{(d^p+\|z\|^2)}
\bE\left[
\big|\big(F(U^\eta_{l,M})
-\mathbf{1}_{\{l\geq 1\}}\bF(U^\zeta_{l-1,M})\big)(r,z)\big|^2
\right]
\bigg)
\,ds
\nonumber\\
&
\leq
(T-t)\int_t^T
\varrho^{-1}\Big(\frac{s-t}{T-t}\Big)
\frac{e^{-\rho s}}{T-s}
\left(\bE\left[\big(d^p+\|\bX^{\upsilon,t,x}_s\|^2\big)^2\right]\right)^{1/2}
\left(\bE\left[\|\bV^{\upsilon,t,x}_s\|^4\right]\right)^{1/2}
\nonumber\\
& \quad
\sup_{(r,z)\in[s,T)\times\bR^d}
\bigg(
\frac{e^{\rho r}(T-r)}{(d^p+\|z\|^2)}
\bE\left[
\big|\big(F(U^\eta_{l,M})
-\mathbf{1}_{\{l\geq 1\}}\bF(U^\zeta_{l-1,M})\big)(r,z)\big|^2
\right]
\bigg)
\,ds
\nonumber\\
&
\leq
a_1b_1e^{-\rho t}(d^p+\|x\|^2)(T-t)
\int_t^T
\varrho^{-1}\Big(\frac{s-t}{T-t}\Big)(s-t)^{-1}(T-s)^{-1}
\nonumber\\
& \quad
\cdot
\sup_{(r,z)\in[s,T)\times\bR^d}
\bigg(
\frac{e^{\rho r}(T-r)}{(d^p+\|z\|^2)}
\bE\left[
\big|\big(F(U^\eta_{l,M})
-\mathbf{1}_{\{l\geq 1\}}\bF(U^\zeta_{l-1,M})\big)(r,z)\big|^2
\right]
\bigg)
\,ds.
\label{A l 2 est}
\end{align}
Furthermore, by \eqref{def F MLP}, \eqref{MLP Lip f},
\eqref{ineq A l 1 2}, \eqref{A l 1 est}, and \eqref{A l 2 est}
it holds for all $l\in\bN_0$, $(t,x)\in[0,T)\times\bR^d$,
$\eta,\zeta,\upsilon\in\Theta$ 
with $\min\{\dim(\eta),\dim(\zeta)\}\geq\dim(\upsilon)$ that
\begin{align*}
&
\Bigg(
\sup_{x\in\bR^d}\bigg(
\frac{e^{\rho t}(T-t)^2}{d^p+\|x\|^2}
\bE\bigg[
\Big\|
\varrho^{-1}\Big(\frac{\cR_t^{\upsilon}-t}{T-t}\Big)
\Big(F(U^\eta_{l,M})-\mathbf{1}_{\{l\geq 1\}}F(U^\zeta_{l-1,M})\Big)
\Big(\cR^\upsilon_t,\bX^{\upsilon,t,x}_{\cR^\upsilon_t}\Big)
\Big(1,\bV^{\upsilon,t,x}_{\cR^\upsilon_t}\Big)
\Big\|^2
\bigg]
\bigg)
\Bigg)^{1/2}
\\
&
=
\bigg(
\sup_{x\in\bR^d}
\left[
(d^p+\|x\|^2)^{-1}e^{\rho t}
\big(A^{\eta,\zeta,\upsilon}_{l,1}+A^{\eta,\zeta,\upsilon}_{l,2}\big)
\right]
\bigg)^{1/2}
\\
&
\leq
\bigg(
\sup_{x\in\bR^d}
\bigg[
(a+a_1b_1)(T-t)
\int_t^T
\varrho^{-1}\Big(\frac{s-t}{T-t}\Big)[(s-t)^{-1}+1](T-s)^{-1}
\\
& \quad 
\cdot
\sup_{(r,z)\in[s,T)\times\bR^d}
\bigg(
\frac{e^{\rho r}(T-r)}{(d^p+\|z\|^2)}
\bE\left[
\big|\big(F(U^\eta_{l,M})
-\mathbf{1}_{\{l\geq 1\}}\bF(U^\zeta_{l-1,M})\big)(r,z)\big|^2
\right]
\bigg)
\,ds
\bigg]
\bigg)^{1/2}
\\
&
\leq
[(a+a_1b_1)(T-t)]^{1/2}
\Bigg[
\bigg(\int_t^T
\frac{[(s-t)^{-1}+1]}{(T-s)\varrho(\frac{s-t}{T-t})}
\,ds\bigg)^{1/2}
\sup_{(r,x)\in[t,T]\times\bR^d}
\bigg(
\frac{\mathbf{1}_{\{0\}}(l)[e^{\rho r}(T-r)]^{1/2}}{(d^p+\|x\|^2)^{1/2}}
\big|(F(\mathbf{0}))(r,x)\big|
\bigg)
\\
& \quad
+
\left(
\int_t^T
\frac{[(s-t)^{-1}+1]}{(T-s)\varrho(\frac{s-t}{T-t})}
\sup_{(r,x)\in[s,T)\times\bR^d}
\left(
\frac{\mathbf{1}_{\{l\geq 1\}}Le^{\rho r}(T-r)}{d^p+\|x\|^2}
\bE\left[\big\|(U^\eta_{l,M}-U^\zeta_{l-1,M})(r,x)\big\|^2\right]
\right)
ds
\right)^{1/2}
\Bigg].
\end{align*}
This proves (i). 
Next by \eqref{MLP Z x est}, \eqref{MLP V est}, \eqref{MLP Lip f},
Cauchy-Schwarz inequality, 
and the fact that for all $(t,x)\in[0,T)\times\bR^d$ and $s\in[t,T)$ it holds that
$\bX^{\theta,t,x}_s$, $\theta\in\Theta$, are identically distributed,
and $\bV^{\theta,t,x}_s$, $\theta\in\Theta$, are identically distributed,
we have for all $\theta\in\Theta$ and $(t,x)\in[0,T)\times\bR^d$ that
\begin{align}
&
\bE\left[\big\|
\big[g\big(\bX^{\theta,t,x}_T\big)-g(x)\big]\big(1,\bV^{\theta,t,x}_T\big)
\big\|^2\right]
\nonumber\\
&
\leq
\bE\left[\big|g\big(\bX^{0,t,x}_T\big)-g(x)\big|^2\right]
+
\left(\bE\left[\big|g\big(\bX^{0,t,x}_T\big)-g(x)\big|^4\right]\right)^{1/2}
\left(\bE\left[\big\|\bV^{0,t,x}_T\big\|^4\right]\right)^{1/2}
\nonumber\\
&
\leq
L\bE\left[\big\|\bX^{0,t,x}_T-x\big\|^2\right]
+
Lb_1c(T-t)^{-1}\left(\bE\left[\big\|\bX^{0,t,x}_T-x\big\|^4\right]\right)^{1/2}
\nonumber\\
&
\leq
(d^p+\|x\|^2)e^{\rho(T-t)}L(a_2T+a_3b_1c).
\end{align} 
This implies (ii).
Next, we start to show (iii) and (iv) by induction on $n\in\bN_0$.
By \eqref{def pdf rho}, we notice for all $t\in[0,T)$ that
\begin{align}
&
\int_t^T
\varrho^{-1}\Big(\frac{s-t}{T-t}\Big)[(s-t)^{-1}+1](T-s)^{-1}
\,ds
\nonumber\\
&
=
\frac{\cB(1-\alpha,1-\alpha)}{(T-t)^{2\alpha}}
\int_t^T
(T-s)^{-(1-\alpha)}
\left[
(s-t)^{-(1-\alpha)}+(s-t)^\alpha
\right]
ds
\nonumber\\
&
\leq
\frac{\cB(1-\alpha,1-\alpha)}{(T-t)^{2\alpha}}
\Bigg[
\int_t^{\frac{T+t}{2}}
\Big(\frac{T-t}{2}\Big)^{-(1-\alpha)}(s-t)^{-(1-\alpha)}
\,ds
+
\int_{\frac{T+t}{2}}^T
\Big(\frac{T-t}{2}\Big)^{-(1-\alpha)}(T-s)^{-(1-\alpha)}
\,ds
\nonumber\\
& \quad
+\int_t^T(T-s)^{-(1-\alpha)}(T-t)^\alpha\,ds
\Bigg]
\nonumber\\
&
=
\frac{\cB(1-\alpha,1-\alpha)}{\alpha(T-t)}(4^{-\alpha}+T).
\label{beta integral est 1}
\end{align}
Then by \eqref{def pdf rho}, \eqref{MLP LG}, \eqref{MLP est F l l-1},
and \eqref{beta integral est 1}
it holds for all $\upsilon\in\Theta$ and $t\in[0,T)$ that
\begin{align*}
&
\sup_{x\in\bR^d}
\left(
\frac{e^{\rho t}(T-t)^2}{d^p+\|x\|^2}
\bE\left[
\Big\|
\varrho^{-1}\Big(\frac{\cR^\upsilon_t-t}{T-t}\Big)
(F(\mathbf{0}))\big(\cR^\upsilon_t,\bX^{\upsilon,t,x}_t\big)
\Big\|^2
\right]
\right)^{1/2}
\\
&
\leq
[c(a+a_1b_1)]^{1/2}(T-t)^{1/2}
\left(
\int_t^T
\varrho^{-1}\Big(\frac{s-t}{T-t}\Big)[(s-t)^{-1}+1](T-s)^{-1}
\,ds
\right)^{1/2}
\\
&
\leq
\big[\alpha^{-1} c(a+a_1b_1)\beta(1-\alpha,1-\alpha)(4^{-\alpha}+T)\big]^{1/2}
<\infty.
\end{align*}
This together with the assumption that $U^\theta_{0,M(t,x)}=\mathbf{0}$ 
for all $(t,x)\in[0,T)\times\bR^d$ and $\theta\in\Theta$ establish (iii)
and (iv) for $n=0$.
Then, let $n\in\bN$ and assume for all $t\in[0,T)$, $l\in[0,n-1]\cap\bN_0$,
$\theta\in\Theta$, and $\eta,\zeta,\upsilon\in\Theta$ with
$\min\{\dim(\eta),\dim(\zeta)\}\geq \dim(\upsilon)$ that
\begin{align}
&
\sup_{(t,x)\in[0,T)\times\bR^d}
\left(
\frac{e^{\rho t}(T-t)^2}{d^p+\|x\|^2}
\bE\left[
\Big\|
\varrho^{-1}\Big(\frac{\cR^{\upsilon}_t-t}{T-t}\Big)
\big(
F\big(U^\eta_{l,M}\big)
-
\mathbf{1}_{\{l\geq 1\}}F\big(U^\zeta_{l-1,M}\big)
\big)
\big(\cR^{\upsilon}_t,\bX^{\upsilon,t,x}_{\cR^\upsilon_t}\big)
\big(1,\bV^{\upsilon,t,x}_{\cR^\upsilon_t}\big)
\Big\|^2
\right]
\right)
\nonumber\\
&
<\infty,
\label{induction 1}
\end{align}
and
\begin{equation}
\label{induction 2}
\sup_{(t,x)\in[0,T)\times\bR^d}
\left(
(d^p+\|x\|^2)^{-1}e^{\rho t}
\bE\left[\big\|U^\theta_{l,M}(s,x)\big\|^2\right]
\right)
<\infty.
\end{equation}
By \eqref{def MLP general}, \eqref{MLP est g}, and \eqref{induction 1},
we obtain for all $\theta\in\Theta$ that
\begin{equation}
\label{induction 3}
\sup_{(t,x)\in[0,T)\bR^d}
\left(
(d^p+\|x\|^2)^{-1}e^{\rho t}
\bE\left[\big\|U^\theta_{n,M}(s,x)\big\|^2\right]
\right)
<\infty.
\end{equation}
Furthermore, combining \eqref{MLP Lip f}, \eqref{MLP est F l l-1}, 
\eqref{beta integral est 1}, \eqref{induction 2}, and \eqref{induction 3} yields
for all $t\in[0,T)$ and $\eta,\zeta,\upsilon\in\Theta$ with
$\min\{\dim(\eta),\dim(\zeta)\}\geq \dim(\upsilon)$ that
\begin{align}
&
\sup_{(t,x)\in[0,T)\times\bR^d}
\left(
\frac{e^{\rho t}(T-t)^2}{d^p+\|x\|^2}
\bE\left[
\Big\|
\varrho^{-1}\Big(\frac{\cR^{\upsilon}_t-t}{T-t}\Big)
\big(
F\big(U^\eta_{n,M}\big)
-
\mathbf{1}_{\{n\geq 1\}}F\big(U^\zeta_{n-1,M}\big)
\big)
\big(\cR^{\upsilon}_t,\bX^{\upsilon,t,x}_{\cR^\upsilon_t}\big)
\big(1,\bV^{\upsilon,t,x}_{\cR^\upsilon_t}\big)
\Big\|^2
\right]
\right)
\nonumber\\
&
\leq
(a+a_1b_1)
\sup_{t\in[0,T)}
\bigg[
(T-t)\int_t^T
\varrho^{-1}(\frac{s-t}{T-t})[(s-t)^{-1}+1](T-s)^{-1}
\nonumber\\
& \quad
\cdot
\sup_{(r,x)\in[s,T)\times\bR^d}
\left(
\frac{Le^{\rho r}(T-r)}{d^p+\|x\|^2}
\bE\left[
\big\|
\big(U^\eta_{n,M}-U^{\zeta}_{n-1,M}\big)(r,x)
\big\|^2
\right]
\right)
ds
\bigg]
\nonumber\\
&
\leq
(a+a_1b_1)L\alpha^{-1}(4^{-\alpha}+T)\cB(1-\alpha,1-\alpha)
\sup_{(t,x)\in[0,T)\times\bR^d}
\left(
\frac{e^{\rho t}(T-t)}{d^p+\|x\|^2}
\bE\left[\big\|
\big(U^\eta_{n,M}-U^\zeta_{n-1,M}\big)(t,x)
\big\|^2\right]
\right)
\nonumber\\
&
<\infty.
\label{induction 4}
\end{align}
Then by \eqref{induction 1}--\eqref{induction 4} and induction,
we have proved (iii) and (iv).

Next, by \eqref{MLP F l l-1 finite} and the triangle inequality we obtain
for all $n\in\bN_0$ and $\eta,\zeta\in\Theta$ with $\dim(\eta)\geq\dim(\zeta)$ that
\begin{align*}
&
\sup_{(t,x)\in[0,T)\times\bR^d}
\left(
\frac{e^{\rho t}}{d^p+\|x\|^2}
\bE\left[
\Big\|
(T-t)\varrho^{-1}\Big(\frac{\cR^{\upsilon}_t-t}{T-t}\Big)
\big(F\big(U^\eta_{l,M}\big)\big)
\big(\cR^{\upsilon}_t,\bX^{\upsilon,t,x}_{\cR^\upsilon_t}\big)
\big(1,\bV^{\upsilon,t,x}_{\cR^\upsilon_t}\big)
\Big\|^2
\right]
\right)^{1/2}
\\
&
\leq
\sum_{l=1}^n
\sup_{(t,x)\in[0,T)\times\bR^d}
\bigg(
\frac{e^{\rho t}}{d^p+\|x\|^2}
\bE\Big[
\Big\|
(T-t)\varrho^{-1}\Big(\frac{\cR^{\upsilon}_t-t}{T-t}\Big)
\\
& \quad
\cdot
\big(F\big(U^\eta_{l,M}\big)-\mathbf{1}_{\{l\geq 1\}}F\big(U^\eta_{l-1,M}\big)\big)
\big(\cR^{\upsilon}_t,\bX^{\upsilon,t,x}_{\cR^\upsilon_t}\big)
\big(1,\bV^{\upsilon,t,x}_{\cR^\upsilon_t}\big)
\Big\|^2
\Big]
\bigg)
<\infty.
\end{align*}
This together with \eqref{expectation time integral 1} establish (v).
Therefore, we have completed the proof of this lemma.
\end{proof}

\subsection{Proof of Lemma \ref{lemma iid expectation}}
\label{sec proof C3}

\begin{proof}[Proof of Lemma \ref{lemma iid expectation}]
Note that (ii) in Lemma \ref{lemma MLP property} ensures for all $i\in\bN$,
$l\in\bN_0$, and $M\in\bN_0$ that
$$
\sigma\Big(
\big(U^{(\theta,l,i)}_{l,M}(t,x)\big)_{(t,x)\in[0,T)\times\bR^d}
\Big)
\subseteq
\sigma\Big(
\big(\xi^{(\theta,l,i,\upsilon)}\big)_{\upsilon\in\Theta},
\big(
\bX^{(\theta,l,i,\upsilon,t,x)}_s,\bV^{(\theta,l,i,\upsilon,t,x)}_s
\big)_{(t,s,x,\upsilon)\in\Delta\times\bR^d\times\Theta}
\Big),
$$
and
$$
\sigma\Big(
\big(U^{(\theta,-l,i)}_{l-1,M}(t,x)\big)_{(t,x)\in[0,T)\times\bR^d}
\Big)
\subseteq
\sigma\Big(
\big(\xi^{(\theta,-l,i,\upsilon)}\big)_{\upsilon\in\Theta},
\big(
\bX^{(\theta,-l,i,\upsilon,t,x)}_s,\bV^{(\theta,-l,i,\upsilon,t,x)}_s
\big)_{(t,s,x,\upsilon)\in\Delta\times\bR^d\times\Theta}
\Big).
$$
This together with the fact that $\xi^\upsilon$, $\upsilon\in\Theta$, are
independent, and the fact that 
$\big(\bX^{\upsilon,t,x}_s,\bV^{\upsilon,t,x}_s\big)$, $\upsilon\in\Theta$,
are independent for all $(t,s,x)\in\Delta\times\bR^d$ imply for all
$l\in\bN_0$, $M\in\bN$, and $(t,x)\in[0,T)\times\bR^d$ that
$$
\Big(
F\big(U^{(\theta,l,i)}_{l,M}\big)
-
\mathbf{1}_{\{l\geq 1\}}F\big(U^{(\theta,-l,i)}_{l-1,M}\big)
\Big)
\Big(
\cR^{(\theta,l,i)}_t,\bX^{(\theta,l,i,t,x)}_{\cR^{(\theta,l,i)}_t}
\Big)
\Big(
1,\bV^{(\theta,l,i,t,x)}_{\cR^{(\theta,l,i)}_t}
\Big),
\quad
i\in\bN,
$$
are independent. 
Moreover, the fact that $\xi^\upsilon$, $\upsilon\in\Theta$, are i.i.d.,
and the fact that it holds for all $(t,s,x)\in\Delta\times\bR^d$ that
$\big(\bX^{\upsilon,t,x}_s,\bV^{\upsilon,t,x}_s\big)$, $\upsilon\in\Theta$,
are i.i.d., and the fact that
$\big(\bX^{\upsilon,t,x}_s,\bV^{\upsilon,t,x}_s\big)
_{(t,s,x,\upsilon)\in\Delta\times\bR^d\times\Theta}$
and
$(\xi^\upsilon)_{\upsilon\in\Theta}$ are independent establish that
$$
\Big(
F\big(U^{(\theta,l,i)}_{l,M}\big)
-
\mathbf{1}_{\{l\geq 1\}}F\big(U^{(\theta,-l,i)}_{l-1,M}\big)
\Big)
\Big(
\cR^{(\theta,l,i)}_t,\bX^{(\theta,l,i,t,x)}_{\cR^{(\theta,l,i)}_t}
\Big)
\Big(
1,\bV^{(\theta,l,i,t,x)}_{\cR^{(\theta,l,i)}_t}
\Big),
\quad
i\in\bN,
$$
are identically distributed.
Hence, we obtain (i).

Next, note that (iii) and (v) in Lemma \ref{lemma MLP property},
(iii) in Lemma \ref{Lemma MLP property},
the fact that it holds for all $l\in\bN_0$ and $i\in\bN$ that
$\xi^{(\theta,l,i)}$ and $\xi^\theta$ are identically distributed,
the fact that it holds for all $l\in\bN_0$, $i\in\bN$, and
$(t,s,x)\in\Delta\times\bR^d$ that 
$\big(\bX^{(\theta,l,i,t,x)}_s,\bV^{(\theta,l,i,t,x)}_s\big)$ 
and $\big(\bX^{\theta,t,x}_s,\bV^{\theta,t,x}_s\big)$
are identically distributed, and e.g., Lemma 2.2 in \cite{HJKNW2020} ensure for all
$i,M\in\bN$, $l\in\bN_0$, and $(t,x)\in[0,T)\times\bR^d$ that
\begin{align}
&
\bE\left[
\Big(
F\big(U^{(\theta,l,i)}_{l,M}\big)
-
\mathbf{1}_{\{l\geq 1\}}F\big(U^{(\theta,-l,i)}_{l-1,M}\big)
\Big)
\Big(\cR^{(\theta,l,i)}_t,\bX^{(\theta,l,i,t,x)}_{\cR^{(\theta,l,i)}_t}\Big)
\Big(1,\bV^{(\theta,l,i,t,x)}_{\cR^{(\theta,l,i)}_t}\Big)
\right]
\nonumber\\
&
=
\bE\left[
\Big(F\big(U^{(\theta,l,i)}_{l,M}\big)\Big)
\Big(\cR^{(\theta,l,i)}_t,\bX^{(\theta,l,i,t,x)}_{\cR^{(\theta,l,i)}_t}\Big)
\Big(1,\bV^{(\theta,l,i,t,x)}_{\cR^{(\theta,l,i)}_t}\Big)
\right]
\nonumber\\
& \quad
-\mathbf{1}_{\{l\geq 1\}}
\bE\left[
\Big(F\big(U^{(\theta,-l,i)}_{l-1,M}\big)\Big)
\Big(\cR^{(\theta,l,i)}_t,\bX^{(\theta,l,i,t,x)}_{\cR^{(\theta,l,i)}_t}\Big)
\Big(1,\bV^{(\theta,l,i,t,x)}_{\cR^{(\theta,l,i)}_t}\Big)
\right]
\nonumber\\
&
=
\bE\left[
\Big(F\big(U^{\theta}_{l,M}\big)\Big)
\Big(\cR^{\theta}_t,\bX^{\theta,t,x}_{\cR^{\theta}_t}\Big)
\Big(1,\bV^{\theta,t,x}_{\cR^{\theta}_t}\Big)
\right]
-\mathbf{1}_{\{l\geq 1\}}
\bE\left[
\Big(F\big(U^{\theta}_{l-1,M}\big)\Big)
\Big(\cR^{\theta}_t,\bX^{\theta,t,x}_{\cR^{\theta}_t}\Big)
\Big(1,\bV^{\theta,t,x}_{\cR^{\theta}_t}\Big)
\right].
\nonumber
\end{align}
Thus, by (iii) in Lemma \ref{lemma MLP property}, 
(iii) in Lemma \ref{Lemma MLP property}, the assumption that it holds for all
$(t,s,x)\in\Delta\times\bR^d$ that 
$\big(\bX^{\upsilon,t,x}_s,\bV^{\upsilon,t,x}_s\big)$, $\upsilon\in\Theta$,
are identically distributed, the construction of $\cR^\theta_t$,
and e.g., Lemma 2.2 in \cite{HJKNW2020} we have for all $n,M\in\bN$ and
$(t,x)\in[0,T)\times\bR^d$ that
\begin{align*}
&
\bE\left[U^\theta_{n,M}(t,x)\right]
\\
&
=
\frac{1}{M^n}
\sum_{i=1}^{M^n}\bE\left[g\big(\bX^{(\theta,t,x,0,-i)}_T\big)
\Big(1,\bV^{(\theta,t,x,0,-i)}_T\Big)\right]
+
\sum_{l=0}^{n-1}\frac{T-t}{M^{n-l}}
\Bigg(
\sum_{i=1}^{M^{n-l}}
\bE\bigg[
\varrho^{-1}\Big(\frac{\cR^{(\theta,l,i)}_t-t}{T-t}\Big)
\nonumber\\
& \quad
\cdot
\Big[F\big(U^{(\theta,l,i)}_{l,M}\big)
-\mathbf{1}_{\{l\geq 1\}}F\big(U^{(\theta,-l,i)}_{l-1,M}\big)\Big]
\bigg]
\Big(\cR^{(\theta,l,i)}_t,\bX^{(\theta,t,x,l,i)}_{\cR^{(\theta,l,i)}_t}\Big)
\Big(1,\bV^{(\theta,t,x,l,i)}_{\cR^{(\theta,l,i)}_t}\Big)
\Bigg)
\\
&
=
\bE\left[g\big(\bX^{\theta,t,x}_T\big)\Big(1,\bV^{\theta,t,x}_T\Big)\right]
+(T-t)\sum_{l=0}^{n-1}\bigg(
\bE\left[
\varrho^{-1}\Big(\frac{\cR^\theta_t-t}{T-t}\Big)
\Big(F\big(U^\theta_{l,M}\big)\Big)
\Big(\cR^\theta_t,\bX^{\theta,t,x}_{\cR^\theta_t}\Big)
\Big(1,\bV^{\theta,t,x}_{\cR^\theta_t}\Big)
\right]
\\
& \quad
-\mathbf{1}_{\{l\geq 1\}}
\bE\left[
\varrho^{-1}\Big(\frac{\cR^\theta_t-t}{T-t}\Big)
\Big(F\big(U^\theta_{l-1,M}\big)\Big)
\Big(\cR^\theta_t,\bX^{\theta,t,x}_{\cR^\theta_t}\Big)
\Big(1,\bV^{\theta,t,x}_{\cR^\theta_t}\Big)
\right]
\bigg)
\\
&
=
\bE\left[g\big(\bX^{\theta,t,x}_T\big)\Big(1,\bV^{\theta,t,x}_T\Big)\right]
+
(T-t)\bE\left[
\varrho^{-1}\Big(\frac{\cR^\theta_t-t}{T-t}\Big)
\Big(F\big(U^\theta_{n-1,M}\big)\Big)
\Big(\cR^\theta_t,\bX^{\theta,t,x}_{\cR^\theta_t}\Big)
\Big(1,\bV^{\theta,t,x}_{\cR^\theta_t}\Big)
\right]
\\
&
=
\bE\left[g\big(\bX^{\theta,t,x}_T\big)\Big(1,\bV^{\theta,t,x}_T\Big)\right]
+
\int_t^T\bE\left[
\Big(F\big(U^\theta_{n-1,M}\big)\Big)
\big(s,\bX^{\theta,t,x}_s\big)
\big(1,\bV^{\theta,t,x}_s\big)
\right]ds.
\end{align*}
This proves (ii). Hence, we have completed the proof of this lemma.
\end{proof}

\smallskip
\noindent 

\bibliographystyle{plain}
\bibliography{Reference}

\end{document}